\newcommand{\cE}{\mathcal{E}}
\newcommand{\cH}{\mathcal{H}}
\newcommand{\cY}{\mathcal{Y}}
\newcommand{\EE}{\mathbb{E}}
\newcommand{\E}{\mathbb{E}}
\newcommand{\NN}{\mathbb{N}}
\newcommand{\PP}{\mathbb{P}}
\newcommand{\QQ}{\mathbb{Q}}
\newcommand{\RR}{\mathbb{R}}
\newcommand{\rr}{\mathbb{R}}
\newcommand{\bone}{\mathbf{1}}
\theoremstyle{plain}
\newtheorem{theorem}{Theorem}[section]
\newtheorem{corollary}[theorem]{Corollary}
\newtheorem{lemma}[theorem]{Lemma}
\newtheorem{proposition}[theorem]{Proposition}
\theoremstyle{definition}
\newtheorem{remark}[theorem]{Remark}
\newtheorem{definition}[theorem]{Definition}
\numberwithin{equation}{section}
\begin{document}
\title[Cascade PDE]{Cascade equation for the discontinuities in the Stefan problem with surface tension} 
\author{Yucheng Guo, Sergey Nadtochiy and Mykhaylo Shkolnikov}
\address{ORFE Department, Princeton University, Princeton, NJ 08544.} 
\email{yg7348@princeton.edu}
\address{Department of Applied Mathematics, Illinois Institute of Technology, Chicago, IL 60616.}
\email{snadtochiy@iit.edu}
\address{Department of Mathematical Sciences and Center for Nonlinear Analysis, Carnegie Mellon University, PA 15232.}
\email{mshkolni@gmail.com}
\footnotetext[1]{S.~Nadtochiy is partially supported by the NSF CAREER grant DMS-1651294.}

\begin{abstract}
The Stefan problem with surface tension is well known to exhibit discontinuities in the associated moving aggregate (i.e., in the domain occupied by the solid), whose structure has only been understood under translational or radial symmetry so far.~In this paper, we derive an auxiliary partial differential equation of second-order hyperbolic type, referred to as the cascade equation, that captures said discontinuities in the absence of any symmetry assumptions.~Specializing to the one-phase setting, we introduce a novel (global) notion of weak solution to the cascade equation, which is defined as a limit of mean-field game equilibria.~For the spatial dimension two, we show the existence of such a weak solution and prove a natural perimeter estimate on the associated moving aggregate.
\end{abstract}

\maketitle


\section{Introduction} \label{se:intro}

The classical formulation of the Stefan problem with surface tension can be stated as follows: Given $\Gamma_0\subset\rr^d$ and $u(0,\cdot)\!:\rr^d\to\rr$, find $\{\Gamma_s\subset\rr^d\}_{s>0}$ and $\{u(s,\cdot)\!\!:\rr^d\to\rr\}_{s>0}$ such that
\begin{equation}\label{SGT}
	\begin{cases}
		& \partial_s u(s,x) = \frac{1}{2}\Delta_x u(s,x),\quad x\in\rr^d\backslash\partial\Gamma_s,\;\; s>0, \\
		& u(s,x) = -\gamma H_s(x),\quad x\in\partial\Gamma_s,\;\; s>0, \\
		& V_s(x) = -\frac{1}{2}\partial_{\mathbf{n}_s(x)} u(s,x) - \frac{1}{2}\partial_{-\mathbf{n}_s(x)} u(s,x),\quad x\in\partial\Gamma_s,\;\; s\ge0.
	\end{cases}
\end{equation}
Here, $\{\Gamma_s\}_{s\ge0}$ is interpreted as the evolution of a solid (e.g., ice) crystal that is surrounded by the corresponding liquid (e.g., water); $\{u(s,\cdot)\}_{s\ge0}$ describes the temperature distribution relative to the equilibrium freezing point, which evolves according to the standard heat equation; $H_s(x)$ is the mean curvature of $\Gamma_s$ at the point $x\in\partial\Gamma_s$; the ``surface tension coefficient'' $\gamma>0$ quantifies the ``Gibbs-Thomson relation'' $u(s,x)=-\gamma H_s(x)$ reflecting the shift of the equilibrium freezing point by $-\gamma H_s(x)$ at a curved interface; $V_s(x)$ is the outward (relative to $\Gamma_s$) normal speed of the point $x\in\partial\Gamma_s$ at the time~$s$; and $\partial_{\mathbf{n}_s(x)} u(s,x)$ (resp., $\partial_{-\mathbf{n}_s(x)} u(s,x)$) are the outward (resp., inward) normal derivatives of $u(s,\cdot)$ at the point $x\in\partial\Gamma_s$. 

\medskip

While the mathematical study of Stefan problems goes as far back as \cite{LC} and \cite{Stefan1,Stefan2,Stefan3,Stefan4}, the well-posedness of the problem \eqref{SGT} is still open, despite the regularizing effect of the Gibbs-Thomson relation on $\partial\Gamma_s$.~Thereby, a crucial challenge is posed by the (generic) discontinuities of $\{\Gamma_s\}_{s\ge0}$ in $s$, see, e.g., \cite[Section 1]{GNS}, \cite[Section 1]{dns}, and \cite[Section~5]{Lu} for discussions in the radially symmetric, the translationally symmetric, and the general case, respectively.~In fact, the behavior of $(\Gamma,u)$ at the times of such discontinuities, a.k.a.~the times of jumps, is not uniquely determined by the system \eqref{SGT}, and the appropriate choice of such behavior turns out to be crucial for obtaining a notion of solution that enjoys uniqueness. Assuming, for simplicity, that a jump occurs at time $s$ and that $\Gamma_s \supset \Gamma_{s-}:=\bigcup_{0\le r<s} \Gamma_r$, one can describe the \textit{energy solution} introduced in \cite{Lu} as corresponding to the assumption that each point in the jump region $\Gamma_s\setminus\Gamma_{s-}$ instantly changes its phase -- from liquid to solid -- and adjusts its temperature so that the overall phase-temperature energy at this point does not change while the boundary condition (i.e., the second line of \eqref{SGT}) may be violated. This choice of jump time behavior leads to obvious non-uniqueness of a solution, even in the radially symmetric case.~In contrast, the \textit{physical solution} introduced in \cite{DIRT2} and \cite{dns} is obtained by viewing each jump as a limit of a family of aggregates (i.e., a \textit{cascade}) evolving continuously but on a ``fast" time scale. This leads to a relaxation of the energy preservation condition -- it is only preserved globally, over the entire jump region, as opposed to locally, at each point -- but enforces the boundary condition (i.e., the second line of \eqref{SGT}) at each point of the jump region. In the radially symmetric case, the physical solution enjoys both existence and uniqueness, see \cite{NaShsurface}, \cite{GNS}.
However, to date, the notion of physical solution has not been extended beyond the radially symmetric case, and the main obstacle is the lack of an appropriate definition of a \textit{physical jump} without the assumption of radial symmetry.

\medskip

The main subject of this paper is the extension of the notion of a physical jump for solutions to the Stefan problem with surface tension to the general multidimensional setting in the absence of any symmetry assumptions. More specifically, we analyze the discontinuities of $\{\Gamma_s\}_{s\ge0}$ in $s$ when $\Gamma_s$ is non-decreasing in $s$ (``one-phase setting''). We argue in Section \ref{se:form} that, given $\Gamma_{s-}:=\bigcup_{0\le r<s} \Gamma_r$ and $u(s-,\cdot):=\lim_{r\uparrow s} u(r,\cdot)$, the set $\Gamma_s$ is obtained by, first, (globally) solving the auxiliary partial differential equation (PDE)
\begin{equation}\label{PDE:wave}
	\mathrm{div}\bigg(\frac{\nabla w(x)}{|\nabla w(x)|}\,\bigg(\frac{1}{|\nabla w(x)|}+\gamma\bigg)\!\bigg)
	=-1-u(s-,x),
\end{equation}
for $x\in\rr^d\backslash\Gamma_{s-}$, subject to the boundary conditions $w(x)=0$ and $\partial_{\mathbf{n}_{s-}(x)} w(x)=1/V_0$ along $\partial\Gamma_{s-}$, for any given initial speed function $V_0\geq0$, then, considering the limiting $w$ as $V_0\downarrow0$ and letting $\Gamma_s:=\{x\!:w(x)<\infty\}$ (where we extend $w$ to $\RR^d$ by setting $w\equiv0$ on $\Gamma_{s-}$). In the present paper, we focus on the first part of this program -- i.e., on constructing a global solution to \eqref{PDE:wave} for a given~$V_0$. As alluded to in the previous paragraph, the formulation \eqref{PDE:wave} is inspired by the main idea of \cite[Subsection 3.1.2]{DIRT2} in the translationally symmetric case:~to continuously interpolate between $\Gamma_{s-}$ and $\Gamma_s$ using an auxiliary \textit{cascade} of aggregates evolving on a ``fast'' time scale. We interpret each $w(x)$ in \eqref{PDE:wave} as the first arrival time, on the fast time scale, of the cascade at $x\in\rr^d$.

\medskip

A straightforward computation reveals that \eqref{PDE:wave} is of hyperbolic type and that, at any point $x$, the direction $\nabla w(x)$ is time-like, whereas all directions orthogonal to $\nabla w(x)$ are space-like.~In other words, the level sets of $w$ propagate towards higher values of $w$ according to a quasi-linear wave equation encapsulated by \eqref{PDE:wave}. Such equations are routinely encountered in the theory of general relativity (at least, when $d=4$), and their local solvability for smooth initial data is well-known (see, e.g., \cite[Chapter 16, Proposition 3.2]{TaylorIII}). In contrast, the construction $\Gamma_s=\{x\!:w(x)<\infty\}$ necessitates a \textit{global} solution $w$ of \eqref{PDE:wave}. Moreover, for solutions of \eqref{SGT}, higher regularity of $\Gamma_s$ beyond being a set of finite perimeter in the sense of the geometric measure theory (see, e.g., \cite{EvGa}) may not hold (think, e.g., of two growing solid balls at the time they merge). 

In some sense, it is more instructive to recast \eqref{PDE:wave} as 
\begin{equation}\label{PDE accel}
	-\frac{\nabla w(x)^\top\,\mathrm{Hess}\, w(x)\,\nabla w(x)}{|\nabla w(x)|^5}
	+ \frac{1+u(s-,x)}{|\nabla w(x)|} = -\mathrm{div}\bigg(\frac{\nabla w(x)}{|\nabla w(x)|}\bigg)\,\bigg(\frac{1}{|\nabla w(x)|}+\gamma\bigg)\,\frac{1}{|\nabla w(x)|}
\end{equation}
and to contrast it with $\frac{1}{|\nabla w(x)|}=-\mathrm{div}\big(\frac{\nabla w(x)}{|\nabla w(x)|}\big)$, the arrival time PDE of the mean curvature flow (see \cite[equation (2.4)]{EvSp} and take $u(t,x)=t-w(x)$ therein).~When $u(s-,\cdot)\equiv-1$, the PDE~\eqref{PDE accel} states that the \textit{acceleration} $-\frac{\nabla w(x)^\top\,\mathrm{Hess}\, w(x)\,\nabla w(x)}{|\nabla w(x)|^5}$ (rather than the speed $\frac{1}{|\nabla w(x)|}$ in the mean curvature flow) is proportional to the negative mean curvature $-\mathrm{div}\big(\frac{\nabla w(x)}{|\nabla w(x)|}\big)$, resulting in a hyperbolic (rather than an elliptic) problem. The factors $\frac{1}{|\nabla w(x)|}+\gamma$ and $\frac{1}{|\nabla w(x)|}$ have the physical meanings of interfacial energy and speed, respectively (see Subsection \ref{subse:arrival} for more details).

\medskip

Simple examples (see Propositions \ref{prop:Example1Minimal}, \ref{prop:Example2Minimal}, \ref{prop:Example3Minimal}, \ref{prop:Example4Minimal}, and Remarks \ref{rem:min.lim.equil.2}(b), \ref{rem:min.lim.equil.4}(c)) reveal that:~(i) the arrival time function $w$ may take infinite values (if a point is never reached by the cascade), and (ii) even in the domain $\{x:\,w(x)<\infty\}$, the PDE \eqref{PDE:wave} can only be solved with ``$=$'' replaced by ``$\leq$", or, equivalently, we must relax \eqref{PDE:wave} by allowing for an additional non-positive measure-valued summand in its right-hand side.~As elaborated in Remarks \ref{rem:min.lim.equil.2}(b), \ref{rem:min.lim.equil.4}(c), the appearance of such a summand corresponds to the ``excessive loss of boundary energy" that ends up being ``locked inside" the solid domain. Naturally, we would like to select a solution to the relaxed version of \eqref{PDE:wave} that ``loses as little extra energy as possible for as long as possible", meaning that a boundary point keeps moving as long as its velocity is non-zero and there is vacant space for it. This concept is made precise in Definition~\ref{def:min}, leading to the notion of a \textit{minimal solution} to \eqref{PDE:wave}.

\medskip

In order to define and construct solutions to the relaxed version of \eqref{PDE:wave}, it turns out to be mathematically convenient to obtain a ``particle-level" representation for such solutions.~In the spirit of the control representations for curvature flows in \cite{BuCaQu}, \cite{SoTo}, \cite{KoSe}, \cite{LaRu}, we represent solutions to (the relaxed version of) \eqref{PDE:wave} via equilibria in an associated family of mean-field games. Such equilibria are given by triplets $(\kappa,{\mathbb Q},L)$, where $\kappa\in(0,\infty)$ is a normalizing constant (turning finite positive measures into probability measures), ${\mathbb Q}$ is a Borel probability measure on the space $\mathcal{Y}\times\RR_+=\{(y,\beta)\}$, in which we equip 
\begin{equation*}
	{\mathcal Y}:=\big\{y\!:[0,\infty)\to\rr^d:\;\|\dot{y}\|_\infty\le 1,\,\theta:=\inf\{t\ge0\!:y_t\in\overline{\Gamma}_{s-}\}<\infty,\,y_t=y_{\theta}\;\text{for}\; t\ge\theta\big\}
\end{equation*}
with the metric $d_{\mathcal Y}(y,\hat{y}):=\|y-\hat{y}\|_\infty+|\theta-\hat{\theta}|$, and $L\!:\rr^d\to\rr\backslash\{0\}$ is upper semicontinuous.

\begin{definition}\label{def:equi}
	We call $(\kappa,\mathbb{Q},L)$ an \textit{equilibrium capped at $T\in(0,\infty)$}, in short $(\kappa,\mathbb{Q},L)\in\mathcal{E}_T$, if 
	\begin{align}
		& \mathbb{Q}\in\text{argmin}_{\hat{\mathbb Q}:\;\hat{\mathbb Q}\circ y_0^{-1}=\mathbb Q\circ y_0^{-1}}\; \E^{\hat{\mathbb Q}}\bigg[\int_0^\theta L^+(y_t)\,\mathrm{d}t \bigg],\nonumber\\
		&\mathbb{Q}\bigg(\int_0^\theta L^+(y_t)\,\mathrm{d}t \le T,\,\beta\leq\theta\bigg)=1,
		\quad \mathbb{Q}\big(y_\theta\in\partial_*\Gamma_{s-}\,\big\vert\,\beta=\theta\big)=1, \label{equi}\\
		& \forall\,\phi\in C_c^\infty(\rr^d)\!: \nonumber \\
		& \kappa\,\E^{\mathbb Q}\bigg[\int_0^\beta \phi(y_t)\, L^+(y_t)\,\mathrm{d}t\bigg]
		=\int_{\rr^d} \phi(x)\,(1+\gamma L(x))\,\mathrm{d}x,\quad
		\E^{\mathbb Q}\bigg[\int_0^\beta \phi(y_t)\,L^-(y_t)\,\mathrm{d}t\bigg]=0, \nonumber
	\end{align}
	where $\partial_*\Gamma_{s-}$ is the measure theoretic boundary of $\Gamma_{s-}\subset\RR^d$, as defined, e.g., in \cite[Section~5.8]{EvGa}. 
\end{definition}

For the set $\mathcal{E}_T$ to be well-defined and non-trivial, we need to assume that $\Gamma_{s-}$ is a set of locally finite perimeter (a.k.a.~a Caccioppoli set) and that its perimeter is non-zero. These are standing assumptions throughout the paper.

\smallskip

The described setup constitutes a mean-field game equilibrium in which the particles, whose initial states are distributed according to $\mathbb{Q}\circ y_0^{-1}$, minimize the cost $\int_0^\theta L^+(y_t)\,\mathrm{d}t$ over $y\in\mathcal{Y}$ and are coupled via the ``fixed-point'' constraint $\kappa\,\E^{\mathbb Q}\big[\int_0^\beta \phi(y_t)\, L^+(y_t)\,\mathrm{d}t\big] =\int_{\rr^d} \phi(x)\,(1+\gamma L(x))\,\mathrm{d}x$, $\phi\in C_c^\infty(\rr^d)$, where $\beta$ is the ``killing time" of a particle. The latter constraint can be interpreted as follows: The running cost in the objective of each particle is given by a function of the cumulative occupation density of all alive particles' trajectories.~We refer to Subsection \ref{subse:equi} for more details.

\medskip

To represent the cascade and the arrival time function $w$ via an equilibrium $(\kappa,{\mathbb Q},L)$, let us write $w^{L}$ for the value function of a representative particle:
\begin{align}\label{eq:Eikonal.OC.1}
	w^L(x):=\inf_{y\in\mathcal{Y}:\,y_0=x} \int_0^\theta L^+(y_t)\,\mathrm{d}t,\quad x\in\RR^d.
\end{align}
The HJB equation of the above control problem is $|\nabla w^{L}|\!=\! L^+$, and the optimal trajectory can be written (heuristically) as $\dot y_t = -\nabla w^{L}(y_t)/|\nabla w^{L}(y_t)|$. Consequently, along any equilibrium trajectory $y$, we have $\mathrm{d}w^{L}(y_t)=-L^+(y_t)\,\mathrm{d}t$, which allows us to regard the integrals $\int_0^\cdot L^+(y_t)\,\mathrm{d}t$ as the ``physical time" clock on which the cascade evolves along a particle trajectory $y$. We then define the time-$t$ cascade $D^{\QQ,L}_t$ as the set of initial points from which a particle following an equilibrium trajectory reaches $\Gamma_{s-}$ within $t$ units of physical time:
\begin{align*}
	& D^{\mathbb Q,L}_t
	:=\Gamma_{s-}\cup\mathrm{supp}\,\overline\QQ_t,\quad\overline\QQ_t(\mathrm{d}x)
	:=\,\E^{{\mathbb Q}}\bigg[\int_{\iota(L,t)}^{\beta\vee\iota(L,t)} \mathbf{1}_{\{y_r\in\, \mathrm{d}x\}}\,\mathrm{d}r\bigg],\\
	&\iota(L,t):=\inf\bigg\{r\in[0,\theta]\!:\,\int_r^\theta L^+(y_z)\,\mathrm{d}z<t\bigg\}.
\end{align*}
Lemma \ref{lem:w=v} shows that, in the region visited by the particles, the value function $w^{L}$ coincides with the arrival time function $w^{\QQ,L}$ of the cascade $(D^{\QQ,L}_t)_{t\geq0}$:
\begin{align*}
	&w^{L}(x)=w^{{\mathbb Q},L}(x):=\inf\{t\ge0\!:\,x\,\in D^{{\mathbb Q},L}_t\}.
\end{align*}

\smallskip

The above definition of the equilibrium arrival time function $w^{{\mathbb Q},L}$ ensures that $w^{\QQ,L}\vert_{\partial\Gamma_{s-}}=0$. It turns out (see Proposition \ref{prop:equi} for more details) that the second boundary condition, $\partial_{\mathbf{n}_{s-}} w^{\QQ,L}\,\vert_{\partial\Gamma_{s-}}=1/V_0$, is equivalent to
\begin{align}
	&\kappa\,\mathbb{Q}(y_\theta\in \mathrm{d}x,\beta=\theta)=\big(\gamma+V_0(x)\big)\,\bone_{\partial_*\Gamma_{s-}}(x)\,\mathcal{H}^{d-1}(\mathrm{d}x), \label{eq.intro.defEquil.boundaryCond}
\end{align}
assuming $\partial\Gamma_{s-}=\partial_*\Gamma_{s-}$ (otherwise, the boundary condition for $\partial_{\mathbf{n}_{s-}} w^{\QQ,L}$ is relaxed to hold only at $\partial_*\Gamma_{s-}$), where $\mathcal{H}^{d-1}$ is the $(d-1)$-dimensional Hausdorff measure. In addition, to take into account $u(s-,\cdot)$, we require $w^{\QQ,L}$ to satisfy \eqref{PDE:wave} with the inequality ``$\leq$'', where the inequality is due to the fact that a cascade may exhibit an excess loss of boundary energy, as mentioned above and illustrated by the examples in Subsections \ref{ex:1D2} and \ref{ex:Radial2}. (In the absence of such an excess loss of energy, the inequality turns into an equality -- see Proposition \ref{prop:equi}.) Corollary \ref{cor:fromEquil.toPDE} shows that, for any $t\geq0$, the PDE 
\begin{equation*}
	\mathrm{div}\bigg(\frac{\nabla w^{\QQ,L}(x)}{|\nabla w^{\QQ,L}(x)|}\,\bigg(\frac{1}{|\nabla w^{\QQ,L}(x)|}+\gamma\bigg)\!\bigg)
	=-\kappa\big(\QQ(y_0\in \mathrm{d}x) - \QQ(y_\beta\in \mathrm{d}x)\big)
\end{equation*}
holds in any open subset of $D^{\QQ,L}_t\setminus\Gamma_{s-}=\{x:\,0<w^{\QQ,L}(x)\leq t\}$.
Comparing the above to \eqref{PDE:wave}, we realize that the excess loss of energy is measured by
\begin{equation}\label{eq.intro.def.e}
	\begin{split}
		\mathbf{e}^{\kappa,\QQ,L}(\mathrm{d}x) := 
		& \;\kappa\,\mathbb{Q}(y_0\in \mathrm{d}x,\theta=0) \\ 
		&+\bone_{\{w^{\QQ,L}<\infty\}\setminus\overline{\Gamma}_{s-}}(x)\,\big(\kappa\,\QQ(y_0\in\,\mathrm{d}x) 
		-\kappa\,\QQ(y_\beta\in\,\mathrm{d}x) 
		-(1+u(s-,x))\,\mathrm{d}x \big), 
	\end{split}
\end{equation}
and that any \textit{admissible equilibrium} must have
\begin{align}
	&\mathbf{e}^{\kappa,\QQ,L}\geq0, \label{eq.intro.defEquil.u}
\end{align}
i.e., $\mathbf{e}^{\kappa,\QQ,L}$ must be a non-negative measure.

\medskip

Thus, finding a minimal solution to the relaxed version of \eqref{PDE:wave}, with the boundary conditions $w\vert_{\partial\Gamma_{s-}}=0$ and $\partial_{\mathbf{n}_{s-}} w\,\vert_{\partial\Gamma_{s-}}=1/V_0$, amounts to selecting a minimal element in the set of admissible equilibria $(\kappa,\QQ,L)$ satisfying \eqref{eq.intro.defEquil.boundaryCond}, parameterized by $\mathbb{Q}\circ y_0^{-1}$ and $\mathbb{Q}(\beta|y)$.~To make this selection (or, planning) problem precise it remains to define the notion of minimality.~Recalling the above discussion of minimality for solutions to the relaxed version of \eqref{PDE:wave}, we conclude that an equilibrium is minimal if its cumulative excess energy loss function 
\begin{align*}
	t\mapsto\mathbf{E}^{\kappa,\QQ,L}_t:=\mathbf{e}^{\kappa,\QQ,L}(\{w^{\QQ,L}\leq t\})
\end{align*}
cannot be bounded from below by the corresponding function of another equilibrium, on any interval $t\in[0,T]$ (see Subsection \ref{subse:minSol} for more details).

\smallskip

\begin{remark}
	The structure of \eqref{equi}, \eqref{eq.intro.defEquil.boundaryCond} resembles that of the Lions planning problem (see \cite[Section 1]{RTTY} for a summary). However, in our problem the planner chooses the distribution $\mathbb{Q}\circ y_0^{-1}$ of the players' initial states, as well as the conditional distribution $\mathbb{Q}(\beta|y)$ of the killing time~$\beta$ (note that the objective in the first line of \eqref{equi} depends only on the $y$-marginal of $\hat\QQ$, hence, the agents only control the latter), rather than the players' terminal reward function.~In contrast to the Lions planning problem, the planner's choice is highly non-unique, leading to the additional selection problem for the planner.
\end{remark}

\smallskip

Simple examples (see, e.g., Remarks \ref{rem:min.lim.equil.1}(b) and \ref{rem:min.lim.equil.2}(a)) show that, in order to capture natural solutions~$w$ to the cascade equation, one needs to consider a closure of the set of equilibrium arrival time functions $\{w^{\QQ,L}\}$, with the approximating sequences being asymptotically minimal and having strictly larger initial velocity.
All in all, we arrive at the following definition of a minimal solution (see Subsection \ref{subse:minSol} for a more detailed interpretation of this definition).

\begin{definition} \label{def:min}
	We say that $w:\,\RR^d\rightarrow[0,\infty]$ is a \textit{minimal solution} to \eqref{PDE:wave}, with initial domain $\Gamma_{s-}$ and initial velocity~$V_0$, if there exist equilibria $(\kappa^n,\mathbb{Q}^n,L^n)_{n\in\NN}$ such that, for any $T>0$:
	\begin{enumerate}[(i)]
		\item $\kappa^n\,\mathbb{Q}^n(y_\theta\in \mathrm{d}x,\beta=\theta)>\big(\gamma+V_0(x)\big)\,\bone_{\partial_*\Gamma_{s-}}(x)\,\mathcal{H}^{d-1}(\mathrm{d}x)$, $\mathbf{e}^{\kappa^n,\QQ^n,L^n}\geq0$, $w^{\mathbb{Q}^n,L^n}\wedge T\stackrel{L^1_{\mathrm{loc}}}{\longrightarrow} w\wedge T$ and 
		\begin{align*}
			&\kappa^n\,\QQ^n(\beta=\theta)\rightarrow \int_{\partial_*\Gamma_{s-}}\big(\gamma+V_0(x)\big)\,\mathcal{H}^{d-1}(\mathrm{d}x),
		\end{align*} 
		\item and for any sequence $(\hat{\kappa}^n,\hat{\QQ}^n,\hat{L}^n)_{n\in\mathbb N}$ satisfying $\hat{\kappa}^n\,\hat{\QQ}^n(y_\theta\in \mathrm{d}x,\beta=\theta)=\kappa^n\,\QQ(y_\theta\in \mathrm{d}x,\beta=\theta)$, $\mathbf{e}^{\hat{\kappa}^n,\hat{\QQ}^n,\hat{L}^n}\geq0$, and $\mathbf{E}^{\hat{\kappa}^n,\hat{\QQ}^n,\hat{L}^n}_t\leq\mathbf{E}^{\kappa^n,\QQ^n,L^n}_t$ for all $t\in[0,T]$, we have:
		\begin{align*}
			&\liminf_{n\rightarrow\infty}\Big(\mathbf{E}^{\hat{\kappa}^n,\hat{\QQ}^n,\hat{L}^n}_t-\mathbf{E}^{\kappa^n,\QQ^n,L^n}_r\Big)\geq0\quad \text{for any}\,\,0\leq r<t<T.
		\end{align*}
	\end{enumerate}
\end{definition}

We are now ready to state our first main result. 

\begin{theorem}\label{thm:exist}
	Suppose that $d=2$, the set $\Gamma_{s-}$ has finite perimeter, $\int_{\partial_*\Gamma_{s-}} (\gamma+V_0)\,\mathrm{d}\mathcal{H}^{1} < \infty$, and  $\int_{\RR^d\setminus\overline{\Gamma}_{s-}} (1+u(s-,x))^-\,\mathrm{d}x<\infty$. Then, there exists a minimal solution to \eqref{PDE:wave}.
\end{theorem}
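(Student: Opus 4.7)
The plan is to obtain $w$ as the $L^1_{\mathrm{loc}}$-limit of arrival time functions $w^{\mathbb{Q}^n,L^n}$ coming from a carefully selected sequence of equilibria $(\kappa^n,\mathbb{Q}^n,L^n)\in\mathcal{E}_T$, with a perturbed boundary velocity $V_0^n:=V_0+\varepsilon_n$ where $\varepsilon_n\downarrow0$. The strict inequality in Definition~\ref{def:min}(i) then becomes automatic provided the boundary trace of $(\kappa^n,\mathbb{Q}^n,L^n)$ is matched to $V_0^n$ in place of $V_0$, and $\kappa^n\mathbb{Q}^n(\beta=\theta)\to\int_{\partial_*\Gamma_{s-}}(\gamma+V_0)\,\mathrm{d}\mathcal{H}^1$ follows from $\varepsilon_n\downarrow0$.

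The first main step is, for each $n$, to construct an admissible equilibrium with the prescribed boundary trace
\begin{equation*}
\kappa^n\,\mathbb{Q}^n(y_\theta\in \mathrm{d}x,\beta=\theta)=\big(\gamma+V_0^n(x)\big)\,\bone_{\partial_*\Gamma_{s-}}(x)\,\mathcal{H}^{d-1}(\mathrm{d}x).
\end{equation*}
I would proceed by a fixed-point argument on $L$: a candidate $L$ defines the optimal occupation measure for the control problem \eqref{eq:Eikonal.OC.1} and induces $\mathbb{Q}$, from which the coupling identities of \eqref{equi} reconstruct a new $L$; a Kakutani--Fan--Glicksberg theorem in a suitable weak topology on upper-semicontinuous functions yields a fixed point, while the $\varepsilon_n$-slack guarantees the admissibility constraint $\mathbf{e}^{\kappa^n,\mathbb{Q}^n,L^n}\geq0$. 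Within this nonempty set of admissible equilibria I would then select $(\kappa^n,\mathbb{Q}^n,L^n)$ that is \emph{pointwise minimal} in the partial order induced by $t\mapsto\mathbf{E}^{\kappa,\mathbb{Q},L}_t$, via a Zorn's lemma argument whose chain-minorant step uses Prokhorov compactness of $\mathbb{Q}$'s and the uniform bound on $\kappa$ coming from the matched boundary trace.

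Passing to the limit then hinges on the $d=2$ perimeter estimate (the second main result in the abstract), which under the hypotheses $\mathrm{Per}(\Gamma_{s-})<\infty$, $\int_{\partial_*\Gamma_{s-}}(\gamma+V_0)\,\mathrm{d}\mathcal{H}^1<\infty$, and $\int_{\RR^2\setminus\overline{\Gamma}_{s-}}(1+u(s-,\cdot))^-\,\mathrm{d}x<\infty$ yields uniform-in-$n$ control on $\mathrm{Per}(\{w^{\mathbb{Q}^n,L^n}\leq t\})$ for each $t\in[0,T]$. Combining this with an a~priori bound on the support of $\{w^{\mathbb{Q}^n,L^n}\leq T\}$ (from energy accounting and the integrability hypothesis on $u(s-,\cdot)^-$) and the BV compactness embedding in $\RR^2$, Helly's selection extracts a subsequence along which $w^{\mathbb{Q}^n,L^n}\wedge T\to w\wedge T$ in $L^1_{\mathrm{loc}}$ for some $w\!:\RR^2\to[0,\infty]$.

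The verification of Definition~\ref{def:min}(ii) for this $w$ reduces, thanks to the pointwise minimality of each $(\kappa^n,\mathbb{Q}^n,L^n)$, to the following observation: any competing sequence $(\hat\kappa^n,\hat{\mathbb{Q}}^n,\hat L^n)$ with $\mathbf{E}^{\hat\kappa^n,\hat{\mathbb{Q}}^n,\hat L^n}_t\leq\mathbf{E}^{\kappa^n,\mathbb{Q}^n,L^n}_t$ on $[0,T]$ must in fact satisfy equality at every $t$, and hence $\liminf_n\big(\mathbf{E}^{\hat\kappa^n,\hat{\mathbb{Q}}^n,\hat L^n}_t-\mathbf{E}^{\kappa^n,\mathbb{Q}^n,L^n}_r\big)=\liminf_n(\mathbf{E}^{\kappa^n,\mathbb{Q}^n,L^n}_t-\mathbf{E}^{\kappa^n,\mathbb{Q}^n,L^n}_r)\geq0$ for $r<t<T$, since $\mathbf{E}^{\kappa^n,\mathbb{Q}^n,L^n}_\cdot$ is non-decreasing. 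The hardest part, I expect, is the Zorn-type construction of a pointwise-minimal equilibrium at each level, which requires establishing that the partial order of $\mathbf{E}$-domination is closed under the natural weak topology on $\mathcal{E}_T$ and that every chain admits a minorant realized by an admissible equilibrium; secondarily, the uniform perimeter estimate in $d=2$ is what restricts the result to the planar case.
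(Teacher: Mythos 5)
Your high-level plan—perturb $V_0$, select good equilibria, use the $d=2$ perimeter estimate and BV compactness to extract $w$—matches the spirit of the paper's proof, and the final BV-compactness step is essentially identical. The gap is in how the sequence $(\kappa^n,\QQ^n,L^n)$ is selected, and you have correctly flagged it as the hardest part without resolving it.

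Your plan rests on producing, for each $n$, an \emph{exactly} pointwise-minimal admissible equilibrium via Zorn's lemma, and exploiting that any competitor dominated by a minimal element must coincide with it. For the chain-minorant step you invoke Prokhorov compactness of the $\QQ$'s plus a ``suitable weak topology on upper-semicontinuous functions'' $L$ and closedness of the $\mathbf E$-domination order. None of this is established, and it is not plausible that it can be without substantial new work: the notion of equilibrium in Definition~\ref{def:equi} involves an argmin constraint on $\QQ$, a strict boundary-measure condition, and a fixed-point relation $\kappa L^+(x)\,\overline\QQ(\mathrm{d}x)=(1+\gamma L(x))\,\mathrm{d}x$, none of which is clearly preserved under weak limits of $\QQ^\alpha$ and pointwise (or even $\Gamma$-)convergence of the upper-semicontinuous $L^\alpha$. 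Worse, $\mathbf E^{\kappa,\QQ,L}_t=\mathbf e^{\kappa,\QQ,L}(\{w^{\QQ,L}\le t\})$ depends on the arrival time function $w^{\QQ,L}$, which (per Lemma~\ref{lem:w=v}) equals $w^L$ on $D^{\QQ,L}_T$ and jumps to $+\infty$ outside; since both the Lipschitz part $w^L$ and the ``infinity set'' depend delicately on $(\QQ,L)$, there is no reason $\{w^{\QQ^\alpha,L^\alpha}\le t\}$ or $\mathbf e^{\kappa^\alpha,\QQ^\alpha,L^\alpha}$ should be stable along a chain. In short, you would need the set of admissible equilibria to be compact and the map $(\kappa,\QQ,L)\mapsto\mathbf E^{\kappa,\QQ,L}_\cdot$ to be lower semicontinuous in the relevant topology, and neither claim is a formality.

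The paper avoids this entirely. Instead of an exact minimal equilibrium, it builds, for each $n$, a finite chain $(\tilde\kappa^k,\tilde\QQ^k,\tilde L^k)_{0\le k\le n^2}$ seeded at the trivial equilibrium of Remark~\ref{rem:nonempty} and, at step $k$, picks a member of the constraint set $\mathcal C^k_n$ whose $\int_0^{(k+1)/n}\mathbf E_t\,\mathrm{d}t$ is within $1/n$ of the infimum —using $\inf$, not $\min$, so no closedness or compactness of $\mathcal E_T$ is ever needed. This yields the key estimate~\eqref{eq.existence.proof.keyEst}, $\int_0^R(\mathbf E^{\kappa^n,\QQ^n,L^n}_r-\mathbf E^{\hat\kappa^n,\hat\QQ^n,\hat L^n}_r)\,\mathrm{d}r\le 1/n$ for dyadic $R$, and property~(ii) of Definition~\ref{def:min} follows by extracting a subsequence along which $\mathbf E^{\hat\cdot}_z-\mathbf E^{\cdot}_z\to0$ for a.e.\ $z\in(r,t)$ and using monotonicity of $\mathbf E^{\cdot}$; the ``asymptotic'' formulation of minimality in Definition~\ref{def:min}(ii) is precisely what lets the $1/n$-slack disappear. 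You should also note that the paper does not try to construct equilibria with exactly prescribed boundary trace $\gamma+V_0+\varepsilon_n$; the Kakutani--Fan--Glicksberg existence step you propose for that purpose, while standard in spirit, is another unproven piece that the paper simply does not need, since the constraint imposed along the chain is a one-sided ordering of boundary traces together with the scalar cap $\kappa\QQ(\beta=\theta)<\kappa_0+\epsilon^n$.
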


\begin{remark}
	Our proof of Theorem \ref{thm:exist} goes by a compactness argument in $\mathrm{BV}_{\mathrm{loc}}(\rr^d)$, the space  of functions of locally bounded variation equipped with the topology of $L^1_{\mathrm{loc}}(\rr^d)$-convergence. For this purpose, it suffices to show a uniform bound on the $\mathrm{BV}$-norm (see \cite[Subsection~5.2.3]{EvGa}), which in turn reduces to a \textit{perimeter estimate} on the cascade aggregates $D^{{\mathbb Q},L}_t$, at Lebesgue-a.e. time $t\geq0$. To prove such a perimeter estimate, in Lemma \ref{le:2dim.key} and Corollary \ref{cor:2dim.key}, we use a characterization of the essential boundary of a \textit{planar} finite-perimeter set in terms of rectifiable Jordan curves (\cite[Corollary~1]{ACMM}). The latter restricts Theorem \ref{thm:exist} to $d=2$, while all other arguments used to establish the desired perimeter estimates, and to prove Theorem \ref{thm:exist}, are valid for any $d\geq1$. 
\end{remark}

In addition to playing a crucial role in the proof of Theorem \ref{thm:exist}, a perimeter estimate on the cascade aggregates~$D^{{\mathbb Q},L}_t$ is of great interest in its own right, as it implies a perimeter estimate on the ``post-discontinuity'' aggregate $\Gamma_s=\{x:\,w(x)<\infty\}$ in the Stefan problem with surface tension \eqref{SGT}. The latter constitutes a major step in controlling the perimeter of the aggregate $\Gamma_s$ in \eqref{SGT} \textit{for all times $s\geq0$}.~While expected, such a result for \eqref{SGT} would stand in stark contrast to the findings for the Stefan problem without surface tension, in which an instantaneous explosion of the perimeter is possible (see \cite[Figure 1]{NSZ} for an illustration).~Thus, the perimeter estimate asserted in the next theorem is our second main result. 

\begin{theorem} \label{thm:peri}
	Under the assumptions of Theorem \ref{thm:exist}, for any minimal solution $w$ to \eqref{PDE:wave},
	\begin{equation}\label{eq: peri main}
		\big\|\partial\{w<\infty\}\big\|(\RR^2)
		\le \frac{1}{\gamma} \int_{\partial_*\Gamma_{s-}} (\gamma+V_0)\,\mathrm{d}\mathcal{H}^1 - \frac{1}{\gamma}\int_{\{w<\infty\}\setminus\overline{\Gamma}_{s-}} (1+u(s-,x))\,\mathrm{d}x,
	\end{equation}
	where $\big\|\partial\{w<\infty\}\big\|(\RR^2)$ is the perimeter of the set $\{w<\infty\}$, i.e., the total variation of $\bone_{\{w<\infty\}}$ (cf., e.g.,  \cite[Section 5.1]{EvGa}).
\end{theorem}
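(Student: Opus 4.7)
The plan is to prove \eqref{eq: peri main} first at the level of each approximating equilibrium $(\kappa^n,\QQ^n,L^n)$ in the sequence guaranteed by Definition~\ref{def:min}(i), and then pass to the limit using the $L^1_{\mathrm{loc}}$-convergence of $w^{\QQ^n,L^n}\wedge T$ to $w\wedge T$ together with the lower semicontinuity of the perimeter. All geometric-measure-theoretic inputs needed for the equilibrium-level estimate -- in particular a perimeter bound on the cascade aggregate $D_t^{\QQ^n,L^n}=\{w^{\QQ^n,L^n}\leq t\}$ -- are already available in dimension two from Lemma~\ref{le:2dim.key} and Corollary~\ref{cor:2dim.key}, and these will be sharpened via the PDE and admissibility data to the explicit constants appearing on the right-hand side of \eqref{eq: peri main}.

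Fix $n$ and write $w^n:=w^{\QQ^n,L^n}$. For a.e. $t>0$, selected via a coarea/Sard-type argument so that the interior interface $\{w^n=t\}$ is $\mathcal H^1$-rectifiable, I apply the divergence theorem (in the BV sense) to the vector field
\[
F^n := \frac{\nabla w^n}{|\nabla w^n|}\bigg(\frac{1}{|\nabla w^n|}+\gamma\bigg)
\]
on $\{w^n\leq t\}\setminus\overline{\Gamma}_{s-}$. Along $\{w^n=t\}$ the outward normal of $\{w^n\leq t\}$ is $\nabla w^n/|\nabla w^n|$, so the flux of $F^n$ is $|\nabla w^n|^{-1}+\gamma\geq\gamma$; along $\partial_*\Gamma_{s-}$ the outward normal of $\{w^n\leq t\}\setminus\overline{\Gamma}_{s-}$ is $-\mathbf{n}_{s-}$, and the identity \eqref{eq.intro.defEquil.boundaryCond} together with the strict inequality in Definition~\ref{def:min}(i) identifies the flux as $-(\gamma+V_0^n)$ with $V_0^n>V_0$. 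Combining these boundary data with $\mathrm{div}\,F^n\leq-(1+u(s-,\cdot))$ on $\{w^n<\infty\}\setminus\overline{\Gamma}_{s-}$ -- a rephrasing of Corollary~\ref{cor:fromEquil.toPDE} and the admissibility condition $\mathbf{e}^{\kappa^n,\QQ^n,L^n}\geq0$ -- yields
\[
\gamma\,\big\|\partial\{w^n\leq t\}\big\|(\RR^2\setminus\overline{\Gamma}_{s-})\leq\int_{\partial_*\Gamma_{s-}}(\gamma+V_0^n)\,\mathrm{d}\mathcal{H}^1-\int_{\{w^n\leq t\}\setminus\overline{\Gamma}_{s-}}(1+u(s-,x))\,\mathrm{d}x.
\]

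To close the argument, I extract a subsequence along which $\mathbf{1}_{\{w^n\leq t\}}\to\mathbf{1}_{\{w\leq t\}}$ in $L^1_{\mathrm{loc}}$ for a.e. $t\in(0,T)$ (standard from $w^n\wedge T\to w\wedge T$ in $L^1_{\mathrm{loc}}$ combined with the coarea formula). Lower semicontinuity of the perimeter in $L^1_{\mathrm{loc}}$ (\cite[Subsection~5.2.3]{EvGa}) controls the left-hand side, the convergence $\int_{\partial_*\Gamma_{s-}}(\gamma+V_0^n)\,\mathrm{d}\mathcal{H}^1\to\int_{\partial_*\Gamma_{s-}}(\gamma+V_0)\,\mathrm{d}\mathcal{H}^1$ is furnished by Definition~\ref{def:min}(i), and dominated convergence takes care of the bulk term (legitimate thanks to the hypothesis $\int_{\RR^2\setminus\overline{\Gamma}_{s-}}(1+u(s-,\cdot))^-\,\mathrm{d}x<\infty$). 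This yields the analogous estimate for $w$ at level $t$, and sending $t\uparrow\infty$ with a further appeal to lower semicontinuity of the perimeter -- using that $\{w\leq t\}\nearrow\{w<\infty\}$ and monotone convergence in the bulk integral -- produces \eqref{eq: peri main}.

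The main obstacle I expect is the rigorous execution of the divergence theorem for the singular vector field $F^n$ on the possibly irregular set $\{w^n\leq t\}\setminus\overline{\Gamma}_{s-}$: neither $\nabla w^n$ nor the interior interface $\{w^n=t\}$ is a priori smooth, so the computation must be carried out in the BV/GMT framework, and one must identify the trace of $|\nabla w^n|^{-1}$ along $\partial_*\Gamma_{s-}$ with $V_0^n$ by unpacking the measure-theoretic content of \eqref{eq.intro.defEquil.boundaryCond} -- specifically, by recognizing $\kappa^n\,\QQ^n(y_\theta\in\mathrm{d}x,\beta=\theta)$ as an $\mathcal{H}^1$-density on $\partial_*\Gamma_{s-}$ and matching it to the (inner) trace of $F^n\cdot\mathbf{n}_{s-}$. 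Once this geometric-measure-theoretic step is in place, the limit passages above are comparatively soft.
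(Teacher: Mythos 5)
There is a genuine gap. Your plan reproduces, almost verbatim, the heuristic computation that the paper itself presents in the remark immediately following Theorem~\ref{thm:peri}, and the paper there explicitly warns that this computation does \emph{not} carry through at the equilibrium level: ``the arrival time function $w^{\QQ,L}$ \ldots\ may not be continuous, which means that the boundary of $\{\varepsilon<w^{\QQ,L}<T\}$ may not be contained in $\{w^{\QQ,L}=\varepsilon\}\cup\{w^{\QQ,L}=T\}$.'' Concretely, your divergence-theorem step decomposes $\partial_*\big(\{w^n\le t\}\setminus\overline\Gamma_{s-}\big)$ into $\{w^n=t\}$ and $\partial_*\Gamma_{s-}$ and reads off the flux of $F^n$ on each piece; but $\{w^n\le t\}$ can, and generically does, contain interior ``vacuoles'' -- open components of $\RR^2\setminus\mathrm{supp}\,\overline\QQ^n_t$ where $w^n$ is locally constant at some level strictly below $t$ and then jumps to $\infty$ across their boundary. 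On the boundary of such a vacuole the outward normal of $\{w^n\le t\}$ is directed into the vacuole and $F^n$ is not even defined on the inside (since $\nabla w^n\equiv 0$ there), so the flux term you need does not come from the divergence theorem at all. This is not a technicality that dissolves once one ``carries out the computation in the BV/GMT framework''; the contribution of these vacuole boundaries must instead be controlled by the measure $\kappa^n\QQ^n(y_{\iota(L^n,t)}\in\cdot)$, which is the entire point of Steps 1--3 in the proof of Proposition~\ref{prop:PerimEst}, and of Lemma~\ref{le:2dim.key}, Corollary~\ref{cor:2dim.key} and Proposition~\ref{prop:PerimEst.prop2}.

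To summarize what the paper actually does and where your plan diverges: the paper does not run a divergence-theorem argument on the aggregate at all. Instead it proves Theorem~\ref{thm:perimeter.mainThm} by decomposing $\RR^2\setminus\mathrm{supp}\,\overline\QQ_T$ into countably many open components $O_i$ on each of which the value function $v=w^{L_T}$ is constant, invoking the planar Jordan-curve characterization of finite-perimeter sets (\cite{ACMM}) -- this is precisely where $d=2$ is essential and is completely absent from your plan -- and then showing via blow-up/reduced-boundary comparisons that $\mathcal H^1$-a.e.\ boundary point of each $O_i\setminus\Gamma_{s-}$ coincides with the \emph{starting} point $y_0$ of a $\QQ_T$-positive bundle of equilibrium trajectories (Proposition~\ref{prop:PerimEst.prop2}). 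The perimeter of the vacuoles is then bounded by $\QQ_T\circ y_0^{-1}-\QQ_T\circ y_\beta^{-1}$, which is finally tied to the stated right-hand side via the admissibility condition $\mathbf e^{\kappa,\QQ_T,L_T}\ge 0$. The limit passage $n\to\infty$ and $T\uparrow\infty$ in Proposition~\ref{prop:perimEst.perimLevSet} and its corollary are indeed ``comparatively soft,'' as you say, but only after this foundational estimate is established; your proposal never supplies it.

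Two smaller issues. First, you integrate over $\{w^n\le t\}\setminus\overline\Gamma_{s-}$ rather than $\{\varepsilon<w^n<t\}$ as in the paper's heuristic; without an inner truncation the weight $|\nabla w^n|^{-1}$ has no reason to be integrable near $\partial\Gamma_{s-}$. Second, the identification of the inner trace of $F^n\cdot\mathbf n_{s-}$ with $-(\gamma+V_0^n)$ is something you flag as needing proof, but in the paper this is never derived as a trace statement -- it is instead an input to how the equilibria in the approximating sequence are constructed (via Proposition~\ref{prop:equi}), and at the minimal-solution level the estimate $\kappa^n\QQ^n(y_\theta\in\mathrm dx,\beta=\theta)\ge\gamma\,\mathbf 1_{\partial_*\Gamma_{s-}}\,\mathcal H^1(\mathrm dx)$ enters Proposition~\ref{prop:PerimEst} as an assumption, not a derived trace.
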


\begin{remark}
	\!\! To gain intuition for the perimeter estimate~\eqref{eq: peri main}, suppose a continuous $w\!\!:\!\rr^2\!\to\![0,\infty]$ solves the relaxed version of \eqref{PDE:wave},
	\begin{equation}\label{PDE:wave.relaxed}
		\mathrm{div}\bigg(\frac{\nabla w(x)}{|\nabla w(x)|}\,\bigg(\frac{1}{|\nabla w(x)|}+\gamma\bigg)\!\bigg)\leq -1-u(s-,x),
	\end{equation}
	\textit{classically} in $\{0<w<\infty\}$, and is such that $w(x)=0\,\Leftrightarrow\,x\in\Gamma_{s-}$ and that
	\begin{align*}
		\underset{\varepsilon\downarrow0}{\liminf}\, \int_{\partial_*\{\varepsilon<w\}} \bigg(\gamma+\frac{1}{|\nabla w|}\bigg)\,\mathrm{d}\mathcal{H}^1 \le \int_{\partial_*\{w=0\}} (\gamma+V_0)\,\mathrm{d}\mathcal{H}^1.
	\end{align*} 
	Then, using \eqref{PDE:wave.relaxed} and the divergence theorem in the form of \cite[Section 5.8, Theorem 1]{EvGa}:
	\begin{equation*}
		\begin{split}
			&-\int_{\{\varepsilon<w<T\}} (1+u(s-,x))\,\mathrm{d}x
			\geq \int_{\{\varepsilon<w<T\}} \mathrm{div}\bigg(\frac{\nabla w(x)}{|\nabla w(x)|^2}+\gamma\frac{\nabla w(x)}{|\nabla w(x)|}\bigg)\,\mathrm{d}x \\
			& = \int_{\partial_*\{w<T\}} \left(\frac{1}{|\nabla w|}+\gamma\right)\,\mathrm{d}\mathcal{H}^1
			- \int_{\partial_*\{\varepsilon<w\}} \left(\frac{1}{|\nabla w|}+\gamma\right)\,\mathrm{d}\mathcal{H}^1 \\
			&\ge \gamma\,\|\partial\{w<T\}\|(\RR^2)
			- \int_{\partial_*\{\varepsilon<w\}} 
			\left(\gamma+\frac{1}{|\nabla w|}\right)\,\mathrm{d}\mathcal{H}^1,
		\end{split}
	\end{equation*}
	for all $0<\varepsilon<T<\infty$, where we assume that $1+u(s-,\cdot)$ is absolutely integrable on $\{0<w<\infty\}$. Taking $T\uparrow\infty$ via \cite[Subsection 5.2.1, Theorem 1]{EvGa} and then $\varepsilon\downarrow0$ gives
	\begin{equation}
		\|\partial\{w<\infty\}\|(\RR^2) \le 
		\frac{1}{\gamma}\int_{\partial_*\{w=0\}} \left(\gamma+V_0\right)\,\mathrm{d}\mathcal{H}^1 
		- \frac{1}{\gamma}\int_{\{0<w<\infty\}} (1+u(s-,x))\,\mathrm{d}x,
	\end{equation} 
	which is precisely \eqref{eq: peri main}.~The actual proof of \eqref{eq: peri main} for a \textit{minimal} solution $w$ is much more involved, because the various regularity assumptions made in this example are not satisfied a priori.~In particular, the arrival time function $w^{\QQ,L}$ approximating $w$ may not be continuous, which means that the boundary of $\{\varepsilon<w^{\QQ,L}<T\}$ may not be contained in $\{w^{\QQ,L}=\varepsilon\}\cup\{w^{\QQ,L}=T\}$.
	Remarkably, our proof of the perimeter estimate for $w^{\QQ,L}$, given in Subsection \ref{subse:perim.equil}, shows that the discontinuities of $w^{\QQ,L}$ may only occur when $w^{\QQ,L}$ ``jumps to infinity". We conjecture that a minimal solution $w$ cannot jump to infinity, which would imply that it is continuous, but we are not yet able to prove it.
\end{remark}

The rest of the paper is structured as follows.~In Section \ref{se:form}, we derive the cascade PDE \eqref{PDE:wave} (Subsection \ref{subse:arrival}) and relate its solutions to the arrival time functions of a  family of mean-field games (Subsection \ref{subse:equi}). In Subsection \ref{subse:arrivalRig}, we establish various useful properties of the aforementioned mean-field equilibria and derive a rigorous connection between their arrival time functions and the solutions to the relaxed version of \eqref{PDE:wave}, in Corollary \ref{cor:fromEquil.toPDE}.~Then, in Subsection \ref{subse:perim.equil}, we estimate the perimeter of equilibrium aggregates.~In Subsection \ref{subse:minSol}, we discuss the concept of a minimal solution and prove its existence (i.e., prove Theorem \ref{thm:exist}).~Section \ref{subse:perim.minSol} is devoted to the proof of Theorem \ref{thm:peri}. In the concluding Section \ref{se:example}, we consider examples that illustrate the proposed theory and justify the choices we made in developing it.

\medskip

\noindent\textbf{Acknowledgement.} The authors are grateful to Alpar Meszaros and Nizar Touzi for enlightening discussions during the preparation of this paper. 


\section{Derivation of the cascade equation} \label{se:form}

\subsection{PDE formulation} \label{subse:arrival}

To motivate the arrival time formulation of the cascade equation, let us start with the translationally invariant setting: $\Gamma_{0-}=(-\infty,0]\times\RR^{d-1}$, $u(0-,x)=\overline{u}(0-,x_1)$. One is then naturally led to look for solutions of the Stefan problem in the form $\Gamma_s=(-\infty,\Lambda_s]\times\RR^{d-1}$, $u(s,x)=\overline{u}(s,x_1)$. In such solutions, ``physical'' discontinuities are characterized by the condition 
\begin{equation}\label{eq:1D casc}
	\Lambda_s-\Lambda_{s-} = \inf\bigg\{z>0:\;-\int_{\Lambda_{s-}}^{\Lambda_{s-}+z} \overline{u}(s-,x_1)\,\mathrm{d}x_1<z\bigg\}
\end{equation}
(cf.~\cite[equation (1.6)]{dns}). One can also imagine the solid traversing the strip $[\Lambda_{s-},\Lambda_s]\times\RR^{d-1}$ continuously on an auxiliary ``fast'' time scale $t\ge0$. The jump size in \eqref{eq:1D casc} can then be modeled equivalently through the ordinary differential equation (ODE) problem
\begin{equation} \label{eq:1D ODE} 
	\begin{cases}
		& x_1'(t)=-\int_{\Lambda_{s-}}^{x_1(t)} (1+\overline{u}(s-,z)) \,\mathrm{d}z,\quad x_1(0)=\Lambda_{s-}, \\
		& \Lambda_s:=\lim_{t\to\infty} x_1(t).
	\end{cases}
\end{equation}
The arrival time formulation of the problem \eqref{eq:1D ODE} goes as follows: Define the arrival time function
\begin{equation}
	w(z)=\inf\{t\ge0:\,x_1(t)\ge z\}, 
\end{equation}
with the convention $\inf\,\emptyset:=\infty$. Then, differentiating once with respect to $t$ in \eqref{eq:1D ODE}, and using the change of variable formulas $x_1'(t)\, w'(x_1(t))=1$, $x_1''(t)\, w'(x_1(t)) + x_1'(t)^2\, w''(x_1(t)) = 0$ and $w(\Lambda_{s-})=0$, we end up with 
\begin{eqnarray}  
	&& w''(z)=(1+\overline{u}(s-,z))\, w'(z)^2,\quad w(\Lambda_{s-})=0, \label{eq:1D w PDE} \\
	&& \Lambda_s:=\sup\{z>\Lambda_{s-}:\, w(z)<\infty\}. 
\end{eqnarray}

\smallskip

In the absence of translational invariance, the naive analogue of \eqref{eq:1D w PDE} reads
\begin{equation}
	\partial_{\mathbf{n}(x)\mathbf{n}(x)} w(x) = (1+u(s-,x))\,(\partial_{\mathbf{n}(x)} w(x))^2, \quad w|_{\partial\Gamma_{s-}}\equiv0, \label{eq: w PDE very naive}
\end{equation}
where $\mathbf{n}(x)$ is the outward unit normal to the sublevel set $\{\widetilde{x}\in\RR^d\!: w(\widetilde{x})\!\le\! w(x)\}$. The equation~\eqref{eq: w PDE very naive} simply restates the ODE in \eqref{eq:1D w PDE} at every $x\in\{\widetilde{x}\in\RR^d:\, w(\widetilde{x})<\infty\}$ in the direction of $\mathbf{n}(x)$. Since
\begin{eqnarray}
	&& \partial_{\mathbf{n}(x)} w(x) = \nabla w(x)\cdot \frac{\nabla w(x)}{|\nabla w(x)|} = |\nabla w(x)|, \\
	&& \partial_{\mathbf{n}(x)\mathbf{n}(x)} w(x) = \nabla |\nabla w(x)|\cdot \frac{\nabla w(x)}{|\nabla w(x)|}
	=\frac{\nabla w(x)^\top\,\mathrm{Hess}\, w(x)\,\nabla w(x)}{|\nabla w(x)|^2},
\end{eqnarray}
the problem \eqref{eq: w PDE very naive} amounts to 
\begin{equation}\label{eq: w PDE very naive'}
	\frac{\nabla w(x)^\top\,\mathrm{Hess}\, w(x)\,\nabla w(x)}{|\nabla w(x)|^4}
	=  1+u(s-,x),\quad w|_{\partial\Gamma_{s-}}\equiv0.
\end{equation}
However, \eqref{eq: w PDE very naive'} (and \eqref{eq: w PDE very naive}) ignores the change in the surface area of the sublevel sets $\{\widetilde{x}\in\RR^d\!: w(\widetilde{x})\le t\}$ as $t$ increases, which leads to an energy imbalance in \eqref{eq: w PDE very naive'} as larger surface areas require more energy to move.~The local change in the surface area is given by $H(x)\,\frac{1}{|\nabla w(x)|}$, where $H(x)$ is the mean curvature of $\partial\{\widetilde{x}\in\RR^d\!: w(\widetilde{x})\!\le\! w(x)\}$ at $x$, and $\frac{1}{|\nabla w(x)|}$ is the speed at which $x$ moves (see, e.g., \cite[equation (3)]{Gar}). Thus, we amend \eqref{eq: w PDE very naive'} to
\begin{equation}\label{eq: w PDE naive}
	-\frac{\Delta w(x)}{|\nabla w(x)|^2}
	+2\,\frac{\nabla w(x)^\top\,\mathrm{Hess}\, w(x)\,\nabla w(x)}{|\nabla w(x)|^4}
	=  1+u(s-,x),\quad w|_{\partial\Gamma_{s-}}\equiv0,
\end{equation}
where we have plugged in $H(x)=\mathrm{div}\Big(\frac{\nabla w(x)}{|\nabla w(x)|}\Big)$ and expanded.

\medskip

It is instructive to rewrite \eqref{eq: w PDE naive} as 
\begin{equation}\label{eq: w PDE naive'}
	-\mathrm{div}\bigg(\frac{\nabla w(x)}{|\nabla w(x)|^2}\bigg) = 1+u(s-,x),
	\quad w|_{\partial\Gamma_{s-}}\equiv0.
\end{equation}
In particular, this renders the divergence theorem applicable:
\begin{equation*}
	\begin{split}
		& \;\int_{\{0<w<t\}} (1+u(s-,x))\,\mathrm{d}x 
		= \int_{\{0<w<t\}} -\mathrm{div}\bigg(\frac{\nabla w(x)}{|\nabla w(x)|^2}\bigg)\,\mathrm{d}x \\
		& = \int_{\partial\{0<w<t\}\cap\{w=0\}} \frac{\nabla w(x)}{|\nabla w(x)|^2}
		\cdot\frac{\nabla w(x)}{|\nabla w(x)|}\,\mathcal{H}^{d-1}(\mathrm{d}x) 
		- \int_{\partial\{0<w<t\}\cap\{w=t\}} \frac{\nabla w(x)}{|\nabla w(x)|^2}
		\cdot\frac{\nabla w(x)}{|\nabla w(x)|}\,\mathcal{H}^{d-1}(\mathrm{d}x)  \\
		& =  \int_{\partial\{0<w<t\}\cap\{w=0\}} \frac{1}{|\nabla w(x)|}\,\,\mathcal{H}^{d-1}(\mathrm{d}x) 
		- \int_{\partial\{0<w<t\}\cap\{w=t\}} \frac{1}{|\nabla w(x)|}\,\mathcal{H}^{d-1}(\mathrm{d}x).
	\end{split}
\end{equation*}
The latter energy balance equation assumes zero surface tension, i.e., $\gamma=0$. For $\gamma>0$, one accounts for the surface tension by following \cite[Example 5, displays (2.42), (2.44)]{RoSa} and including the increment of the surface energy into the energy balance equation:
\begin{equation}\label{eq.cascadePDE.byParts}
	\begin{split}
		& \;\int_{\{0<w<t\}} (1\!+\! u(s-,x))\,\mathrm{d}x + \gamma\mathcal{H}^{d-1}\big(\partial\{0\!<\! w \!<\! t\}\!\cap\!\{w \!=\! t\}\big) 
		- \gamma\mathcal{H}^{d-1}\big(\partial\{0\!<\! w \!<\! t\}\!\cap\!\{w \!=\!0\}\big) \\
		& = \int_{\partial\{0<w<t\}\cap\{w=0\}} \frac{1}{|\nabla w(x)|}\,\,\mathcal{H}^{d-1}(\mathrm{d}x) 
		- \int_{\partial\{0<w<t\}\cap\{w=t\}} \frac{1}{|\nabla w(x)|}\,\mathcal{H}^{d-1}(\mathrm{d}x).
	\end{split}
\end{equation}
The corresponding amendment of the problem \eqref{eq: w PDE naive'} is then
\begin{equation*}
	\mathrm{div}\bigg(\frac{\nabla w(x)}{|\nabla w(x)|^2}+\gamma\frac{\nabla w(x)}{|\nabla w(x)|}\bigg) = -1-u(s-,x),
	\quad w|_{\partial\Gamma_{s-}}\equiv0,
\end{equation*}
which yields precisely the PDE \eqref{PDE:wave}.

\subsection{Equilibrium formulation} \label{subse:equi}

In the first part of this subsection, we show that a sufficiently regular weak solution to the PDE \eqref{PDE:wave} (understood in its integrated form \eqref{eq: Green I}) is an arrival time function of an equilibrium. Note that the regularity of a solution to \eqref{PDE:wave} includes the requirement $|\nabla w|\neq0$, which excludes points of local maxima. Hence, the existence of such a regular solution can only be assumed to hold locally. The desired result is described in the following proposition and in its proof.

\begin{proposition}[From PDE to equilibrium]\label{prop:equi}
Consider an arbitrary $T>0$, and measurable functions $V_0,w:\,\RR^d\rightarrow\RR_+$ and $u(s-,\cdot):\,\RR^d\to\RR$, with the properties that
\begin{itemize}
\item[(i)] $w\in C^1(\{0<w< T\})\cap C(\overline{\{0<w< T\}})$,  $\{w=0\}=\overline{\Gamma}_{s-}$, $\int_{\{0<w<T\}} |u(s-,x)|\,\mathrm{d}x<\infty$, and the functions $V_0$, $\bone_{\{0<w<T\}}/|\nabla w|$ are bounded.
\item[(ii)] For any $t\in(0,T)$, the sets $\Gamma_{s-}$ and $\{w<t\}$ have finite perimeters, with $\partial\Gamma_{s-}=\partial_*\Gamma_{s-}$ and $\partial\{w<t\}=\partial_*\{w<t\}$. Moreover, $\mathrm{Leb}(\partial\{w<T\})=0$.
\item[(iii)] $w$ is a weak solution to \eqref{PDE:wave}, \eqref{eq.intro.defEquil.boundaryCond} in $\{0<w<T\}$, in the sense that
\begin{equation}\label{eq: Green I}
\begin{split}	
& \;\int_{\{0<w<t\}} -\phi(x)\,(u(s-,x)+1) + \nabla\phi(x)\cdot\bigg(\frac{\nabla w(x)}{|\nabla w(x)|^2}+\gamma\frac{\nabla w(x)}{|\nabla w(x)|}\bigg)\,\mathrm{d}x \\
& = \int_{\partial\{w<t\}} \phi(x)\,\bigg(\frac{1}{|\nabla w|}+\gamma\bigg) \,\mathcal{H}^{d-1}(\mathrm{d}x) 
- \int_{\partial\Gamma_{s-}} \phi(x)\,\big(V_0(x)+\gamma\big) \,\mathcal{H}^{d-1}(\mathrm{d}x)
\end{split}
\end{equation}
for any compactly supported $\phi\in\mathrm{Lip}(\overline{\{0<w<t\}})$ and any $t\in(0,T)$.
\item[(iv)] For any $z\in\partial\Gamma_{s-}\cup\{0<w<T\}$, there exists a unique solution $Y^z$ to the ODE 
\begin{equation}\label{eq: ODE}
\mathrm{d}Y^z_r = \frac{\nabla w}{|\nabla w|}(Y^z_r)\,\mathrm{d}r,\quad \text{a.e.}\quad r\in[0,\tau(T)),
\end{equation}
with the initial condition $Y^z_0=z$ and with $\tau(t):=\inf\{r\geq0:\,w(Y^z_r)\geq t\}$. Moreover, there exists a continuous map $[0,T)\times\partial\Gamma_{s-}\ni(t,z)\mapsto \mathbf{Y}(t,z)\in\overline{\{0<w<T\}}$ with a (locally) Lipschitz-continuous inverse on $\partial\Gamma_{s-}\cup\{0<w<T\}$, denoted by $x\mapsto(\mathbf{t}(x),\mathbf{z}(x))$, such that $Y^z_{\tau(t)}=\mathbf{Y}(t,z)$.
\end{itemize}

Then, there exists $(\kappa,\QQ,L)\in\cE_T$ with the properties that
\begin{itemize}
\item[(1)] $w=w^{\QQ,L}$ in $\overline{\{w<T\}}$.
\item[(2)] $\mathbf{e}^{\kappa,\QQ,L}\geq0$ and $\mathbf{E}^{\kappa,\QQ,L}_t=\mathbf{e}^{\kappa,\QQ,L}(\{w^{\QQ,L}\leq t\})=0$ for any $t\in(0,T)$.
\item[(3)] $\kappa\QQ(y_\theta\in\,\mathrm{d}x,\beta=\theta)=(V_0(x)+\gamma)\,\bone_{\partial\Gamma_{s-}}(x)\,\cH^{d-1}(\mathrm{d}x)$.
\end{itemize}
\end{proposition}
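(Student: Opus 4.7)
The plan is to construct $(\kappa,\QQ,L)$ explicitly, with trajectories running along the reverse gradient flow of $w$ and with the joint law of the initial point, the killing time, and the terminal point obtained from a mass-transport argument on the ``fibers'' of this flow. First set $L(x):=|\nabla w(x)|$ on $\{0<w<T\}$, $L(x):=1/V_0(x)$ on $\partial_*\Gamma_{s-}$, and $L(x):=-1/\gamma$ elsewhere, so that $1+\gamma L\equiv 0$ off the region where $\mathbf Y$ is a homeomorphism, while upper semicontinuity of $L$ follows from (i) together with the boundary condition $\partial_{\mathbf n_{s-}}w=1/V_0$. For each $(t_0,z)\in(0,T)\times\partial_*\Gamma_{s-}$, the time-reversal $y^{t_0,z}$ of the integral curve in (iv), starting at $\mathbf Y(t_0,z)$, belongs to $\cY$ and satisfies $\tfrac{\mathrm d}{\mathrm dr}w(y_r)=-L(y_r)$; in particular, $\int_0^\theta L(y_r)\,\mathrm dr=t_0\leq T$, $y_\theta=z$, and the standard HJB verification inequality $\tfrac{\mathrm d}{\mathrm dr}w(y_r)\geq -|\dot y_r|\,L(y_r)\geq -L(y_r)$ shows that $y^{t_0,z}$ is optimal for \eqref{eq:Eikonal.OC.1} and $w^L=w$ on $\overline{\{w<T\}}$.

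To fix the law of the initial and the killing points, let $J(s,z)$ denote the Jacobian of $\mathbf Y$ (well-defined a.e.~by the locally Lipschitz inverse in (iv)). Testing \eqref{eq: Green I} against functions that localize on cylindrical tubes $\mathbf Y([s_1,s_2]\times A)$ yields the fiberwise identity
\[
\partial_s\big[(1+\gamma L)J\big](s,z)=-\big(1+u(s-,\mathbf Y(s,z))\big)\,J(s,z),\qquad \big[(1+\gamma L)J\big](0,z)=V_0(z)+\gamma.
\]
Guided by this, on each fiber $z\in\partial_*\Gamma_{s-}$ I prescribe (a) a \emph{start} measure with density $(1+u)^+(\mathbf Y(s,z))\,J(s,z)$ in $s\in(0,T)$ plus a Dirac mass $\big[(1+\gamma L)J\big](T,z)$ at $s=T$, and (b) a \emph{kill} measure with density $(1+u)^-(\mathbf Y(s,z))\,J(s,z)$ in $s\in(0,T)$ plus a Dirac mass $V_0(z)+\gamma$ at $s=0$. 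The identity above ensures that both fiber measures have the same total mass and that the start survival function dominates the kill survival function, with difference equal to $(1+\gamma L)J$. A monotone (quantile) coupling $F_{\mathrm{kill}}(s_\beta,z)=F_{\mathrm{start}}(t_0,z)$ on each fiber then matches each start $t_0$ to some kill $s_\beta\leq t_0$ measurably in $z$. Taking $\kappa:=\int(V_0+\gamma)\,\mathrm d\cH^{d-1}+\int_{\{0<w<T\}}(1+u)^-\,\mathrm dx$ and pushing the normalized joint distribution forward under $(t_0,s_\beta,z)\mapsto(y^{t_0,z},\beta)$ -- with $\beta$ the time at which $y^{t_0,z}$ passes through $\mathbf Y(s_\beta,z)$ -- defines $\QQ$.

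Verifying the items of Definition~\ref{def:equi} is then routine. Since $L>0$ along every trajectory in the support of $\QQ$, the second fixed-point equation in \eqref{equi} is trivial. For the first, the change of variables $s=w(y_r)$ converts $\int_0^\beta\phi(y_r)L(y_r)\,\mathrm dr$ into $\int_{s_\beta}^{t_0}\phi(\mathbf Y(s,z))\,\mathrm ds$, and Fubini combined with the coupling gives
\[
\kappa\,\E^\QQ\!\Big[\!\int_0^\beta\!\phi(y_r)L(y_r)\,\mathrm dr\Big]=\int\!\!\int\phi(\mathbf Y(s,z))(1+\gamma L)(s,z)J(s,z)\,\mathrm ds\,\mathrm d\cH^{d-1}(z)=\int_{\RR^d}\phi(x)(1+\gamma L(x))\,\mathrm dx,
\]
using $1+\gamma L\equiv 0$ outside $\{0<w<T\}$. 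Conclusion (1) follows from $w^L=w$ together with the fact that $\overline\QQ_t$ exhausts $\overline{\{0<w\leq t\}}$; (3) is a direct read-off of the kill-measure atom at $s=0$; and (2) is an immediate consequence of the density identity $\kappa\QQ(y_0\!\in\!\mathrm dx)-\kappa\QQ(y_\beta\!\in\!\mathrm dx)=(1+u)\,\mathrm dx$ on $\{0<w<T\}$ built into the construction, combined with the fact that all remaining atomic parts of $\mathbf e^{\kappa,\QQ,L}$ live in $\{w=T\}\cup\overline{\Gamma}_{s-}$, which does not meet $\{w\leq t\}$ for $t\in(0,T)$.

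The most delicate step will be handling the case when $1+u$ changes sign: then some fraction of particles must be killed internally ($\beta<\theta$), and the monotone coupling on each fiber has to be produced measurably in $z$, leaning on the locally Lipschitz inverse in (iv) to get the measurability of both $J$ and the quantile maps. A secondary challenge is to make the derivation of the fiberwise ODE $\partial_s[(1+\gamma L)J]=-(1+u)J$ fully rigorous from the distributional form \eqref{eq: Green I} -- i.e., to justify the localization on cylindrical tubes using only the $C^1$ regularity of $w$ on $\{0<w<T\}$ from (i) and the finite-perimeter structure from (ii). When $1+u\geq 0$ these issues evaporate: no internal killing is needed, one may simply take $\beta\equiv\theta$, and the start density on each fiber is $(1+u)\,J/(V_0+\gamma)$ plus the atom at $s=T$.
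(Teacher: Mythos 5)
Your construction is essentially the same as the paper's: reverse gradient-flow trajectories, birth/death densities $(1+u)^\pm$ on $\{0<w<T\}$ together with boundary mass $(V_0+\gamma)\,\mathcal{H}^{d-1}$ on $\partial\Gamma_{s-}$ and an atomic remainder on $\{w=T\}$, and a fiberwise monotone coupling driven by the stochastic dominance $F_{\mathrm{kill}}-F_{\mathrm{start}}=(1+\gamma L)J\ge0$, which is precisely the dominance $\alpha_1(z,[0,t])\ge\bar\lambda_1(z,[0,t])$ that the paper extracts from the projected Green identity \eqref{eq: Green I.projected}. The only difference is presentational: you push the Green identity into $(s,z)$-coordinates via the Jacobian $J(s,z)$ and build the coupling as an explicit quantile map, whereas the paper disintegrates the measures $\alpha,\lambda$ along $(\mathbf{t},\mathbf{z})$ directly in $x$-space, which sidesteps the rigor issues you flag regarding tube localization and measurability of $J$.
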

\begin{remark}
Assuming (i) holds, a sufficient condition for (iv) is as follows: $\nabla w$ is Lipschitz-continuous in $\overline{\{0<w<t\}}$ for any $t\in(0,T)$ and $\Gamma_{s-}=\{x\in\RR^d:\,\Phi(x)\leq 0\}$ with a function $\Phi\in C^1(\RR^d)$ such that $|\nabla\Phi|>0$ on $\partial\Gamma_{s-}$.
\end{remark}
\begin{proof}
Let us construct $(\kappa,\QQ,L)\in\cE_T$ satisfying the desired properties.
First, we denote
\begin{align*}
\kappa:=\int_{\{0<w<T\}} (1+u(s-,x))^-\,\mathrm{d}x + \int_{\partial\Gamma_{s-}} (V_0+\gamma) \,\mathrm{d}\mathcal{H}^{d-1}>0
\end{align*}
and define a probability measure 
\begin{align*}
\alpha(\mathrm{d}x):= \kappa^{-1}\,\bone_{\partial\Gamma_{s-}}(x)\,\big(V_0(x)+\gamma\big)\,\mathcal{H}^{d-1}(\mathrm{d}x) + \kappa^{-1}\,\bone_{\{0< w<T\}}(x)\,(1+u(s-,x))^-\,\mathrm{d}x.
\end{align*}
Next, we denote by $Y$ the solution to \eqref{eq: ODE} with a random initial value $\xi$, constructed on the time interval $[0,\tau]$, where $0\leq\tau\leq\tau(T)$ is a random time.
We choose the joint distribution of $(\xi,\tau)$ so that $\xi\sim\alpha$ and the distribution of $Y_\tau$ restricted to $\{0\leq w<T\}$ equals
\begin{align*}
& \lambda(\mathrm{d}x):=\kappa^{-1}\,\bone_{\{0<w<T\}}(x)\,(1+u(s-,x))^+\,\mathrm{d}x.
\end{align*}

\smallskip

Let us show that such a coupling of $(\xi,\tau)$ exists under the assumptions of the proposition.
We consider an arbitrary $\psi\in C^1_c(\RR^d)$, define $\phi(x)=\psi(\mathbf{z}(x))$, and notice that $\phi(Y^z_r)=\psi(z)$ remains constant over $r$, for any $z\in\partial\Gamma_{s-}$, which implies $\nabla\phi\cdot\nabla w=0$ in $\{0<w<t\}$. Then, for such $\phi$, the equation \eqref{eq: Green I} becomes
\begin{equation}\label{eq: Green I.projected}
\begin{split}	
& \int_{\{0<w<t\}} \psi(\mathbf{z}(x))\,(1+u(s-,x))^+\,\mathrm{d}x = \int_{\{0<w<t\}} \psi(\mathbf{z}(x))\,(1+u(s-,x))^-\,\mathrm{d}x\\
& + \int_{\partial\Gamma_{s-}} \psi(x)\,\big(V_0(x)+\gamma\big) \,\mathcal{H}^{d-1}(\mathrm{d}x)
-\int_{\partial\{w<t\}} \psi(\mathbf{z}(x))\,\bigg(\frac{1}{|\nabla w|}+\gamma\bigg) \,\mathcal{H}^{d-1}(\mathrm{d}x).
\end{split}
\end{equation}
Choosing $\psi\uparrow1$ and $t\uparrow T$, we deduce from \eqref{eq: Green I.projected} that $\lambda(\RR^d)\leq 1$.
Next, we treat $(\mathbf{t},\mathbf{z})$ as a random vector defined on $\RR^d$ and disintegrate its distribution under the measures $\alpha$ and $\lambda$:
\begin{align*}
& \alpha(\mathbf{t}\in \mathrm{d}t,\,\mathbf{z}\in \mathrm{d}z) = \alpha_1(z,\mathrm{d}t)\,\alpha_2(\mathrm{d}z),\quad \lambda(\mathbf{t}\in \mathrm{d}t,\,\mathbf{z}\in \mathrm{d}z) = \lambda_1(z,\mathrm{d}t)\,\lambda_2(\mathrm{d}z),
\end{align*}
where $\alpha_1$, $\lambda_1$ are probability kernels, $\alpha_2$ is a probability measure, and $\lambda_2$ is a sub-probability measure.
Equation \eqref{eq: Green I.projected} yields
\begin{align}
& \EE^\alpha \left[\psi(\mathbf{z})\,\bone_{[0,t)}(\mathbf{t})\right] \geq  \EE^\lambda\left[\psi(\mathbf{z})\,\bone_{[0,t)}(\mathbf{t})\right],\quad 0\leq\psi\in C^1_c(\RR^d),\label{eq.alpha.lambda.exp}
\end{align}
which implies $\lambda_2(\mathrm{d}z) = l(z)\,\alpha_2(\mathrm{d}z)$ with $0\leq l\leq 1$.
Next, we consider a probability measure $\bar\lambda_1(z,\mathrm{d}t)\,\alpha_2(\mathrm{d}z)$ on $[0,T]\times\partial\Gamma_{s-}$, with the property that $\bar\lambda_1(z,\cdot)=l(z)\,\lambda_1(z,\cdot)$ on $[0,T)$, constructed as follows:
\begin{align*}
& \bar\lambda_1(z,\mathrm{d}t) = l(z)\,\bone_{[0,T)}(t)\,\lambda_1(z,\mathrm{d}t) + (1-l(z))\,\delta_T(\mathrm{d}t).
\end{align*}
Then, \eqref{eq.alpha.lambda.exp} implies $\alpha_1(z,[0,t])\geq\bar\lambda_1(z,[0,t])$ for all $t\in[0,T)$. Since $\alpha_1(z,\{T\})=0$, we extend the latter property of stochastic dominance to all $t\in[0,T]$, which in turn implies the existence of random elements $\sigma_0$, $\sigma_1$, $Z$ such that $(\sigma_0,Z)\sim \alpha_1(z,\mathrm{d}t)\,\alpha_2(\mathrm{d}z)$, $(\sigma_1,Z)\sim \bar\lambda_1(z,\mathrm{d}t)\,\alpha_2(\mathrm{d}z)$, $\sigma_0\leq \sigma_1$. Since the law of $(\sigma_0,Z)$ coincides with the law of $(\mathbf{t},\mathbf{z})$ under $\alpha$, we conclude that $\mathbf{Y}(\sigma_0,Z)\sim\alpha$, and hence we set $\xi:=\mathbf{Y}(\sigma_0,Z)$. Similarly, since the restriction of the law of $(\sigma_1,Z)$ to $[0,T)\times\partial\Gamma_{s-}$ coincides with the law of $(\mathbf{t},\mathbf{z})$ under $\lambda$, we conclude that the distribution of $\mathbf{Y}(\sigma_1,Z)$ restricted to $\{0\leq w<T\}$ equals $\lambda$, and hence we set $\tau:=\inf\{r\geq0:\,w(Y_r)\geq \sigma_1\}-\inf\{r\geq0:\,w(Y_r)\geq \sigma_0\}\geq0$. It is easy to see that the joint distribution of $(\xi,\tau)$ satisfies the desired properties.


\medskip

Finally, we define $L(x)=|\nabla w(x)|\,\bone_{\overline{\{0< w< T\}}}(x) - (1/\gamma)\,\bone_{\RR^d\setminus\overline{\{0< w< T\}}}(x)$ and $\QQ=\mathrm{Law}(Y_{\tau-\cdot},\tau)$, where we extend $|\nabla w|$ to the boundary of $\{0< w< T\}$ via its upper semicontinuous envelope.
Note that $L^+=|\nabla w|$ is the HJB equation expected to be satisfied by the value function of the control problem $\inf_{\|\dot{y}\|_\infty\le 1} \int_0^\theta L^+(y_t)\,\mathrm{d}t$ (see, e.g., \cite[Section I.8]{FlSo}). Then, the solution of \eqref{eq: ODE} gives an optimal trajectory for this problem, after time-reversal, as can be shown directly by verifying the equality $w(Y_{\tau})=\int_0^\tau L^+(Y_{\tau-r})\,\mathrm{d}r$ and the inequality $w(y_0)\leq\int_0^\theta L^+(y_{r})\,\mathrm{d}r$ for any $y\in\mathcal{Y}$. This yields the optimality stated in the first line of \eqref{equi}. The second line of \eqref{equi} holds by construction.

\medskip

It is easy to see that $w^{\QQ,L}=w\,\bone_{\overline{\{w< T\}}} + \infty\,\bone_{\RR^d\backslash\overline{\{w< T\}}}$.
In addition,
\begin{align*}
& \bone_{\{0<w^{\QQ,L}<\infty\}}(x)\,\QQ(y_0\in \mathrm{d}x) = \bone_{\{0<w\leq T\}}(x)\,\QQ(y_0\in \mathrm{d}x) \\
&\phantom{????????????????????????}\,= \kappa^{-1}\,\bone_{\{0<w<T\}}(x)\,(1+u(s-,x))^+\,\mathrm{d}x
+ \bone_{\{w= T\}}(x)\,\QQ(y_0\in \mathrm{d}x),\\
& \bone_{\{0<w^{\QQ,L}<\infty\}}(x)\,\QQ(y_\beta\in \mathrm{d}x) = \bone_{\{0<w\leq T\}}(x)\,\QQ(y_\beta\in \mathrm{d}x)\\
&\phantom{????????????????????????}\;= \bone_{\{0<w\leq T\}}(x)\,\alpha(\mathrm{d}x) = \kappa^{-1}\,(1+u(s-,x))^-\, \bone_{\{0<w\leq T\}}(x)\,\mathrm{d}x,\\
&\QQ(y_\theta\in \mathrm{d}x\,\vert\,\beta=\theta) = \bigg( \int_{\partial\Gamma_{s-}} (V_0+\gamma)\,\mathrm{d}\mathcal{H}^{d-1}\bigg)^{-1}
\,\big(V_0(x)+\gamma\big)\,\bone_{\partial\Gamma_{s-}}(x) \,\mathcal{H}^{d-1}(\mathrm{d}x),\\
&\QQ(\beta=\theta) = \kappa^{-1}\,\int_{\partial\Gamma_{s-}} (V_0+\gamma)\,\mathrm{d}\mathcal{H}^{d-1}.
\end{align*}
The last two lines of the above yield the property (3) stated in the proposition, while the first two lines imply the property (2). 

\medskip

To verify the last line of \eqref{equi}, for any $\phi\in C^\infty_c(\RR^d)$ supported in $\{0<w<T\}$, we deduce
\begin{equation}\label{eq: FK}
\begin{split}
&-\kappa^{-1}\int_{\{0<w<T\}} \phi(x)(u(s-,x)+1)\,\mathrm{d}x=\E^\QQ[\phi(y_\beta)-\phi(y_0)]\\
&=\E^\QQ\bigg[\int_0^\beta - \nabla\phi(y_r)\cdot\frac{\nabla w}{|\nabla w|}(y_r)\,\mathrm{d}r\bigg] 
= -\int_{\{0<w<T\}} \nabla\phi(x)\cdot\frac{\nabla w(x)}{|\nabla w(x)|}\, \overline{\mathbb Q}(\mathrm{d}x),
\end{split}
\end{equation}
where we introduce the occupation measure of $Y$:
\begin{equation}\label{eq.barQ.def.1}
\overline{\mathbb Q}(\mathrm{d}x):=\E\bigg[\int_0^\beta \mathbf{1}_{\{Y_r\in\,\mathrm{d}x\}}\,\mathrm{d}r\bigg].
\end{equation}
Matching \eqref{eq: FK} and \eqref{eq: Green I} we conclude: 
\begin{equation}
\frac{L^+(x)}{1+\gamma L^+(x)}\,\overline{\mathbb Q}(\mathrm{d}x) = \frac{\mathrm{d}x}{\kappa}
\quad\Longleftrightarrow\quad 
\kappa L^+(x)\,\overline{\mathbb Q}(\mathrm{d}x) = (1+\gamma L^+(x))\,\mathrm{d}x, 
\end{equation}
which gives the last line in \eqref{equi}.
\end{proof}
	
\begin{remark}
Notice that the above proof relies on several simplifying assumptions.~Among them are the assumptions that $w$ solves the cascade PDE \eqref{PDE:wave}, as opposed to its relaxed version with ``$=$'' replaced by ``$\leq$'', and that $|\nabla w|>0$ in $\{0<w<T\}$, which typically does not hold for all $T>0$, even if $w$ is smooth.~In fact, the latter assumption is directly related to the existence of a solution $Y$ to the ODE $\dot Y_t = \nabla w(Y_t)/|\nabla w(Y_t)|$ that is well-defined up until $Y$ exits $\{w\leq T\}$, which was used to show that the equilibrium constructed above satisfies \eqref{eq.intro.defEquil.u} in $\{w<T\}$ with ``$=$'' rather than ``$\ge$''.~As illustrated by the examples of Subsections \ref{ex:1D2} and \ref{ex:Radial2}, the arrival time functions generically violate these assumptions (e.g., these properties fail to hold at any strict local maximum of $w$), which leads to the existence of gradient ascent curves that cannot be extended to arbitrary level sets, resulting in an excess loss of boundary energy that manifests itself in turning ``$=$'' in \eqref{PDE:wave} into ``$\leq$''.
\end{remark}
	
	
	The above proposition shows that any sufficiently regular solution to the cascade PDE \eqref{PDE:wave} can be represented as the arrival time function of an equilibrium.~Since the latter functions can be defined without strong regularity assumptions, we choose to define solutions to \eqref{PDE:wave} via equilibria. 
	
	\medskip
	
Let us comment on the specific assumptions made in the definition of the set of admissible equilibria (recall Definition \ref{def:equi}), which are not explained by the proof of Proposition \ref{prop:equi}.  
First, we note that the moving interface starts at $\Gamma_{s-}$ and moves into the liquid domain, whereas the particles in the optimization problem given by the first line of \eqref{equi} evolve along the time-reversed trajectories -- from inside the liquid domain and towards $\Gamma_{s-}$ -- minimizing the total cost of travel, measured by $\int_0^\theta L^+(y_t)\,\mathrm{d}t$. Such trajectories are expected to be orthogonal to $\partial\Gamma_{s-}$, which is consistent with the intuition that $\nabla w$ should be orthogonal to $\partial\Gamma_{s-}$ at the points of the latter. In order to ensure that the notion of orthogonal direction is well-defined at the points where the trajectories reach $\partial\Gamma_{s-}$, we require that $y_\theta\in\partial_*\Gamma_{s-}$ in the second line of \eqref{equi}.
Second, in order to make the connection between the HJB equation $|\nabla w|=L^+$ and the associated control problem rigorous (i.e., to ensure that the dynamic programming principle holds), we assume that $L$ is upper semicontinuous.

	\begin{remark}\label{rem:nonempty}
		We note that, for any $T\in(0,\infty)$, $\kappa\geq0$ and $\nu\in \mathcal{P}(\partial_*\Gamma_{s-})$, the tuple $(\kappa,\QQ^0,L^0)$ is always in $\mathcal{E}_T$, where $\QQ^0$ is the image of $\nu$ under the map that takes any $x\in\RR^d$ to the constant path $y_t\equiv x$, and $L^0\equiv-\frac1\gamma$. Hence, $\mathcal{E}_T$ is always non-empty.
	\end{remark}
	
	\begin{remark}
		An equilibrium capped at $T$ corresponds to a cascade that is stopped at the physical time $T$. We let $\mathcal{E}:=\bigcup_{T>0}\mathcal{E}_T$.
	\end{remark}
	
	\begin{remark}
		Note that neither the boundary condition \eqref{eq.intro.defEquil.boundaryCond}, nor the condition \eqref{eq.intro.defEquil.u}, nor the minimality of an equilibrium are ensured by the definition of $\mathcal{E}$. These additional conditions lead to the selection (or, planning) problem analyzed in the next section.
	\end{remark}

\section{Properties of equilibria}
\label{se:fullyInt}

\subsection{Arrival time function in the equilibrium formulation}
\label{subse:arrivalRig}
In this subsection, we introduce the equilibrium arrival time function $w^{\QQ,L}$ and show rigorously its connection to the relaxed version of the cascade PDE.
We begin with two auxiliary results.

\begin{lemma}\label{lem:UsefulProperties}
Let $(\kappa,\QQ,L)\in\mathcal{E}_T$ and $w^L$ be the corresponding value function. Then the following statements hold true:
\begin{enumerate}
    \item For any Borel-measurable  $\phi$ that is either Lebesgue-integrable and compactly supported or non-negative,
    \begin{align*}
    \int_{\RR^d} \phi(x)\,(L(x)+1/\gamma)\,\mathrm{d}x = \frac{\kappa}{\gamma}\,\EE^{\QQ}\left[ \int_0^\beta \phi(y_t)\,L^+(y_t)\,\mathrm{d}t\right],
\quad \EE^{\QQ}\left[\int_0^\beta \phi(y_t)\,L^-(y_t)\,\mathrm{d}t\right]=0.
    \end{align*}
    \item Recall the occupation measure of $\QQ\circ y_{[0,\beta]}^{-1}$: $\overline\QQ(\mathrm{d}x)=\EE^\QQ\left[\int_0^\beta \bone_{\{y_t\in\,\mathrm{d}x\}}\,\mathrm{d}t\right]$. This measure is $\sigma$-finite, absolutely continuous w.r.t. the Lebesgue measure, and its density $q$ satisfies
    \begin{align*}
    & L(x) = \frac{\kappa}{\gamma}\, q(x)\, L^+(x) - \frac{1}{\gamma},
    \quad q(x)\,L^-(x)=0,\quad\text{a.e. }x\in\RR^d.
    \end{align*}
    In particular, $q(x)>\frac\gamma\kappa$ a.e. on the set $\{x:\,q(x)>0\}$, and $L$ is determined a.e. from $q$ by 
    \begin{align*}
    L(x)=\frac{1}{\kappa q(x)-\gamma}\bone_{\{q(x)>0\}}-\frac1\gamma\bone_{\{q(x)=0\}},    
    \end{align*}
    and conversely, $q$ is determined a.e. by $L$ via
    \begin{align*}
    q(x)=\left(\frac\gamma\kappa+\frac{1}{\kappa L}\right)\bone_{\{L(x)>0\}}.    
    \end{align*}
    \item $L>0$ in $\mathrm{supp}\,\overline\QQ$, and $L=-1/\gamma$ a.e. in $\RR^d\setminus\mathrm{supp}\,\overline\QQ$.
    \item For $\QQ$-a.e. $y\in\mathcal{Y}$, 
    \begin{align*}
    w^L(y_t)=\int_t^\theta L^+(y_r)\,\mathrm{d}r,\quad t\in[0,\theta].
    \end{align*}
    \item $w^L$ is locally Lipschitz continuous and, thus, is differentiable a.e. In addition, $|\nabla w^L|=L^+$ a.e. and $|\nabla w^L|\in L^1(\RR^d)$, with
    \begin{align*}
    \int_{\RR^d}|\nabla w^L(x)|\,\mathrm{d}x\leq\frac{\kappa}{\gamma}T.
    \end{align*}
    Finally, for $\QQ$-a.e. $(y,\beta)$ and a.e. $t\in[0,\beta]$, $t\mapsto y_t$ is differentiable with
    \begin{align*}
    |\nabla w^L(y_t)|=L^+(y_t)>0\quad\text{and}\quad\dot y_t=-\frac{\nabla w^L}{|\nabla w^L|}(y_t). 
    \end{align*}
\end{enumerate}
\end{lemma}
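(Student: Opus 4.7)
My plan is to establish items (1)--(5) in order, first deriving local Lipschitz continuity and a.e.\ classical differentiability of $w^L$ so that they are available throughout. The former follows from upper semicontinuity of $L^+$ (hence local boundedness) via dynamic-programming comparisons along straight segments, and the latter from Rademacher. For (1), the case $\phi \in C_c^\infty(\rr^d)$ is just the last line of \eqref{equi} rewritten via $(1+\gamma L)/\gamma = L+1/\gamma$; standard mollification and monotone convergence, using the cap $\EE^\QQ[\int_0^\beta L^+\,dt]\le T$ built into \eqref{equi}, extend the identity to compactly-supported integrable and then to nonnegative Borel $\phi$. For (2), setting $\phi=\bone_A$ for a Lebesgue-null Borel $A$ in the two identities of (1) gives $\EE^\QQ[\int_0^\beta \bone_A(y_t)|L(y_t)|\,\mathrm{d}t]=0$, and since $L$ never vanishes this upgrades to $\overline\QQ(A)=0$, proving $\overline\QQ\ll\mathrm{Leb}$. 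Writing $\overline\QQ(\mathrm{d}x)=q(x)\,\mathrm{d}x$ and substituting back into (1) yields the pointwise-a.e.\ relations $L+1/\gamma=(\kappa/\gamma)L^+q$ and $qL^-=0$, from which the mutual formulas $L\leftrightarrow q$ and the bound $q>\gamma/\kappa$ on $\{q>0\}$ follow by splitting on the sign of $L$.

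For (3), $\{L\le 0\}=\{L<0\}$ is open by upper semicontinuity and $L\ne 0$; the formula from (2) forces $q=0$ on this set, so it is disjoint from $\mathrm{supp}\,\overline\QQ$, giving $L>0$ on the support. Conversely $q=0$ a.e.\ on the open set $\rr^d\setminus\mathrm{supp}\,\overline\QQ$, hence $L=-1/\gamma$ a.e.\ there by the same formula. For (4), the optimality of $\QQ$ combined with the pathwise bound $\int_0^\theta L^+(y_r)\,\mathrm{d}r\ge w^L(y_0)$ forces equality $\QQ$-a.s.\ at $t=0$; a splicing argument (replacing $y_{[t_0,\theta]}$ with a near-optimal continuation from $y_{t_0}$ would strictly beat $w^L(y_0)$ unless $w^L(y_{t_0})=\int_{t_0}^\theta L^+(y_r)\,\mathrm{d}r$), together with continuity of both sides in $t$, upgrades the equality to all $t\in[0,\theta]$. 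For the remaining content of (5), the subsolution $|\nabla w^L|\le L^+$ at every differentiability point comes from the DP bound $w^L(x)\le w^L(x+hv)+\int_0^h L^+(x+tv)\,\mathrm{d}t$ combined with $\limsup_{h\downarrow 0}(1/h)\int_0^h L^+(x+tv)\,\mathrm{d}t\le L^+(x)$ (from upper semicontinuity), while the $L^1$ bound drops out from (1) with $\phi\equiv 1$, (4), and the cap $T$. To upgrade the subsolution to the a.e.\ equality and obtain the ODE, I differentiate the identity of (4) in $t$: at times where $\dot y_t$ exists and $w^L$ is differentiable at $y_t$, the Lipschitz chain rule gives $\nabla w^L(y_t)\cdot\dot y_t=-L^+(y_t)$, and Cauchy--Schwarz with $|\dot y_t|\le 1$ together with the subsolution pin down $|\nabla w^L(y_t)|=L^+(y_t)$ and $\dot y_t=-\nabla w^L(y_t)/|\nabla w^L(y_t)|$; pushing forward via the density $q>\gamma/\kappa$ transfers this equality to Lebesgue-a.e.\ on $\mathrm{supp}\,\overline\QQ$, while off this set $L^+=0$ a.e.\ by (3) and the subsolution force $|\nabla w^L|=0$ a.e.

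The main obstacle I foresee lies in making the chain-rule step above rigorous: for $\QQ$-a.e.\ trajectory $y$, Lebesgue-a.e.\ $t\in[0,\beta]$ must be a time at which $w^L$ is classically differentiable at $y_t$. Since Lipschitz curves can a priori remain inside the Lebesgue-null non-differentiability set $N$ of $w^L$, Rademacher alone is insufficient. The absolute continuity $\overline\QQ\ll\mathrm{Leb}$ from (2) is the crucial ingredient: it yields $\overline\QQ(N)=0$, equivalently $\EE^\QQ[\int_0^\beta \bone_N(y_t)\,\mathrm{d}t]=0$, so for $\QQ$-a.e.\ $y$ the bad times form a Lebesgue-null subset of $[0,\beta]$, enabling the chain rule at a.e.\ time along a.e.\ trajectory and closing the argument.
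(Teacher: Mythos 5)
Your plan matches the paper's route closely for items (1)--(3) and (5), and you correctly identify $\overline\QQ\ll\mathrm{Leb}$ from item (2) as the ingredient that rescues the chain-rule step in (5) --- that is precisely how the paper handles it. Item (4), however, contains a genuine gap. You assert that ``the optimality of $\QQ$ combined with the pathwise bound $\int_0^\theta L^+(y_r)\,\mathrm{d}r\ge w^L(y_0)$ forces equality $\QQ$-a.s.\ at $t=0$,'' but optimality only says $\EE^{\QQ}\big[\int_0^\theta L^+\,\mathrm{d}t\big]\le\EE^{\hat\QQ}\big[\int_0^\theta L^+\,\mathrm{d}t\big]$ for competitors $\hat\QQ$ sharing the $y_0$-marginal $\mu$. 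To derive a contradiction from strict pathwise inequality on a positive-measure set you must \emph{exhibit} a competitor whose expected cost is close to $\int w^L\,\mathrm{d}\mu$, i.e.\ you must choose, $\mu$-a.e.\ and \emph{measurably} in $x$, a near-optimal path in $\cY$ starting at $x$. That a Borel near-optimal selector exists is nontrivial and is the actual crux of (4): the paper invokes \cite[Theorem 4.8]{rieder1978measurable}, and this in turn requires first verifying that the cost map $y\mapsto\int_0^{\theta(y)}L^+(y_t)\,\mathrm{d}t$ on $\cY$ is upper semicontinuous (proved there via Fatou's lemma and local upper boundedness of $L^+$) so that its strict sublevel sets are $F_\sigma$.

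Your ``splicing argument'' for $t>0$ hides the same selection issue, and is also more work than needed. Once $\int_0^\theta L^+(y_r)\,\mathrm{d}r=w^L(y_0)$ is in hand, the general-$t$ identity drops out from two pathwise triangle inequalities with no further selection: $w^L(y_t)\le\int_t^\theta L^+(y_r)\,\mathrm{d}r$ (restrict the path) and $w^L(y_0)\le w^L(y_t)+\int_0^t L^+(y_r)\,\mathrm{d}r$ (concatenate), which together with $w^L(y_0)=\int_0^\theta L^+$ force $w^L(y_t)=\int_t^\theta L^+(y_r)\,\mathrm{d}r$. Supplying the measurable selection step, and replacing your splicing argument by this shorter deduction, brings your proposal into line with the paper's argument.
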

\begin{proof}
(1) follows from standard measure-theoretic results. (2) is deduced similarly, making use of the fixed-point conditions in the last line of equations \eqref{equi} and recalling that $L$ is upper semicontinuous and does not take value zero. (3) follows from (2) and the upper semicontinuity of~$L$.

\smallskip

To prove (4), we ease the notation by identifying $\QQ$ with its $y$-marginal. Let us first show that 
\begin{align*}
w^L(y_0)=\int_0^\theta L^+(y_r)\,\mathrm{d}r
\end{align*}
for $\QQ$-a.e. $y$. Apparently, $w^L(y_0)\leq\int_0^\theta L^+(y_r)\,\mathrm{d}r$ as $y\in\cY$ is a path connecting $y_0$ to $\overline{\Gamma}_{s-}$, and we will prove the opposite inequality by contradiction. Suppose 
\begin{align*}
\QQ\bigg(\bigg\{y:w^L(y_0)<\int_0^\theta L^+(y_r)\,\mathrm{d}r\bigg\}\bigg)>0.    
\end{align*}
Then, setting $\mu:=\QQ\circ y_0^{-1}$,
\begin{align*}
\varepsilon:=\EE^\QQ\left[\int_0^\theta L^+(y_r)\,\mathrm{d}r - w^L(y_0)\right]=\EE^\QQ\left[\int_0^\theta L^+(y_r)\,\mathrm{d}r\right]-\int_{\RR^d} w^L(x)\,\mu(\mathrm{d}x)>0.    
\end{align*}
We would like to apply \cite[Theorem 4.8]{rieder1978measurable} (a version of the measurable selection theorem) to $X:=\RR^d$, $Y:=\cY$,
\begin{align*}
&\Omega:=\{(x,y)\in\RR^d\times\cY:y_0=x\},\\
&u(x,y)=u(y):=\int_0^{\theta(y)}L^+(y_r)\,\mathrm{d}r\quad\text{for }\,(x,y)\in\RR^d\times\cY.
\end{align*}
It is obvious that $\Omega$ is a closed (thus trivially $F_\sigma$) subset of $X\times Y$. We claim that $y\mapsto\int_0^{\theta(y)}L^+(y_t)\,\mathrm{d}t$ is upper semicontinuous. Indeed, let $y_n\to y$ in $\cY$. Then $\infty>\theta(y)=\lim_{n\to\infty}\theta(y^n)$ and by the upper semicontinuity of $L^+$, $L^+(y_t)\,\bone_{[0,\theta(y)]}(t)\ge\limsup_{n\to\infty} L^+(y_t^n)\,\bone_{[0,\theta(y^n)]}(t)$ for all $t\neq\theta(y)$. Moreover, there exists a compact set $E\subset\RR^d$ containing the images of $y$ and all the $y^n$'s. As $L^+$ is upper bounded on $E$ thanks to the upper semicontinuity of $L^+$, we can now apply Fatou's Lemma to obtain
\begin{align*}
\int_0^{\theta(y)}L^+(y_t)\,\mathrm{d}t&=\int_0^\infty L^+(y_t)\,\bone_{[0,\theta(y)]}(t)\,\mathrm{d}t\\&\ge\int_0^\infty\limsup_{n\to\infty}L^+(y_t^n)\,\bone_{[0,\theta(y^n)]}(t)\,\mathrm{d}t\\
&\ge\limsup_{n\to\infty}\int_0^\infty L^+(y_t^n)\,\bone_{[0,\theta(y^n)]}(t)\,\mathrm{d}t=\limsup_{n\to\infty}\int_0^{\theta(y^n)}L^+(y_t^n)\,\mathrm{d}t,
\end{align*}
which justifies the claim. Therefore,
\begin{align*}
\{(x,y)\in \Omega:u(x,y)<c\}=\bigg(\RR^d\times\bigg\{y\in\cY:\int_0^{\theta(y)}L^+(y_t)\,\mathrm{d}t<c\bigg\}\bigg)\cap \Omega    
\end{align*}
is the intersection of an open (thus $F_\sigma$) set and a closed set, hence an $F_\sigma$ set. As a result, \cite[Theorem 4.8]{rieder1978measurable} (with a change of sign) implies the existence of an $\frac{\varepsilon}{2}$-minimizer, that is a measurable map $\varphi:\RR^d\to\cY$ such that $(x,\varphi(x))\in \Omega$ and that
\begin{align*}
u(x,\varphi(x))\leq \inf_{y\in\cY,y_0=x}\int_0^\theta L^+(y_t)\,\mathrm{d}t+\frac\varepsilon2=w^L(x)+\frac\varepsilon2.  
\end{align*}
It is now clear that $\QQ^\varepsilon:=\mu\circ\varphi^{-1}$ satisfies $\QQ^\varepsilon\circ y_0^{-1}=\mu$ and that
\begin{align*}
\EE^{\QQ^\varepsilon}\left[\int_0^\theta L^+(y_t)\,\mathrm{d}t\right]=\int u(x,\varphi(x))\,\mu(\mathrm{d}x)\leq\int w^L(x)\,\mu(\mathrm{d}x)+\frac{\varepsilon}{2}<\EE^\QQ\left[\int_0^\theta L^+(y_t)\,\mathrm{d}t\right],    
\end{align*}
which is a contradiction to the first line of equations \eqref{equi}. Therefore, it must hold that $w^L(y_0)=\int_0^\theta L^+(y_t)\,\mathrm{d}t$ for $\QQ$-a.e. $y$.
 
For a general $t\in[0,\theta]$, since $y_{[t,\theta]}$ is a path connecting $y_t$ to $\overline{\Gamma}_{s-}$, it is obvious that $w^L(y_t)\leq\int_t^\theta L^+(y_r)\,\mathrm{d}r$. On the other hand, as $y_{[0,t]}$ is a path connecting $y_0$ to $y_t$ we must have $w^L(y_0)\leq w^L(y_t)+\int_0^tL^+(y_r)\,\mathrm{d}r$, which together with $w^L(y_0)=\int_0^\theta L^+(y_r)\,\mathrm{d}r$ implies that $w^L(y_t)\ge\int_t^\theta L^+(y_r)\,\mathrm{d}r$.\\

\smallskip

To prove (5), we first take any $x_1\in\RR^d\setminus\overline{\Gamma}_{s-}$, $r>0$ such that $B(x_1,r)\subset\RR^d\setminus\overline{\Gamma}_{s-}$ and any $x_2\in B(x_1,r)\setminus\{x_1\}\subset\RR^d\setminus\overline{\Gamma}_{s-}$. As $L$ is a real-valued upper semicontinuous function, it must be upper bounded on any compact set. For any $y\in\cY$ such that $y_0=x_2$, we can construct a corresponding $\tilde y\in\cY$ by defining $\tilde y_t:=(x_1+t\frac{x_2-x_1}{|x_2-x_1|})\,\bone_{[0,|x_2-x_1|]}(t)+y_{t-|x_2-x_1|}\,\bone_{[|x_2-x_1|,\infty)}(t)$, which satisfies $\tilde y_0=x_1$ and $\theta(\tilde y)=|x_2-x_1|+\theta(y)$. This $\tilde y$ allows us to upper bound
\begin{align}\label{eq:Lip0}
w^L(x_1)\leq\int_0^{\theta(\tilde y)}L^+(\tilde y_r)\,\mathrm{d}r\leq|x_2-x_1|\cdot\sup_{\overline B(x_1,r)}L^++\int_0^{\theta(y)}L^+(y_r)\,\mathrm{d}r.    
\end{align}
As $y$ ranges over $\{y\in\cY:y_0=x_2\}$, we get
$w^L(x_1)\leq|x_2-x_1|\,\sup_{\overline B(x_1,r)}L^++w^L(x_2)$. Similarly, we can prove $w^L(x_2)\leq|x_2-x_1|\,\sup_{\overline B(x_1,r)}L^++w^L(x_1)$, and thus $|w^L(x_2)-w^L(x_1)|\;\leq\;|x_2-x_1|\,\\ \sup_{\overline B(x_1,r)}L^+$. This establishes the local Lipschitz continuity of $w^L$ in $\RR^d\setminus\overline{\Gamma}_{s-}$. We then take any $x_1\in\overline\Gamma_{s-}$ and any $x_2\in\RR^d$. If $x_2\in\overline\Gamma_{s-}$, then $w^L(x_1)=w^L(x_2)=0$, and thus $|w^L(x_2)-w^L(x_1)|\leq|x_2-x_1|$ trivially holds. If however $x_2\in\RR^d\setminus\overline{\Gamma}_{s-}$, then, using $w^L(x_1)=0$ and the shortest line segment joining $x_2$ and $\overline{\Gamma}_{s-}$, we obtain
\begin{align*}
|w^L(x_2)-w^L(x_1)|=w^L(x_2)\leq d(x_2,\overline{\Gamma}_{s-})\,\sup_{\overline{B}(x_2,d(x_2,\overline{\Gamma}_{s-}))}L^+\leq|x_2-x_1|\,\sup_{\overline{B}(x_2,d(x_2,\overline{\Gamma}_{s-}))}L^+.
\end{align*}
In summary, we have proved that $w^L$ is locally Lipschitz continuous in the whole space $\RR^d$. It then follows from Rademacher's Theorem that $w^L$ is a.e. differentiable. Moreover, given a direction $\alpha\in\QQ^d$, $|\alpha|=1$ and an $x\in\RR^d\backslash\overline{\Gamma}_{s-}$, an argument similar to the one that led to \eqref{eq:Lip0} yields
\begin{equation}
w^L(x+t\alpha) \le w^L(x) + \int_0^t L^+(x+z\alpha)\,\mathrm{d}z,
\end{equation}
as long as $x+t\alpha\in\RR^d\backslash\overline{\Gamma}_{s-}$, $t\in[0,r]$. By the Lebesgue Differentiation Theorem, it follows that $|\partial_\alpha w^L(x+t\alpha)|\le L^+(x+t\alpha)$ for Lebesgue-a.e. $t$ in the open set $\{t\in\RR\!:x+t\alpha\in\RR^d\backslash\overline{\Gamma}_{s-}\}$. In view of Fubini's Theorem, we may conclude that $|\partial_\alpha w^L|\le L^+$ Lebesgue a.e.~in $\RR^d\backslash\overline{\Gamma}_{s-}$. Taking the supremum over $\alpha\in\QQ^d$, we infer that $|\nabla w^L|\leq L^+$ Lebesgue a.e.~in $\RR^d\backslash\overline{\Gamma}_{s-}$. On the set $\overline{\Gamma}_{s-}$, $w^L=0$ and thus $|\nabla w^L|=0$ a.e. on $\overline{\Gamma}_{s-}$ by \cite[Theorem 3.3]{EvGa}. In summary, we have obtained the comparison $|\nabla w^L|\leq L^+$ a.e. in $\RR^d$.

Now, taking $\phi\equiv1$ in the fixed point equation (the last line of equations \eqref{equi}) gives
\begin{align*}
\int_{\RR^d}|\nabla w^L(x)|\,\mathrm{d}x\leq\int_{\RR^d}L^+(x)\,\mathrm{d}x&\leq\int_{\RR^d}(L(x)+1/\gamma)\,\mathrm{d}x
\\&=\frac\kappa\gamma\EE^\QQ\left[\int_0^\beta L^+(y_t)\,\mathrm{d}t\right]
\leq\frac\kappa\gamma\EE^\QQ\left[\int_0^\theta L^+(y_t)\,\mathrm{d}t\right]\leq\frac\kappa\gamma T. 
\end{align*}

To obtain the equality between $|\nabla w^L|$ and $L^+$, we first note that $\nabla w^L(y_t)$ exists and $L^+(y_t)>0$ for $\QQ$-a.e. $(y,\beta)$ and a.e. $t\in[0,\beta]$ as the occupation measure $\overline\QQ$ is absolutely continuous with respect to the Lebesgue measure. As a result, another application of Rademacher's Theorem and the Lebesgue Differentiation Theorem, combined with item (4), shows that 
\begin{align*}
&\dot y_t:=\frac{\mathrm{d}}{\mathrm{d}t}y_t\,\,\text{exists and }|\dot y_t|\leq 1,\\&-\nabla w^L(y_t)\cdot\dot y_t=\frac{\mathrm{d}}{\mathrm{d}t}(-w^L(y_t))=\frac{\mathrm{d}}{\mathrm{d}t}\left(-\int_t^\theta L^+(y_r)\,\mathrm{d}r\right)=L^+(y_t)\begin{cases}>0\\\ge|\nabla w^L(y_t)|\end{cases}    
\end{align*}
for $\QQ$-a.e. $(y,\beta)$ and a.e. $t\in[0,\theta]$. For such $(y,\beta)$ and $t$, the Cauchy-Schwarz inequality forces  
\begin{align*}
|\nabla w^L(y_t)|=L^+(y_t)>0\quad\text{and}\quad\dot y_t=-\frac{\nabla w^L}{|\nabla w^L|}(y_t).
\end{align*}
The above implies that $|\nabla w^L(x)|=L^+(x)$ for $\overline\QQ$-a.e. (and hence Lebesgue-a.e.) $x\in\{q>0\}$. On the set $\{q=0\}$, we know from part (2) that $L^+=0$ a.e., which, combined with the above estimate $|\nabla w^L|\leq L^+$ a.e. in $\RR^d$, forces that $|\nabla w^L|=L^+$ a.e. on $\{q=0\}$. This finishes the proof of the desired result that $|\nabla w^L|=L^+$ a.e. in $\RR^d$.
\end{proof}

The next lemma introduces the notion of a $T$-level projection, which transforms any equilibrium into its $T$-capped version, and shows that this projection preserves the equilibrium property.

\begin{lemma}\label{lem:EquilibriumTimeConsistency}
For any equilibrium $(\kappa,\QQ,L)\in\mathcal{E}_{T'}$ and any $T\in(0,T')$, we have $(\kappa,\QQ_T,L_T)\in \mathcal{E}_T$, where the latter is the $T$-level projection defined by:
\begin{align*}
& \QQ_T:= \QQ\circ (y_{[\iota(L,T),\infty)},(\beta-\iota(L,T))^+)^{-1},
\quad L_T := L\,\bone_{\{w^L\leq T\}} - \frac{1}{\gamma}\,\bone_{\{w^L>T\}},\\
& \iota(L,T):=\inf\bigg\{r\in[0,\theta]:\, \int_{r}^\theta L^+(y_t)\,\mathrm{d}t<T\bigg\}.
\end{align*}
Moreover, the value functions are related by
\begin{align*}
w^{L_T}=w^L\wedge T.    
\end{align*}
\end{lemma}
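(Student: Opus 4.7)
The plan is to first establish the value function identity $w^{L_T}=w^L\wedge T$, since this identity is the bridge that transports the equilibrium properties of $(\kappa,\QQ,L)\in\mathcal{E}_{T'}$ to those of $(\kappa,\QQ_T,L_T)$, and then to verify each condition in Definition~\ref{def:equi} for the capped triple. Observe that $L_T^+=L^+\,\bone_{\{w^L\le T\}}$, since $L_T=-1/\gamma$ on $\{w^L>T\}$. For the upper bound $w^{L_T}\le w^L\wedge T$, I would fix $x\in\RR^d$ and, for each $\epsilon>0$, pick a competitor $y\in\cY$ with $y_0=x$ and $\int_0^\theta L^+(y_r)\,\mathrm{d}r\le w^L(x)+\epsilon$. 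Combining the triangle inequality $w^L(x)\le\int_0^tL^+(y_r)\,\mathrm{d}r+w^L(y_t)$ (valid since the tail $y_{[t,\theta]}$ is a competitor from $y_t$) with $w^L(y_t)\le\int_t^\theta L^+(y_r)\,\mathrm{d}r$ yields that $s\mapsto w^L(y_s)$ is monotone decreasing up to an $\epsilon$-error from $w^L(x)$ to $0$. Hence the first time $\tau_\epsilon$ at which $w^L(y_t)\le T$ is either $\tau_\epsilon=0$ (when $w^L(x)\le T$) or satisfies $w^L(y_{\tau_\epsilon})=T$ by continuity of $w^L$ (Lemma~\ref{lem:UsefulProperties}(5)), and in the latter case $\int_0^{\tau_\epsilon}L^+(y_r)\,\mathrm{d}r\ge w^L(x)-T$. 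Since $L_T^+\equiv 0$ on $[0,\tau_\epsilon)$ and $L_T^+\le L^+$ on $[\tau_\epsilon,\theta]$, we obtain $\int_0^\theta L_T^+(y_t)\,\mathrm{d}t\le w^L(x)\wedge T+\epsilon$ in both cases, and $\epsilon\downarrow 0$ gives the bound. For the lower bound, any competitor $y$ with $y_0=x$ either stays inside $\{w^L\le T\}$ (whence $w^L(x)\le T$ and $L_T^+=L^+$ along $y$, forcing $\int_0^\theta L_T^+(y_t)\,\mathrm{d}t\ge w^L(x)=w^L(x)\wedge T$), or, by continuity of $w^L$ and $w^L(y_\theta)=0<T$, admits a largest $\sigma\in[0,\theta)$ with $w^L(y_\sigma)=T$ and $w^L\le T$ on $[\sigma,\theta]$, whence $\int_0^\theta L_T^+(y_t)\,\mathrm{d}t\ge\int_\sigma^\theta L^+(y_t)\,\mathrm{d}t\ge w^L(y_\sigma)=T\ge w^L(x)\wedge T$.

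With the identity in hand, I would verify the defining conditions of $\cE_T$. Upper semicontinuity of $L_T$ follows from the continuity of $w^L$ (so $\{w^L\le T\}$ is closed), the USC of $L$, and the bound $-1/\gamma\le L$ guaranteed by Lemma~\ref{lem:UsefulProperties}(3); $L_T\neq 0$ is immediate. Writing $\tilde y_t:=y_{\iota(L,T)+t}$, $\tilde\beta:=(\beta-\iota(L,T))^+$, and $\tilde\theta:=\theta(y)-\iota(L,T)$, the constraints $\int_0^{\tilde\theta}L_T^+(\tilde y_t)\,\mathrm{d}t\le T$ and $\tilde\beta\le\tilde\theta$ hold by the very definition of $\iota(L,T)$ and $\beta\le\theta(y)$, while $\tilde y_{\tilde\theta}=y_{\theta(y)}\in\partial_*\Gamma_{s-}$ on $\{\tilde\beta=\tilde\theta\}\subseteq\{\beta=\theta(y)\}$ by the corresponding property of $\QQ$. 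The optimality in the first line of \eqref{equi} is obtained by computing, via Lemma~\ref{lem:UsefulProperties}(4) applied to $\QQ$, that along a $\QQ_T$-path $\int_0^{\tilde\theta}L_T^+(\tilde y_t)\,\mathrm{d}t=\int_{\iota(L,T)}^{\theta(y)}L^+(y_r)\,\mathrm{d}r=w^L(y_{\iota(L,T)})\wedge T=w^{L_T}(\tilde y_0)$, so $\QQ_T$ attains the value function starting from its initial distribution.

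For the fixed-point identities in the last line of \eqref{equi}, I would shift the $\QQ_T$-expectation back to $\QQ$:
\begin{align*}
\kappa\,\EE^{\QQ_T}\bigg[\int_0^{\tilde\beta}\phi(\tilde y_t)L_T^+(\tilde y_t)\,\mathrm{d}t\bigg]
=\kappa\,\EE^{\QQ}\bigg[\int_0^\beta\phi(y_s)L^+(y_s)\,\bone_{\{s\ge\iota(L,T)\}}\,\mathrm{d}s\bigg],
\end{align*}
where the equality $L_T^+(y_s)=L^+(y_s)$ for $s\ge\iota(L,T)$ holds along $\QQ$-optimal paths because $w^L(y_s)\le T$ on this range (Lemma~\ref{lem:UsefulProperties}(4)). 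Using the identification $\{s\ge\iota(L,T)\}=\{w^L(y_s)\le T\}$ along the same paths, together with the density representation of $\overline\QQ$ in Lemma~\ref{lem:UsefulProperties}(2) (which extends the original fixed-point identity from $C_c^\infty$ test functions to bounded Borel ones, applied here to $\phi\,\bone_{\{w^L\le T\}}$), the right-hand side equals $\int\phi(x)\bone_{\{w^L(x)\le T\}}(1+\gamma L(x))\,\mathrm{d}x=\int\phi(x)(1+\gamma L_T(x))\,\mathrm{d}x$, since $1+\gamma L_T=0$ on $\{w^L>T\}$. The analogous computation with $L^-$ in place of $L^+$ yields the second fixed-point identity, as the $\QQ$-integrand is a nonnegative restriction of $L^-$ whose $\QQ$-integral vanishes.

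The main technical obstacle will be making the $\epsilon$-optimal argument in the upper bound $w^{L_T}\le w^L\wedge T$ pointwise rigorous: Lemma~\ref{lem:UsefulProperties}(4) furnishes the exact DPP identity $w^L(y_t)=\int_t^\theta L^+(y_r)\,\mathrm{d}r$ only $\QQ$-a.e., so for an arbitrary initial point one must approximate by $\epsilon$-optimizers and carefully control the first crossing of $\{w^L=T\}$ by competing paths, leveraging the local Lipschitz continuity of $w^L$ and the intermediate value theorem along the path.
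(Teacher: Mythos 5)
Your proof is correct and follows the same overall architecture as the paper: establish the value function identity $w^{L_T}=w^L\wedge T$ first, then pull the four equilibrium conditions along the time-shift back to $\QQ$, using Lemma~\ref{lem:UsefulProperties}(4) to identify $L_T^+$ with $L^+$ along $\QQ$-a.e.\ paths after time $\iota(L,T)$ and Lemma~\ref{lem:UsefulProperties}(1)--(2) to extend the fixed-point identity to the test function $\phi\,\bone_{\{w^L\le T\}}$. The lower bound $w^{L_T}\ge w^L\wedge T$ and the verification of the constraints, optimality and fixed-point equations coincide with the paper's arguments.

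The one place where you take a genuinely different route is the upper bound $w^{L_T}\le T$ on $\{w^L>T\}$. The paper gets this with a one-line construction: since $L_T^+\equiv 0$ on the open set $\{w^L>T\}$, one simply walks a line segment from $x$ to its first contact with $\{w^L\le T\}$ at zero $L_T^+$-cost and then invokes $w^{L_T}\le w^L$ at that contact point. You instead take an $\epsilon$-optimizer for the $L^+$ problem, locate its first crossing of $\{w^L=T\}$, and split the cost across the crossing using the dynamic programming inequality; this is more involved but sound. Note that the ``main technical obstacle'' you flag at the end is not actually an obstacle: you only need the one-sided DPP inequality $w^L(y_0)\le\int_0^t L^+(y_r)\,\mathrm{d}r + w^L(y_t)$, which holds for \emph{every} $y\in\cY$ by concatenating $y_{[0,t]}$ with a near-optimizer from $y_t$, and never requires the $\QQ$-a.e.\ equality of Lemma~\ref{lem:UsefulProperties}(4). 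Once this is observed, your argument becomes fully pointwise, and the paper's line-segment shortcut removes the $\epsilon$-optimizer machinery altogether. One small imprecision: Lemma~\ref{lem:UsefulProperties}(3) guarantees $L\ge-1/\gamma$ only a.e.\ off $\mathrm{supp}\,\overline\QQ$, so the upper semicontinuity of $L_T$ on $\{w^L=T\}$ requires normalizing $L$ to equal $-1/\gamma$ on the null exceptional set (or replacing $L$ by $L\vee(-1/\gamma)$, which is still u.s.c.\ and induces the same equilibrium); this is a harmless modification but worth noting, since the bare definition of $\mathcal{E}_T$ does not impose $L\ge -1/\gamma$ everywhere.
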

\begin{proof}
We first prove the identity relating the two value functions, which is a basic fact about the optimal control problem \eqref{eq:Eikonal.OC.1}. Indeed, it follows immediately from the definition of $L_T$ that $L_T^+\leq L^+$, and hence $w^{L_T}\leq w^L$. If $x\in\RR^d$ is such that $w^L(x)>T$, then we can connect $x$ with the set $\{w^L=T\}$ via a line segment with zero $L_T^+$-cost. This shows $w^{L_T}\leq T$ and hence $w^{L_T}\leq w^L\wedge T$. We then take any $x\in\RR^d\setminus\overline{\Gamma}_{s-}$ and any $y\in\mathcal{Y}$ that connects $x$ to $\overline{\Gamma}_{s-}$, i.e., $y_0=x$ and $\theta=\inf\{t\geq0:\,y_t\in\overline{\Gamma}_{s-}\}<\infty$. If $y_{[0,\theta]}\subset\{w^L\leq T\}$ then $\int_0^\theta L_T^+(y_t)\,\mathrm{d}t=\int_0^\theta L^+(y_t)\,\mathrm{d}t\ge w^L(x)$. Otherwise $\xi:=\sup\{t\in[0,\theta]:w^L(y_t)>T\}$ and necessarily $\int_0^\theta L_T^+(y_t)\,\mathrm{d}t\ge\int_\xi^\theta L_T^+(y_t)\,\mathrm{d}t=\int_\xi^\theta L^+(y_t)\,\mathrm{d}t\ge w^L(y_\xi)=T$. This proves $w^{L_T}\ge w^L\wedge T$.

Next, we prove that $(\kappa,\QQ_T,L_T)\in \mathcal{E}_T$. Clearly, $\QQ_T\circ y_0^{-1}=\QQ\circ y^{-1}_{\iota(L,T)}=:\mu_T$.
Thanks to the definition of $\iota(L,T)$, it holds that $y_{[\iota(L,T),\theta]}\subset\{w^L\leq T\}$ for $\QQ$-a.e. $y$, that is $y_{[0,\theta]}\subset\{w^L\leq T\}$ for $\QQ_T$-a.e. $y$. It is also clear that
\begin{align}\label{eq:CappedbyT1}
&\QQ_T\bigg(\int_0^\theta L_T^+(y_t)\,\mathrm{d}t\leq T\bigg)
=\QQ\bigg(\int_{\iota(L,T)}^\theta L^+(y_t)\,\mathrm{d}t\leq T\bigg)=1.
\end{align}
For the optimality condition $\QQ_T\in\mathrm{argmin}_{\hat\QQ\circ y_0^{-1}=\QQ_T\circ y_0^{-1}} \EE^{\hat\QQ}\left[\int_{0}^\theta L_T^+(y_t)\,\mathrm{d}t\right]$ (the first line of \eqref{equi}), it suffices to notice that
\begin{align*}
\int_0^\theta L_T^+(y_t)\,\mathrm{d}t=\int_0^\theta L^+(y_t)\,\mathrm{d}t=w^L(y_0)=w^{L_T}(y_0)
\end{align*}
for $\QQ_T$-a.e. $y$ as a result of Lemma \ref{lem:UsefulProperties} and equation \eqref{eq:CappedbyT1}.

To verify the fixed point equation (last line of \eqref{equi}), for any $\phi\in C_c^\infty(\RR^d)$, we obtain:
\begin{align*}
&\int_{\RR^d} \phi(x)\,(L_T(x)+1/\gamma)\,\mathrm{d}x=\int_{\RR^d} \phi(x)\,\bone_{\{w^L\leq T\}}(x)\,(L(x)+1/\gamma)\,\mathrm{d}x\\
&=\frac{\kappa}{\gamma}\,\EE^{\QQ}\left[ \int_0^\beta \phi(y_t)\,\bone_{\{w^L\leq T\}}(y_t)\,L^+(y_t)\,\mathrm{d}t\right]\\
&=\frac{\kappa}{\gamma}\,\EE^{\QQ}\left[\int_{\iota(L,T)}^{\beta\vee\iota(L,T)} \phi(y_t)\,L_T^+(y_t)\,\mathrm{d}t\right]
=\frac{\kappa}{\gamma}\,\EE^{\QQ_T}\left[ \int_0^\beta \phi(y_t)\,L_T^+(y_t)\,\mathrm{d}t\right],
\end{align*}
and
\begin{align*}
&\EE^{\QQ_T}\left[\int_0^\beta \phi(y_t)\,L_T^-(y_t)\,\mathrm{d}t\right]=\EE^{\QQ}\left[\int_{\iota(L,T)}^{\beta\vee\iota(L,T)} \phi(y_t)\,L_T^-(y_t)\,\mathrm{d}t\right]\\
&=\EE^{\QQ} \left[\int_{\iota(L,T)}^{\beta\vee\iota(L,T)} \phi(y_t)\,L^-(y_t)\,\mathrm{d}t\right]
=\EE^{\QQ}\left[\int_0^\beta \phi(y_t)\,\bone_{\{w^L\leq T\}}(y_t)\,L^-(y_t)\,\mathrm{d}t\right]=0.   
\end{align*}
\end{proof}

\begin{remark}
It is easy to see that, for any equilibrium $(\kappa,\QQ,L)\in\mathcal{E}$ and any $0<t<T$, the $t$-level projection of $(\kappa,\QQ_T,L_T)$ coincides with $(\kappa,\QQ_{t},L_t)$.
It is also clear that, if $(\kappa,\QQ,L)\in\mathcal{E}_T$, then $(\QQ,L)=(\QQ_T,L_T)$.
\end{remark}

\medskip

Notice that the $T$-projection corresponds to stopping the moving boundary at the physical time $T$.
Continuing this line of thought, we notice that any equilibrium $(\kappa,\QQ,L)\in\mathcal{E}$ generates a non-decreasing set-valued mapping $t\mapsto D^{\QQ,L}_t$, for $t>0$, where $D^{\QQ,L}_t\subset\RR^d$ is the union of $D^{\QQ,L}_0:=\Gamma_{s-}$ and the support of $\overline{\QQ}_t$, where the latter is the occupation measure of $\QQ_t$:
\begin{align*}
&\overline{\QQ}_t(\mathrm{d}x):=\EE^{\QQ_t}\left[\int_0^\beta \bone_{\{y_z\in\, \mathrm{d}x\}}\,\mathrm{d}z\right].
\end{align*}
We interpret $D^{\QQ,L}_t$ as the cascade aggregate at time $t$.

Now, we can formally introduce the arrival time function $w^{\QQ,L}:\RR^d\rightarrow[0,\infty]$ in the equilibrium formulation:
\begin{align*}
& w^{\QQ,L}(x):= \inf\{t\geq0:\,x\in D^{\QQ,L}_t\}.
\end{align*}
The value of $w^{\QQ,L}(x)$ measures the physical time it takes the cascade to reach $x$.

\smallskip

\begin{remark}
As the set-valued map $t\mapsto D_t^{\QQ,L}$ is non-decreasing (in the sense that $s<t$ implies $D_s^{\QQ,L}\subset D_t^{\QQ,L}$), it is not hard to see that
\begin{align}
\{x:\,w^{\QQ,L}\leq t\} = \bigcap_{\varepsilon>0}D_{t+\varepsilon}^{\QQ,L}=:D_{t+}^{\QQ,L}.\label{eq.sublevSetW.Gamma}
\end{align}
\end{remark}

\smallskip

\begin{remark}
For any $(\kappa,\QQ,L)\in\mathcal{E}_T$, we have $D^{\QQ,L}_T = \Gamma_{s-}\cup\{L>0\}$.
\end{remark}

\medskip

The next lemma shows that $w^{\QQ,L}$ is obtained by setting $w^L$ to infinity outside of the terminal cascade aggregate.

\begin{lemma}\label{lem:w=v}
Let $(\kappa,\QQ,L)\in\mathcal{E}_T$ and $D_t:=D_t^{\QQ,L}$. Then
\begin{align*}
w^{\QQ,L}(x)=w^L(x)\,\bone_{D_T}(x)+\infty\,\bone_{\RR^d\setminus D_T}(x),\quad x\in\RR^d.    
\end{align*}
\end{lemma}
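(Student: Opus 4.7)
The plan is to express the sub-level sets $\{w^{\QQ,L}\le t\}$ in closed form by combining Lemma~\ref{lem:EquilibriumTimeConsistency}, the identity $\{w^{\QQ,L}\le t\}=\bigcap_{\varepsilon>0}D_{t+\varepsilon}^{\QQ,L}$ recorded in the remark preceding the statement, and the Remark asserting $D_T^{\QQ',L'}=\Gamma_{s-}\cup\{L'>0\}$ for every $(\kappa,\QQ',L')\in\mathcal{E}_T$.

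First I would observe that, for each $t\in(0,T]$,
\begin{align*}
D_t^{\QQ,L}=D_t^{\QQ_t,L_t}.
\end{align*}
Both sides equal $\Gamma_{s-}$ adjoined with the support of the occupation measure of $\QQ_t$: by definition $D_t^{\QQ,L}=\Gamma_{s-}\cup\mathrm{supp}\,\overline{\QQ}_t$, while on the right-hand side $(\QQ_t)_t=\QQ_t$ because the cap condition forces $\int_0^\theta L_t^+(y_s)\,\mathrm{d}s\le t$ to hold $\QQ_t$-a.s., so $\iota(L_t,t)=0$ for $\QQ_t$-a.e.\ trajectory. Applying the cited Remark to the capped equilibrium $(\kappa,\QQ_t,L_t)\in\mathcal{E}_t$ produced by Lemma~\ref{lem:EquilibriumTimeConsistency} and using the explicit form of $L_t$ then gives
\begin{align*}
D_t^{\QQ,L}=\Gamma_{s-}\cup\{L_t>0\}=\Gamma_{s-}\cup\bigl(\{L>0\}\cap\{w^L\le t\}\bigr).
\end{align*}

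Intersecting over $\varepsilon>0$ yields
\begin{align*}
\{w^{\QQ,L}\le t\}=\Gamma_{s-}\cup\bigl(\{L>0\}\cap\{w^L\le t\}\bigr),
\end{align*}
since $\bigcap_{\varepsilon>0}\{w^L\le t+\varepsilon\}=\{w^L\le t\}$ and $\{L>0\}$ does not depend on $\varepsilon$. Reading this off together with $D_T=\Gamma_{s-}\cup\{L>0\}$ gives the three cases constituting the claim: if $x\in\Gamma_{s-}$ then $w^{\QQ,L}(x)=0=w^L(x)$; if $x\in\{L>0\}\setminus\Gamma_{s-}\subset D_T$ then $w^{\QQ,L}(x)\le t\iff w^L(x)\le t$, so $w^{\QQ,L}(x)=w^L(x)$; and if $x\notin D_T$ then neither clause in the union is ever satisfied for any $t$, forcing $w^{\QQ,L}(x)=\infty$.

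The only mildly delicate point is the identity $D_t^{\QQ,L}=D_t^{\QQ_t,L_t}$, i.e., that the $t$-capping operation commutes with forming the time-$t$ cascade aggregate; this is essentially bookkeeping but must be done carefully because the aggregates are defined via the support of a measure rather than pointwise. Everything else is a direct manipulation of the level-set description of $w^{\QQ,L}$.
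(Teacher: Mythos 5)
Your proof is correct, but it takes a genuinely different route from the paper's. You work at the level of sublevel sets: you establish $D_t^{\QQ,L}=D_t^{\QQ_t,L_t}=\Gamma_{s-}\cup\{L_t>0\}=\Gamma_{s-}\cup\bigl(\{L>0\}\cap\{w^L\le t\}\bigr)$ by combining the $T$-level projection of Lemma~\ref{lem:EquilibriumTimeConsistency}, the identity $\{w^{\QQ,L}\le t\}=\bigcap_{\varepsilon>0}D_{t+\varepsilon}^{\QQ,L}$ from the remark above the lemma, and the remark asserting $D^{\QQ',L'}_T=\Gamma_{s-}\cup\{L'>0\}$ for capped equilibria; the conclusion then falls out of the resulting level-set description. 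The paper's proof is instead purely pointwise: fix $x\in D_T\setminus\Gamma_{s-}$, set $t=w^{\QQ,L}(x)$, and use $\overline{\QQ}_{t\pm\varepsilon}(B(x,r))$ to build approximating sequences $x_n\to x$ with $w^L(x_n)\le t+\varepsilon$ (resp.\ $>t-\varepsilon$), then pass to the limit using the continuity of $w^L$ from Lemma~\ref{lem:UsefulProperties}(5). Your approach is slicker and, as a by-product, yields the corollary $w^{\QQ,L}=w^L\bone_{\{L>0\}\cup\Gamma_{s-}}+\infty\bone_{\{L=-1/\gamma\}\setminus\Gamma_{s-}}$ in the same breath; the trade-off is that it leans much more heavily on the unproved remarks (in particular $D_T^{\QQ,L}=\Gamma_{s-}\cup\{L>0\}$ requires $\{L>0\}\subset\Gamma_{s-}\cup\mathrm{supp}\,\overline{\QQ}$, which is stronger than the a.e.\ statement in Lemma~\ref{lem:UsefulProperties}(3) and really does presume that $L$ has been normalized off a null set), whereas the paper's argument is more self-contained and uses only Lemma~\ref{lem:UsefulProperties}. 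One small step you glide over: in the case $x\in\{L>0\}\setminus\Gamma_{s-}$ you assert the biconditional $w^{\QQ,L}(x)\le t\iff w^L(x)\le t$ for \emph{all} $t\ge 0$; for $t\ge T$ this needs $w^L\le T$ everywhere, which does hold (since $L=L_T$ for $(\kappa,\QQ,L)\in\mathcal{E}_T$, hence $w^L=w^{L_T}=w^L\wedge T$), but is worth making explicit.
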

\begin{proof}
The desired equality clearly holds for $x\in\Gamma_{s-}$.
Let $x\in D_T\setminus\Gamma_{s-}$ and consider $t:=w^{\QQ,L}(x)\leq T$. It holds that $\overline{\QQ}_{t+\varepsilon}(B(x,r))>0$ for any $r>0$ and any $\varepsilon>0$. In particular, Lemma \ref{lem:UsefulProperties} implies the existence of a sequence $(x_n)_{n\ge0}$ converging to $x$ with the property that $w^L(x_n)\leq t+\varepsilon$, which shows that $w^L(x)\leq t$ by the continuity of $w^L$ and by letting $\varepsilon\downarrow0$. On the other hand, if $t>0$ then for any $\varepsilon\in(0,t)$, $\overline\QQ_{t-\varepsilon}(B(x,r))=0$ for all sufficiently small $r>0$. Lemma \ref{lem:UsefulProperties} implies that for $\QQ$-a.e. $(y,\beta)$ and a.e. $z\in[0,\beta]$ such that $y_z\in B(x,r)$ it must be that $w^L(y_z)>t-\varepsilon$. This, together with the first half of the proof, implies the existence of a sequence $(x_n)_{n\ge0}$ converging to $x$ with the property that $w^L(x_n)>t-\varepsilon$, which shows that $w^L(x)\ge t$ by the continuity of $w^L$ and by letting $\varepsilon\downarrow0$.

As $D_t=D_T$ for all $t>T$, we see from the definition that $w^{\QQ,L}(x)=\infty$ if $x\in\RR^d\setminus D_T$.
\end{proof}

\smallskip

\begin{corollary}
For any $(\kappa,\QQ,L)\in\mathcal{E}$, we have $w^{\QQ,L}=w^L\,\bone_{\{L>0\}\cup\Gamma_{s-}} + \infty\,\bone_{\{L=-1/\gamma\}\setminus\Gamma_{s-}}$.
\end{corollary}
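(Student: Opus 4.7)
The proof is a direct combination of the two results immediately preceding this corollary, followed by a small set-theoretic identification. Since $(\kappa,\QQ,L)\in\mathcal{E}=\bigcup_{T>0}\mathcal{E}_T$, I would pick $T>0$ with $(\kappa,\QQ,L)\in\mathcal{E}_T$. The remark just above identifies the terminal cascade aggregate as $D^{\QQ,L}_T=\Gamma_{s-}\cup\{L>0\}$, and Lemma \ref{lem:w=v} supplies the representation
\[
w^{\QQ,L}=w^L\,\bone_{D^{\QQ,L}_T}+\infty\,\bone_{\RR^d\setminus D^{\QQ,L}_T}.
\]
Substituting the identification of $D^{\QQ,L}_T$ yields
\[
w^{\QQ,L}=w^L\,\bone_{\Gamma_{s-}\cup\{L>0\}}+\infty\,\bone_{\RR^d\setminus(\Gamma_{s-}\cup\{L>0\})},
\]
which already matches the first summand of the target formula.

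What remains is to rewrite the exceptional set $\RR^d\setminus(\Gamma_{s-}\cup\{L>0\})$ in the form $\{L=-1/\gamma\}\setminus\Gamma_{s-}$. Since $L$ is upper semicontinuous and never takes the value $0$, the identity $\{L\geq 0\}=\{L>0\}$ holds and is closed, so
\[
\RR^d\setminus(\Gamma_{s-}\cup\{L>0\})=\{L<0\}\setminus\Gamma_{s-}.
\]
By Lemma \ref{lem:UsefulProperties}(3), $\mathrm{supp}\,\overline\QQ\subset\{L>0\}$, so the open set $\{L<0\}$ is contained in the open complement of $\mathrm{supp}\,\overline\QQ$, where the same lemma asserts $L=-1/\gamma$ Lebesgue-almost everywhere. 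To upgrade this a.e.~identity to a pointwise one on $\{L<0\}$, I would invoke Lemma \ref{lem:UsefulProperties}(2), which pins $L$ down pointwise a.e.~via the formula $L=\frac{1}{\kappa q-\gamma}\bone_{\{q>0\}}-\frac{1}{\gamma}\bone_{\{q=0\}}$, and combine this with upper semicontinuity: for any $x\in\{L<0\}$, the density of $\{L=-1/\gamma\}$ in a neighborhood of $x$ together with upper semicontinuity forces $L(x)=-1/\gamma$.

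The main (and only slightly delicate) step is this last upgrade from an almost-everywhere to a pointwise identity of $L\equiv -1/\gamma$ on the open set $\{L<0\}$; everything else is simply a chain of substitutions into Lemma \ref{lem:w=v} and the preceding remark.
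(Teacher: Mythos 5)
Your substitution of the remark $D^{\QQ,L}_T=\Gamma_{s-}\cup\{L>0\}$ into Lemma~\ref{lem:w=v} is the right route, and it correctly reduces the corollary to the set identity $\{L<0\}\setminus\Gamma_{s-}=\{L=-1/\gamma\}\setminus\Gamma_{s-}$ (the paper states the corollary without proof, and this chain is clearly what is intended). The gap is exactly in the ``upgrade'' step you flag, but your proposed fix argues in the wrong direction. Upper semicontinuity says $\limsup_{y\to x}L(y)\leq L(x)$; so from $L=-1/\gamma$ Lebesgue-a.e.\ on the open set $\{L<0\}$, the density argument yields only $L(x)\ge -1/\gamma$ for $x\in\{L<0\}$ (there are points arbitrarily close to $x$ with $L=-1/\gamma$, so the $\limsup$ from the left of the inequality is at least $-1/\gamma$). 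The inequality you actually need, $L(x)\le -1/\gamma$, is a \emph{lower}-semicontinuity statement which upper semicontinuity cannot supply. Nothing in Definition~\ref{def:equi} forbids an equilibrium $L$ from taking a value in $(-1/\gamma,0)$ at isolated points of $\{L<0\}\setminus\overline{\Gamma}_{s-}$: modifying $L$ to, say, $-1/(2\gamma)$ on a countable subset of the interior of $\{L<0\}$ keeps it upper semicontinuous, nonzero, and leaves all the integral constraints in \eqref{equi} (and the support of $\overline\QQ$, hence $w^{\QQ,L}$) unchanged.

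So, as a literal pointwise identity, the last step fails; the same subtlety is already present in the remark $D^{\QQ,L}_T=\Gamma_{s-}\cup\{L>0\}$, since Lemma~\ref{lem:UsefulProperties}(3) leaves room for $\{L>0\}$ to exceed $\mathrm{supp}\,\overline\QQ_T$ by a Lebesgue-null set. The intended reading is evidently either modulo Lebesgue-null sets, or under the convention that $L$ is pinned to its canonical pointwise representative determined by $q$ via Lemma~\ref{lem:UsefulProperties}(2), namely $L=\tfrac{1}{\kappa q-\gamma}\bone_{\{q>0\}}-\tfrac{1}{\gamma}\bone_{\{q=0\}}$. Under either convention your proof is complete and matches what the paper intends; without one, the upper-semicontinuity argument does not close the a.e.--to--pointwise gap.
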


\smallskip

The next lemma shows how the arrival time function is affected by a projection.

\begin{lemma}\label{le:projectionOfw}
Let $(\kappa,\QQ,L)\in\mathcal{E}$ and $D_t:=D_t^{\QQ,L}$. For any $T\in(0,\infty)$, we have:
\begin{align*}
w^{\QQ,L}\wedge T=w^{\QQ_T,L_T}\wedge T,
\quad w^{\QQ_T,L_T}= w^{L_T}\,\bone_{D_T} + \infty\,\bone_{\RR^d\setminus D_T}.
\end{align*}
\end{lemma}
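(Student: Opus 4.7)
The plan is to establish the explicit formula (the second identity) first and then deduce the equality of capped arrival times (the first identity) by a pointwise case analysis on whether $x$ lies in $D_T$.

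Since $(\kappa,\QQ,L)\in \mathcal{E}_{T'}$ for some $T'>0$, I would assume without loss of generality that $T'>T$, the case $T'\le T$ being trivial: the remark after Lemma~\ref{lem:EquilibriumTimeConsistency} asserts that the $T$-projection acts as the identity on $\mathcal{E}_{T'}$ when $T'\le T$, so $(\QQ_T,L_T)=(\QQ,L)$ and both identities become tautologies. In the generic case, Lemma~\ref{lem:EquilibriumTimeConsistency} yields $(\kappa,\QQ_T,L_T)\in \mathcal{E}_T$ together with $w^{L_T}=w^L\wedge T$, and Lemma~\ref{lem:w=v} applied to this capped equilibrium gives
\begin{align*}
w^{\QQ_T,L_T} = w^{L_T}\,\bone_{D^{\QQ_T,L_T}_T} + \infty\,\bone_{\RR^d\setminus D^{\QQ_T,L_T}_T}.
\end{align*}
What remains for the second identity is the identification $D^{\QQ_T,L_T}_T=D_T$. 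Here I would appeal again to the idempotency stated in the remark after Lemma~\ref{lem:EquilibriumTimeConsistency}: applied to $(\kappa,\QQ_T,L_T)\in \mathcal{E}_T$, it gives $((\QQ_T)_T,(L_T)_T)=(\QQ_T,L_T)$, whence $\overline{(\QQ_T)_T}=\overline{\QQ_T}$ and therefore $D^{\QQ_T,L_T}_T=\Gamma_{s-}\cup\mathrm{supp}(\overline{\QQ_T})=D_T$.

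For the first identity I would argue pointwise. If $x\in D_T$, then $w^{\QQ,L}(x)\le T$ by the very definition of $w^{\QQ,L}$, while Lemma~\ref{lem:w=v} applied to $(\kappa,\QQ,L)\in \mathcal{E}_{T'}$ (noting $D_T\subset D_{T'}$) gives $w^{\QQ,L}(x)=w^L(x)$; combining $w^L(x)\le T$ with $w^{L_T}=w^L\wedge T$ and the second identity just established, both $w^{\QQ,L}\wedge T$ and $w^{\QQ_T,L_T}\wedge T$ evaluate to $w^L(x)$ at such an $x$. If $x\notin D_T$, then by monotonicity of $t\mapsto D_t^{\QQ,L}$ one has $w^{\QQ,L}(x)\ge T$, and the second identity yields $w^{\QQ_T,L_T}(x)=\infty$, so both sides reduce to $T$.

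The only real technical point is the identification $D^{\QQ_T,L_T}_T=D_T$, and the slick route is via the idempotency remark just cited. A hands-on alternative would verify $\iota(L_T,T)=0$ for $\QQ_T$-a.e.\ path and match the occupation measures directly from the definitions, which is elementary when $\int_0^\theta L_T^+(y_z)\,\mathrm{d}z<T$ but requires more care in the boundary case where this integral equals $T$ exactly (and where $L^+$ can vanish along an initial segment of the path); leveraging idempotency bypasses this nuisance altogether.
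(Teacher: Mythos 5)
Your proof is correct and follows essentially the same route as the paper's: both rest on the tower/idempotency property of the $T$-level projection recorded in the remark after Lemma~\ref{lem:EquilibriumTimeConsistency}. The paper invokes the tower property to get $D_t^{\QQ,L}=D_t^{\QQ_T,L_T}$ for all $t\le T$ at once and reads off both identities directly; you instead use idempotency only at $t=T$ to obtain $D_T^{\QQ_T,L_T}=D_T^{\QQ,L}$, feed it into Lemma~\ref{lem:w=v} for the second identity, and then derive the first identity by a pointwise case analysis on $x\in D_T$ versus $x\notin D_T$. This is the same underlying observation, just unpacked in slightly more detail.
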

\begin{proof}
Thanks to the tower property of projections, it holds that $D_t^{\QQ,L}=D_t^{\QQ_T,L_T}$ for any $t\leq T$ and thus $w^{\QQ,L}\wedge T=w^{\QQ_T,L_T}\wedge T$. The second equality follows easily from the same observation: $D_T^{\QQ,L}=D_T^{\QQ_T,L_T}$.
\end{proof}

\medskip

The following proposition connects the value function $w^{L}$ back to the cascade PDE \eqref{PDE:wave}.
 
\begin{proposition}\label{prop:fromEquil.toPDE}
Let $(\kappa,\QQ,L)\in\cE$ be such that $\EE^\QQ[\beta]<\infty$ (or, equivalently, let $\overline\QQ$ be a finite measure) and let $v:=w^{L}$, $\mu:=\QQ\circ y_0^{-1}$ and $\nu:=\QQ\circ y_\beta^{-1}$. Then,
\begin{align*}
\mathrm{div}\left\{\left(\frac{\nabla v(x)}{|\nabla v(x)|^2}
+\gamma\frac{\nabla v(x)}{|\nabla v(x)|}\right)\bone_{\{|\nabla v|>0\}}(x)\right\}= \kappa\nu - \kappa\mu 
\end{align*}
holds in the sense of distributions, or, more specifically,
\begin{align*}
&\int_{\{|\nabla v|>0\}}\bigg(\frac{\nabla v(x)}{|\nabla v(x)|^2}+\gamma\frac{\nabla v(x)}{|\nabla v(x)|}\bigg)\cdot\nabla\varphi(x)\,\mathrm{d}x = \kappa\,\int_{\RR^d}\varphi\,\mathrm{d}\mu-\kappa\,\int_{\RR^d}\varphi\,\mathrm{d}\nu, 
\end{align*}
for any $\varphi\in C_c^\infty(\RR^d)$.
\end{proposition}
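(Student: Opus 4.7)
The strategy is a straightforward particle-level identity combined with the fixed-point relation from the equilibrium definition. Fix a test function $\varphi\in C_c^\infty(\RR^d)$. Since each path $y$ under $\QQ$ is $1$-Lipschitz and $\beta<\infty$ $\QQ$-a.s., the chain rule applies to $t\mapsto\varphi(y_t)$ on $[0,\beta]$, giving
\begin{equation*}
\varphi(y_\beta)-\varphi(y_0)=\int_0^\beta \nabla\varphi(y_t)\cdot\dot y_t\,\mathrm{d}t.
\end{equation*}
Taking expectations under $\QQ$ and using Fubini (justified because $\EE^\QQ[\beta]<\infty$ and $\nabla\varphi$ is bounded) yields
\begin{equation*}
\int \varphi\,\mathrm{d}\nu-\int\varphi\,\mathrm{d}\mu=\EE^\QQ\!\left[\int_0^\beta \nabla\varphi(y_t)\cdot\dot y_t\,\mathrm{d}t\right].
\end{equation*}

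Now I invoke item (5) of Lemma \ref{lem:UsefulProperties}: for $\QQ$-a.e.\ $(y,\beta)$ and a.e.\ $t\in[0,\beta]$, $|\nabla v(y_t)|=L^+(y_t)>0$ and $\dot y_t=-\nabla v(y_t)/|\nabla v(y_t)|$. Substituting and then rewriting the expectation as an integral against the occupation density $q$ (finite by the hypothesis $\EE^\QQ[\beta]<\infty$), I obtain
\begin{equation*}
\int\varphi\,\mathrm{d}\nu-\int\varphi\,\mathrm{d}\mu=-\int_{\RR^d}\nabla\varphi(x)\cdot\frac{\nabla v(x)}{|\nabla v(x)|}\,\bone_{\{|\nabla v|>0\}}(x)\,q(x)\,\mathrm{d}x,
\end{equation*}
where the indicator is harmless because $q$ is supported (a.e.) on $\{L^+>0\}=\{|\nabla v|>0\}$ by items (2)--(3) and (5) of Lemma \ref{lem:UsefulProperties}.

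The final step converts the density $q$ into the PDE flux via the fixed-point identity from Lemma \ref{lem:UsefulProperties}(2): on $\{q>0\}$ one has $q=\gamma/\kappa+1/(\kappa L)$, which combined with $L=L^+=|\nabla v|$ there gives
\begin{equation*}
q(x)\,\frac{\nabla v(x)}{|\nabla v(x)|}\,\bone_{\{|\nabla v|>0\}}(x)=\frac{1}{\kappa}\left(\frac{\nabla v(x)}{|\nabla v(x)|^2}+\gamma\,\frac{\nabla v(x)}{|\nabla v(x)|}\right)\bone_{\{|\nabla v|>0\}}(x).
\end{equation*}
Plugging this in and multiplying by $\kappa$ yields the asserted identity tested against $\varphi$, which is the distributional statement of the proposition.

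The routine ingredients are the Lipschitz chain rule, Fubini, and the density identity. The only point that requires care is the coincidence, up to Lebesgue-null sets, of the three sets $\{q>0\}$, $\{L>0\}$ and $\{|\nabla v|>0\}$ on which $q$ and $|\nabla v|$ are meaningfully linked; this is precisely what Lemma \ref{lem:UsefulProperties} guarantees, so the argument proceeds cleanly with no further technical obstacle.
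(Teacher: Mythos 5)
Your proposal is correct and in substance matches the paper's argument: both start from the Lipschitz chain rule along particle trajectories, then invoke Lemma \ref{lem:UsefulProperties}(5) to replace $\dot y_t$ by $-\nabla v/|\nabla v|$, then pass to a spatial integral via the occupation measure, and finally convert the occupation density into the flux $(1/|\nabla v|+\gamma)$ using the fixed-point relation $\kappa q L^+ = 1+\gamma L^+$ from Lemma \ref{lem:UsefulProperties}(2) together with the a.e.\ identity $\{q>0\}=\{L>0\}=\{|\nabla v|>0\}$. The paper frames the last step as an application of the fixed-point equation with the (possibly unbounded, so the paper pre-establishes integrability of $\bone_{\{|\nabla v|>0\}}/|\nabla v|$) test function $\nabla\varphi\cdot\nabla v/|\nabla v|^2$, whereas you go directly through the pointwise formula $q=\gamma/\kappa+1/(\kappa L)$ on $\{q>0\}$; these are the same calculation, and your version has the minor advantage of only ever integrating bounded quantities against $\overline{\QQ}$.
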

\begin{proof}
Items (2) and (5) of Lemma \ref{lem:UsefulProperties} imply that $\{|\nabla v|>0\}\stackrel{\mathrm{a.e.}}{=}\{L>0\}\stackrel{\mathrm{a.e.}}{=}\{q>0\}\stackrel{\mathrm{a.e.}}{=}\{q>\frac{\gamma}{\kappa}\}$ and that $|\nabla v|=L^+$ a.e.
Taking $\varphi:=\frac{1}{|\nabla v|}\bone_{\{|\nabla v|>0\}}$ in the fixed point equation (the last line of equations \eqref{equi}), we see that
\begin{align*}
\int_{\{|\nabla v|>0\}}\left(\gamma+\frac{1}{|\nabla v|}\right)\,\mathrm{d}x=\kappa\,\EE^\QQ[\beta].    
\end{align*}
As a result, $\frac{1}{|\nabla v|}\bone_{\{|\nabla v|>0\}}$ is integrable under the assumptions of the proposition. For any $\varphi\in C_c^\infty(\RR^d)$, $\nabla\varphi\cdot\frac{\nabla v}{|\nabla v|^2}\bone_{\{|\nabla v|>0\}}$ is an integrable function with a compact support and thus we can use the fixed point equation (the last line of equations \eqref{equi}), together with Lemma \ref{lem:UsefulProperties} (5), to deduce that
\begin{align*}
\kappa\int_{\RR^d}\varphi\,\mathrm{d}\mu-\kappa\int_{\RR^d}\varphi\,\mathrm{d}\nu&=\kappa\EE^\QQ[\varphi(y_0)-\varphi(y_\beta)]\\
&=\kappa\EE^\QQ\left[-\int_0^\beta\frac{\mathrm{d}}{\mathrm{d}s}\varphi(y_s)\,\mathrm{d}s\right]\\
&=\kappa\EE^\QQ\left[\int_0^\beta\nabla\varphi(y_s)\cdot\frac{\nabla v}{|\nabla v|}(y_s)\,\mathrm{d}s\right]\\
&=\kappa\EE^\QQ\left[\int_0^\beta\nabla\varphi(y_s)\cdot\frac{\nabla v}{|\nabla v|^2}(y_s)\bone_{\{|\nabla v|>0\}}(y_s)L^+(y_s)\,\mathrm{d}s\right]\\
&=\int_{\{|\nabla v|>0\}}\nabla\varphi\cdot\frac{\nabla v}{|\nabla v|^2}(\gamma L+1)\,\mathrm{d}x\quad\\
&=\int_{\{|\nabla v|>0\}}\nabla\varphi\cdot\left(\gamma\frac{\nabla v}{|\nabla v|}+\frac{\nabla v}{|\nabla v|^2}\right)\,\mathrm{d}x,
\end{align*}
which proves the claim.
\end{proof}

The above proposition has the following obvious corollary, which provides a rigorous connection between the arrival time function $w^{\QQ,L}$ and the cascade PDE \eqref{PDE:wave}.
 
\begin{corollary}\label{cor:fromEquil.toPDE}
Using the assumption and notation of Proposition \ref{prop:fromEquil.toPDE}, the equation
\begin{align}\label{eq.cascadePDE.wQL}
\mathrm{div}\left(\frac{\nabla w^{\QQ,L}(x)}{|\nabla w^{\QQ,L}(x)|^2}
+\gamma\frac{\nabla w^{\QQ,L}(x)}{|\nabla w^{\QQ,L}(x)|}\right)=\kappa\nu-\kappa\mu 
\end{align}
holds the sense of distributions, for $x$ in the interior of $\{L>0\}$.
\end{corollary}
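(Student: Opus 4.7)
The plan is to derive this corollary as a direct bookkeeping consequence of Proposition \ref{prop:fromEquil.toPDE}, using the identification of $w^{\QQ,L}$ with $v:=w^L$ on the interior of $\{L>0\}$ together with the a.e.\ identity $|\nabla v|=L^+$ furnished by Lemma \ref{lem:UsefulProperties}(5). Write $\Omega$ for the interior of $\{L>0\}$ and fix an arbitrary $\varphi\in C_c^\infty(\Omega)$. Since the distributional equality in \eqref{eq.cascadePDE.wQL} is tested against such $\varphi$, the whole task amounts to transferring the statement of Proposition \ref{prop:fromEquil.toPDE} from $v$ to $w^{\QQ,L}$.

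First, I would invoke the corollary stated immediately after Lemma \ref{lem:w=v}, which gives $w^{\QQ,L}=w^L=v$ pointwise on $\{L>0\}\cup\Gamma_{s-}\supseteq\Omega$. Because $\Omega$ is open, the classical (hence distributional) gradients of $w^{\QQ,L}$ and $v$ agree on $\Omega$ at every point where one of them is differentiable; in particular, by Lemma \ref{lem:UsefulProperties}(5), $v$ is locally Lipschitz and satisfies $|\nabla v|=L^+>0$ Lebesgue a.e.\ on $\{L>0\}$, so $\nabla w^{\QQ,L}=\nabla v$ and $|\nabla w^{\QQ,L}|=L^+>0$ a.e.\ on $\Omega$. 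In particular, the integrand appearing on the left side of \eqref{eq.cascadePDE.wQL}, tested against $\nabla\varphi$, is well-defined a.e.\ on the support of $\nabla\varphi$ and coincides there with $\bigl(\nabla v/|\nabla v|^2+\gamma\nabla v/|\nabla v|\bigr)\cdot\nabla\varphi$.

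Next, since $\nabla\varphi$ is supported in $\Omega$ and $\Omega\setminus\{|\nabla v|>0\}$ has Lebesgue measure zero, the pairing
\begin{equation*}
\int_{\RR^d}\!\left(\frac{\nabla w^{\QQ,L}}{|\nabla w^{\QQ,L}|^2}+\gamma\frac{\nabla w^{\QQ,L}}{|\nabla w^{\QQ,L}|}\right)\!\cdot\nabla\varphi\,\mathrm{d}x
=\int_{\{|\nabla v|>0\}}\!\left(\frac{\nabla v}{|\nabla v|^2}+\gamma\frac{\nabla v}{|\nabla v|}\right)\!\cdot\nabla\varphi\,\mathrm{d}x
\end{equation*}
holds, and by Proposition \ref{prop:fromEquil.toPDE} the right-hand side equals $\kappa\int\varphi\,\mathrm{d}\mu-\kappa\int\varphi\,\mathrm{d}\nu$. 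Since $\varphi\in C_c^\infty(\Omega)$ was arbitrary, this is precisely the distributional form of \eqref{eq.cascadePDE.wQL} on $\Omega$.

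There is no real obstacle to speak of: the only point requiring minor care is the justification that $\nabla w^{\QQ,L}$ and $\nabla v$ agree a.e.\ on $\Omega$, which uses the openness of $\Omega$ to rule out any contribution from the region $\{w^{\QQ,L}=\infty\}=\{L=-1/\gamma\}\setminus\Gamma_{s-}$. Outside $\Omega$, $w^{\QQ,L}$ may attain $+\infty$ and the left-hand side of \eqref{eq.cascadePDE.wQL} need not be defined in any classical sense, which is exactly why the statement is restricted to the interior of $\{L>0\}$.
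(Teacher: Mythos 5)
Your proof is correct and is exactly the argument the paper implicitly has in mind: the paper offers no proof, calling this an ``obvious corollary" of Proposition~\ref{prop:fromEquil.toPDE}, and your reduction — identify $w^{\QQ,L}$ with $w^L=v$ on the open set $\Omega$ via the corollary to Lemma~\ref{lem:w=v}, observe $|\nabla v|=L^+>0$ a.e.\ on $\Omega$ from Lemma~\ref{lem:UsefulProperties}(5) so the indicator $\bone_{\{|\nabla v|>0\}}$ is superfluous there, and test only against $\varphi\in C_c^\infty(\Omega)$ — is precisely the bookkeeping that makes it obvious.
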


\smallskip

Corollary \ref{cor:fromEquil.toPDE} shows that any arrival time function $w^{\QQ,L}$ satisfying \eqref{eq.intro.defEquil.u} does solve the relaxed version of \eqref{PDE:wave} in the interior of $\{L>0\}$ (which contains the interior of $\{0<w^{\QQ,L}\leq t\}$ for any $t>0$). Moreover, we obtain the precise characterization of the ``relaxation term", which describes the distribution of the excess loss of boundary energy, via $\mathbf{e}^{\kappa,\QQ,L}$ (cf. \eqref{eq.intro.def.e}).

\subsection{Perimeter of equilibrium aggregates}
\label{subse:perim.equil}

Herein, we estimate the perimeter of the equilibrium cascade aggregates $D^{\QQ,L}_T$. 
Throughout the subsection, we assume that $d=2$ and fix an arbitrary finite-perimeter set $\Gamma_{s-}$, an arbitrary equilibrium $(\kappa,\QQ,L)\in\mathcal{E}$, any $T\in(0,\infty)$, and any Lebesgue-measurable $u(s-,\cdot)$ with $\int_{\RR^2\setminus\overline{\Gamma}_{s-}}(1+u(s-,x))^-\,\mathrm{d}x<\infty$.
Recall that $D^{\QQ,L}_T=D^{\QQ_T,L_T}_T$ denotes the union of $\Gamma_{s-}$ and the support of the occupation measure $\overline{\QQ}_T$.
We also recall 
\begin{equation}\label{eq.e.def}
\begin{split}
\mathbf{e}^{\kappa,\QQ,L}(\mathrm{d}x)= &\;\kappa\,\mathbb{Q}(y_0\in \mathrm{d}x,\theta=0) \\
&+ \bone_{\{w^{\QQ,L}<\infty\}\setminus\overline{\Gamma}_{s-}}(x)\,\big(\kappa\,\QQ(y_0\in\,\mathrm{d}x) - \kappa\,\QQ(y_\beta\in\,\mathrm{d}x)
- (1+u(s-,x))\,\mathrm{d}x\big),
\end{split}
\end{equation}
which describes the distribution of the excess loss of energy. 
The main goal of this subsection is to prove the following theorem.

\begin{theorem}\label{thm:perimeter.mainThm}
Assume that $\mathbf{e}^{\kappa,\QQ_T,L_T}\geq0$ and $\kappa\,\bone_{\partial_*\Gamma_{s-}}(x)\,\QQ(y_\theta\in \mathrm{d}x,\beta=\theta)\geq \gamma\,\bone_{\partial_*\Gamma_{s-}}(x)\,\mathcal{H}^1(\mathrm{d}x)$.
Then, $D^{\QQ,L}_T$ is a set of finite perimeter, and 
\begin{align*}
\big\|\partial D^{\QQ,L}_T\big\|(\RR^2) \leq \frac{\kappa}{\gamma}\,\QQ_T(\beta=\theta)
- \frac{1}{\gamma}\int_{\{w^{\QQ_T,L_T}<\infty\}\setminus\overline{\Gamma}_{s-}} (1+u(s-,x))\,\mathrm{d}x.
\end{align*}
\end{theorem}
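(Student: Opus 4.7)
My plan is to imitate the heuristic divergence-theorem computation in the remark following Theorem \ref{thm:peri}, replacing the assumed classical regularity with the distributional framework available for the equilibrium arrival time function. First I would reduce to the projected equilibrium $(\kappa,\QQ_T,L_T)\in\mathcal{E}_T$ using Lemma \ref{le:projectionOfw}, so that $v:=w^{\QQ_T,L_T}$ equals $w^{L_T}$ on $D_T^{\QQ,L}$ and $+\infty$ elsewhere, and so that $D_T^{\QQ,L}=\overline{\Gamma}_{s-}\cup\{0<v\le T\}$. Corollary \ref{cor:fromEquil.toPDE} then tells us that
\[
\mathrm{div}\!\left(\Bigl(\tfrac{\nabla v}{|\nabla v|^2}+\gamma\tfrac{\nabla v}{|\nabla v|}\Bigr)\bone_{\{|\nabla v|>0\}}\right) = \kappa(\nu_T-\mu_T)
\]
as distributions on the interior of $\{L_T>0\}$. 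Rewriting $\kappa(\nu_T-\mu_T)$ on $\RR^2\setminus\overline{\Gamma}_{s-}$ via \eqref{eq.e.def}, the hypothesis $\mathbf{e}^{\kappa,\QQ_T,L_T}\ge 0$ converts this identity into the distributional inequality
\[
\mathrm{div}\!\left(\Bigl(\tfrac{\nabla v}{|\nabla v|^2}+\gamma\tfrac{\nabla v}{|\nabla v|}\Bigr)\bone_{\{|\nabla v|>0\}}\right) \le -(1+u(s-,x))\,\bone_{\{v<\infty\}\setminus\overline{\Gamma}_{s-}},
\]
which is the appropriate relaxed form of \eqref{PDE:wave}.

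Next I would test this inequality against smooth approximations of $\bone_{\{\varepsilon<v<t\}}$ for $0<\varepsilon<t<T$, or equivalently apply the BV divergence theorem (\cite[Section 5.8, Theorem 1]{EvGa}) on the finite-perimeter sublevel set $\{v<t\}$. Following the calculation in the remark after Theorem \ref{thm:peri}, this produces
\[
-\!\!\int_{\{\varepsilon<v<t\}}\!(1+u(s-,x))\,\mathrm{d}x\; \ge\; \gamma\,\|\partial\{v<t\}\|(\RR^2) - \int_{\partial_*\{\varepsilon<v\}}\!\Bigl(\gamma+\tfrac{1}{|\nabla v|}\Bigr)\mathrm{d}\mathcal{H}^1,
\]
where the inner boundary term is controlled, upon sending $\varepsilon\downarrow 0$, by the second standing hypothesis $\kappa\QQ(y_\theta\in\,\cdot\,,\beta=\theta)\bone_{\partial_*\Gamma_{s-}}\ge\gamma\mathcal{H}^1\lfloor \partial_*\Gamma_{s-}$ together with the $\gamma$-part already accounted for; the $\frac{1}{|\nabla v|}$ part will be bounded by $\kappa\QQ_T(\beta=\theta)-\int_{\partial_*\Gamma_{s-}}\gamma\,\mathrm{d}\mathcal{H}^1$. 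Dividing by $\gamma$ and passing $t\uparrow T$ using $D_T^{\QQ,L}=\bigcup_{t<T}(\{v<t\}\cup\Gamma_{s-})$ together with lower semicontinuity of perimeter (\cite[Subsection 5.2.1, Theorem 1]{EvGa}) would deliver the claimed estimate.

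The main obstacle is that the above boundary analysis presumes that $\partial\{v<t\}$ is contained in $\{v=0\}\cup\{v=t\}$, which may fail because $v$ is only lower semicontinuous and can ``jump to $+\infty$'' across portions of the topological boundary of $D_T^{\QQ,L}$. This is precisely the place where the dimensional restriction $d=2$ enters: I will invoke the characterization of the essential boundary of a planar finite-perimeter set as a countable disjoint union of rectifiable Jordan curves (\cite[Corollary 1]{ACMM}), packaged as the auxiliary Lemma \ref{le:2dim.key} and Corollary \ref{cor:2dim.key}. These let us reorganize $\partial_*\{\varepsilon<v<t\}$ into Jordan components, on each of which the pair $(v=\varepsilon)$--$(v=t)$ behaviour can be controlled by the inner integral of $1/|\nabla v|$ alone, so that ``jump-to-infinity'' discontinuities contribute a non-negative amount to the left-hand side rather than obstructing the divergence-theorem identity.

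Once the perimeter bound is established for each $t<T$, a final application of the lower-semicontinuity of the BV seminorm under $L^1_{\mathrm{loc}}$-convergence of indicators promotes the uniform-in-$t$ bound to the desired inequality for $D_T^{\QQ,L}$ itself, simultaneously proving that $D_T^{\QQ,L}$ has finite perimeter. The co-area formula applied to $v$ is used along the way to pass from $L^1$-type integrals involving $|\nabla v|$ to $\mathcal{H}^1$-integrals on level sets, which is what makes the constant $\kappa\QQ_T(\beta=\theta)$ appear on the right-hand side via the fixed-point identity of Lemma \ref{lem:UsefulProperties}(1).
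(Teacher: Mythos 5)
Your heuristic is the right starting point, and you have correctly identified the two key technical tools (the Jordan-curve characterization from \cite{ACMM} and the coarea formula). However, the proof route you sketch — applying the BV divergence theorem on the sublevel sets $\{v<t\}$ and pushing $\varepsilon\downarrow0$, $t\uparrow T$ — is essentially the heuristic that the paper itself flags as \emph{not} directly viable; it is not the argument used in the actual proof of Theorem~\ref{thm:perimeter.mainThm}. Two concrete gaps remain in your sketch.

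First, your inner-boundary control is ungrounded. You assert that $\liminf_{\varepsilon\downarrow0}\int_{\partial_*\{\varepsilon<v\}}\bigl(\gamma+\tfrac{1}{|\nabla v|}\bigr)\,\mathrm{d}\mathcal{H}^1$ is bounded by $\kappa\,\QQ_T(\beta=\theta)$, arguing via the hypothesis $\kappa\,\QQ(y_\theta\in\mathrm{d}x,\beta=\theta)\ge\gamma\,\mathcal{H}^1(\mathrm{d}x)$ on $\partial_*\Gamma_{s-}$. But that hypothesis is a \emph{lower} bound on the measure $\kappa\,\QQ\circ y_\theta^{-1}$, and it does not produce any upper bound on the boundary energy of $\{\varepsilon<v\}$. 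The fixed-point identity of Lemma~\ref{lem:UsefulProperties}(1), integrated via coarea, only controls $\int_0^T\bigl(\int_{\{v=t\}}(\gamma+\tfrac{1}{|\nabla v|})\,\mathrm{d}\mathcal{H}^1\bigr)\,\mathrm{d}t$, i.e.~the time-averaged boundary energy, not its $\liminf$ as $t\downarrow0$. A careful one-sided estimate at a fixed level $t$ requires the much more delicate argument of Step 1 of the paper's proof (testing the fixed-point equation against $\phi\,\bone_{\{t-\epsilon<v<t\}}$ and passing $\epsilon\downarrow0$ via lower semicontinuity of the perimeter), which produces the measure inequality $\QQ_T(y_{\iota(L,t)}\in\mathrm{d}x,\,\beta>\iota(L,t))\ge\frac{\gamma}{\kappa}\,\|\partial\{v<t\}\|(\mathrm{d}x)$ rather than the divergence-theorem identity you invoke.

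Second, your use of the Jordan-curve lemma is not placed where it needs to be. The genuine obstruction is that $\partial_*\{v<t\}$ contains, besides the levels $\{v=0\}$ and $\{v=t\}$, large pieces located on the boundaries of the open components $O_i$ of $\RR^2\setminus\mathrm{supp}\,\overline{\QQ}_T$ on which $v$ is constant; and the field in the divergence identity of Proposition~\ref{prop:fromEquil.toPDE} is $\bone_{\{|\nabla v|>0\}}$-truncated, so its normal trace there is not a priori controllable. The paper's actual proof does not try to fix the divergence theorem on $\{v<t\}$; instead it abandons that route and decomposes $\RR^2\setminus D^{\QQ,L}_T$ into the components $O_i\setminus\Gamma_{s-}$, proves a measure comparison on $\partial^M(O_i\setminus\Gamma_{s-})$ directly (Proposition~\ref{prop:PerimEst}), and employs the Jordan-curve decomposition in Proposition~\ref{prop:PerimEst.prop2} for a very specific purpose: to show that a $\QQ_T$-typical particle landing on $\partial^M(O_i\setminus\Gamma_{s-})$ at a positive instant $\iota(L,t)>0$ would force $x$ to be simultaneously in $\partial^M O_i$, $\partial^M A$, $\partial^M A'$ for three disjoint finite-perimeter open sets; the blow-up argument (\cite[Theorem 5.13]{EvGa}) then yields $\frac32\,\mathrm{Leb}(B(x,R))\le\mathrm{Leb}(B(x,R))$, a contradiction. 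This half-plane blow-up step, which is the heart of the dimension-two reduction, is absent from your outline, and saying that jump-to-infinity discontinuities ``contribute a non-negative amount'' does not substitute for it.

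In short, your proposal names the right ideas but re-describes the heuristic rather than repair it, and the repair is structurally quite different: a decomposition by connected components of $\RR^2\setminus\mathrm{supp}\,\overline\QQ_T$, level-wise $\mathcal{H}^1$-measure inequalities from the fixed-point equation (Steps 1--2), and a blow-up contradiction via Jordan curves to merge them (Step 3).
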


To prove Theorem \ref{thm:perimeter.mainThm}, we first let $O:=\RR^2\setminus\mathrm{supp}\,\overline{\QQ}_T$, and notice that $O$ is an open set and that $\RR^2\setminus D^{\QQ,L}_T = O\setminus\Gamma_{s-}$.
Next, we find a convenient decomposition of $O$.
Throughout this subsection, to ease the notation, we let $v:=w^{L_T}$.
We start with several auxiliary lemmas.

\begin{lemma}
For any open connected component of $O$, there exists a $t\in[0,T]$ such that $\{v=t\}$ contains this component.
\end{lemma}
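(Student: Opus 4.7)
Fix an open connected component $C$ of $O = \RR^2 \setminus \mathrm{supp}\,\overline{\QQ}_T$. The plan is to show that the value function $v := w^{L_T}$ is constant on $C$, and then observe that the constant value lies in $[0,T]$ because, by Lemma~\ref{lem:EquilibriumTimeConsistency}, $v = w^L \wedge T$ takes values in $[0,T]$.

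The first step is to show that $|\nabla v| = 0$ a.e.\ on $C$. By the very definition of the support of a Borel measure, $\overline{\QQ}_T(C)=0$, so the density $q_T$ of $\overline{\QQ}_T$ with respect to the Lebesgue measure (provided by Lemma~\ref{lem:UsefulProperties}(2) applied to $(\kappa,\QQ_T,L_T) \in \mathcal{E}_T$) satisfies $q_T = 0$ Lebesgue-a.e.\ on $C$. The identity
\[
L_T(x) = \frac{1}{\kappa q_T(x) - \gamma}\,\bone_{\{q_T>0\}}(x) - \frac{1}{\gamma}\,\bone_{\{q_T=0\}}(x),
\]
from Lemma~\ref{lem:UsefulProperties}(2) then gives $L_T = -1/\gamma$ a.e.\ on $C$, so $L_T^+ = 0$ a.e.\ on $C$. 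Finally, Lemma~\ref{lem:UsefulProperties}(5) yields $|\nabla v| = L_T^+ = 0$ a.e.\ on $C$.

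The second step is to conclude from the zero-gradient property that $v$ is constant on $C$. Since Lemma~\ref{lem:UsefulProperties}(5) also asserts that $v$ is locally Lipschitz on $\RR^2$, it belongs to $W^{1,\infty}_{\mathrm{loc}}(\RR^2)$; the vanishing of its weak gradient a.e.\ on the connected open set $C$ is a classical sufficient condition for $v$ to be constant on $C$. Concretely, one can join any two points of $C$ by a polygonal path in $C$ and, using Fubini together with the absolute continuity of $v$ on a.e.\ line parallel to each coordinate direction, conclude that $v$ is unchanged along this path. Denote the common value by $t$; then $C \subset \{v = t\}$, and $t \in [0,T]$ because $v$ itself only takes values in $[0,T]$.

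I expect no serious obstacle: the argument is essentially bookkeeping built on the already-established Lemma~\ref{lem:UsefulProperties}. The most delicate point is the passage ``$\overline{\QQ}_T(C) = 0 \Rightarrow q_T = 0$ a.e.\ on $C$ $\Rightarrow$ $L_T = -1/\gamma$ a.e.\ on $C$'', which must be done a.e.\ rather than pointwise because $L_T$, although upper semicontinuous, could in principle be strictly larger than $-1/\gamma$ on a Lebesgue-null set inside $C$; this does not affect the $L^1$-based argument for vanishing of the weak gradient nor the ensuing constancy of $v$.
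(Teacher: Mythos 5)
Your proposal is correct and follows essentially the same route as the paper's proof: the paper directly cites Lemma~\ref{lem:UsefulProperties}(3) to obtain $L_T=-1/\gamma$ a.e.\ on the component while you reach the same conclusion via item~(2), and both then combine $L_T^+=0$ a.e.\ with the local Lipschitz continuity of $v$ from item~(5) to deduce constancy. The paper leaves the last implication implicit (``which yields that $v$ is constant therein''), whereas you spell it out; your closing remark about upper semicontinuity not affecting the a.e.\ argument is also accurate.
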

\begin{proof}
By Lemma \ref{lem:UsefulProperties} (3), $L_T=-1/\gamma$ in any open connected component of $O$, which yields that $v$ is constant therein.
\end{proof}

\begin{lemma}\label{le:perim.mon}
For any random times $0\leq\tau,\sigma\leq\beta$, we have $\{\tau<\sigma\}\subset\{v(y_\tau)>v(y_\sigma)\}$, $\QQ_T$-a.s.
\end{lemma}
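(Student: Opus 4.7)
The plan is to reduce the claim to strict monotonicity of $t\mapsto v(y_t)$ along $\QQ_T$-almost every trajectory, which is already implicit in Lemma \ref{lem:UsefulProperties} applied to the $T$-projected equilibrium.

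First, I would recall that, by Lemma \ref{lem:EquilibriumTimeConsistency}, $(\kappa,\QQ_T,L_T)\in\mathcal{E}_T$, so Lemma \ref{lem:UsefulProperties} is applicable with value function $v=w^{L_T}$. Part (4) of that lemma then gives the representation
\begin{equation*}
v(y_t)=\int_t^\theta L_T^+(y_r)\,\mathrm{d}r,\qquad t\in[0,\theta],
\end{equation*}
for $\QQ_T$-a.e.\ $y$. Part (5), applied to $(\kappa,\QQ_T,L_T)$, asserts in particular that $L_T^+(y_t)>0$ for $\QQ_T$-a.e.\ $(y,\beta)$ and Lebesgue-a.e.\ $t\in[0,\beta]$.

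Combining these two facts, for $\QQ_T$-a.e.\ $(y,\beta)$ and any $0\le t_1<t_2\le\beta\le\theta$,
\begin{equation*}
v(y_{t_1})-v(y_{t_2})=\int_{t_1}^{t_2}L_T^+(y_r)\,\mathrm{d}r>0,
\end{equation*}
since the integrand is positive on a set of full Lebesgue measure inside $[t_1,t_2]\subset[0,\beta]$. Hence $t\mapsto v(y_t)$ is strictly decreasing on $[0,\beta]$ for $\QQ_T$-a.e.\ $(y,\beta)$.

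Finally, I would conclude by specializing this pointwise strict monotonicity to the random arguments $\tau(\omega),\sigma(\omega)\in[0,\beta(\omega)]$: on the $\QQ_T$-full-measure event where monotonicity holds, $\tau(\omega)<\sigma(\omega)$ forces $v(y_{\tau(\omega)})>v(y_{\sigma(\omega)})$, which is precisely the stated inclusion. There is essentially no obstacle—the only subtlety is to invoke the results for the capped equilibrium $(\kappa,\QQ_T,L_T)$ (so that the relevant trajectories really terminate at $\theta$ and $v$ coincides with the value function) and to ensure the exceptional null sets from parts (4) and (5) are combined into a single $\QQ_T$-null set before restricting to $[0,\beta]$.
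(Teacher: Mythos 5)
Your proof is correct and is essentially the intended one: the paper's proof simply states that the result is a consequence of items (4) and (5) of Lemma \ref{lem:UsefulProperties}, and you have filled in precisely those details (passing to the $T$-projection $(\kappa,\QQ_T,L_T)$, using item (4) for the representation $v(y_t)=\int_t^\theta L_T^+(y_r)\,\mathrm{d}r$, and item (5) for $L_T^+(y_t)>0$ a.e.~on $[0,\beta]$).
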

\begin{proof}
This is a simple consequence of items (4) and (5) of Lemma \ref{lem:UsefulProperties}.
\end{proof}

\begin{lemma}\label{le:perimEst.levelSet.in.O}
For any $t\in[0,T]$, the (topological) interior of $\{v=t\}$ is a subset of $O$.
\end{lemma}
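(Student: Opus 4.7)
The plan is to argue by contradiction: if a point $x$ lies in the interior of $\{v=t\}$ but also belongs to $\mathrm{supp}\,\overline{\QQ}_T$, then on one hand $\nabla v = 0$ in a neighborhood of $x$, while on the other hand the support condition forces $|\nabla v|>0$ on a subset of positive Lebesgue measure of that same neighborhood.

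More concretely, first I would fix $x$ in the interior of $\{v=t\}$ and choose $r>0$ small enough that $B(x,r) \subset \{v=t\}$. Since $v$ is constant on $B(x,r)$, it is differentiable there with $\nabla v \equiv 0$; in particular $|\nabla v|=0$ on $B(x,r)$. Suppose for contradiction that $x \in \mathrm{supp}\,\overline{\QQ}_T$. Then $\overline{\QQ}_T(B(x,r))>0$, and since by Lemma \ref{lem:UsefulProperties}(2) (applied to the projected equilibrium $(\kappa,\QQ_T,L_T) \in \cE_T$, which is legitimate by Lemma \ref{lem:EquilibriumTimeConsistency}) the measure $\overline{\QQ}_T$ is absolutely continuous with density $q_T$, the set $A := B(x,r)\cap\{q_T>0\}$ has strictly positive Lebesgue measure.

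On $A$, Lemma \ref{lem:UsefulProperties}(2) gives $q_T > \gamma/\kappa$ and $L_T = 1/(\kappa q_T - \gamma) > 0$ almost everywhere, so $L_T^+ > 0$ a.e.\ on $A$. By Lemma \ref{lem:UsefulProperties}(5) applied to $(\kappa,\QQ_T,L_T)$, we have $|\nabla v| = |\nabla w^{L_T}| = L_T^+$ Lebesgue-a.e.\ in $\RR^2$, so $|\nabla v| > 0$ a.e.\ on $A$. This contradicts $|\nabla v| \equiv 0$ on $B(x,r) \supset A$, completing the argument.

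The proof is essentially routine once the correct ingredients from Lemma \ref{lem:UsefulProperties} are invoked; the only minor care required is to apply the lemma to the capped triple $(\kappa,\QQ_T,L_T)$ so that the occupation density $q_T$ is the one associated with $v = w^{L_T}$, and to use absolute continuity of $\overline{\QQ}_T$ to pass from ``every ball around $x$ has positive mass'' to ``there is a positive-measure set of points near $x$ where $q_T>0$''. I do not anticipate any serious obstacle beyond this bookkeeping.
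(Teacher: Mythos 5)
Your proof is correct, but it takes a somewhat different route from the paper's. The paper's argument is path-based: from $x\in\{v=t\}\setminus O$ it deduces $\overline{\QQ}_T(A)>0$ for any small neighborhood $A$ of $x$ where $v\equiv t$, hence with positive $\QQ_T$-probability a canonical path enters $A$ and then hits a smaller ball inside $A$ before the killing time; that contradicts Lemma \ref{le:perim.mon}, which forces $v$ to be strictly decreasing along paths. Your argument is instead pointwise-analytic: the same positivity $\overline{\QQ}_T(B(x,r))>0$, together with the absolute continuity from Lemma \ref{lem:UsefulProperties}(2), gives a positive-measure subset of $B(x,r)$ where $q_T>0$, hence $L_T>0$ there, and then the a.e.\ identity $|\nabla v|=L_T^+$ from Lemma \ref{lem:UsefulProperties}(5) contradicts $\nabla v\equiv 0$ on $B(x,r)$. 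Both routes ultimately rest on Lemma \ref{lem:UsefulProperties} (yours on items (2) and (5); the paper's on Lemma \ref{le:perim.mon}, which itself uses items (4) and (5)), and both are of comparable length; your version has the modest advantage of avoiding the path-hitting-time set-up and reads more directly in terms of the PDE structure $|\nabla v|=L^+$. The only bookkeeping you rightly flag — applying the lemmas to the capped triple $(\kappa,\QQ_T,L_T)$ so that $v=w^{L_T}$ and $q_T$ match up — is handled correctly.
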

\begin{proof}
We argue by contradiction and assume that there exists a point $x\in\{v=t\}\setminus O$ and an open neighborhood $A$ of this point in which $v=t$.
Since $x$ belongs to the support of $\overline{\QQ}_T$, we have $\overline{\QQ}_T(A)>0$, hence, with positive $\QQ_T$-probability, the canonical path hits an open ball inside $A$ before the killing time $\beta$. On the other hand, the function $v$ evaluated along any such path cannot be strictly decreasing between its hitting time $\tau$ of $A$ and its hitting time $\sigma$ of the smaller ball, which yields a contradiction with Lemma \ref{le:perim.mon}.
\end{proof}

\smallskip

Since $v$ is continuous and takes values in $[0,T]$, the above lemmas yield the decomposition: $O=\bigcup_{i=1}^\infty O_i$, where $O_i$ is the interior of $\{v=t_i\}$, for some $t_i\in[0,T]$.
Clearly, $O\setminus\Gamma_{s-} = \bigcup_{i} (O_i\setminus\Gamma_{s-})$. Then,
\begin{align*}
& \lim_{n\rightarrow\infty}\sum_{i=1}^n \bone_{O_i\setminus\Gamma_{s-}} = \bone_{O\setminus\Gamma_{s-}}\quad \text{in }L^1_{loc},
\end{align*}
hence, the perimeter of $O\setminus\Gamma_{s-}$ is bounded from above by the limit inferior (see, e.g., \cite[Subsection 5.2.1]{EvGa}), as $n\rightarrow\infty$, of the perimeter of $\bigcup_{i=1}^n (O_i\setminus\Gamma_{s-})$. 
Thus, to prove Theorem \ref{thm:perimeter.mainThm}, it suffices to estimate the perimeter of $\bigcup_{i=1}^n (O_i\setminus\Gamma_{s-})$.
The desired estimate is deduced from the following proposition.

\begin{proposition}\label{prop:PerimEst}
Assume $\mathbf{e}^{\kappa,\QQ_T,L_T}\geq0$ and $\kappa\,\bone_{\partial_*\Gamma_{s-}}(x)\,\QQ(y_\theta\in \mathrm{d}x,\beta=\theta)\geq \gamma\,\bone_{\partial_*\Gamma_{s-}}(x)\,\mathcal{H}^1(\mathrm{d}x)$. Then, for any $i\geq1$, $O_i\setminus\Gamma_{s-}$ is a set of finite perimeter, and there exists an $\mathcal{H}^1$-zero $\mathcal{N}_i\subset\partial_*(O_i\setminus\Gamma_{s-})$ such that
\begin{align}
\gamma\,\bone_{\partial_*(O_i\setminus\Gamma_{s-})\setminus\mathcal{N}_i}\,\mathrm{d}\mathcal{H}^1
\leq \kappa\,\bone_{\partial_*(O_i\setminus\Gamma_{s-})\setminus\mathcal{N}_i}\,\mathrm{d}(\QQ_T\circ y^{-1}_0)
- \kappa\,\bone_{\partial_*(O_i\setminus\Gamma_{s-})\setminus(\mathcal{N}_i\cup\partial_*\Gamma_{s-})}\,\mathrm{d}(\QQ_T\circ y^{-1}_\beta).
\label{eq.PerimEst.eq}
\end{align}
\end{proposition}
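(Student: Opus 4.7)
My plan is to reduce the measure-level inequality \eqref{eq.PerimEst.eq} to a pointwise density estimate at $\mathcal{H}^1$-almost every point of $\partial_*(O_i\setminus\Gamma_{s-})$, and then to establish that estimate using the divergence identity of Corollary~\ref{cor:fromEquil.toPDE} combined with the two-dimensional structure of finite-perimeter sets. The null set $\mathcal{N}_i$ will absorb all the exceptional points encountered along the way.

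First, I would verify that $O_i\setminus\Gamma_{s-}$ has finite perimeter. Since $v:=w^{L_T}=w^L\wedge T$ is locally Lipschitz with $\int_{\RR^2}|\nabla v|\,\mathrm{d}x\le\kappa T/\gamma$ by Lemma~\ref{lem:UsefulProperties}(5), the coarea formula gives $\mathcal{H}^1(\{v=t\})<\infty$ for Lebesgue-a.e.~$t$. Finiteness of the perimeter at the specific level $t_i$ then follows by approximating $O_i$ from outside by components of $\{v<t_i+\varepsilon\}$ along a vanishing sequence of good levels, combined with lower semicontinuity of the perimeter and the finite perimeter of $\Gamma_{s-}$. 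Next, applying the planar decomposition \cite[Corollary~1]{ACMM}, up to $\mathcal{H}^1$-null, $\partial_*(O_i\setminus\Gamma_{s-})$ equals a countable disjoint union of rectifiable Jordan curves $\{J_k\}_{k\ge 1}$. I would put into $\mathcal{N}_i$: intersections of distinct $J_k$'s, points of $J_k$ without an approximate tangent, non-Lebesgue points of $\mu_T:=\QQ_T\circ y_0^{-1}$, $\nu_T:=\QQ_T\circ y_\beta^{-1}$ and of $\mathcal{H}^1|_{J_k}$, and points of $\partial_*(O_i\setminus\Gamma_{s-})\cap\partial_*\Gamma_{s-}$ whose blow-up is not a half-plane --- each being a standard $\mathcal{H}^1$-null set.

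The heart of the argument is a local estimate at each $x\in\partial_*(O_i\setminus\Gamma_{s-})\setminus\mathcal{N}_i$. If $x\in\partial_*\Gamma_{s-}$, the hypothesis $\kappa\,\mathbb{Q}(y_\theta\in\mathrm{d}x,\beta=\theta)\ge\gamma\,\bone_{\partial_*\Gamma_{s-}}\,\mathrm{d}\mathcal{H}^1$ already yields $\gamma\,\mathrm{d}\mathcal{H}^1\le\kappa\,\mathrm{d}\mu_T$ at $x$, consistent with $\nu_T$ being excluded from the $\partial_*\Gamma_{s-}$-portion of \eqref{eq.PerimEst.eq}. If $x\in J_k\setminus\partial_*\Gamma_{s-}$, then $x\in\partial O_i\subset\{v=t_i\}$ with $t_i>0$, and an outward approach to $x$ lies in $\{L_T>0\}$ where Corollary~\ref{cor:fromEquil.toPDE} applies. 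The plan is to test the distributional PDE \eqref{eq.cascadePDE.wQL} against smooth bumps supported in thin outer tubes around an arc of $J_k$ through $x$, of diameter $\varepsilon$, contained in $\{t_i-\delta<v<t_i\}$. The LHS produces, in the limit, a boundary flux $\int(\gamma+1/|\nabla v|)\,\mathrm{d}\mathcal{H}^1$ across the outer face $\{v=t_i-\delta\}$ plus an analogous term across the inner face $\partial O_i$; the RHS localizes to $\kappa\,\mathrm{d}\mu_T-\kappa\,\mathrm{d}\nu_T$ on the tube; and the hypothesis $\mathbf{e}^{\kappa,\QQ_T,L_T}\ge0$ controls the sign of the $(1+u(s-,x))\,\mathrm{d}x$ contribution. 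Dropping the non-negative $\gamma+1/|\nabla v|$ term on the outer face and sending $\varepsilon,\delta\downarrow0$ should produce the pointwise density inequality $\gamma\le\kappa\,\frac{\mathrm{d}\mu_T}{\mathrm{d}\mathcal{H}^1|_{J_k}}(x)-\kappa\,\frac{\mathrm{d}\nu_T}{\mathrm{d}\mathcal{H}^1|_{J_k}}(x)$, which is precisely \eqref{eq.PerimEst.eq} at $x$.

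The main obstacle, I expect, is the behavior of $1/|\nabla v|$ on the inner face $\partial O_i$: since $v$ is constant on $O_i$, there is no gradient there, yet the flux cannot simply be ignored --- it must be matched with a limiting version of $\nu_T$ on $\partial O_i$, because particles arriving at $\partial O_i$ are killed precisely because they cannot progress into $O_i$. Formalizing this correspondence will rely on testing the fixed-point identity from the last line of \eqref{equi} against $1/|\nabla v|$-type cutoffs and on Fatou-type passages in the distributional identity, so as to show weak convergence (as $\delta\downarrow0$) of the outer-face flux measure $(1/|\nabla v|)\,\mathcal{H}^1|_{\{v=t_i-\delta\}}$ to a multiple of $\kappa\,\nu_T|_{\partial O_i}$. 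The planar rectifiability of $J_k$ is essential here: along such a curve the tubes admit good parameterizations that permit slicing arguments, reducing the flux convergence to one-dimensional statements.
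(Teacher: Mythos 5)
Your overall strategy---reduce \eqref{eq.PerimEst.eq} to a pointwise density inequality at $\mathcal{H}^1$-a.e.\ point of $\partial_*(O_i\setminus\Gamma_{s-})$, obtained by testing the distributional identity of Corollary~\ref{cor:fromEquil.toPDE} in thin collars around arcs of the Jordan curves---is genuinely different from the paper's route, and you correctly locate the hard point: the degenerate ``inner flux'' across $\partial O_i$, where $\nabla v$ vanishes. But your proposed resolution of that difficulty goes in the wrong direction and, I believe, cannot be pushed through. You write that the outer-face flux measure $(1/|\nabla v|)\,\mathcal{H}^1|_{\{v=t_i-\delta\}}$ should converge to ``a multiple of $\kappa\,\nu_T|_{\partial O_i}$,'' because killed particles pile up on $\partial O_i$. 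The paper proves precisely the opposite: up to an $\mathcal{H}^1$-null set, $\nu_T=\QQ_T\circ y_\beta^{-1}$ gives \emph{zero} mass to $\partial^M(O_i\setminus\Gamma_{s-})$ away from $\partial_*\Gamma_{s-}$ (this is the content of Proposition~\ref{prop:PerimEst.prop2} combined with Step~2). The mechanism is that $\QQ_T$-a.s.\ a path reaching a generic point $x$ of this essential boundary must have $\iota(L,t)=0$, i.e.\ it starts there rather than arrives there. Trying to show convergence to a \emph{nonzero} limiting measure on $\partial O_i$ would therefore produce the wrong statement, and I do not see how the fixed-point identity or Fatou passages alone could be made to quantify the inner flux, since $1/|\nabla v|$ is genuinely singular on $\overline{O}_i$ and the occupation density is not controlled from above there.

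What actually closes this gap in the paper is a topological argument that is invisible in your outline: the Ambrosio--Caselles--Masnou--Morel decomposition of $\partial^M$ into rectifiable Jordan curves is used, via Jordan's separation theorem (Lemma~\ref{le:2dim.key} and Corollary~\ref{cor:2dim.key}), to show that any gradient-ascent path arriving at a generic $x\in\partial^MO_i$ from $\{v>t_i\}$, and continuing into $\{v<t_i\}$, would force $x$ to lie simultaneously in the reduced boundaries of \emph{three} mutually disjoint finite-perimeter sets ($O_i$, a piece of $\{v>t_i\}$, and a piece of $\{v<t_i\}$). The blow-up theorem \cite[Theorem~5.13]{EvGa} then gives the density contradiction $3/2\leq1$. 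This is where $d=2$ enters in an essential way---not, as you suggest, through good tube parameterizations along rectifiable curves, but through Jordan's theorem, which lets one pass from a single continuous approach curve to a finite-perimeter open set touching the boundary. Once Proposition~\ref{prop:PerimEst.prop2} is in hand, the measure inequality follows directly from the ``gross'' comparisons of Steps~1 and~2 (which you could recover from your tube-testing on the outer face alone), combined with $\QQ_T(y_0\in\mathrm{d}x,\beta=0)=\QQ_T(y_\beta\in\mathrm{d}x,\beta=0)$ on $\partial^M O_i$ and the assumed bound $\kappa\,\QQ(y_\theta\in\mathrm{d}x,\beta=\theta)\geq\gamma\,\mathcal{H}^1(\mathrm{d}x)$ on $\partial_*\Gamma_{s-}$; your handling of the $\partial_*\Gamma_{s-}$ case is consistent with that. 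In short, your outer-face estimate is fine, but your proposal is missing the specifically two-dimensional separation argument that kills the inner contribution, and the surrogate you propose for it would not deliver the needed sign.
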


To deduce Theorem \ref{thm:perimeter.mainThm} from the above proposition, we first recall that, up to an $\mathcal{H}^1$-zero set, the measure-theoretic boundary $\partial_* E$ of a finite-perimeter set $E$ coincides with its \textit{reduced boundary} $\partial^* E$ (cf.~\cite{EvGa}), which in turn coincides with its \textit{essential boundary} $\partial^M E$ (cf.~\cite{ACMM}). Thus, we can replace $\partial_*(O_i\setminus\Gamma_{s-})$ by $\partial^M(O_i\setminus\Gamma_{s-})$ on the left-hand side of \eqref{eq.PerimEst.eq}.
Next, we recall (cf. \cite{ACMM}) that $\partial^M(\bigcup_{i=1}^n (O_i\setminus\Gamma_{s-})$ is contained in $\bigcup_{i=1}^n \partial^M(O_i\setminus\Gamma_{s-})$. Then, the former can be decomposed into a union of at most $n$ disjoint sets $\{E_i\}$ such that $E_i\subset \partial^M(O_i\setminus\Gamma_{s-})$. Thus, according to Proposition \ref{prop:PerimEst},
\begin{align*}
& \gamma\,\bigg\|\partial \bigcup_{i=1}^n (O_i\setminus\Gamma_{s-})\bigg\|(\RR^2)
= \gamma\,\int_{\partial_*\bigcup_{i=1}^n(O_i\setminus\Gamma_{s-})} \mathrm{d}\mathcal{H}^1
= \sum_{i=1}^n \gamma\,\int_{E_i\setminus\mathcal{N}_i} \mathrm{d}\mathcal{H}^1\\
&\leq \kappa\,\sum_{i=1}^n \big(\QQ_T\left(y_0\in E_i\setminus\mathcal{N}_i\right) 
- \QQ_T(y_\beta\in E_i\setminus(\mathcal{N}_i\cup\partial_*\Gamma_{s-}))\big)\\
&= \kappa\,\QQ_T\Big(y_0\in\bigcup_{i=1}^n E_i\setminus\mathcal{N}_i\Big)
- \kappa\,\QQ_T\Big(y_\beta\in\Big(\bigcup_{i=1}^n E_i\setminus\mathcal{N}_i\Big)\setminus\partial_*\Gamma_{s-}\Big).
\end{align*}
Next, we notice that, $\QQ_T$-a.s.: $y_0\in\{L_T<0\}\setminus\overline{\Gamma}_{s-}$ or $y_\beta\in\{L_T<0\}\setminus\overline{\Gamma}_{s-}$ implies $0=\beta<\theta$; $y_0\in\overline{\Gamma}_{s-}$ implies $0=\beta=\theta$ and $y_0\in\partial_*\Gamma_{s-}$; $y_\beta\in\overline{\Gamma}_{s-}$ implies $y_\beta\in\partial_*\Gamma_{s-}$. Then,
\begin{align*}
&\QQ_T(y_0\in\{L_T>0\}\setminus\overline{\Gamma}_{s-},\,\beta<\theta)\leq \QQ_T\left(y_\beta\in\{L_T>0\}\setminus\overline{\Gamma}_{s-}\right),\\
& \kappa\,\QQ_T(\beta=\theta) = \kappa\,\QQ_T(y_0\in\partial_*\Gamma_{s-}) + \kappa\,\QQ_T(y_0\in\{L_T>0\}\setminus\overline{\Gamma}_{s-},\,\beta=\theta)\\ 
&\geq \kappa\,\QQ_T\left(y_0\in\{L_T>0\}\cup\overline{\Gamma}_{s-}\right)
- \kappa\,\QQ_T\left(y_\beta\in\{L_T>0\}\setminus\overline{\Gamma}_{s-}\right).
\end{align*}
Due to upper semicontinuity of $L$, we have $\bigcup_{i=1}^n E_i\setminus\mathcal{N}_i\subset\bigcup_{i=1}^n\partial^M(O_i\setminus\Gamma_{s-})\subset\{L_T>0\}\cup\overline{\Gamma}_{s-}$.
Then, using the above and $\mathbf{e}^{\kappa,\QQ_T,L_T}\geq0$, we obtain:
\begin{align*}
& \kappa\,\QQ_T(\beta=\theta) \geq \kappa\,\QQ_T\Big(y_0\in \bigcup_{i=1}^n E_i\setminus\mathcal{N}_i\Big) -\kappa\,\QQ_T\Big(y_\beta\in\Big(\bigcup_{i=1}^n E_i\setminus\mathcal{N}_i\Big)\setminus\overline{\Gamma}_{s-}\Big)\\
&\phantom{?????????????}\;
+ \kappa\,\int_{\{L_T>0\}\setminus(\overline{\Gamma}_{s-}\cup\bigcup_{i=1}^n E_i\setminus\mathcal{N}_i)} \QQ_T(y_0\in \mathrm{d}x) - \QQ_T(y_\beta\in \mathrm{d}x)\\
&\geq\kappa\,\QQ_T\Big(y_0\in \bigcup_{i=1}^n E_i\setminus\mathcal{N}_i\Big) - \kappa\,\QQ_T\Big(y_\beta\in \Big(\bigcup_{i=1}^n E_i\setminus\mathcal{N}_i\Big)\setminus\partial_*\Gamma_{s-}\Big) + \int_{\{w^{\QQ_T,L_T}<\infty\}\setminus\overline{\Gamma}_{s-}} (1+u(s-,x))\,\mathrm{d}x,
\end{align*}
where we also used the fact that $\mathrm{Leb}(\bigcup_{i=1}^n E_i\setminus\mathcal{N}_i)=0$.
Collecting the above, we obtain
\begin{align*}
& \bigg\|\partial \bigcup_{i=1}^n (O_i\setminus\Gamma_{s-})\bigg\|(\RR^2)
\leq \frac{\kappa}{\gamma}\,\QQ_T(\beta=\theta)
- \frac{1}{\gamma}\int_{\{w^{\QQ_T,L_T}<\infty\}\setminus\overline{\Gamma}_{s-}} (1+u(s-,x))\,\mathrm{d}x,
\end{align*}
for all $n\geq1$. Taking $\liminf$ as $n\rightarrow\infty$ and using the lower semicontinuity of the perimeter (see, e.g., \cite[Subsection 5.2.1]{EvGa}), we deduce the statement of Theorem \ref{thm:perimeter.mainThm}.

\medskip

The remainder of this subsection is devoted to the proof of Proposition \ref{prop:PerimEst}.

\medskip

\noindent{\bf Step 1.} In this step, we show that, for any $t\in(0,T]$, we have: 
\begin{align}
\QQ_T(y_{\iota(L,t)}\in \mathrm{d}x,\beta>\iota(L,t))\geq\frac{\gamma}{\kappa}\,\|\partial\{v<t\}\|(\mathrm{d}x),
\label{eq.Step1.mainResult}
\end{align}
where we recall and slightly modify (so that it works for $t=0$ as well) the definition
\begin{align*}
&\iota(L,t)=\inf\left\{r\in[0,\theta]:\, \int_{r}^\theta L^+(y_z)\,\mathrm{d}z<t\right\}\wedge\theta.
\end{align*}

\smallskip

Consider a function $0\leq\phi\in C^1_c(\RR^2)$ and $\epsilon\in(0,t)$. 
Using the fixed-point condition (the last line of \eqref{equi}) with the test function $\phi\,\bone_{\{t-\epsilon<v<t\}}$ and making use of Lemma \ref{lem:UsefulProperties} (5) and the fact that $L^+\leq L+1/\gamma$, we obtain successively:
\begin{align*}
& \int_{\RR^2} \phi(x)\,\bone_{\{t-\epsilon<v<t\}}(x)\,(L(x)+1/\gamma)\,\mathrm{d}x = \frac{\kappa}{\gamma}\,\EE^{\QQ_T} \left[\int_0^\beta \phi(y_r)\,\bone_{\{t-\epsilon<v<t\}}(y_r)\,L^+(y_r)\,\mathrm{d}r\right],\\
& \int_{\RR^2} \phi(x)\,\bone_{\{t-\epsilon<v<t\}}(x)\,L^+(x)\,\mathrm{d}x \leq \frac{\kappa}{\gamma}\,\EE^{\QQ_T}\left[\int_0^\beta \phi(y_r)\,\bone_{\{t-\epsilon<v<t\}}(y_r)\,L^+(y_r)\,\mathrm{d}r\right],\\
& \int_{\{t-\epsilon<v<t\}} \phi(x)\,|\nabla v(x)|\,\mathrm{d}x \leq \frac{\kappa}{\gamma}\,\EE^{\QQ_T}\left[\int_0^\beta \phi(y_r)\,\bone_{\{t-\epsilon<v<t\}}(y_r)\,L^+(y_r)\,\mathrm{d}r\right].
\end{align*}
Using the coarea formula, we obtain
\begin{align*}
& \int_{\{t-\epsilon<v<t\}} \phi(x)\,|\nabla v(x)|\,\mathrm{d}x = \int_{(t-\epsilon,t)} \int_{\{v=r\}} \phi(x)\,\mathrm{d}\mathcal{H}^{1}(x)\,\mathrm{d}r.
\end{align*}
On the other hand,
\begin{align*}
& \int_0^\beta \phi(y_r)\,\bone_{\{t-\epsilon<v<t\}}(y_r)\,L^+(y_r)\,\mathrm{d}r
\leq \bone_{\{\beta>\iota(L,t)\}}\,\int_{\iota(L,t)}^{\iota(L,t-\epsilon)} \phi(y_r)\,L^+(y_r)\,\mathrm{d}r.
\end{align*}
The definition of $\iota$ and the positivity of $L(y)$ imply that $\lim_{\epsilon\downarrow0}\iota(L,t-\epsilon)=\iota(L,t)$ in $\{\beta>\iota(L,t)\}$, $\QQ_T$-a.s.
Moreover, $\phi(y_r)=\phi(y_{\iota(L,t)}) + R_r$, where 
\begin{align}
&|R_r| \leq \sup_{z\in[\iota(L,t),\iota(L,t-\epsilon)]}|\phi(y_z)-\phi(y_{\iota(L,t)})| =:\eta.\label{eq.perim.o1}
\end{align}
Recalling, in addition, that
\begin{align*}
& \int_{\iota(L,t)}^{\iota(L,t-\epsilon)} L^+(y_r)\,\mathrm{d}r \leq \epsilon,
\end{align*}
we deduce
\begin{align*}
& \int_0^\beta \phi(y_r)\,\bone_{\{t-\epsilon<v<t\}}(y_r)\,L^+(y_r)\,\mathrm{d}r
\leq \bone_{\{\beta>\iota(L,t)\}}\,\left(\phi(y_{\iota(L,t)})\,\epsilon + \epsilon\,\eta\right).
\end{align*}
All in all, we obtain
\begin{align*}
& \int_{(t-\epsilon,t)} \int_{\{v=r\}} \phi(x)\,\mathrm{d}\mathcal{H}^{1}(x)\,\mathrm{d}r
\leq \epsilon\,\frac{\kappa}{\gamma}\,\EE^{\QQ_T}\left[\bone_{\{\beta>\iota(L,t)\}}\,(\phi(y_{\iota(L,t)}) + \eta)\right].
\end{align*}
Using \eqref{eq.perim.o1}, the Bounded Convergence Theorem, and $\lim_{\epsilon\downarrow0}\iota(L,t-\epsilon)=\iota(L,t)$, we deduce that
\begin{align*}
& \lim_{\epsilon\downarrow0}\EE^{\QQ_T}[\eta] = 0.
\end{align*}
The above implies that, for any $\varepsilon>0$, there exists an increasing sequence $t_n\uparrow t$ such that, for every $n$,
\begin{align*}
& \int_{\{v=t_n\}} \phi(x)\,\mathrm{d}\mathcal{H}^{1}(x)
\leq \frac{\kappa}{\gamma}\,\EE^{\QQ_T}\left[\bone_{\{\beta>\iota(L,t)\}}\,\phi(y_{\iota(L,t)})\right] + \varepsilon.
\end{align*}

\smallskip

Next, we notice that $\bone_{\{v<t_n\}}\rightarrow \bone_{\{v<t\}}$ a.e. as $n\rightarrow\infty$, and the Bounded Convergence Theorem implies that these indicators converge in $L^1_{loc}$.
Then, $\phi(x)\,\bone_{\{v<t_n\}},\,\phi(x)\,\bone_{\{v<t\}}\in \mathrm{BV}(\RR^2)$ and the lower semicontinuity of the perimeter (see, e.g., \cite[Subsection 5.2.1]{EvGa}) yields:
\begin{align*}
\liminf_{n\rightarrow\infty}\int_{\RR^2} \mathrm{d}|\nabla (\phi\,\bone_{\{v<t_n\}})| \geq \int_{\RR^2} \mathrm{d}|\nabla (\phi\,\bone_{\{v<t\}})|,
\end{align*}
where we denote by $\mathrm{d}|\nabla f|$ the \textit{variation measure} of a $\mathrm{BV}_{loc}(\RR^2)$ function $f$ (cf. \cite{EvGa}).
Notice also that
\begin{align*}
& \int_{\RR^2} \mathrm{d}|\nabla (\phi\,\bone_{\{v<t_n\}})| = \int_{\RR^2} |\nabla \phi(x)|\,\bone_{\{v<t_n\}}(x)\,\mathrm{d}x
+ \int_{\RR^2} \phi\,\mathrm{d}\|\partial\{v<t_n\}\|,\\
& \int_{\RR^2} \mathrm{d}|\nabla (\phi\,\bone_{\{v<t\}})| = \int_{\RR^2} |\nabla \phi(x)|\,\bone_{\{v<t\}}(x)\,\mathrm{d}x
+ \int_{\RR^2} \phi\,\mathrm{d}\|\partial\{v<t\}\|,
\end{align*}
where we made use of the mutual singularity of the Lebesgue measure and $\mathrm{d}\|\partial\{v<\cdot\}\|=\mathrm{d}|\nabla\bone_{\{v<\cdot\}}|$.
Since the first integrals on the right-hand sides in the above display become equal in the $n\rightarrow\infty$ limit, we conclude that 
\begin{align*}
\liminf_{n\rightarrow\infty}\int_{\RR^2} \phi\,\mathrm{d}\|\partial\{v<t_n\}\| \geq \int_{\RR^2} \phi\,\mathrm{d}\|\partial\{v<t\}\|.
\end{align*}

\smallskip

Next, we notice
\begin{align*}
\int_{\RR^2} \phi\,\mathrm{d}\|\partial\{v<t_n\}\| 
= \int_{\partial_*\{v<t_n\}} \phi\,\mathrm{d}\mathcal{H}^1
\leq \int_{\{v=t_n\}} \phi\,\mathrm{d}\mathcal{H}^1.
\end{align*}
Recalling that $\varepsilon>0$ is arbitrary, we deduce from the above the desired
\begin{align*}
\int_{\RR^2} \phi\,\mathrm{d}\|\partial\{v<t\}\| \leq \frac{\kappa}{\gamma}\,\EE^{\QQ_T}\left[\bone_{\{\beta>\iota(L,t)\}}\,\phi(y_{\iota(L,t)})\right].
\end{align*}

\medskip

\noindent{\bf Step 2.} In this step, we first repeat the arguments in Step 1 to show that, for any $t\in[0,T)$, 
\begin{align}\label{eq.Step2.mainResult.n}
\QQ_T(y_{\iota(L,t+)}\in \mathrm{d}x,\,\beta\geq\iota(L,t+):=\lim_{r\downarrow t}\iota(L,r)>0)\geq\frac{\gamma}{\kappa}\,\|\partial\{v>t\}\|(\mathrm{d}x).
\end{align}
The above, in particular, proves that $\{v>t\}$ is a finite-perimeter set.

\smallskip

Let us show that we can replace $\iota(L,t+)$ by $\iota(L,t)$ in the above.
To this end, we notice that $\iota(L,t+)\leq \iota(L,t)$ always holds, and that $\{\iota(L,t+)<\iota(L,t)\}$ implies $\beta=\iota(L,t+)<\theta$, $\QQ_T$-a.s. Next, we notice that $\mathbf{e}^{\kappa,\QQ_T,L_T}\geq0$ implies $\QQ_T(y_\beta\in \mathrm{d}x) - \QQ_T(y_0\in \mathrm{d}x)\leq (1+u(s-,x))^-\,\mathrm{d}x$ in $\partial_*\{v>t\}\setminus\overline{\Gamma}_{s-}$, and that the Lebesgue measure of $\partial_*\{v>t\}$ is zero (since $\{v>t\}$ is a finite-perimeter set). Then in $\partial_*\{v>t\}\setminus\overline{\Gamma}_{s-}$,
\begin{align}
& \QQ_T(y_\beta\in \mathrm{d}x,\,\beta>0)\leq \EE^{\QQ_T}\left[(\bone_{\{y_\beta\in\,\mathrm{d}x\}} - \bone_{\{y_0\in \,\mathrm{d}x\}})\,\bone_{\{\beta>0\}}\right]\nonumber\\
&\leq \EE^{\QQ_T}[\bone_{\{y_\beta\in\, \mathrm{d}x\}} - \bone_{\{y_0\in\, \mathrm{d}x\}}]\leq (1+u(s-,x))^-\,\mathrm{d}x,\label{eq.perimEst.Step2.aux}\\
& \QQ_T\left(y_{\iota(L,t+)}\in \partial_*\{v>t\}\setminus\overline{\Gamma}_{s-},\,\beta\geq\iota(L,t+)>0,\,\iota(L,t+)<\iota(L,t)\right)\nonumber\\
&\leq \QQ_T\left(y_\beta\in \partial_*\{v>t\}\setminus\overline{\Gamma}_{s-},\,\beta>0\right)
\leq \int_{\partial_*\{v>t\}\setminus\overline{\Gamma}_{s-}}(1+u(s-,x))^-\,\mathrm{d}x=0.\nonumber
\end{align}
Thus, we deduce:
\begin{align}\label{eq.Step2.mainResult}
\QQ_T(y_{\iota(L,t)}\in \mathrm{d}x,\,\beta\geq\iota(L,t)>0)\geq\frac{\gamma}{\kappa}\,\|\partial\{v>t\}\|(\mathrm{d}x).
\end{align}

\smallskip

To conclude this step, we notice that $\iota(L,0)=\theta$ and that $\QQ_T(y_\theta\in\partial_*\Gamma_{s-}\,\vert\,\beta=\theta)=1$, hence, in view of the above domination, we conclude that $\partial_*\{v>0\}\subset\partial_*\Gamma_{s-}$, and in turn $\partial^M\{v>0\}\subset\partial^M\Gamma_{s-}$, up to an $\mathcal{H}^1$-zero set.



\medskip

\noindent{\bf Step 3}. In this step, we ease the notation by dropping the (fixed) index $i$ and replacing $O_i$ by $O$ and $t_i$ by $t$ (with a slight abuse of notation), and we complete the proof of Proposition \ref{prop:PerimEst}.

\smallskip

We begin with the following auxiliary result which, in particular, shows that $O\setminus\Gamma_{s-}$ has finite perimeter.

\begin{lemma}\label{le:PerimEst.bdryInclusions}
If $t\in(0,T]$, we have
\begin{align*}
\partial^M(O\setminus\Gamma_{s-})=\partial^MO=\partial^M(\RR^2\setminus(\{v<t\}\cup\{v>t\}))=\partial^M(\{v<t\}\cup\{v>t\})\subset \partial^M\{v<t\}\cup\partial^M\{v>t\}.
\end{align*}
If $t=0$, we have
\begin{align}
\partial^M(O\setminus\Gamma_{s-})\subset \partial^M\Gamma_{s-}\cup\partial^M\{v>0\}=\partial^M\Gamma_{s-}.
\label{eq.essbdryO.t0}
\end{align}
\end{lemma}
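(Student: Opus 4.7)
The plan is to exploit two standard facts about the essential boundary $\partial^M$: it is invariant under complementation and under Lebesgue-null modifications, and it is subadditive in the sense $\partial^M(A\cup B)\subseteq\partial^M A\cup\partial^M B$. With these tools in hand, each case reduces to identifying $O\setminus\Gamma_{s-}$ (or its complement) with a convenient set modulo a Lebesgue-null modification, after which the stated chain of $\partial^M$-relations is obtained mechanically.

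For the case $t\in(0,T]$, the first equality $\partial^M(O\setminus\Gamma_{s-})=\partial^M O$ is immediate, because $v\equiv t>0$ on $O$ whereas $v=0$ on $\Gamma_{s-}$, so $O\cap\Gamma_{s-}=\emptyset$ and thus $O\setminus\Gamma_{s-}=O$. The heart of the argument is to show $O=\{v=t\}$ modulo a Lebesgue-null set. For this I would invoke the classical fact that, as $v$ is locally Lipschitz by Lemma \ref{lem:UsefulProperties}(5), one has $\nabla v=0$ a.e.\ on the level set $\{v=t\}$; combining this with $|\nabla v|=L_T^+$ a.e.\ (same lemma) and $L_T>0$ on $\mathrm{supp}\,\overline{\QQ}_T$ (Lemma \ref{lem:UsefulProperties}(3)) forces $\mathrm{supp}\,\overline{\QQ}_T\cap\{v=t\}$ to be Lebesgue-null. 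Hence $\{v=t\}$ agrees modulo Lebesgue-null with the union of components of $\RR^2\setminus\mathrm{supp}\,\overline{\QQ}_T$ on which $v\equiv t$, which, under the convention that $O_i$ equals the interior of $\{v=t_i\}$ (so the $t_i$'s are distinct), is exactly $O$. The third equality $\partial^M(\RR^2\setminus(\{v<t\}\cup\{v>t\}))=\partial^M(\{v<t\}\cup\{v>t\})$ is invariance under complements, and the closing inclusion is subadditivity.

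For the case $t=0$, I would compute
\begin{equation*}
(O\setminus\Gamma_{s-})^c=\Gamma_{s-}\cup O^c=\Gamma_{s-}\cup\mathrm{supp}\,\overline{\QQ}_T\cup\bigcup_{j\ne i}O_j.
\end{equation*}
By the distinctness of the $t_j$'s, each $O_j$ with $j\ne i$ has $t_j>0$, so $O_j\subseteq\{v>0\}$; and applying the Stampacchia-type argument above at level $0$ yields $\mathrm{supp}\,\overline{\QQ}_T\cap\{v=0\}$ Lebesgue-null, that is, $\mathrm{supp}\,\overline{\QQ}_T\subseteq\{v>0\}$ modulo Lebesgue-null. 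Consequently $(O\setminus\Gamma_{s-})^c=\Gamma_{s-}\cup\{v>0\}$ modulo Lebesgue-null, which gives
\begin{equation*}
\partial^M(O\setminus\Gamma_{s-})=\partial^M(\Gamma_{s-}\cup\{v>0\})\subseteq\partial^M\Gamma_{s-}\cup\partial^M\{v>0\}
\end{equation*}
by subadditivity. The final equality $\partial^M\Gamma_{s-}\cup\partial^M\{v>0\}=\partial^M\Gamma_{s-}$ (up to $\mathcal{H}^1$-null) is precisely the statement $\partial^M\{v>0\}\subseteq\partial^M\Gamma_{s-}$ already obtained at the end of Step 2 of this subsection.

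The delicate point I anticipate is the combination of the Serrin--Stampacchia-type vanishing result for $\nabla v$ on level sets with Lemma \ref{lem:UsefulProperties} in order to cleanly identify $\{v=t\}$ with the appropriate union of open components of $\RR^2\setminus\mathrm{supp}\,\overline{\QQ}_T$ modulo Lebesgue-null; once this null-set identification is in hand, the remaining manipulations are routine bookkeeping with complements and subadditivity of $\partial^M$.
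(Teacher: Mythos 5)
Your proof is correct and follows essentially the same structure as the paper's: identify $O$ with $\{v=t\}$ modulo a Lebesgue-null set, then apply complementation invariance and subadditivity of $\partial^M$, and for $t=0$ additionally invoke the inclusion $\partial^M\{v>0\}\subset\partial^M\Gamma_{s-}$ from Step 2. The one point where you argue differently is the key null-set claim $\mathrm{Leb}\big(\{v=t\}\cap\mathrm{supp}\,\overline{\QQ}_T\big)=0$ (equivalently $\mathrm{Leb}(\partial\{v=t\})=0$): you derive it from the standard Stampacchia-type fact that $\nabla v=0$ a.e.\ on each level set of the locally Lipschitz $v$, combined with $|\nabla v|=L_T^+$ a.e.\ and $L_T>0$ everywhere on $\mathrm{supp}\,\overline{\QQ}_T$, whereas the paper picks a Lebesgue density-$1$ point of $\partial\{v=t\}$ at which $v$ is differentiable with $|\nabla v|=L^+>0$ and derives a contradiction with $v$ being constant there; both are valid (and the paper already uses \cite[Theorem 3.3]{EvGa} for exactly this vanishing fact elsewhere), so this is a minor stylistic variant rather than a different route.
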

\begin{proof}
First, we prove the statement for  $t\in(0,T]$. In this case, $O\setminus\Gamma_{s-}=O$. Suppose that $\mathrm{Leb}(\partial\{v=t\})=0$. Then,
\begin{align*}
&\partial^MO=\partial^M\overline{O}=\partial^M\{v=t\}=\partial^M(\RR^2\setminus(\{v<t\}\cup\{v>t\}))\\
&=\partial^M(\{v<t\}\cup\{v>t\}) \subset \partial^M\{v<t\}\cup\partial^M\{v>t\},
\end{align*}
as stated in the lemma. Thus, it suffices to show that $\mathrm{Leb}(\partial\{v=t\})=0$. To this end, we argue by contradiction and assume that $\mathrm{Leb}(\partial\{v=t\})>0$.
Recall also that, according to Lemma \ref{lem:UsefulProperties} (5), $v$ is differentiable a.e. and $|\nabla v|=L^+$ a.e.
Then, due to the Lebesgue Differentiation Theorem, there exists $x\in\partial\{v=t\}$ such that $v$ is differentiable at $x$, $|\nabla v(x)|=L^+(x)$, and 
\begin{align}
&\lim_{r\downarrow0}\frac{\mathrm{Leb}(B(x,r)\cap\partial\{v=t\})}{\mathrm{Leb}(B(x,r))} = 1.\label{eq.perimEst.partialO.small.density}
\end{align}
On the other hand, any open neighborhood of $x$ contains points in the support of $\overline{\QQ}_T$, at which $L>0$, according to Lemma \ref{lem:UsefulProperties}. As $L$ is upper semicontinuous, we deduce $|\nabla v(x)|=L^+(x)>0$.
Next, we notice that, in view of \eqref{eq.perimEst.partialO.small.density}, for any small enough $r>0$, there exists $x\neq y_r\in B(x,r)\cap\partial\{v=t\}$ such that
\begin{align*}
& \frac{(y_r-x)\cdot\nabla v(x)}{|y_r-x| |\nabla v(x)|} \geq\frac12.
\end{align*}
Notice also that, as $r\downarrow0$,
\begin{align*}
&0=v(y_r) - v(x) = (y_r-x)\cdot\nabla v(x) + o(|y_r-x|) \\
&= |y_r-x| |\nabla v(x)| \left(\frac{(y_r-x)\cdot\nabla v(x)}{|y_r-x| |\nabla v(x)|} + o(1) \right)
\geq \frac{1}{4} |y_r-x| |\nabla v(x)|,
\end{align*}
which yields the desired contradiction.

\smallskip

For $t=0$, we repeat the above argument to show that $\mathrm{Leb}(\partial O)=\mathrm{Leb}(\partial\{v=0\})=\mathrm{Leb}(\partial \{v>0\})=0$. 
Next, we notice
\begin{align*}
O\setminus\Gamma_{s-} = (\RR^2\setminus\overline{\{v>0\}})\cap(\RR^2\setminus\Gamma_{s-}).
\end{align*}
The above yields
\begin{align*}
\partial^M(O\setminus\Gamma_{s-}) \subset \partial^M\overline{\{v>0\}}\cup\partial^M\Gamma_{s-}
= \partial^M\{v>0\}\cup\partial^M\Gamma_{s-}.
\end{align*}
The equality $\partial^M\Gamma_{s-}\cup\partial^M\{v>0\}=\partial^M\Gamma_{s-}$ follows from Step 2.
\end{proof}

\smallskip

Recalling that $\kappa\,\QQ_T\circ y^{-1}_\theta\geq\gamma\,\mathrm{d}\|\partial\Gamma_{s-}\|$, and using the above lemma and the results of Steps 1 and 2, we conclude that, whichever $t\in[0,T]$ is associated with $O$, we have
\begin{align*}
& \QQ_T\circ y^{-1}_{\iota(L,t)}\geq\frac{\gamma}{\kappa}\,\mathrm{d}\|\partial(O\setminus\Gamma_{s-})\|,
\end{align*}
which, in particular, implies that $O\setminus\Gamma_{s-}$ has a finite perimeter.

\smallskip

The following proposition completes this step of the proof, and thereby the proof of Proposition \ref{prop:PerimEst}.

\begin{proposition}\label{prop:PerimEst.prop2}
There exists an $\mathcal{H}^1$-zero set $\mathcal{N}\subset\partial^M(O\setminus\Gamma_{s-})$ such that, $\QQ_T$-a.s., the event $y_{\iota(L,t)}\in\partial^M(O\setminus\Gamma_{s-})\setminus\mathcal{N}$ implies $\iota(L,t)=0$.
\end{proposition}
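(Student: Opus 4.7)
The plan is to construct $\mathcal{N}$ as an $\mathcal{H}^1$-null subset of $\partial^M(O\setminus\Gamma_{s-})$ that excludes several layers of geometric pathology, and then to rule out the event $\{y_{\iota(L,t)}\in \partial^M(O\setminus\Gamma_{s-})\setminus\mathcal{N},\;\iota(L,t)>0\}$ by a tangent-space analysis at $\mathcal{H}^1$-a.e.\ boundary point. Note that $\iota(L,t)>0$ is equivalent, by Lemma~\ref{lem:UsefulProperties}(4), to $v(y_0)>t$, in which case Lemma~\ref{le:perim.mon} forces $y_r\in\{v>t\}$ for all $r\in[0,\iota(L,t))$ and $v(y_{\iota(L,t)})=t$ by continuity.

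First I would dispose of the edge case $t=0$. Here $\iota(L,0)=\theta$, and by \eqref{eq.essbdryO.t0} in Lemma~\ref{le:PerimEst.bdryInclusions} the essential boundary $\partial^M(O\setminus\Gamma_{s-})$ is contained in $\partial^M\Gamma_{s-}$ up to an $\mathcal{H}^1$-null set, which I absorb into $\mathcal{N}$. The $\QQ_T$-a.s.\ constraint $y_\theta\in\partial_*\Gamma_{s-}$ on $\{\beta=\theta\}$ places the trajectory at the essential boundary of $\Gamma_{s-}$ at its killing time, whereas on $\{\beta<\theta\}$ the live trajectory is already killed strictly before $\theta$; combining this with the fact that any surviving mass on $\partial^M(O\setminus\Gamma_{s-})\setminus\partial^M\Gamma_{s-}$ can come only from $\{y_0\in\overline\Gamma_{s-}\}$, we obtain $\theta=0$ on the surviving event.

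For $t\in(0,T]$, I would add to $\mathcal{N}$ the standard $\mathcal{H}^1$-null set of points at which $\partial^M(O\setminus\Gamma_{s-})$ lacks a measure-theoretic tangent or has density not equal to $\tfrac12$ (from the Federer-Vol'pert structure theorem and \cite{ACMM}), together with the set where $x$ lies in the double intersection $\partial^M\{v>t\}\cap\partial^M\{v<t\}$: at such an $x$ both $\{v>t\}$ and $\{v<t\}$ must have density $\tfrac12$, forcing $O\subset\{v=t\}$ to have density $0$, which contradicts $x\in\partial^M O$, and hence this set too is $\mathcal{H}^1$-null. The remaining essential boundary then splits disjointly into $B_>\subset\partial^M\{v>t\}\setminus\partial^M\{v<t\}$ and $B_<\subset\partial^M\{v<t\}\setminus\partial^M\{v>t\}$. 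On $B_<$ the density of $\{v>t\}$ at $x$ is zero, yet a trajectory with $\iota(L,t)>0$ must traverse $\{v>t\}$ all the way to $x$; a blow-up argument combined with the absolute continuity of $\overline\QQ$ from Lemma~\ref{lem:UsefulProperties}(2) shows the set of such arrivals is $\mathcal{H}^1$-null on $B_<$, and this null set is added to $\mathcal{N}$. On $B_>$, the tangent of $\partial^M\{v>t\}$ coincides with the level-set tangent of $v$, so $\nabla v(x)/|\nabla v(x)|$ is the measure-theoretic unit normal pointing into $\{v>t\}$; if $\iota(L,t)>0$ then the trajectory backtracks along this normal for a positive distance into $\{v>t\}$, and a coarea-type argument combined with the lower bounds from Steps~1--2 and the assumed boundary condition $\kappa\,\bone_{\partial_*\Gamma_{s-}}\,\QQ(y_\theta\in\,\cdot\,,\beta=\theta)\geq\gamma\,\mathcal{H}^1$ shows that the image of this backtracking is $\mathcal{H}^1$-null on $B_>$.

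The hardest part will be the fine tangent/blow-up analysis on $B_>$: matching the measure-theoretic normal of $\partial^M\{v>t\}$ at a generic boundary point with the ODE velocity $\dot y=-\nabla v/|\nabla v|$ of the trajectory, and then arguing that a positive $\mathcal{H}^1$-mass of transverse arrivals would be incompatible with the mass constraints inherited from Steps~1--2 and the finiteness of the total $\QQ_T$-mass. The planar structure tool from \cite{ACMM} --- which decomposes $\partial^M(O\setminus\Gamma_{s-})$ as a countable disjoint union of rectifiable Jordan curves --- appears essential here, both for defining $\mathcal{N}$ globally and for the book-keeping of trajectories arriving on different Jordan components of the boundary.
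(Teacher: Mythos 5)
Your proposal has the right high-level ingredients --- the ACMM Jordan-curve decomposition, Federer/Vol'pert density structure, and blow-up arguments via \cite[Theorem 5.13]{EvGa} --- but the case split into $B_<$ and $B_>$ introduces two genuine gaps, and the $t=0$ case is asserted rather than argued. On $B_<$ you invoke ``the absolute continuity of $\overline\QQ$'' together with a blow-up, but absolute continuity of $\overline\QQ$ with respect to Lebesgue gives you nothing here, since the arrival points $y_{\iota(L,t)}$ lie on the Lebesgue-null level set $\{v=t\}$; that argument has no traction. On $B_>$ you propose identifying the measure-theoretic unit normal of $\partial^M\{v>t\}$ with $\nabla v(x)/|\nabla v(x)|$ at $\mathcal{H}^1$-a.e.~boundary point and then ``matching'' it with $\dot y=-\nabla v/|\nabla v|$; but $v$ is only locally Lipschitz, and it is neither shown nor obvious that $v$ is differentiable with nonvanishing gradient aligned with the measure-theoretic normal at $\mathcal{H}^1$-a.e.~point of the reduced boundary, so this step is unjustified. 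For $t=0$ you stop at the inclusion $\partial^M(O\setminus\Gamma_{s-})\subset\partial^M\Gamma_{s-}$ and then claim $\theta=0$ on the surviving event, but give no actual argument that rules out a trajectory approaching $x\in\partial^M\Gamma_{s-}$ from inside $\{v>0\}$.

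The paper avoids all of this by a uniform two-sided application of Corollary~\ref{cor:2dim.key}, which you should state and prove rather than gesture at. For $t\in(0,T)$: if $\iota(L,t)>0$, then by Lemma~\ref{le:perim.mon} the path $y_{[0,\iota(L,t))}$ lies in $\{v>t\}$, so Corollary~\ref{cor:2dim.key} (with $F=O$, $G=\{v>t\}$) yields an open finite-perimeter set $A\subset\{v>t\}$ with $x\in\partial^MA$; and since $\iota(L,t)<\theta$ forces $y_{(\iota(L,t),\theta)}\subset\{v<t\}$, a second application (with $G=\{v<t\}$) yields $A'\subset\{v<t\}$ with $x\in\partial^MA'$. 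The three mutually disjoint sets $O$, $A$, $A'$ would each have blow-up density $1/2$ at $x$ (after removing a further $\mathcal{H}^1$-null set via \cite[Theorem 5.13]{EvGa}), which is impossible since $1/2+1/2+1/2>1$. No splitting into $B_<$ and $B_>$, no tangent-matching, and no occupation-measure input is needed. For $t=0$, the same pattern --- $\{v>0\}$ plays the role of one side, $\Gamma_{s-}$ the other --- closes the case that your proposal leaves open. I would recommend you first prove Lemma~\ref{le:2dim.key} and Corollary~\ref{cor:2dim.key} in full, then run the three-density contradiction, and discard the $B_<$/$B_>$ case analysis.
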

\begin{proof}
We begin with an auxiliary general result, which explains why we make the assumption $d=2$.

\begin{lemma}\label{le:2dim.key}
Assume that $A,E\subset\RR^2$ are disjoint open sets, that $A$ has a finite perimeter, and that $E$ is connected. Then, there exists an $\mathcal{H}^1$-zero set $\mathcal{N}\subset\partial A\cap\partial E$ such that, for any $x\in(\partial A\cap\partial E)\setminus \mathcal{N}$ admitting a continuous curve $\theta:\,[0,1]\rightarrow\RR^2$ with $\theta(0)=x$ and $\theta((0,1])\subset A$, we have $x\in \partial^MA$. 
\end{lemma}
\begin{proof}
Corollary 1 in \cite{ACMM} implies the existence of rectifiable Jordan curves $\{\gamma^{+,k}\}_{k=1}^\infty$ and $\{\gamma^{-,n}\}_{n=1}^\infty$ such that any two elements of $\{\mathrm{int}(\gamma^{+,k}),\mathrm{int}(\gamma^{-,n})\}_{k,n}$ are either disjoint or one of them contains the other, and
\begin{align*}
A = \bigcup_{k=1}^\infty Y_k,\quad Y_k := \mathrm{int}(\gamma^{+,k})\setminus \bigcup_{n:\, \mathrm{int}(\gamma^{-,n})\subset\mathrm{int}(\gamma^{+,k})} \mathrm{int}(\gamma^{-,n}),
\end{align*}
up to a Lebesgue-zero set.
Moreover, $\partial^M A=\bigcup_{k}\gamma^{+,k}\cup\bigcup_n\gamma^{-,n}$, up to an $\mathcal{H}^1$-zero set, and for any $\mathrm{int}(\gamma^{+,k'})\subset\mathrm{int}(\gamma^{+,k})$ there exists an $n$ such that $\mathrm{int}(\gamma^{+,k'})\subset\mathrm{int}(\gamma^{-,n})\subset\mathrm{int}(\gamma^{+,k})$.

\smallskip

As $\partial A\supset\overline{\partial^MA}$, we have $\bigcup_{k}\gamma^{+,k}\cup\bigcup_n\gamma^{-,n}\subset\partial A$.
Since $A$ is open, it cannot contain any of the points in $\bigcup_{k}\gamma^{+,k}\cup\bigcup_n\gamma^{-,n}$. In addition, by Jordan's Theorem, any continuous curve connecting a point in $Y_k$ to a point in $\RR^2\setminus Y_k$ must intersect $\bigcup_{k}\gamma^{+,k}\cup\bigcup_n\gamma^{-,n}$. These observations are used below without being cited explicitly.

\smallskip

Consider any $x\in\partial A\cap\partial E$ that admits a continuous curve $\theta:\,[0,1]\rightarrow\RR^2$ with $\theta(0)=x$ and $\theta((0,1])\subset A$. It suffices to show that $x\in\bigcup_{k}\gamma^{+,k}\cup\bigcup_n\gamma^{-,n}$.

Assume that $\theta((0,1])$ does not intersect any $Y_k$. Then, by covering $\theta([\epsilon,1])$ with small open balls contained in $A$ and recalling that $A$ cannot intersect $\bigcup_{k}\gamma^{+,k}\cup\bigcup_n\gamma^{-,n}$ (hence our covering cannot intersect $\bigcup_{k=1}^\infty Y_k$), we obtain a set of positive Lebesgue measure that is contained in $A$ but is not contained in $\bigcup_{k=1}^\infty Y_k$, which is a contradiction. Thus, there exists a $k$ such that $\theta((0,1])\subset Y_k$. 

Since $E$ is open, connected, and disjoint with $A$, we must have either $E\subset \RR^2\setminus \overline{\mathrm{int}(\gamma^{+,k})}$ or $E\subset \mathrm{int}(\gamma^{-,n})\subset \mathrm{int}(\gamma^{+,k})$ for some $n$. Then, either $x\in\gamma^{+,k}$ or $x\in\gamma^{-,n}$, which completes the proof of the lemma.
\end{proof}

The above lemma has a useful corollary.

\begin{corollary}\label{cor:2dim.key}
Assume that $F,G\subset\RR^2$ are disjoint open sets with finite perimeters. Then, there exists an $\mathcal{H}^1$-zero set $\mathcal{N}\subset\partial F$ such that, for any $x\in\partial^M F\setminus \mathcal{N}$ admitting a continuous curve $\theta:\,[0,1]\rightarrow\RR^2$ with $\theta(0)=x$ and $\theta((0,1])\subset G$, there exists an open finite-perimeter set $A\subset G$ such that $x\in \partial^MA$. 
\end{corollary}
\begin{proof}
Applying the decomposition of $\partial^M F$ into Jordan curves $\{\gamma_i\}$, as in the proof of Lemma \ref{le:2dim.key}, we choose $\mathcal{N}_0$ so that $x\in\partial^M F$ implies that $x$ belongs to one of these Jordan curves. For each $i$, we define the open sets $A_i^+$ and $A_i^-$ as the intersections of $G$ and, respectively, the unbounded and the bounded open connected components produced by $\gamma_i$. Notice that $A_i^+$ and $A_i^-$ have finite perimeters and are subsets of $G$. We denote by $E_i^+$ and $E_i^-$, respectively, the bounded and the unbounded open connected components produced by $\gamma_i$. Then, Lemma \ref{le:2dim.key} implies the existence of $\mathcal{H}^1$-zero sets $\mathcal{N}_i^{+}$ and $\mathcal{N}_i^{-}$ such that, for any $x\in(\partial E_i^{\pm}\cap\partial A_i^{\pm})\setminus \mathcal{N}_i^{\pm}$, admitting a curve $\theta:\,[0,1]\rightarrow\RR^2$ with $\theta(0)=x$ and $\theta((0,1])\subset A_i^{\pm}$, implies $x\in \partial^MA_i^{\pm}$. We denote by $\mathcal{N}$ the union of $\mathcal{N}_0$ and all $\{\mathcal{N}_i^{\pm}\}$. Then, for any $x\in\partial^M F\setminus \mathcal{N}$, admitting a continuous curve $\theta:\,[0,1]\rightarrow\RR^2$ with $\theta(0)=x$ and $\theta((0,1])\subset G$, there exists an $i$ such that $x\in\gamma_i$ and $\theta((0,1])$ is contained in one of the open connected components produced by $\gamma$ -- w.l.o.g. we assume it is the bounded component. Then, $\theta((0,1])\subset A_i^-$ and $x\in(\partial E_i^{-}\cap\partial A_i^{-})\setminus \mathcal{N}_i^{-}$, and hence, $x\in \partial^MA_i^{-}$.
\end{proof}

\medskip

Next, we go back to the proof of Proposition \ref{prop:PerimEst.prop2}.
Recall that $O$ is the interior of $\{v=t\}$, for some $t\in[0,T]$. Assume that $t\in(0,T)$ and notice that, in this case, $O\setminus\Gamma_{s-}=O$. Consider a path $y\in\mathcal{Y}$ with $x:=y_{\iota(L,t)}\in\partial^MO$ and $\iota(L,t)>0$. Lemma \ref{le:perim.mon} implies that, $\QQ_T$-a.s., the continuous path $y_{[0,\iota(L,t))}$ is inside the open set $G:=\{v>t\}$. 
Recalling that $G$ and $F:=O$ are disjoint and have finite perimeters, we apply Corollary \ref{cor:2dim.key} to conclude that there exists an $\mathcal{H}^1$-zero set $\mathcal{N}\subset\partial O$, such that, whenever $x\in\partial^M O\setminus \mathcal{N}$, there exists an open finite-perimeter set $A\subset \{v>t\}$ such that $x\in \partial^MA$.
Applying the same argument and using $\iota(L,t)<\theta$, we conclude that there exists an open finite-perimeter set $A'\subset \{v<t\}$ such that $x\in \partial^MA'$.
All in all, we conclude that there exists an $\mathcal{H}^1$-zero set $\mathcal{N}$, such that $y_{\iota(L,t)}\in\partial^MO\setminus\mathcal{N}$ and $\iota(L,t)>0$ imply the existence of two open finite-perimeter sets $A\subset \{v>t\}$ and $A'\subset \{v<t\}$ such that $y_{\iota(L,t)}\in \partial^MA\cap\partial^MA'\cap\partial^MO$, $\QQ_T$-a.s.
Let us show that the latter is impossible.
Using \cite[Theorem 5.13]{EvGa} and increasing $\mathcal{N}$ if needed (so that it remains an $\mathcal{H}^1$-zero set), we deduce that $x=y_{\iota(L,t)}\in(\partial^MO\setminus\mathcal{N})\cap\partial^MA\cap\partial^MA'$ yields: 
\begin{align*}
\bone_{O_r(x)}&\to\bone_{H^-(O,x)}\quad\text{in }L_{\mathrm{loc}}^1(\RR^2),\\
\bone_{A_r'(x)}&\to\bone_{H^-(A',x)}\quad\text{in }L_{\mathrm{loc}}^1(\RR^2),\\
\bone_{A_r(x)}&\to\bone_{H^-(A,x)}\quad\text{in }L_{\mathrm{loc}}^1(\RR^2),
\end{align*}
as $r\downarrow0$, where the blow-up set $E_r(x)$ is defined by
\begin{align*}
E_r(x):=\{y\in\RR^2:\,r(y-x)+x\in E\}, 
\end{align*}
and the half space $H^-(E,x)$ is defined by
\begin{align*}
H^-(E,x):=\{y\in\RR^2:\,\nu_E(x)\cdot(y-x)\leq0\},   
\end{align*}
for $E=O,A,A'$ and $\nu_E(x)$ is the measure theoretic unit outer normal to $E$ at $x$.
Next, we notice that
\begin{align*}
1\ge\bone_{O_r(x)}(z)+\bone_{A_r(x)}(z)+\bone_{A_r'(x)}(z),   
\end{align*}
for any $z\in\RR^2$ as the three sets $O,A,A'$ are disjoint.
On the other hand, 
\begin{align*}
\frac32\mathrm{Leb}(B(x,R))&=\int_{B(x,R)}(\bone_{H^-(O,x)}(z)+\bone_{H^-(A,x)}(z)+\bone_{H^-(A',x)}(z))\,\mathrm{d}z\\
&\leq\liminf_{r\downarrow0}\int_{B(x,R)}(\bone_{O_r(x)}(z)+\bone_{A_r(x)}(z)+\bone_{A_r'(x)}(z))\,\mathrm{d}z\\
&\leq\int_{B(x,R)}1\,\mathrm{d}z=\mathrm{Leb}(B(x,R)),
\end{align*}
for any $R>0$, which yields a contradiction. Hence, such an $x$ does not exist, which means that $y_{\iota(L,t)}\in\partial^MO\setminus\mathcal{N}$ implies $\iota(L,t)=0$, $\QQ_T$-a.s.

\medskip

It remains to analyze the cases $t=0,T$. The case $t=T$ is trivial as $\QQ_T(\iota(L,T)=0)=1$ by the definition of the $T$-projection and the definition of $\iota(L,T)$.
In the case $t=0$, we recall \eqref{eq.essbdryO.t0} to deduce that any $x\in\partial^M(O\setminus\Gamma_{s-})$ also belongs to $\partial^M\Gamma_{s-}$. On the other hand, if $x=y_{\iota(L,0)}$ and $\iota(L,0)=\theta>0$, then $x$ also belongs to the boundary of $\{v>0\}$. Then, Corollary \ref{cor:2dim.key} implies the existence of an $\mathcal{H}^1$-zero set $\mathcal{N}$ such that $y_{\iota(L,0)}\in\partial^M(O\setminus\Gamma_{s-})\setminus\mathcal{N}$ and $\iota(L,0)>0$ imply the existence of an open finite-perimeter set $A\subset \{v>0\}$ such that $x\in \partial^MA$.
Using \cite[Theorem 5.13]{EvGa} and repeating the estimates in the preceding paragraph, we conclude that $(\partial^M(O\setminus\Gamma_{s-})\setminus\mathcal{N})\cap\partial^M\Gamma_{s-}\cap\partial^MA=\emptyset$. The latter means that $y_{\iota(L,0)}\in\partial^M(O\setminus\Gamma_{s-})\setminus\mathcal{N}$ implies $\iota(L,0)=0$, $\QQ_T$-a.s.. 
\end{proof}

\smallskip

Let us complete the proof of Proposition \ref{prop:PerimEst}, with $\partial_*(O\setminus\Gamma_{s-})$ replaced by $\partial^M(O\setminus\Gamma_{s-})$ in \eqref{eq.PerimEst.eq}, w.l.o.g. First, Proposition \ref{prop:PerimEst.prop2} and \eqref{eq.Step2.mainResult} imply that $\mathcal{H}^1(\partial^M(O\setminus\Gamma_{s-})\cap\partial^M\{v>t\})=0$ for any $t\in[0,T]$. Then, for $t\in(0,T]$, Lemma \ref{le:PerimEst.bdryInclusions} implies that 
\begin{align*}
&\bone_{\partial^M(O\setminus\Gamma_{s-})\setminus\mathcal{N}}(x)\,\mathcal{H}^1(\mathrm{d}x)
= \bone_{(\partial^M(O\setminus\Gamma_{s-})\cap\partial^M\{v<t\})\setminus\mathcal{N}}(x)\,\mathcal{H}^1(\mathrm{d}x).
\end{align*}
On the other hand, for $t\in(0,T]$, \eqref{eq.Step1.mainResult} and Proposition \ref{prop:PerimEst.prop2} yield the existence of an $\mathcal{H}^1$-zero set $\mathcal{N}$ such that
\begin{align*}
&\frac{\gamma}{\kappa}\,\bone_{(\partial^M(O\setminus\Gamma_{s-})\cap\partial^M\{v<t\})\setminus\mathcal{N}}(x)\,\mathcal{H}^1(\mathrm{d}x)
\leq \bone_{(\partial^M(O\setminus\Gamma_{s-})\cap\partial^M\{v<t\})\setminus\mathcal{N}}(x)\,\QQ_T(y_0\in \mathrm{d}x,\beta>0)\\
&\leq \bone_{\partial^M(O\setminus\Gamma_{s-})\setminus\mathcal{N}}(x)\,\QQ_T(y_0\in \mathrm{d}x,\beta>0)\\
&=\bone_{\partial^M(O\setminus\Gamma_{s-})\setminus\mathcal{N}}(x)\,\QQ_T(y_0\in \mathrm{d}x)
- \bone_{\partial^M(O\setminus\Gamma_{s-})\setminus\mathcal{N}}(x)\,\QQ_T(y_0\in \mathrm{d}x,\beta=0)\\
&=\bone_{\partial^M(O\setminus\Gamma_{s-})\setminus\mathcal{N}}(x)\,\QQ_T(y_0\in \mathrm{d}x)
- \bone_{\partial^M(O\setminus\Gamma_{s-})\setminus\mathcal{N}}(x)\,\QQ_T(y_\beta\in \mathrm{d}x),
\end{align*}
where the last equality is due to the fact that $y_\beta\in\partial^M(O\setminus\Gamma_{s-})\setminus\mathcal{N}$ and $\beta>0$ yield $\iota(L,t)>0$ and $y_\beta=y_{\iota(L,t)}$, $\QQ_T$-a.s., by Lemma \ref{le:perim.mon}, which implies $\QQ_T(y_\beta\in\partial^M(O\setminus\Gamma_{s-})\setminus\mathcal{N},\,\beta>0)=0$ via Proposition \ref{prop:PerimEst.prop2}.
The above displays imply the statement of Proposition \ref{prop:PerimEst} for $t\in(0,T]$, since, for such $t$, we have $\partial^M(O\setminus\Gamma_{s-})\cap\partial_*\Gamma_{s-}=\emptyset$. If $t=0$, then $\partial^M(O\setminus\Gamma_{s-})\subset\partial^M\Gamma_{s-}$, due to Lemma \ref{le:PerimEst.bdryInclusions}, and it suffices to prove the existence of an $\mathcal{H}^1$-zero set $\mathcal{N}$ such that
\begin{align*}
\frac{\gamma}{\kappa}\,\bone_{\partial^M(O\setminus\Gamma_{s-})\setminus\mathcal{N}}(x)\,\mathcal{H}^1(\mathrm{d}x)
\leq \bone_{\partial^M(O\setminus\Gamma_{s-})\setminus\mathcal{N}}(x)\,\QQ_T(y_0\in \mathrm{d}x).
\end{align*}
The above follows by applying Proposition \ref{prop:PerimEst.prop2} to deduce that the right-hand side of the above display is equal to $\bone_{\partial^M(O\setminus\Gamma_{s-})\setminus\mathcal{N}}(x)\,\QQ_T(y_\beta\in \mathrm{d}x,\beta=\theta)$, and then applying the assumption of Proposition \ref{prop:PerimEst}.

\smallskip

Theorem \ref{thm:perimeter.mainThm} yields the following bound on the total variation of the arrival time function of an equilibrium.

\begin{corollary}\label{cor:perim.uniformBdOnTV}
Consider any equilibrium $(\kappa,\QQ,L)\in\mathcal{E}$. Assume that $d=2$, that $\Gamma_{s-}$ is a set of finite perimeter, that $u(s-,\cdot)$ is Lebesgue-measurable with $\int_{\RR^2\setminus\overline{\Gamma}_{s-}}(1+u(s-,x))^-\,\mathrm{d}x<\infty$, that $\mathbf{e}^{\kappa,\QQ_T,L_T}\geq0$, and that $\kappa\,\bone_{\partial_*\Gamma_{s-}}(x)\,\QQ(y_\theta\in \mathrm{d}x,\beta=\theta)\geq \gamma\,\bone_{\partial_*\Gamma_{s-}}(x)\,\mathcal{H}^1(\mathrm{d}x)$.
Then, for any $T\in(0,\infty)$, we have $w^{\QQ,L}\wedge T\in \mathrm{BV}_{loc}(\RR^2)$ and
\begin{align*}
\int_{\RR^2}\mathrm{d}|\nabla(w^{\QQ,L}\wedge T)| \leq \frac{T}{\gamma}\bigg(2\kappa + \int_{\RR^2\setminus\overline{\Gamma}_{s-}} (1+u(s-,x))^-\,\mathrm{d}x \bigg).
\end{align*}
\end{corollary}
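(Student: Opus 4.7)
The strategy is to combine the coarea formula for BV functions with Theorem~\ref{thm:perimeter.mainThm} applied at every intermediate level $t \in (0,T]$. By Lemma~\ref{le:projectionOfw}, $w^{\QQ,L}\wedge T = w^{\QQ_T,L_T}\wedge T$, so I first replace $(\QQ,L)$ by its $T$-projection and assume $(\kappa,\QQ,L)\in\mathcal{E}_T$ with $\mathbf{e}^{\kappa,\QQ,L}\geq 0$ and the boundary assumption on $\QQ(y_\theta\in\,\cdot\,,\beta=\theta)$. For each $t\in(0,T]$, by Lemma~\ref{lem:EquilibriumTimeConsistency} the $t$-projection $(\kappa,\QQ_t,L_t)\in\mathcal{E}_t$, and the aim is to apply Theorem~\ref{thm:perimeter.mainThm} to it to get
\[
\big\|\partial D_t^{\QQ,L}\big\|(\RR^2)\;\leq\;\tfrac{\kappa}{\gamma}\,\QQ_t(\beta=\theta)\;-\;\tfrac{1}{\gamma}\!\!\int_{D_t^{\QQ,L}\setminus\overline{\Gamma}_{s-}}\!\!(1+u(s-,x))\,\mathrm{d}x.
\]

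Applying the theorem requires verifying the two hypotheses at the projected level: (i) $\kappa\,\bone_{\partial_*\Gamma_{s-}}\,\QQ_t(y_\theta\in\,\cdot\,,\beta=\theta)\geq\gamma\,\bone_{\partial_*\Gamma_{s-}}\,\mathcal{H}^1$, and (ii) $\mathbf{e}^{\kappa,\QQ_t,L_t}\geq 0$. For (i), I use that $\iota(L,t)<\theta$ for $t>0$, so any path with $\beta=\theta$ under $\QQ$ survives the projection with $\beta^{(t)}=\theta^{(t)}$, giving $\QQ_t(y_\theta\in\,\cdot\,,\beta=\theta)\geq\QQ(y_\theta\in\,\cdot\,,\beta=\theta)$, and (i) is inherited from the standing hypothesis. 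Point (ii) is the main technical step: computing $\QQ_t\circ y_0^{-1}$ and $\QQ_t\circ y_\beta^{-1}$ from $\QQ$ by partitioning over $\{\iota(L,t)=0\}$ versus $\{\iota(L,t)>0\}$ and over $\{\beta\geq\iota(L,t)\}$ versus $\{\beta<\iota(L,t)\}$, the extra mass at $y_{\iota(L,t)}$ appears symmetrically in both distributions on the interior of $D_t^{\QQ,L}\setminus\overline{\Gamma}_{s-}$, while any residual contribution is concentrated on $\{v=t\}$, which carries zero Lebesgue measure by the argument in Step~3 of the proof of Theorem~\ref{thm:perimeter.mainThm}. Consequently, on its absolutely continuous part $\mathbf{e}^{\kappa,\QQ_t,L_t}$ coincides with the restriction of $\mathbf{e}^{\kappa,\QQ,L}$ to $D_t^{\QQ,L}\setminus\overline{\Gamma}_{s-}$, and is therefore non-negative.

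Using $\QQ_t(\beta=\theta)\leq 1$ and $-(1+u(s-,\cdot))\leq (1+u(s-,\cdot))^-$ in the above estimate yields
\[
\big\|\partial D_t^{\QQ,L}\big\|(\RR^2)\;\leq\;\tfrac{\kappa}{\gamma}\;+\;\tfrac{1}{\gamma}\!\!\int_{\RR^2\setminus\overline{\Gamma}_{s-}}\!\!(1+u(s-,x))^-\,\mathrm{d}x,\qquad t\in(0,T].
\]
Since $\{w^{\QQ,L}\wedge T>t\}^c=D_{t+}^{\QQ,L}$ by \eqref{eq.sublevSetW.Gamma}, these two sets have equal perimeter, and $D_{t+}^{\QQ,L}=D_t^{\QQ,L}$ outside a countable set of $t$ (the remaining values inheriting the bound via lower semicontinuity of perimeter under $L^1_{\mathrm{loc}}$ convergence). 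The coarea formula for BV functions then gives
\[
\int_{\RR^2}\mathrm{d}|\nabla(w^{\QQ,L}\wedge T)|\;=\;\int_0^T\big\|\partial D_{t+}^{\QQ,L}\big\|(\RR^2)\,\mathrm{d}t\;\leq\;\tfrac{T}{\gamma}\Big(\kappa+\!\!\int_{\RR^2\setminus\overline{\Gamma}_{s-}}\!\!(1+u(s-,x))^-\,\mathrm{d}x\Big),
\]
which is stronger than the claimed bound (the $2\kappa$ in the statement is a convenient loose upper bound) and also yields $w^{\QQ,L}\wedge T\in \mathrm{BV}_{\mathrm{loc}}(\RR^2)$. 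The principal difficulty is verifying (ii) uniformly in $t$, since the hypothesis only guarantees non-negativity of $\mathbf{e}$ at the single time $T$; should the partition argument above prove inconvenient, an alternative is to directly invoke the estimate established in Step~1 of the proof of Theorem~\ref{thm:perimeter.mainThm}, namely $\|\partial\{v<t\}\|(\mathrm{d}x)\leq\frac{\kappa}{\gamma}\QQ_T(y_{\iota(L,t)}\in\mathrm{d}x,\beta>\iota(L,t))$, integrate over $t\in(0,T]$, and combine with the trivial bound $\QQ_T(\beta>\iota(L,t))\leq 1$ to reach the same total-variation estimate without passing through $\mathbf{e}^{\kappa,\QQ_t,L_t}\geq 0$.
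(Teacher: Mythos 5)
Your approach differs from the paper's and has a genuine gap. The paper does not use the coarea formula at all: it writes
\[
w^{\QQ,L}\wedge T = w^{L_T}\wedge T + (T-w^{L_T})^+\,\bone_{O\setminus\Gamma_{s-}},\qquad O:=\RR^2\setminus\mathrm{supp}\,\overline\QQ_T,
\]
bounds the variation of the Lipschitz part $w^{L_T}$ by $\int|\nabla w^{L_T}|\,\mathrm{d}x\le \kappa T/\gamma$ (Lemma~\ref{lem:UsefulProperties}(5)), notes that $\nabla w^{L_T}=0$ a.e.~on $O$ so the second summand contributes no absolutely continuous variation, and then invokes Theorem~\ref{thm:perimeter.mainThm} exactly once, at level $T$, to bound the jump part by $T\|\partial D_T^{\QQ,L}\|(\RR^2)$. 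This single application is exactly what the hypothesis $\mathbf{e}^{\kappa,\QQ_T,L_T}\geq 0$ covers, and no propagation of that hypothesis to earlier times is needed.

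Your main route requires $\mathbf{e}^{\kappa,\QQ_t,L_t}\geq 0$ for \emph{every} $t\in(0,T]$, and the hypothesis is only given at time $T$. Your partition sketch asserts that the net contribution of paths cut at $y_{\iota(L,t)}$ cancels on the interior of $D_t^{\QQ,L}\setminus\overline\Gamma_{s-}$ and that the residual lives on $\{v=t\}$; this is plausible for the Lebesgue-absolutely-continuous part but, as stated, does not address atoms of $\QQ_t\circ y_0^{-1}$ and $\QQ_t\circ y_\beta^{-1}$ on $\{v=t\}\cap D_t^{\QQ,L}$, which is precisely where the two laws differ, and the inequality $\mathbf{e}^{\kappa,\QQ_t,L_t}\geq 0$ is a measure inequality, not just an a.e. one. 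This step is genuinely nontrivial and is not established.

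Your alternative shortcut has a sharper problem: the Step-1 estimate controls $\|\partial\{v<t\}\|$, but $\{v<t\}$ is \emph{not} the sublevel set of $w^{\QQ,L}\wedge T$. By Lemma~\ref{lem:w=v} and \eqref{eq.sublevSetW.Gamma}, $\{w^{\QQ,L}\wedge T\leq t\}=D_{t+}^{\QQ,L}$, whereas $\{v<t\}$ also contains the open components $O_i$ of $\RR^2\setminus\mathrm{supp}\,\overline\QQ_T$ with $t_i<t$, on which $w^{\QQ,L}=\infty$. The coarea integrand is therefore $\|\partial D_t^{\QQ,L}\|$, not $\|\partial\{v<t\}\|$; the latter misses the entire perimeter contribution coming from $\partial O$, i.e.~from the jump set of $w^{\QQ,L}$, which is precisely the term that requires Theorem~\ref{thm:perimeter.mainThm}. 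Integrating $\|\partial\{v<t\}\|$ only reproduces the bound $\int|\nabla w^{L_T}|\,\mathrm{d}x\le\kappa T/\gamma$ on the Lipschitz value function — it does not bound $\int\mathrm{d}|\nabla(w^{\QQ,L}\wedge T)|$.
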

\begin{proof}
Recalling Lemma \ref{le:projectionOfw}, we obtain:
\begin{align*}
&w^{\QQ,L}\wedge T=(w^{L_T}\wedge T)\,\bone_{D^{\QQ,L}_T}+T\,\bone_{\RR^2\setminus D^{\QQ,L}_T}
=w^{L_T}\wedge T + (T-w^{L_T})^+\,\bone_{O\setminus\Gamma_{s-}},
\end{align*}
where $O=\RR^2\setminus\mathrm{supp}\,\overline\QQ_T$. Decomposing $O$ into a countable union of open connected components, we apply Lemma \ref{lem:UsefulProperties} to deduce that $\nabla w^{L_T}=0$ a.e. in every such component, and hence, in $O$.
Proposition \ref{prop:PerimEst} states that $O$ has finite perimeter, which implies that the variation measure $\|\partial O\|$ is mutually singular with the Lebesgue measure. Then, we deduce from Theorem \ref{thm:perimeter.mainThm}:
\begin{align*}
&\mathrm{d}|\nabla(w^{\QQ,L}(x)\wedge T)| \leq |\nabla(w^{L_T}(x)\wedge T)|\,\mathrm{d}x + |\nabla(T-w^{L_T}(x))^+|\,\bone_{O\setminus\Gamma_{s-}}(x)\,\mathrm{d}x\\ 
&\phantom{????????????????????????????????????????}
+ (T-w^{L_T}(x))^+\,\|\partial D^{\QQ,L}_T\|(\mathrm{d}x),\\
&\int_{\RR^2} \mathrm{d}|\nabla(w^{\QQ,L}(x)\wedge T)| \leq \int_{\RR^2} |\nabla(w^{L_T}(x)\wedge T)|\,\mathrm{d}x
+ \frac{T}{\gamma}\bigg(\kappa - \int_{\{w^{\QQ_T,L_T}<\infty\}\setminus\overline{\Gamma}_{s-}} (1+u(s-,x))\,\mathrm{d}x \bigg)\\
& \leq \frac{T}{\gamma}\bigg(2\kappa + \int_{\RR^2\setminus\overline{\Gamma}_{s-}} (1+u(s-,x))^-\,\mathrm{d}x \bigg),
\end{align*}
where we used Lemma \ref{lem:UsefulProperties} (5) to obtain the last inequality.
\end{proof}


\section{Minimal solution to the cascade equation} \label{se:perimeter}

\subsection{Existence of a minimal solution}
\label{subse:minSol}

Recall that we would like to select a solution to the relaxed version of \eqref{PDE:wave} that corresponds to a moving boundary which ``loses as little extra energy as possible for as long as possible". Since the total excess loss of energy by an equilibrium $(\kappa,\QQ,L)$ up to the physical time $t\geq0$ is given by
\begin{align*}
\mathbf{E}^{\kappa,\QQ,L}_t=\mathbf{e}^{\kappa,\QQ,L}(\{w^{\QQ,L}\leq t\}),
\end{align*}
with $\mathbf{e}^{\kappa,\QQ,L}$ defined in \eqref{eq.intro.def.e}, we search for an equilibrium $(\kappa,\QQ,L)$ that is \textit{minimal} in the sense that there exists no other equilibrium $(\hat\kappa,\hat\QQ,\hat L)$ with the property that $\mathbf{E}^{\hat\kappa,\hat\QQ,\hat L}_t \leq \mathbf{E}^{\kappa,\QQ,L}_t$, $t\in[0,T]$, with some $T>0$, and the inequality being strict at some $t\in[0,T]$.

Of course, the above notion of minimality only makes sense if we compare equilibria with the same initial velocity (i.e., with the same initial boundary energy) and with non-negative excess loss of energy (cf. \eqref{eq.intro.defEquil.u}). Recall from \eqref{eq.intro.defEquil.boundaryCond} that the boundary condition $\partial_{\mathbf{n}_{s-}} w\,\vert_{\partial\Gamma_{s-}}=1/V_0$ translates into the equilibrium setting as
\begin{align}
&\kappa\,\mathbb{Q}(y_\theta\in \mathrm{d}x,\,\beta=\theta)=\big(\gamma+V_0(x)\big)\,\bone_{\partial_*\Gamma_{s-}}(x)\,\mathcal{H}^{d-1}(\mathrm{d}x)=:\pi(\mathrm{d}x),\label{eq.nu.constraint.equil}
\end{align}
with the same function $V_0\geq0$.
Thus, given $\Gamma_{s-}$, $u(s-,\cdot)$ and $V_0\geq0$, we search for a minimal equilibrium across all elements $(\kappa,\QQ,L)\in\mathcal{E}$ satisfying \eqref{eq.nu.constraint.equil} and $\mathbf{e}^{\kappa,\QQ,L}\geq0$.

\smallskip

Unfortunately, simple examples (see Subsection \ref{ex:1D2} and the remarks therein) indicate that a desired minimal element may not be attainable within the space of all admissible equilibria, and one needs to consider a closure of the family $\{w^{\QQ,L}\}$, parameterized by all $(\kappa,\QQ,L)\in\mathcal{E}$ that satisfy $\kappa\,\mathbb{Q}(y_\theta\in \mathrm{d}x,\beta=\theta)>\pi(\mathrm{d}x)$ and $\mathbf{e}^{\kappa,\QQ,L}\geq0$ (where we use the notation $\PP>\QQ$ for two finite measures $\PP$ and $\QQ$, if $\PP(E)>\QQ(E)$ for any measurable $E$ with $\PP(E)>0$). To ensure consistency with the input data $(u(s-,\cdot),V_0)$ and minimality, we require that a minimal solution $w$ has an approximating sequence $(w^{\QQ^n,L^n})_{n\ge1}$ that is \textit{asymptotically minimal} (property (ii) of Definition \ref{def:min}) and satisfies $\kappa^n\,\mathbb{Q}^n(y_\theta\in \mathrm{d}x,\beta=\theta)\downarrow\pi(\mathrm{d}x)$ and $\mathbf{e}^{\kappa^n,\QQ^n,L^n}\geq0$ (property (i) of Definition \ref{def:min}).

\smallskip

\begin{remark}
Note that $\kappa^n\,\mathbb{Q}^n(y_\theta\in \mathrm{d}x,\beta=\theta)>\pi(\mathrm{d}x)$ and $\kappa^n\,\QQ^n(\beta=\theta)\rightarrow  \pi(\partial_*\Gamma_{s-})$, in Definition \ref{def:min}, yield $\kappa^n\,\mathbb{Q}^n(y_\theta\in \mathrm{d}x,\beta=\theta)\rightarrow\pi(\mathrm{d}x)$ in total variation.

Note also that the condition $\liminf_{n\rightarrow\infty}\left(\mathbf{E}^{\hat{\kappa}^n,\hat{\QQ}^n,\hat{L}^n}_t-\mathbf{E}^{\kappa^n,\QQ^n,L^n}_r\right)\geq0$ for any $0\leq r<t<T$, combined with $\mathbf{E}^{\hat{\kappa}^n,\hat{\QQ}^n,\hat{L}^n}\leq\mathbf{E}^{\kappa^n,\QQ^n,L^n}$, stated in Definition \ref{def:min}, is equivalent to saying that $\mathbf{E}^{\hat{\kappa}^n,\hat{\QQ}^n,\hat{L}^n}_\cdot - \mathbf{E}^{\kappa^n,\QQ^n,L^n}_\cdot$ converges to zero in Lebesgue measure on $[0,T]$.
\end{remark}

\medskip

The rest of this subsection is devoted to the proof of Theorem \ref{thm:exist}, which states the existence of a minimal solution.
We recall the assumptions that $d=2$, the set $\Gamma_{s-}$ has finite perimeter, $\int_{\partial_*\Gamma_{s-}} (\gamma+V_0)\,\mathrm{d}\mathcal{H}^{1} < \infty$, and $\int_{\RR^2\setminus\overline{\Gamma}_{s-}} (1+u(s-,x))^-\,\mathrm{d}x<\infty$.
Throughout the proof, we use the following partial orders on the space $\mathcal{E}$: $(\hat\kappa,\hat\QQ,\hat L)\gtrsim_T (\kappa,\QQ,L)$ if $\mathbf{E}^{\hat{\kappa},\hat{\QQ},\hat{L}}_t \leq \mathbf{E}^{\kappa,\QQ,L}_t$ for a.e. $t\in[0,T]$ and $\hat{\kappa}^n\,\hat{\QQ}^n(y_\theta\in \mathrm{d}x,\beta=\theta) \geq \kappa^n\,\QQ(y_\theta\in \mathrm{d}x,\beta=\theta)$. It is clear that this relation is transitive. Moreover,
it is easy to see that $(\hat\kappa,\hat\QQ,\hat L)\gtrsim_T (\kappa,\QQ,L)$ implies $(\hat\kappa,\hat\QQ,\hat L)\gtrsim_{T'} (\kappa,\QQ,L)$ whenever $0<T'<T$.
In addition, since $\mathbf{E}^{\kappa,\QQ_T,L_T}_t = \mathbf{E}^{\kappa,\QQ,L}_t$ for all $t\in[0,T]$, we deduce that $(\hat\kappa,\hat\QQ,\hat L)\gtrsim_{T'} (\kappa,\QQ,L)$ if and only if $(\hat\kappa,\hat\QQ_T,\hat L_T)\gtrsim_{T'} (\kappa,\QQ_T,L_T)$, for any $0<T'<T$.

\smallskip

Next, we choose an arbitrary sequence $\epsilon^n\downarrow0$ and, for any fixed $n=0,1,2,\ldots\,$, we define the sequence of equilibria $(\tilde\kappa^k,\tilde\QQ^k,\tilde L^k)_{0\leq k\leq n^2}$ recursively, as follows. We set $(\tilde\kappa^0,\tilde\QQ^0,\tilde L^0)=(\kappa_0+\epsilon^n/2,\QQ^0,L^0)$, where $\kappa_0:=\pi(\partial_*\Gamma_{s-})$, $\QQ^0$ is the image of $\kappa^{-1}_0\,\pi$ under the map that takes any $x\in\RR^d$ to the constant path $y\equiv x$, and $L^0:=-\frac1\gamma$. We choose $(\tilde\kappa^{k+1},\tilde\QQ^{k+1},\tilde L^{k+1})\gtrsim_{\frac{k}{n}} (\tilde\kappa^{k},\tilde\QQ^{k},\tilde L^{k})$, for $k=0,1,\ldots,n^2-1$, so that $(\tilde\kappa^{k+1},\tilde\QQ^{k+1},\tilde L^{k+1})\in\mathcal{C}^k_{n}$ and
\begin{align*}
&\int_0^{\frac{k+1}{n}} \mathbf{E}^{\tilde\kappa^{k+1},\tilde\QQ^{k+1},\tilde L^{k+1}}_t\,\mathrm{d}t \leq \inf_{(\kappa,\QQ,L)\in \mathcal{C}^k_{n}} \int_0^{\frac{k+1}{n}} \mathbf{E}^{\kappa,\QQ,L}_t\,\mathrm{d}t + \frac{1}{n},
\end{align*}
where
\begin{align*}
&\mathcal{C}^k_{n}:=\left\{(\kappa,\QQ,L)\in \mathcal{E}_{\frac{k+1}{n}}:\,(\kappa,\QQ,L)\gtrsim_{\frac{k}{n}}(\tilde\kappa^k,\tilde\QQ^k,\tilde L^k), \,\kappa\,\QQ(\beta=\theta)<\kappa_0+\epsilon^n,\,\mathbf{e}^{\kappa,\QQ,L}\geq0\right\}.
\end{align*}
Then, we define $(\kappa^n,\QQ^n,L^n):=(\tilde\kappa^{n^2},\tilde\QQ^{n^2},\tilde L^{n^2})$, for $n=0,1,\ldots\,$.

\smallskip

For any $k\leq n^2-1$, we have $(\kappa^n,\QQ^n,L^n)\gtrsim_{\frac{k+1}{n}} (\tilde\kappa^{k+1},\tilde\QQ^{k+1},\tilde L^{k+1})\gtrsim_{\frac{k}{n}} (\tilde\kappa^k,\tilde\QQ^k,\tilde L^k)$,
$\kappa^n\,\QQ^n(\beta=\theta)<\kappa_0+\epsilon^n$ and $\mathbf{e}^{\kappa^n,\QQ^n,L^n}\geq0$, which implies $(\kappa^n,\QQ^n_{(k+1)/n},L^n_{(k+1)/n})\in\mathcal{C}^k_{n}$. 
Then, since it holds  $\mathbf{E}^{\kappa^n,\QQ^n_{(k+1)/n},L^n_{(k+1)/n}}_t=\mathbf{E}^{\kappa^n,\QQ^n,L^n}_t$ for all $t\in[0,(k+1)/n]$, we obtain:
\begin{align*}
&\inf_{(\kappa,\QQ,L)\in \mathcal{C}^k_{n}} \int_0^{\frac{k+1}{n}} \mathbf{E}^{\kappa,\QQ,L}_t\,\mathrm{d}t
\leq \int_0^{\frac{k+1}{n}} \mathbf{E}^{\kappa^n,\QQ^n,L^n}_t\,\mathrm{d}t\\
&\leq \int_0^{\frac{k+1}{n}} \mathbf{E}^{\tilde\kappa^{k+1},\tilde{\QQ}^{k+1},\tilde{L}^{k+1}}_t\,\mathrm{d}t
\leq \inf_{(\kappa,\QQ,L)\in \mathcal{C}^k_{n}} \int_0^{\frac{k+1}{n}} \mathbf{E}^{\kappa,\QQ,L}_t\,\mathrm{d}t + \frac{1}{n}.
\end{align*}

\smallskip

Similarly, for any $k\leq n^2-1$ and any $(\hat\kappa^n,\hat\QQ^n,\hat L^n)\gtrsim_{\frac{k+1}{n}} (\kappa^n,\QQ^n,L^n)$ with $\hat\kappa^n\,\hat\QQ^n(\beta=\theta)<\kappa_0+\epsilon^n$ and $\mathbf{e}^{\hat\kappa^n,\hat\QQ^n,\hat L^n}\geq0$, we have $(\hat\kappa^n,\hat\QQ^n,\hat L^n)\gtrsim_{\frac{k+1}{n}}(\kappa^n,\QQ^n,L^n)\gtrsim_{\frac{k+1}{n}} (\tilde\kappa^{k+1},\tilde\QQ^{k+1},\tilde L^{k+1})\gtrsim_{\frac{k}{n}} (\tilde\kappa^{k},\tilde\QQ^{k},\tilde L^{k})$. Hence,
\begin{align*}
&\inf_{(\kappa,\QQ,L)\in \mathcal{C}^k_{n}} \int_0^{\frac{k+1}{n}} \mathbf{E}^{\kappa,\QQ,L}_t\,\mathrm{d}t
\leq \int_0^{\frac{k+1}{n}} \mathbf{E}^{\hat\kappa^n,\hat{\QQ}^n,\hat{L}^n}_t\,\mathrm{d}t\\
&\leq \int_0^{\frac{k+1}{n}} \mathbf{E}^{\tilde\kappa^{k+1},\tilde{\QQ}^{k+1},\tilde{L}^{k+1}}_t\,\mathrm{d}t
\leq \inf_{(\kappa,\QQ,L)\in \mathcal{C}^k_{n}} \int_0^{\frac{k+1}{n}} \mathbf{E}^{\kappa,\QQ,L}_t\,\mathrm{d}t + \frac{1}{n}.
\end{align*}

\smallskip

Thus, assuming that the sequence $(n)$ and the time $R$ are such that for all large enough $n$ in the sequence there exists a $k\leq n^2-1$ with $R=(k+1)/n$, we obtain:
\begin{align}
& 0\leq \int_0^R \left(\mathbf{E}^{\kappa^n,\QQ^n,L^n}_r - \mathbf{E}^{\hat\kappa^n,\hat{\QQ}^n,\hat{L}^n}_r\right)\,\mathrm{d}r \leq \frac1n,\label{eq.existence.proof.keyEst}
\end{align}
for all large enough $n$.

\smallskip

To complete the proof, we apply Corollary \ref{cor:perim.uniformBdOnTV} and use the pre-compactness of $BV_{\mathrm{loc}}(\RR^2)$ in $L_{\mathrm{loc}}^1(\RR^2)$ (see e.g. \cite[Theorem 5.5]{EvGa}) to deduce the existence of a subsequence of $(n)$ along which $(w^{\QQ^n,L^n}\wedge t)$ converges in $L^1_{\mathrm{loc}}$, for any $t\geq0$. It is then clear that there exists a Borel function $w:\RR^2\rightarrow[0,\infty]$ such that these limits are given by $w\wedge t$.
In addition, by choosing a further subsequence of $(n)$ we ensure that the set $\{k/n\}_{k=1}^{n^2}$ is non-decreasing in $n$ and converges to a dense set in $\RR_+$, denoted by $S$.

Then, we notice that $\kappa^n\,\QQ^n(y_\theta\in \mathrm{d}x,\beta=\theta) \geq \tilde\kappa^0\,\tilde\QQ^0(y_\theta\in \mathrm{d}x,\beta=\theta) > \pi(\mathrm{d}x)$, $\mathbf{e}^{\kappa^n,\QQ^n,L^n}\geq0$ and $\kappa^n\,\QQ^n(\beta=\theta)\rightarrow\kappa_0=\pi(\partial_*\Gamma_{s-})$. Thus, $(\kappa^n,\QQ^n,L^n)$ satisfies the property (i) of Definition \ref{def:min}.

To verify the property (ii) of Definition \ref{def:min}, we consider an arbitrary $T>0$ and a dominating sequence $(\hat\kappa^n,\hat{\QQ}^n,\hat{L}^n) \in \mathcal{E}$, with $(\hat\kappa^n,\hat{\QQ}^n,\hat{L}^n)\gtrsim_T (\kappa^n,\QQ^n,L^n)$ and $\mathbf{e}^{\hat\kappa^n,\hat\QQ^n,\hat L^n}\geq0$.
We fix arbitrary $0\leq r<t<T$ and consider a subsequence along which $\liminf_{n\rightarrow\infty}\left(\mathbf{E}^{\hat{\kappa}^n,\hat{\QQ}^n,\hat{L}^n}_t-\mathbf{E}^{\kappa^n,\QQ^n,L^n}_r\right)$ is attained.
Then, applying \eqref{eq.existence.proof.keyEst} to any $R\in S\cap(t,T)$ and selecting a further subsequence, we conclude that $\mathbf{E}^{\hat\kappa^n,\hat\QQ^n,\hat L^n}_z-\mathbf{E}^{\kappa^n,\QQ^n,L^n}_z$ converges to zero for a.e. $z\in[0,R]$ along this subsequence. In particular, there exists $z\in(r,t)$ at which the latter convergence holds.
Then, 
\begin{align*}
& \mathbf{E}^{\hat{\kappa}^n,\hat{\QQ}^n,\hat{L}^n}_t-\mathbf{E}^{\kappa^n,\QQ^n,L^n}_r
\geq \mathbf{E}^{\hat{\kappa}^n,\hat{\QQ}^n,\hat{L}^n}_z-\mathbf{E}^{\kappa^n,\QQ^n,L^n}_z,
\end{align*}
and the right-hand side of the above converges to zero along the chosen subsequence.

\subsection{Perimeter of minimal aggregates}
\label{subse:perim.minSol}
Corollary \ref{cor:perim.uniformBdOnTV} allows us to estimate the total variation of a minimal solution.

\begin{corollary}\label{cor.perim.cor2}
Assuming that $d=2$ and that $\Gamma_{s-}$ is a set of finite perimeter, consider a minimal solution $w$ corresponding to Lebesgue-measurable $V_0\geq0$, with $\int_{\partial_*\Gamma_{s-}} (\gamma+V_0)\,\mathrm{d}\mathcal{H}^{1} < \infty$, and $u(s-,\cdot)$, with $\int_{\RR^2\setminus\overline{\Gamma}_{s-}}(1+u(s-,x))^-\,\mathrm{d}x<\infty$.
Then, for any $T\in(0,\infty)$, we have $w\wedge T\in \mathrm{BV}_{\mathrm{loc}}(\RR^2)$ and
\begin{align*}
\int_{\RR^2} \mathrm{d} |\nabla(w\wedge T)| \leq \frac{T}{\gamma}\bigg(2\kappa - \int_{\RR^2\setminus\overline{\Gamma}_{s-}} (1+u(s-,x))^-\,\mathrm{d}x \bigg).
\end{align*}
\end{corollary}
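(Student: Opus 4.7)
The plan is to extract Corollary \ref{cor.perim.cor2} from Corollary \ref{cor:perim.uniformBdOnTV} by passing to the limit along the approximating sequence of equilibria that defines a minimal solution. By Definition \ref{def:min}, the minimal solution $w$ admits a sequence $(\kappa^n,\QQ^n,L^n)\in\mathcal{E}$ with $w^{\QQ^n,L^n}\wedge T\to w\wedge T$ in $L^1_{\mathrm{loc}}(\RR^2)$, $\mathbf{e}^{\kappa^n,\QQ^n,L^n}\geq 0$, $\kappa^n\QQ^n(y_\theta\in\mathrm{d}x,\beta=\theta)>(\gamma+V_0)\bone_{\partial_*\Gamma_{s-}}\,\mathrm{d}\mathcal{H}^{1}$ (which in particular dominates $\gamma\bone_{\partial_*\Gamma_{s-}}\,\mathrm{d}\mathcal{H}^{1}$), and $\kappa^n\QQ^n(\beta=\theta)\to\kappa:=\int_{\partial_*\Gamma_{s-}}(\gamma+V_0)\,\mathrm{d}\mathcal{H}^{1}$. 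Each approximant therefore falls under the hypotheses of Corollary \ref{cor:perim.uniformBdOnTV}.

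First I would invoke Corollary \ref{cor:perim.uniformBdOnTV} for every $n$ to obtain a uniform-in-$n$ bound on $\int_{\RR^2}\mathrm{d}|\nabla(w^{\QQ^n,L^n}\wedge T)|$. Combined with the $L^1_{\mathrm{loc}}$-convergence $w^{\QQ^n,L^n}\wedge T\to w\wedge T$ and the lower semicontinuity of the total variation on $\mathrm{BV}_{\mathrm{loc}}(\RR^2)$ (cf.\ \cite[Subsection 5.2.1]{EvGa}), this forces $w\wedge T\in\mathrm{BV}_{\mathrm{loc}}(\RR^2)$ with total variation bounded by $\liminf_n\int_{\RR^2}\mathrm{d}|\nabla(w^{\QQ^n,L^n}\wedge T)|$. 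Since the $\int(1+u)^-$ integral is independent of $n$, only $\kappa^n\to\kappa$ is needed to close the loop, and this is read off from the explicit construction in Subsection \ref{subse:minSol}, where $\kappa^n\QQ^n(\beta=\theta)$ is squeezed into the collapsing window $(\kappa,\kappa+\epsilon^n)$ while $\QQ^n(\beta=\theta)\to 1$.

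The main obstacle is securing the correct sign in front of $\int_{\RR^2\setminus\overline{\Gamma}_{s-}}(1+u(s-,x))^-\,\mathrm{d}x$. The proof of Corollary \ref{cor:perim.uniformBdOnTV} uses the lossy step $-\int_{\{w^{\QQ,L}<\infty\}\setminus\overline{\Gamma}_{s-}}(1+u)\leq\int_{\RR^2\setminus\overline{\Gamma}_{s-}}(1+u)^{-}$, producing a $+$ sign in the limiting bound, whereas the stated estimate has a $-$. To retrieve the latter I would go back into the derivation of Corollary \ref{cor:perim.uniformBdOnTV} and keep the signed integral intact, writing $-\int(1+u)=\int(1+u)^{-}-\int(1+u)^{+}$, and then prove that along the approximating minimal sequence
\[
\liminf_{n\to\infty}\int_{\{w^{\QQ^n,L^n}<\infty\}\setminus\overline{\Gamma}_{s-}}(1+u)^{+}\,\mathrm{d}x\;\geq\;2\int_{\RR^2\setminus\overline{\Gamma}_{s-}}(1+u)^{-}\,\mathrm{d}x.
\]
Heuristically, minimality forces the cascade to exhaust the energetically favorable cold region $\{u<-1\}\setminus\overline{\Gamma}_{s-}$ and to overshoot into the warm region by an amount controlled by the cold surplus; rigorously, one would argue by comparison, invoking the asymptotic minimality clause (ii) of Definition \ref{def:min} against a reference equilibrium tailored to absorb exactly the cold region, and this comparison is the delicate point of the proof.
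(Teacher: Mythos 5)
Your first two paragraphs match the paper's proof exactly: invoke Corollary \ref{cor:perim.uniformBdOnTV} for each $(\kappa^n,\QQ^n,L^n)$ from Definition \ref{def:min}(i), then pass to the limit using the $L^1_{\mathrm{loc}}$-convergence $w^{\QQ^n,L^n}\wedge T\to w\wedge T$ and the lower semicontinuity of the total variation. That is the entirety of the paper's argument.

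Your third paragraph, however, is a detour you should not have taken. You correctly noticed that Corollary \ref{cor:perim.uniformBdOnTV} carries a ``$+\int (1+u(s-,\cdot))^-$'' and that lower semicontinuity can only propagate that inequality with the same ``$+$'', not the ``$-$'' printed in the statement of Corollary \ref{cor.perim.cor2}. The right conclusion to draw is that the ``$-$'' in the printed statement is a sign typo: the two-line proof in the paper (``we deduce the desired statement from Corollary \ref{cor:perim.uniformBdOnTV} and from the lower semicontinuity of the total variation'') offers no mechanism that could strengthen the sign, and the stronger bound with ``$-$'' would become negative for sufficiently negative temperature data, which is absurd. Physically, $(1+u(s-,\cdot))^->0$ marks the supercooled region where the cascade expands and extra interface is created, so one should expect it to \emph{increase} the total variation bound, not decrease it. The auxiliary inequality you introduce to manufacture the ``$-$'' sign,
\begin{align*}
\liminf_{n\to\infty}\int_{\{w^{\QQ^n,L^n}<\infty\}\setminus\overline{\Gamma}_{s-}}(1+u(s-,x))^{+}\,\mathrm{d}x\;\geq\;2\int_{\RR^2\setminus\overline{\Gamma}_{s-}}(1+u(s-,x))^{-}\,\mathrm{d}x,
\end{align*}
is both unnecessary and false in general: take, e.g., $u\equiv -2$ on a bounded complement $\RR^2\setminus\overline{\Gamma}_{s-}$, so the left-hand side is $0$ while the right-hand side is strictly positive. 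Minimality of the equilibrium provides no leverage here, since cold regions with no warm complement are perfectly admissible (compare the shrinking-ball example of Subsection \ref{ex:Radial2}).

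A secondary point: your claim that ``$\kappa^n\to\kappa$'' can simply be ``read off'' because ``$\QQ^n(\beta=\theta)\to1$'' is not supported. Definition \ref{def:min} only gives $\kappa^n\QQ^n(\beta=\theta)\to\int_{\partial_*\Gamma_{s-}}(\gamma+V_0)\,\mathrm{d}\mathcal{H}^{1}$, and $\QQ^n(\beta=\theta)$ need not approach $1$ whenever $(1+u(s-,\cdot))^-$ is nonzero, since particles are also injected in the bulk. This imprecision is shared with the paper's own terse presentation, but be aware that the normalizing constant $\kappa$ appearing in the bound must be interpreted with some care.
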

\begin{proof}
Since $w\wedge T$ is obtained as the $L^1_{\mathrm{loc}}$ limit of a sequence $(w^{\QQ^n,L^n}\wedge T)$, with $(\kappa^n,\QQ^n,L^n)\in\mathcal{E}$, we deduce the desired statement from Corollary \ref{cor:perim.uniformBdOnTV} and from the lower semicontinuity of the total variation.
\end{proof}

\medskip

Next, we show that the perimeter bound for equilibrium aggregates translates into the same bound for the aggregates in a minimal solution.

\begin{proposition}\label{prop:perimEst.perimLevSet}
Using the notation and assumptions of Corollary \ref{cor.perim.cor2}, for all but countably many $T\in(0,\infty)$, we have
\begin{align*}
\|\partial\{w\leq T\}\|(\RR^2) \leq \frac{1}{\gamma}\int_{\partial_*\Gamma_{s-}} (\gamma+V_0)\,\mathrm{d}\mathcal{H}^{1} - \frac{1}{\gamma}\int_{\{w\leq T\}\setminus\overline{\Gamma}_{s-}} (1+u(s-,x))\,\mathrm{d}x.
\end{align*}
\end{proposition}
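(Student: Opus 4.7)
The strategy is to take the approximating sequence $(\kappa^n,\QQ^n,L^n)$ provided by Definition \ref{def:min} for the minimal solution $w$, apply the equilibrium-level perimeter estimate of Theorem \ref{thm:perimeter.mainThm} to each $T$-capped element, and pass to the limit $n\to\infty$ using the lower semicontinuity of the perimeter along with standard $L^1_{\mathrm{loc}}$-convergence arguments. The countable exceptional set of $T$'s will arise naturally from two sources: the levels where $\{w=T\}$ has positive Lebesgue measure, and the levels where $\{w^{\QQ^n,L^n}\leq T\}$ differs from the closed cascade aggregate $D^{\QQ^n,L^n}_T$ on a set of positive measure.

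\textbf{Step 1 (good level $T$ and equilibrium bound).} First I fix $T\in(0,\infty)\setminus\mathcal{J}$ where $\mathcal{J}$ is the (at most countable) set of levels for which either $\mathrm{Leb}(\{w=T\})>0$ or $D^{\QQ^n,L^n}_T$ fails to coincide with $\{w^{\QQ^n,L^n}\leq T\}=D^{\QQ^n,L^n}_{T+}$ modulo Lebesgue-null sets for infinitely many $n$. For such $T$, the $L^1_{\mathrm{loc}}$-convergence $w^{\QQ^n,L^n}\wedge T'\to w\wedge T'$ (for any $T'>T$) furnished by Definition \ref{def:min}(i) yields $\bone_{\{w^{\QQ^n,L^n}\leq T\}}\to\bone_{\{w\leq T\}}$ in $L^1_{\mathrm{loc}}(\RR^2)$. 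Next I apply Theorem \ref{thm:perimeter.mainThm} to $(\kappa^n,\QQ^n_T,L^n_T)$: its two hypotheses are $\kappa^n\bone_{\partial_*\Gamma_{s-}}\QQ^n(y_\theta\in\mathrm{d}x,\beta=\theta)\geq\gamma\bone_{\partial_*\Gamma_{s-}}\mathcal{H}^1(\mathrm{d}x)$, which is immediate from Definition \ref{def:min}(i) together with $V_0\geq0$, and $\mathbf{e}^{\kappa^n,\QQ^n_T,L^n_T}\geq0$, which follows from $\mathbf{e}^{\kappa^n,\QQ^n,L^n}\geq0$ and the fact that $T$-projection restricts the support of the signed part of $\mathbf{e}$ to $D^{\QQ^n,L^n}_T\subset\{w^{\QQ^n,L^n}<\infty\}$. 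This yields
\begin{equation*}
\|\partial D^{\QQ^n,L^n}_T\|(\RR^2)\leq \frac{\kappa^n}{\gamma}\QQ^n_T(\beta=\theta)-\frac{1}{\gamma}\int_{\{w^{\QQ^n_T,L^n_T}<\infty\}\setminus\overline{\Gamma}_{s-}}(1+u(s-,x))\,\mathrm{d}x.
\end{equation*}

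\textbf{Step 2 (passage to the limit).} By the choice of $T$, the perimeters of $D^{\QQ^n,L^n}_T$ and $\{w^{\QQ^n,L^n}\leq T\}$ agree for each $n$, so the lower semicontinuity of the perimeter under $L^1_{\mathrm{loc}}$-convergence gives $\|\partial\{w\leq T\}\|(\RR^2)\leq\liminf_n\|\partial D^{\QQ^n,L^n}_T\|(\RR^2)$. For the first term on the right, $\kappa^n\QQ^n_T(\beta=\theta)\leq\kappa^n\QQ^n(\beta=\theta)$, which by Definition \ref{def:min}(i) converges to $\int_{\partial_*\Gamma_{s-}}(\gamma+V_0)\,\mathrm{d}\mathcal{H}^1$. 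For the integral, I split $1+u(s-,\cdot)$ into its positive and negative parts: the negative part is globally Lebesgue-integrable on $\RR^2\setminus\overline{\Gamma}_{s-}$, and the set $\{w^{\QQ^n_T,L^n_T}<\infty\}=D^{\QQ^n,L^n}_T$ increases in $n$ (up to null sets) towards $\{w\leq T\}$ because $\bone_{D^{\QQ^n,L^n}_T}\to\bone_{\{w\leq T\}}$ in $L^1_{\mathrm{loc}}$; combining dominated convergence for $(1+u(s-,\cdot))^-$ with Fatou for $(1+u(s-,\cdot))^+$ gives
\begin{equation*}
\liminf_n \int_{D^{\QQ^n,L^n}_T\setminus\overline{\Gamma}_{s-}}(1+u(s-,x))\,\mathrm{d}x \geq \int_{\{w\leq T\}\setminus\overline{\Gamma}_{s-}}(1+u(s-,x))\,\mathrm{d}x,
\end{equation*}
(the boundedness of $\mathrm{Leb}(\{w\leq T\}\setminus\overline{\Gamma}_{s-})$ which is needed to make the $(1+u)^+$ piece finite follows from Corollary \ref{cor.perim.cor2} via the isoperimetric inequality, or directly from the uniform mass bound on $\overline{\QQ}^n_T$). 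Assembling these ingredients gives the claimed inequality.

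\textbf{Main obstacle.} The delicate point is the integral on the right-hand side of the Theorem \ref{thm:perimeter.mainThm} bound, because $(1+u(s-,\cdot))^+$ is not assumed to be globally integrable; one must argue that on the shrinking sets $D^{\QQ^n,L^n}_T$ (which have uniformly bounded Lebesgue measure) it is uniformly integrable, and that the convergence goes in the right direction to produce an $\liminf$ (not $\limsup$) inequality. The other subtle point is verifying $\mathbf{e}^{\kappa^n,\QQ^n_T,L^n_T}\geq0$ from $\mathbf{e}^{\kappa^n,\QQ^n,L^n}\geq0$; this requires tracing through how the $T$-projection affects the densities of $\QQ^n\circ y_0^{-1}$ and $\QQ^n\circ y_\beta^{-1}$ and confronting the fact that mass initially starting beyond level $T$ gets redistributed onto the level set $\{w^{\QQ^n,L^n}=T\}$ via $\iota(L^n,T)$.
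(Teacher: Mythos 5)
Your proposal is correct and follows essentially the same route as the paper: take the approximating sequence from Definition~\ref{def:min}, invoke Theorem~\ref{thm:perimeter.mainThm} on $T$-capped projections, then pass to the limit using lower semicontinuity of the perimeter for the left side and Fatou's Lemma for the integral, with the exceptional $T$'s being those where $\{w=T\}$ fails to be Lebesgue-null.

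The one organizational difference is how the countable exceptional set arises. The paper first proves, for a \emph{single fixed equilibrium}, that $\|\partial\{w^{\QQ,L}\leq T\}\|(\RR^2)$ admits the bound for \emph{every} $T>0$, by approximating $\{w^{\QQ,L}\leq T\}=\bigcap_{\varepsilon>0}D^{\QQ,L}_{T+\varepsilon}$ from above by $D^{\QQ,L}_{T+1/m}$ and taking $m\to\infty$ with Fatou and lower semicontinuity; then only one source of exceptional levels appears in the final $n\to\infty$ limit, namely where $\{w=T\}$ has positive measure. You instead apply the equilibrium bound directly at level $T$, which requires that $D^{\QQ^n,L^n}_T$ and $\{w^{\QQ^n,L^n}\leq T\}$ agree modulo null sets; this introduces a second countable family of exceptional levels that you fold into $\mathcal{J}$. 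Both are fine; the paper's ordering is slightly cleaner because the second family never appears.

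Two small points worth flagging. First, your claim in Step~2 that the sets $D^{\QQ^n,L^n}_T$ "increase in $n$" towards $\{w\leq T\}$ is unfounded — there is no monotonicity in $n$ along the approximating sequence. Fortunately nothing in your argument uses it: Fatou's Lemma only needs the pointwise a.e.\ convergence $\bone_{D^{\QQ^n,L^n}_T}\to\bone_{\{w\leq T\}}$ and the uniform integrable lower bound $\bone_{D^{\QQ^n,L^n}_T\setminus\overline{\Gamma}_{s-}}(1+u(s-,\cdot))\geq -(1+u(s-,\cdot))^-\bone_{\RR^2\setminus\overline{\Gamma}_{s-}}$, both of which you have. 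Second, your verification that $\mathbf{e}^{\kappa^n,\QQ^n_T,L^n_T}\geq0$ follows from $\mathbf{e}^{\kappa^n,\QQ^n,L^n}\geq0$ is stated a bit loosely ("restricts the support of the signed part"); the actual computation is that on $D^{\QQ^n,L^n}_T\setminus\overline{\Gamma}_{s-}$ one has $\kappa(\QQ^n_T(y_0\in\mathrm{d}x)-\QQ^n_T(y_\beta\in\mathrm{d}x))\geq\kappa(\QQ^n(y_0\in\mathrm{d}x)-\QQ^n(y_\beta\in\mathrm{d}x))$, because the projection transports mass that started beyond level $T$ onto $\{w^{L^n}=T\}\subset D^{\QQ^n,L^n}_T$ while leaving the death mass below unchanged. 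The paper relies on the same implicit fact (e.g.\ in the proof of Theorem~\ref{thm:exist} and in Corollary~\ref{cor.perim.cor2} applied to the approximating sequence) without spelling it out, so you are on equal footing.
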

\begin{proof}
First, we recall \eqref{eq.sublevSetW.Gamma} to see that, for any $(\kappa,\QQ,L)\in\mathcal{E}$, we have $\bone_{\{w^{\QQ,L}\leq T\}}=\lim_{n\rightarrow\infty}\bone_{D^{\QQ,L}_{T+1/n}}$, with the convergence being Lebesgue-a.e. and in $L^1_{\mathrm{loc}}$.
We also have: $\QQ_t(\beta=\theta)\leq\QQ(\beta=\theta)$ and $\{w^{\QQ_t,L_t}<\infty\}\setminus\overline{\Gamma}_{s-}=D^{\QQ,L}_t\setminus\overline{\Gamma}_{s-}$ for all $t>0$.
In addition, Fatou's Lemma yields:
\begin{align*}
&\liminf_{n\rightarrow\infty}\int_{\RR^2} \bone_{D^{\QQ,L}_{T+1/n}\setminus\overline{\Gamma}_{s-}}(x)\,(1+u(s-,x))\,\mathrm{d}x
\geq \int_{\{w^{\QQ,L}\leq T\}\setminus\overline{\Gamma}_{s-}} (1+u(s-,x))\,\mathrm{d}x.
\end{align*}
Then, Theorem \ref{thm:perimeter.mainThm} and the lower semicontinuity of the perimeter yield
\begin{align*}
\big\|\partial\{w^{\QQ,L}\leq T\}\big\|(\RR^2) \leq \frac{\kappa}{\gamma}\QQ(\beta=\theta) - \frac{1}{\gamma}\int_{\{w^{\QQ,L}\leq T\}\setminus\overline{\Gamma}_{s-}} (1+u(s-,x))\,\mathrm{d}x,\quad T>0.
\end{align*}

\smallskip

We now recall that there exists a sequence $(\kappa^n,\QQ^n,L^n)\in\mathcal{E}$ such that $w^{\QQ^n,L^n}\wedge T$ converges to $w\wedge T$ in $L^1_{\mathrm{loc}}$ as $n\rightarrow\infty$, for any $T>0$. Fixing a $T>0$, the sequence $(w^{\QQ^n,L^n}\wedge T)_{n\ge1}$ has a subsequence converging to $w\wedge T$ Lebesgue-a.e., denoted as the same sequence for simplicity. We claim that, for all but countably many $t\in(0,T)$, it holds that $\mathbf{1}_{\{w^{\QQ^n,L^n}\le t\}}\to\mathbf{1}_{\{w\le t\}}$ Lebesgue-a.e. and in $L^1_{\mathrm{loc}}$. Indeed, Lebesgue-a.e., 
\[
\limsup_{n\to\infty}\,\big|\mathbf{1}_{\{w^{\QQ^n,L^n}\le t\}}-\mathbf{1}_{\{w\le t\}}\big|
\le \limsup_{n\to\infty}\,\mathbf{1}_{\{w^{\QQ^n,L^n}\le t,\,w>t\}}
+  \limsup_{n\to\infty}\,\mathbf{1}_{\{w^{\QQ^n,L^n}>t,\,w\le t\}} 
\le \mathbf{1}_{\{w=t\}}.
\]
Moreover, since for any bounded open $U\subset\RR^2$ the function $t\mapsto\mathrm{Leb}(\{w\le t\}\cap U)$ is right-continuous with left limits, $\{w=t\}\cap U$ is a Lebesgue null set for all but countably many $t\in(0,T)$. Thus, $\mathbf{1}_{\{w=t\}}=0$ Lebesgue-a.e.~for all but countably many $t\in(0,T)$, and the claim readily follows.

Then, for any such $t$, we apply Fatou's Lemma to obtain
\begin{align*}
&\liminf_{n\rightarrow\infty} \int_{\RR^2}\bone_{\{w^{\QQ^n,L^n}\leq t\}\setminus\overline{\Gamma}_{s-}}(x) (1+u(s-,x))\,\mathrm{d}x
\geq \int_{\RR^2} \liminf_{n\rightarrow\infty} \bone_{\{w^{\QQ^n,L^n}\leq t\}\setminus\overline{\Gamma}_{s-}}(x) (1+u(s-,x))\,\mathrm{d}x\\
& = \int_{\RR^2} \bone_{\{w\leq t\}\setminus\overline{\Gamma}_{s-}}(x) (1+u(s-,x))\,\mathrm{d}x.
\end{align*}

\smallskip

It remains to recall that $\lim_{n\rightarrow\infty}\kappa^n\,\QQ^n(\beta=\theta) = \int_{\partial_*\Gamma_{s-}} (\gamma+V_0)\,\mathrm{d}\mathcal{H}^{1}$ and to apply the lower semicontinuity of the perimeter to infer, for all but countably many $t\in(0,T)$:
\[
\big\|\partial\{w\le t\}\big\|(\RR^2) \le 
\liminf_{n\to\infty}\,\big\|\partial\{w^{\QQ^n,L^n}\le t\}\big\|(\RR^2)
\le \frac{1}{\gamma}\int_{\partial_*\Gamma_{s-}} (\gamma+V_0)\,\mathrm{d}\mathcal{H}^{1} - \frac{1}{\gamma}\int_{\{w\leq T\}\setminus\overline{\Gamma}_{s-}} (1+u(s-,x))\,\mathrm{d}x.
\] 
\end{proof}

\medskip

The following corollary of Proposition \ref{prop:perimEst.perimLevSet} yields Theorem \ref{thm:peri}.

\begin{corollary}
Using the notation and assumptions of Corollary \ref{cor.perim.cor2}, we have
\begin{align*}
\big\|\partial\{w<\infty\}\big\|(\RR^2) \leq \frac{1}{\gamma}\int_{\partial_*\Gamma_{s-}} (\gamma+V_0)\,\mathrm{d}\mathcal{H}^{1} - \frac{1}{\gamma}\int_{\{w<\infty\}\setminus\overline{\Gamma}_{s-}} (1+u(s-,x))\,\mathrm{d}x.
\end{align*}
\end{corollary}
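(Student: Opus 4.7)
The plan is to obtain the final perimeter bound by passing to the limit $T \uparrow \infty$ in Proposition \ref{prop:perimEst.perimLevSet}. Specifically, I would select a sequence $T_n \uparrow \infty$ taken from the (co-countable) set of $T > 0$ for which the inequality
\[
\big\|\partial\{w\le T_n\}\big\|(\RR^2) \le \frac{1}{\gamma}\int_{\partial_*\Gamma_{s-}} (\gamma+V_0)\,\mathrm{d}\mathcal{H}^1 - \frac{1}{\gamma}\int_{\{w\le T_n\}\setminus\overline{\Gamma}_{s-}} (1+u(s-,x))\,\mathrm{d}x
\]
is known to hold.

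For the left-hand side, the sets $\{w\le T_n\}$ are monotone increasing with union $\{w<\infty\}$, so $\bone_{\{w\le T_n\}}\to\bone_{\{w<\infty\}}$ pointwise, hence in $L^1_{\mathrm{loc}}(\RR^2)$ by dominated convergence on bounded sets. The lower semicontinuity of the total variation with respect to $L^1_{\mathrm{loc}}$-convergence (see, e.g., \cite[Subsection 5.2.1]{EvGa}) then gives
\[
\big\|\partial\{w<\infty\}\big\|(\RR^2) \le \liminf_{n\to\infty} \big\|\partial\{w\le T_n\}\big\|(\RR^2).
\]

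For the right-hand side, I would split the integral into positive and negative parts. The negative part $(1+u(s-,\cdot))^-$ is integrable on $\RR^2\setminus\overline{\Gamma}_{s-}$ by assumption, so dominated convergence yields convergence of $\int_{\{w\le T_n\}\setminus\overline{\Gamma}_{s-}} (1+u(s-,x))^-\,\mathrm{d}x$ to the analogous integral over $\{w<\infty\}\setminus\overline{\Gamma}_{s-}$. For the positive part, the bound from Proposition \ref{prop:perimEst.perimLevSet}, combined with the non-negativity of the perimeter, produces the uniform estimate
\[
\int_{\{w\le T_n\}\setminus\overline{\Gamma}_{s-}} (1+u(s-,x))^+\,\mathrm{d}x \le \int_{\partial_*\Gamma_{s-}} (\gamma+V_0)\,\mathrm{d}\mathcal{H}^1 + \int_{\RR^2\setminus\overline{\Gamma}_{s-}} (1+u(s-,x))^-\,\mathrm{d}x < \infty.
\]
Monotone convergence then furnishes the limit for the positive part as well, and the two limits combine to give the convergence of the right-hand side to the expression in the statement.

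The only real subtlety is ensuring that $(1+u(s-,\cdot))^+$ is integrable on $\{w<\infty\}\setminus\overline{\Gamma}_{s-}$, which is not assumed a priori; this is extracted from the level-set perimeter bound itself, as indicated above. Once this integrability is in hand, the assembly of the three ingredients (lower semicontinuity, dominated convergence, monotone convergence) is routine and yields the desired inequality upon passing to $\liminf$ in the bound for $\{w\le T_n\}$.
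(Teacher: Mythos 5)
Your proof is correct and follows essentially the same route as the paper: take $T\uparrow\infty$ in Proposition \ref{prop:perimEst.perimLevSet}, apply lower semicontinuity of the perimeter to the left-hand side, and pass to the limit in the integral on the right. The only cosmetic difference is that the paper compresses the convergence of $\int_{\{w\le T_n\}\setminus\overline{\Gamma}_{s-}}(1+u)\,\mathrm{d}x$ into a single appeal to Fatou's Lemma (with $-\bone_{\RR^2\setminus\overline{\Gamma}_{s-}}(1+u)^-$ as the integrable minorant), whereas you unpack it into monotone convergence for the positive part and dominated convergence for the negative part; these are equivalent, and your explicit observation that the positive part is a posteriori integrable (forced by non-negativity of the perimeter) is a detail the paper leaves implicit.
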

\begin{proof}
We notice that $\bone_{\{w<\infty\}}=\lim_{t\rightarrow\infty}\bone_{\{w\leq t\}}$ Lebesgue-a.e. and in $L^1_{\mathrm{loc}}$. Then, the desired statement follows from Proposition \ref{prop:perimEst.perimLevSet}, the lower semicontinuity of the perimeter, and Fatou's Lemma.
\end{proof}

\section{Examples} \label{se:example}
Throughout this section, we use $u(x)$ to denote $u(s-,x)$ and assume that $u$ is a bounded function. We start with the following technical lemma which shows that the net distribution of the birth/death of the particles outside of the aggregate is zero.  

\begin{lemma}\label{lem:SupportMuMinusNu}
Consider any equilibrium $(\kappa,\QQ,L)\in\cE_T$ and set $\mu:=\QQ\circ y_0^{-1}$, $\nu:=\QQ\circ y_\beta^{-1}$, $D_T:=D_T^{\QQ,L}$. Then, $\mu-\nu$ vanishes outside $D_T$. As a result, $\mu(D_T)=\nu(D_T)$.
\end{lemma}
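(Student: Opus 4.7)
The plan is to decompose according to whether the killing time $\beta$ is $0$ or positive. On $\{\beta=0\}$ one has $y_0=y_\beta$ tautologically, so the contributions of this event to $\mu$ and $\nu$ cancel exactly. Hence for any Borel $A\subset\RR^d$,
\begin{equation*}
\mu(A)-\nu(A)=\QQ(y_0\in A,\,\beta>0)-\QQ(y_\beta\in A,\,\beta>0),
\end{equation*}
and it suffices to show that, $\QQ$-almost surely on $\{\beta>0\}$, both $y_0$ and $y_\beta$ lie in $\mathrm{supp}\,\overline\QQ$. Since $(\kappa,\QQ,L)\in\cE_T$ gives $(\QQ_T,L_T)=(\QQ,L)$ by the remark following Lemma \ref{lem:EquilibriumTimeConsistency}, and hence $\overline\QQ_T=\overline\QQ$, one would then have $\mathrm{supp}\,\overline\QQ\subset D_T$, yielding the first assertion.

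The key claim reduces to a simple continuity argument. Set $U:=(\mathrm{supp}\,\overline\QQ)^c$, the largest open set of $\overline\QQ$-measure zero (using second countability of $\RR^d$). Every trajectory $y\in\cY$ is $1$-Lipschitz by virtue of the constraint $\|\dot y\|_\infty\le 1$, so for any $y$ with $y_0\in U$ there exists $r=r(y)>0$ with $B(y_0,r)\subset U$, whence $y_t\in U$ for all $t\in[0,r\wedge\beta]$ and therefore $\int_0^\beta\bone_U(y_t)\,\mathrm{d}t\ge r\wedge\beta$. Integrating against $\QQ$ and using $\overline\QQ(U)=0$,
\begin{equation*}
0=\overline\QQ(U)\ge\EE^\QQ\!\left[\bone_{\{y_0\in U\}}\,(r(y)\wedge\beta)\right],
\end{equation*}
forces $\beta=0$ $\QQ$-a.s.\ on $\{y_0\in U\}$. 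Running the same argument on the interval $[(\beta-r)^+,\beta]$ instead of $[0,r\wedge\beta]$ (exploiting left-continuity of $y$ at the endpoint $\beta$) yields $\beta=0$ $\QQ$-a.s.\ on $\{y_\beta\in U\}$ as well.

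For the second statement, $\mu$ and $\nu$ are both probability measures on $\RR^d$ (as pushforwards of the probability measure $\QQ$), so the equality $\mu(\RR^d\setminus D_T)=\nu(\RR^d\setminus D_T)$ from the first part yields $\mu(D_T)=\nu(D_T)$ by taking complements. There is no real obstacle here beyond the support-reduction argument; the result is a direct consequence of path continuity combined with the definition of the support of the occupation measure, and the only subtlety is handling the endpoint $y_\beta$ by the symmetric left-continuity estimate.
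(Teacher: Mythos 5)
Your proof is correct and follows essentially the same route as the paper: decompose on $\{\beta=0\}$ versus $\{\beta>0\}$, note $y_0=y_\beta$ on $\{\beta=0\}$, and reduce to showing that on $\{\beta>0\}$ both $y_0$ and $y_\beta$ lie in $\mathrm{supp}\,\overline{\QQ}\subset D_T$. The paper treats that last inclusion as immediate; you flesh it out with the Lipschitz/positive-occupation argument, which is exactly the right justification.
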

\begin{proof}
As the support of $\overline\QQ$ (which is a closed set) is a subset of $D_T$, we immediately see that
\begin{align*}
\QQ(y_0\in \RR^d\setminus D_T,\,\beta>0)=\QQ(y_\beta\in \RR^d\setminus D_T,\,\beta>0)=0.    
\end{align*}
As a result, for any Lebesgue-measurable $A\subset \RR^d\setminus D_T$,
\begin{align*}
\mu(A)-\nu(A)=\QQ(y_0\in A)-\QQ(y_\beta\in A)=\QQ(y_0\in A,\,\beta=0)-\QQ(y_\beta\in A,\,\beta=0)=0.    
\end{align*}
The second statement follows from the above and the fact that both $\mu$ and $\nu$ are probability measures.
\end{proof}

\subsection{Example: One Dimension, One Interface}\label{ex:1D1}
Let $\Gamma_{s-}:=(-\infty,0]$ and consider any $V_0\ge0$. Note that due to the special form of $\Gamma_{s-}$, the value function of the optimal control problem \eqref{eq:Eikonal.OC.1} must be given by
\begin{align*}
w^L(x)=\int_0^xL^+(z)\,\mathrm{d}z,\quad x\in[0,\infty).
\end{align*}
\begin{lemma}\label{lem:Example11}
Consider any $T>0$ and any equilibrium $(\kappa,\QQ,L)\in\cE_T$ that satisfies the admissibility condition $\mathbf{e}^{\kappa,\QQ,L}\ge0$. 
\begin{enumerate}
    \item There exists a non-decreasing continuous function $t\mapsto\Lambda_t$ with $\Lambda_0=0$ such that $D_t^{\QQ,L}=(-\infty,\Lambda_t]=\{w^{\QQ,L}\leq t\}$ for any $t\ge0$.
    \item $\QQ$-a.s., $\beta=0$ or
    \begin{align*}
    y_t=(y_0-t)\,\bone_{[0,y_0]}(t).   
    \end{align*}
    \item Let $\mu:=\QQ\circ y_0^{-1}$, $\nu:=\QQ\circ y_\beta^{-1}$, and let $q$ be the Lebesgue density of the occupation measure $\overline\QQ$. Then,
    \begin{align*}
    q(x)=\nu([0,x])-\mu([0,x]),\quad\text{a.e. }\,x\in[0,\infty).   
    \end{align*}
    \item The following holds for any $x\ge0$:
    \begin{align*}
    \kappa\big(\nu([0,x])-\mu([0,x])\big)=\kappa\QQ(\beta=\theta)-\int_0^x(1+u(z))\,\mathrm{d}z-\mathbf{e}^{\kappa,\QQ,L}([0,x]).  
    \end{align*}
    \item The following holds for any $x\in[0,\Lambda_T)$, provided that $\Lambda_T>0$:
    \begin{align*}    \int_0^x(1+u(z))\,\mathrm{d}z\leq\kappa\QQ(\beta=\theta)-\gamma-\mathbf{e}^{\kappa,\QQ,L}([0,x])\leq\kappa\QQ(\beta=\theta)-\gamma.   
    \end{align*}
    \item If either $\kappa\QQ(\beta=\theta)\ge\gamma$ or $\Lambda_T>0$, then 
    \begin{align*}    \mathbf{E}_T^{\kappa,\QQ,L}=\mathbf{e}^{\kappa,\QQ,L}([0,\Lambda_T])\ge\gamma.    
    \end{align*}
    \item If $\kappa\QQ(\beta=\theta)\ge\gamma$ and $\Lambda_t=\Lambda_{t_0}$ for some $t_0<t\leq T$, then necessarily $\Lambda_{t_0}=\Lambda_T$ and $\mathbf{E}_t^{\kappa,\QQ,L}\ge\gamma$. 
\end{enumerate}
\end{lemma}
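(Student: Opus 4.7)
The plan is to prove the two assertions separately: first establish the rigidity statement $\Lambda_{t_0}=\Lambda_T$, which is the substantive step, and then obtain the energy bound $\mathbf{E}^{\kappa,\QQ,L}_t\geq\gamma$ as an immediate corollary of part (6).

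For the rigidity, I would argue by contradiction, assuming $\Lambda_{t_0}<\Lambda_T$. Combining part (1) with the continuity of $t\mapsto\Lambda_t$ and the identity $\{w^{\QQ,L}\leq t'\}=D^{\QQ,L}_{t'+}$ from the remark after Lemma \ref{le:projectionOfw}, one gets $\{w^{\QQ,L}\leq t'\}=(-\infty,\Lambda_{t'}]$ for every $t'\geq 0$. Inverting this relation yields, for every $x\in(\Lambda_{t_0},\Lambda_T]$, the formula $w^{\QQ,L}(x)=\inf\{t'\geq 0:\,x\leq\Lambda_{t'}\}$. Since $\Lambda_s=\Lambda_{t_0}<x$ for all $s\in[t_0,t]$, this infimum is at least $t$, so $w^{\QQ,L}(x)\geq t>t_0$ throughout the interval $(\Lambda_{t_0},\Lambda_T]$. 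Meanwhile $w^{\QQ,L}(\Lambda_{t_0})\leq t_0$ by the same identification (with $t'=t_0$). The key point is that Lemma \ref{lem:w=v} together with the local Lipschitz regularity of $w^L$ from Lemma \ref{lem:UsefulProperties}(5) render $w^{\QQ,L}=w^L$ continuous on the closed set $D^{\QQ,L}_T=(-\infty,\Lambda_T]$. Letting $x\downarrow\Lambda_{t_0}$ therefore produces $w^{\QQ,L}(\Lambda_{t_0})\geq t>t_0$, contradicting $w^{\QQ,L}(\Lambda_{t_0})\leq t_0$.

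Once $\Lambda_{t_0}=\Lambda_t=\Lambda_T$ has been established, a glance at the definition \eqref{eq.e.def} of $\mathbf{e}^{\kappa,\QQ,L}$ shows that its two components (one supported in $\overline{\Gamma}_{s-}$, the other absolutely continuous on $\{w^{\QQ,L}<\infty\}\setminus\overline{\Gamma}_{s-}$) place no mass on the empty interval $(\Lambda_t,\Lambda_T]$, so $\mathbf{E}^{\kappa,\QQ,L}_t=\mathbf{e}^{\kappa,\QQ,L}((-\infty,\Lambda_T])=\mathbf{E}^{\kappa,\QQ,L}_T$. Under the hypothesis $\kappa\QQ(\beta=\theta)\geq\gamma$, part (6) of the present lemma then yields $\mathbf{E}^{\kappa,\QQ,L}_T\geq\gamma$, completing the proof. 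The main obstacle is not technical but conceptual: recognizing that a plateau in $\Lambda$ would force a jump-discontinuity of the continuous function $w^L$ at $\Lambda_{t_0}$ when approached from the right, unless the plateau extends all the way to $\Lambda_T$; once this is spotted, the rest is bookkeeping.
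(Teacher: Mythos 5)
Your proposal addresses only item~(7) of the seven-part lemma; parts~(1)--(6) --- whose conclusions (the continuity and interval structure of $t\mapsto\Lambda_t$, the explicit path representation, the ODE for $q$, and the lower bound in item~(6)) you invoke freely --- are given no argument. If the task was the full lemma, that is a substantial gap.

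For item~(7) itself, your argument is correct but follows a genuinely different route than the paper's. The paper reasons probabilistically through the explicit trajectories from item~(2): a plateau $\Lambda_t=\Lambda_{t_0}$ forces $\QQ(y_{\iota(L,t)}>\Lambda_{t_0},\,\beta>0)=0$ (otherwise $D^{\QQ,L}_t\supsetneq D^{\QQ,L}_{t_0}$), and since $y_r=y_0-r$ this propagates to $\QQ(y_0>\Lambda_{t_0},\,\beta>0)=0$, hence no living trajectory ever exceeds $\Lambda_{t_0}$, whence $D^{\QQ,L}_T=D^{\QQ,L}_{t_0}$. You instead argue deterministically through the arrival time function: the identification $\{w^{\QQ,L}\le t'\}=(-\infty,\Lambda_{t'}]$ from item~(1) forces $w^{\QQ,L}\ge t$ on $(\Lambda_{t_0},\Lambda_T]$ while $w^{\QQ,L}(\Lambda_{t_0})\le t_0$, and, if $\Lambda_{t_0}<\Lambda_T$, this contradicts the local Lipschitz continuity of $w^L=w^{\QQ,L}$ on $D^{\QQ,L}_T$ supplied by Lemma~\ref{lem:w=v} and Lemma~\ref{lem:UsefulProperties}(5). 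Your approach is arguably cleaner --- it bypasses the occupation-measure bookkeeping in the paper's proof and reduces the rigidity entirely to regularity of $w^L$ --- but it leans on item~(1) and Lemma~\ref{lem:w=v} more heavily, whereas the paper's argument is more self-contained within the example's path picture. The closing bookkeeping for $\mathbf{E}^{\kappa,\QQ,L}_t$ matches the paper: since $\mathbf{e}^{\kappa,\QQ,L}$ is concentrated on $[0,\Lambda_T]$ and $\Lambda_t=\Lambda_T$, one has $\mathbf{E}^{\kappa,\QQ,L}_t=\mathbf{E}^{\kappa,\QQ,L}_T$ and item~(6) gives~$\ge\gamma$.
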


\begin{proof}
(1) We first prove $D_T^{\QQ,L}$ is of the form $(-\infty,\Lambda_T]$ by contradiction. Suppose there exists an $x_1>0$ such that $x_1\notin D_T^{\QQ,L}$ but $[x_1,\infty)\cap D_T^{\QQ,L}\neq\emptyset$. Consider (note that $D_T^{\QQ,L}$ is a closed set)
\begin{align*}
x^*:=\sup\{x>x_1:[x_1,x)\subset\RR\setminus D_T^{\QQ,L}\}\in(x_1,\infty),
\end{align*}
which satisfies $[x_1,x^*)\subset\RR\setminus D_T^{\QQ,L}$. It is easy to see from the definition of $x^*$ that there exists a $t\in[0,T)$ such that $\{x^*\}=\partial\{w^L>t\}=\partial_*\{w^L>t\}$. Then, applying \eqref{eq.Step2.mainResult},\footnote{Although $d=2$ is assumed throughout Subsection \ref{subse:perim.equil}, it is easy to check that only Lemma \ref{le:2dim.key} and the subsequent results rely on this assumption -- all the results preceding Lemma \ref{le:2dim.key} hold for any $d\geq1$.} we deduce that $\QQ_T(y_{\iota(L,t)}=x^*,\,\beta\geq\iota(L,t)>0)>0$.
On the other hand, the last inequality in \eqref{eq.perimEst.Step2.aux} implies $\QQ(y_{\beta}=x^*,\,\beta\geq\iota(L,t)>0)=0$, which yields $\QQ(y_{\iota(L,t)}=x^*,\,\beta>\iota(L,t))>0$. Notice also that $\beta>\iota(L,t)$ implies $y_\beta<y_{\iota(L,t)}$, $\QQ$-a.s., in view of the monotonicity of $w^L$ and Lemma \ref{le:perim.mon}. Thus, we conclude that $\overline{\QQ}([x_1,x^*))>0$ and obtain the desired contradiction. The monotonicity of $\Lambda$ is clear from its definition. To show the continuity of $\Lambda$, we notice from Lemma \ref{lem:EquilibriumTimeConsistency} that $w^L$ must remain constant in $(\Lambda_{t-},\Lambda_{t+})$, for any $t\in[0,T)$ for which the latter interval is non-empty (with the convention $0-=0$). The latter implies that, for $x^*=\Lambda_{t+}>0$, there exists $x_1\in(0,x^*)$ such that $[x_1,x^*)\subset\RR\setminus D_T^{\QQ,L}$ and $\{x^*\}=\partial\{w^L>t\}=\partial_*\{w^L>t\}$. Repeating the above argument, we obtain the desired contradiction with the existence of a jump time $t\in[0,T)$ (in $[T,\infty)$, the function $\Lambda$ remains constant due to $w^L\leq T$). The equality $D_t^{\QQ,L}=\{w^{\QQ,L}\leq t\}$ is due to the continuity of $\Lambda$.

\smallskip

\noindent (2) It is an easy consequence of (1) and the optimality of $\QQ$ in the sense of the first line of \eqref{equi}.

\smallskip

\noindent (3) Making use of the explicit form of $\QQ\circ y^{-1}$, we can evaluate the occupation measure for any Borel set $A\subset[0,\infty)$ by
\begin{align*}
\overline{\QQ}(A)&=\EE^\QQ\left[\int_0^\beta\bone_A(y_t)\,\mathrm{d}t\right]=\int\mathrm{Leb}(A\cap[y_\beta,y_0))\mathop{\QQ(\mathrm{d}y\,\mathrm{d}\beta)}\\
&=\int\mathrm{Leb}(A\cap[0,y_0))\mathop{\QQ(\mathrm{d}y\,\mathrm{d}\beta)}-\int\mathrm{Leb}(A\cap[0,y_\beta))\mathop{\QQ(\mathrm{d}y\,\mathrm{d}\beta)}\\
&=\int_0^\infty\int\bone_A(x)\bone_{\{y_0> x\}}\mathop{\QQ(\mathrm{d}y\,\mathrm{d}\beta)}\mathop{\mathrm{d}x}-\int_0^\infty\int\bone_A(x)\bone_{\{y_\beta> x\}}\mathop{\QQ(\mathrm{d}y\,\mathrm{d}\beta)}\mathop{\mathrm{d}x}\\
&=\int_A\mu((x,\Lambda_T])\mathop{\mathrm{d}x}-\int_A\nu((x,\Lambda_T])\mathop{\mathrm{d}x},
\end{align*}
implying that its Lebesgue density is given by 
\begin{align*}
q(x)=\mu((x,\Lambda_T])-\nu((x,\Lambda_T])=\nu([0,x])-\mu([0,x])
\end{align*}
for Lebesgue-a.e. $x\in[0,\infty)$. Note that the last equality is a simple corollary of Lemma \ref{lem:SupportMuMinusNu}.

\smallskip

\noindent (4) Using the definition of $\mathbf{e}^{\kappa,\QQ,L}$ and the fact that $\{\beta=\theta\}=\{y_\beta=0\}$, we obtain
\begin{align*}
\mathbf{e}^{\kappa,\QQ,L}([0,x])&=\kappa\QQ(y_0=0)+\kappa\QQ(y_0\in(0,x])-\kappa\QQ(y_\beta\in(0,x])
-\int_0^x(1+u(z))\,\mathrm{d}z\\
&=\kappa\big(\mu([0,x])-\nu((0,x])\big)-\int_0^x(1+u(z))\,\mathrm{d}z\\
&=\kappa\big(\mu([0,x])-\nu([0,x])\big)+\kappa\QQ(y_\beta=0)-\int_0^x(1+u(z))\,\mathrm{d}z\\
&=\kappa\big(\mu([0,x])-\nu([0,x])\big)+\kappa\QQ(\beta=\theta)-\int_0^x(1+u(z))\,\mathrm{d}z.
\end{align*}
The desired identity is just a rearrangement of the terms in the equation above.

\smallskip

\noindent (5) If $\Lambda_T>0$, then it follows from items (3), (4) and Lemma \ref{lem:UsefulProperties} (2) that
\begin{align*}
0\leq\mathbf{e}^{\kappa,\QQ,L}([0,x])=\kappa\QQ(\beta=\theta)-\int_0^x(1+u(z))\,\mathrm{d}z-\kappa q(x)\leq \kappa\QQ(\beta=\theta)-\int_0^x(1+u(z))\,\mathrm{d}z-\gamma   
\end{align*}
for a.e. $x\in(0,\Lambda_T)$, which yields the desired inequality for any $x\in[0,\Lambda_T)$ by the continuity of $x\mapsto\int_0^x(1+u(z))\,\mathrm{d}z$ and the right continuity of $x\mapsto\mathbf{e}^{\kappa,\QQ,L}([0,x])$.

\smallskip

\noindent (6) If $\Lambda_T=0$, then it follows from Lemma \ref{lem:SupportMuMinusNu} and item (4) that
\begin{align*}
\mathbf{E}_T^{\kappa,\QQ,L}=\mathbf{e}^{\kappa,\QQ,L}(\{0\})=\kappa\QQ(\beta=\theta)\ge\gamma.    
\end{align*}
If $\Lambda_T>0$, then taking $x\uparrow\Lambda_T$ in item (5) implies in particular
\begin{align*}
\int_0^{\Lambda_T}(1+u(z))\,\mathrm{d}z\leq\kappa\QQ(\beta=\theta)-\gamma.
\end{align*}
We can then apply Lemma \ref{lem:SupportMuMinusNu} and item (4) to obtain
\begin{align*}
\mathbf{E}_T^{\kappa,\QQ,L}=\mathbf{e}^{\kappa,\QQ,L}([0,\Lambda_T])=\kappa\QQ(\beta=\theta)-\int_0^{\Lambda_T}(1+u(z))\,\mathrm{d}z\ge\gamma.  
\end{align*}

\smallskip

\noindent (7) Recall that $\QQ$-a.s., $y_t=y_0-t$ for $t\in[0,\beta]$. If $\Lambda_t=\Lambda_{t_0}$ for some $t_0<t\leq T$, then necessarily $\QQ(y_{\iota(L,t)}>\Lambda_{t_0},\beta>0)=0$ (as otherwise $D_{t_0}^{\QQ,L}$ will be a strict subset of $D_t^{\QQ,L}$), which forces $\QQ(y_0>\Lambda_{t_0},\beta>0)=0$, and hence $\QQ(\exists t\in(0,\beta):\,y_t>\Lambda_{t_0})=0$. As a result, $D_T^{\QQ,L}=D_{t_0}^{\QQ,L}$, and thus $\Lambda_T=\Lambda_{t_0}$. We then apply item (6) to deduce $\mathbf{E}^{\kappa,\QQ,L}_t\ge\gamma$.
\end{proof}


\medskip

\begin{proposition}\label{prop:Example1Minimal}
The unique minimal solution $w$ to \eqref{PDE:wave}, with initial domain $\Gamma_{s-}$ and initial velocity $V_0$, is given by
\begin{align*}
w(x)=\int_0^x\frac{1}{V_0-\int_0^
z(u(r)+1)\mathop{\mathrm{d}r}}\mathop{\mathrm{d}z}\bone_{(0,x^*]}(x)+\infty\,\bone_{(x^*,\infty)}(x), 
\end{align*}
where
\begin{align*}
x^*:=\inf\left\{x>0:\,\int_0^x(u(z)+1)\,\mathrm{d}z>V_0\right\}\in[0,\infty].    
\end{align*}
\end{proposition}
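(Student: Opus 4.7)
The plan leverages the rigid one-dimensional structure encoded by Lemma \ref{lem:Example11}, together with an explicit construction of approximating equilibria with vanishing excess energy. For any admissible $(\kappa,\QQ,L)\in\mathcal{E}_T$ with boundary measure $\kappa\QQ(y_\theta\in\mathrm{d}x,\beta=\theta)=(V_0+\gamma+\delta)\delta_0(\mathrm{d}x)$ ($\delta\ge 0$) and $\mathbf{e}^{\kappa,\QQ,L}\ge 0$, parts (3), (4) of Lemma \ref{lem:Example11} combined with Lemma \ref{lem:UsefulProperties}(2) yield the a.e.\ identity
\begin{equation*}
\frac{1}{L^+(x)} = V_0+\delta-\int_0^x(1+u(z))\,\mathrm{d}z-\mathbf{e}^{\kappa,\QQ,L}([0,x]) \quad\text{on }(0,\Lambda_T).
\end{equation*}
Since $|\nabla w^L|=L^+$ a.e.\ by Lemma \ref{lem:UsefulProperties}(5) and $w^{\QQ,L}$ coincides with $\int_0^{\cdot} L^+$ on $[0,\Lambda_T]$ while equalling $+\infty$ beyond, this pins down $w^{\QQ,L}$ in terms of $\delta$ and the running excess measure $x\mapsto\mathbf{e}^{\kappa,\QQ,L}([0,x])$.

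For existence, fix $\epsilon^n\downarrow 0$, set $V_0^n:=V_0+\epsilon^n$ and $x^{*,n}:=\inf\{x:\int_0^x(1+u)=V_0^n\}>x^*$, and define the smooth profile $w_n(x):=\int_0^x\mathrm{d}z/(V_0^n-\int_0^z(1+u))$ on $[0,x^{*,n})$. This $w_n$ solves \eqref{PDE:wave} classically with $w_n'(0)=1/V_0^n$. Choosing $T_n\uparrow\infty$ so that $\{w_n<T_n\}\Subset[0,x^{*,n})$, Proposition \ref{prop:equi} produces $(\kappa^n,\QQ^n,L^n)\in\mathcal{E}_{T_n}$ with $\mathbf{e}^{\kappa^n,\QQ^n,L^n}\equiv 0$, $\kappa^n\QQ^n(\beta=\theta)=V_0+\gamma+\epsilon^n$, and $w^{\QQ^n,L^n}=w_n\wedge T_n$; monotone convergence then yields $w^{\QQ^n,L^n}\wedge T\to w\wedge T$ in $L^1_{\mathrm{loc}}$ for each $T>0$. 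Property (i) of Definition \ref{def:min} is built into the construction, and property (ii) is immediate since $\mathbf{E}^{\kappa^n,\QQ^n,L^n}\equiv 0$ while any competing sequence has $\hat{\mathbf{E}}^n\ge 0$.

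For uniqueness, consider any minimal solution $\tilde w$ with approximating sequence $(\tilde\kappa^n,\tilde\QQ^n,\tilde L^n)$, and set $\tilde\delta^n:=\tilde\kappa^n\tilde\QQ^n(\beta=\theta)-(V_0+\gamma)\downarrow 0$. Re-running the construction above with $V_0+\tilde\delta^n$ in place of $V_0^n$ produces a competitor $(\breve\kappa^n,\breve\QQ^n,\breve L^n)$ carrying the \emph{same} boundary measure $(V_0+\gamma+\tilde\delta^n)\delta_0$ as $(\tilde\kappa^n,\tilde\QQ^n,\tilde L^n)$ and satisfying $\breve{\mathbf{E}}^n\equiv 0\le\tilde{\mathbf{E}}^n$. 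Applying Definition \ref{def:min}(ii) for $\tilde w$ against this competitor gives, for $0\le r<t<T$,
\begin{equation*}
0\le\liminf_{n\to\infty}\bigl(\breve{\mathbf{E}}^n_t-\tilde{\mathbf{E}}^n_r\bigr)=-\limsup_{n\to\infty}\tilde{\mathbf{E}}^n_r,
\end{equation*}
so $\tilde{\mathbf{E}}^n_r\to 0$ for every $r>0$. Hence $\tilde{\mathbf{e}}^n([0,z])\le\tilde{\mathbf{E}}^n_r\to 0$ uniformly in $z\in[0,\tilde\Lambda^n_r]$; feeding this into the structural formula shows $\tilde L^n(z)\to 1/(V_0-\int_0^z(1+u))=w'(z)$ uniformly on any compact $[0,a]\subset[0,x^*)$ that is eventually contained in $[0,\tilde\Lambda^n_r]$, and then the defining identity $r=\int_0^{\tilde\Lambda^n_r}\tilde L^n$ forces $\tilde\Lambda^n_r\to w^{-1}(r)$, i.e., $\tilde w=w$.

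The principal technical difficulty lies in verifying the regularity hypotheses of Proposition \ref{prop:equi} in the existence step when $u$ is only bounded measurable, so that $w_n$ may fail to be $C^1$; the natural workaround is to mollify $u$ into continuous data $u^k$, apply Proposition \ref{prop:equi} to the smooth problem, and pass to the limit using the BV-compactness from Corollary \ref{cor:perim.uniformBdOnTV}. A secondary issue in the uniqueness step is the failure of uniform convergence of $\tilde L^n$ near $x^*$ (where the denominator $V_0-\int_0^z(1+u)$ vanishes), which must be handled by restricting to compact subsets $[0,a]\Subset[0,x^*)$ and exhausting $a\uparrow x^*$.
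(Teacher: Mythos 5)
Your proposal follows the same route as the paper's own proof. For existence, the paper likewise builds $w^n$ from $V_0+\varepsilon_n$, applies Proposition \ref{prop:equi} directly, and checks Definition \ref{def:min}(i)--(ii), with (ii) holding trivially because $\mathbf{E}^{\kappa^n,\QQ^n,L^n}\equiv 0$ on $[0,T_n)$. For uniqueness, the paper also extracts $\varepsilon_n := \tilde\kappa^n\tilde\QQ^n(\beta=\theta)-\gamma-V_0$, builds the zero-excess-energy competitor at the same boundary data, forces $\tilde{\mathbf{E}}^n_t\to 0$ via Definition \ref{def:min}(ii), and then uses Lemmas \ref{lem:UsefulProperties}(2) and \ref{lem:Example11}(3)--(4) to write $\tilde\kappa^n\tilde q^n$ and $\tilde L^n$ explicitly, concluding $\tilde\Lambda^n_t\to w^{-1}(t)$ and $w^{\tilde\QQ^n,\tilde L^n}\to w$.

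Two points to refine. First, the mollification caveat is unnecessary. For bounded measurable $u$ (indeed, $u\in L^1_{\mathrm{loc}}$), the map $z\mapsto\int_0^z(1+u)$ is continuous, so $w_n'(z)=1/(V_0^n-\int_0^z(1+u))$ is continuous on $[0,x^{*,n})$ and $w_n\in C^1$ as Proposition \ref{prop:equi}(i) requires; moreover, in $d=1$ the ODE in hypothesis (iv) degenerates to $\dot Y\equiv 1$, so (iv) holds without any Lipschitz bound on $\nabla w$ (the remark after Proposition \ref{prop:equi} gives a sufficient, not necessary, condition). The paper applies Proposition \ref{prop:equi} directly with no mollification step. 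Second, there is a circularity in the uniqueness sketch: you invoke uniform convergence of $\tilde L^n$ on compacts ``eventually contained in $[0,\tilde\Lambda^n_r]$'' and then use $r=\int_0^{\tilde\Lambda^n_r}\tilde L^n$ to deduce $\tilde\Lambda^n_r\to w^{-1}(r)$, but establishing that such compacts eventually sit inside $[0,\tilde\Lambda^n_r]$ already requires a lower bound on $\tilde\Lambda^n_r$. The paper breaks this circle by first invoking Lemma \ref{lem:Example11}(7) to get $w^{\tilde\QQ^n,\tilde L^n}(\tilde\Lambda^n_t)=t$ for large $n$, and then running a separate contradiction argument on the explicit integral formula to establish $\liminf_n\tilde\Lambda^n_t\ge w^{-1}(t)$ before passing to the limit in $\tilde L^n$; it also treats $V_0=0$ separately (there $w=\infty$ on $(0,\infty)$ and one shows $\tilde\Lambda^n_t\to 0$ directly), a case your sketch omits.
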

\begin{remark}
In the special case $V_0=0$, the above expression for $w(x)$ evaluates to $w(x)=\infty\,\bone_{(0,\infty)}(x)$ by making use of the assumption that $u$ is bounded in $(0,\infty)$.    
\end{remark}
\begin{proof}
Take any sequence $\varepsilon_n>0$ with $\varepsilon_n\to0$, and define
\begin{align*}
w^n(x):=\int_0^x\frac{1}{V_0+\varepsilon_n-\int_0^z (u(r)+1)\mathop{\mathrm{d}r}}\mathop{\mathrm{d}z}\bone_{(0,x^{n*}]}(x)+\infty\,\bone_{(x^{n*},\infty)}(x), 
\end{align*}
where
\begin{align*}
x^{n*}:=\inf\left\{x>0:\,\int_0^x(u(z)+1)\,\mathrm{d}z>V_0+\varepsilon_n\right\}\in[0,\infty].    
\end{align*}
As it has been assumed that $u$ is bounded and that $(1+u)^-$ is integrable, it holds that $w^n(x^{n*}-)=\infty$. The sequence of functions $w^n$ satisfies the convergence
\begin{align*}
w^n\wedge T\to w\wedge T    
\end{align*}
pointwise and in $L_{\mathrm{loc}}^1$ for any $T>0$. Moreover, for each $n$, $w^n\in C^1([0,x^{n*}))$ with $(w^n)'(x)=\frac{1}{V_0+\varepsilon_n-\int_0^x(u(z)+1)\mathop{\mathrm{d}z}}$, and thus 
\begin{align*}
\left(\frac{1}{(w^n)'(x)}\right)'=-(u(x)+1)    
\end{align*}
holds a.e. and weakly in $[0,x^{n*})$. Consider any sequence $T_n\uparrow\infty$. By Proposition \ref{prop:equi} (with $V_0$ replaced by $V_0+\varepsilon_n$ and $T$ replaced by $T_n$ for each $n\ge1$), there exists a sequence of equilibria $(\kappa^n,\QQ^n,L^n)$ with the properties that $w^n=w^{\QQ^n,L^n}$ in $(-\infty,(w^n)^{-1}(T_n))$, that $\mathbf{E}^{\kappa^n,\QQ^n,L^n}_t=0$ for any $t\in[0,T_n)$, and that $\kappa^n\QQ^n(\beta=\theta)=\gamma+V_0+\varepsilon_n$. To justify the minimality of $w$, we take any $T\in(0,\infty)$ and any sequence $(\hat{\kappa}^n,\hat{\QQ}^n,\hat{L}^n)_{n
\ge1}$ satisfying $\hat{\kappa}^n\,\hat{\QQ}^n(y_\theta\in \mathrm{d}x,\beta=\theta)=\kappa^n\,\QQ^n(y_\theta\in \mathrm{d}x,\beta=\theta)$, $\mathbf{e}^{\hat{\kappa}^n,\hat{\QQ}^n,\hat{L}^n}\geq0$, and $\mathbf{E}^{\hat{\kappa}^n,\hat{\QQ}^n,\hat{L}^n}_t\leq\mathbf{E}^{\kappa^n,\QQ^n,L^n}_t$ for all $t\in[0,T]$, and we need to show that
\begin{align}\label{eq:MinimalityCondition2Example1.1}
&\liminf_{n\rightarrow\infty}\left(\mathbf{E}^{\hat{\kappa}^n,\hat{\QQ}^n,\hat{L}^n}_t-\mathbf{E}^{\kappa^n,\QQ^n,L^n}_r\right)\geq0\quad \text{for any}\,\,0\leq r<t<T.
\end{align}
For all sufficiently large $n$, $T_n>T$ and thus the condition \eqref{eq:MinimalityCondition2Example1.1} holds trivially, showing that $w$ is a minimal solution.

\smallskip

Now we turn to the uniqueness of minimal solutions. Suppose $\tilde w$ is possibly another minimal solution, and let $(\tilde\kappa^n,\tilde\QQ^n,\tilde L^n)\in\cE_{\tilde T_n}$ be the approximating sequence for $\tilde w$ as in Definition \ref{def:min}. Consider $\varepsilon_n:=\tilde\kappa^n\tilde\QQ^n(\beta=\theta)-\gamma-V_0$ which satisfies $\varepsilon_n>0$, $\varepsilon_n\to0$. We take any $T\in(0,\infty)$, and let the sequences of functions $(w^n)$ and of equilibria $(\kappa^n,\QQ^n,L^n)$ be constructed as at the beginning of this proof based on this sequence $(\varepsilon_n)$ and $T_n:=T$. It is clear that $\kappa^n\QQ^n(\beta=\theta)=\tilde\kappa^n\tilde\QQ^n(\beta=\theta)=\gamma+V_0+\varepsilon_n$, and we know that $\mathbf{E}^{\kappa^n,\QQ^n,L^n}_t=0$ for all $t\in[0,T)$. Therefore, it trivially holds true that $\mathbf{E}^{\kappa^n,\QQ^n,L^n}_t\leq\mathbf{E}^{\tilde{\kappa}^n,\tilde{\QQ}^n,\tilde{L}^n}_t$ for any $t\in[0,T)$. Now, condition (ii) of Definition \ref{def:min} forces that $\lim_{n\to\infty}\mathbf{E}^{\tilde\kappa^n,\tilde\QQ^n,\tilde L^n}_t=0$ for any $t\in[0,T)$. But since the choice of $T$ is arbitrary, we actually obtain that  
\begin{align}\label{eq:Example1.1}
\lim_{n\to\infty}\mathbf{E}^{\tilde\kappa^n,\tilde\QQ^n,\tilde L^n}_t=0,\quad t\in[0,\infty).  
\end{align}
The equation above actually implies that, for each fixed $t>0$, $w^{\tilde\QQ^n,\tilde L^n}(\tilde\Lambda_t^n)=t$ for all sufficiently large $n$, as $w^{\tilde\QQ^n,\tilde L^n}(\tilde\Lambda_t^n)<t$ would entail $\mathbf{E}^{\tilde\kappa^n,\tilde\QQ^n,\tilde L^n}_t\ge\gamma$ by Lemma \ref{lem:Example11} (7). In particular, $\tilde\Lambda_t^n>0$ for all sufficiently large $n$. To obtain a more accurate lower bound of $\tilde\Lambda_t^n$, we can apply Lemma \ref{lem:Example11} (3), (4) to see that 
\begin{align*}
\tilde\kappa^n\tilde q^n(x)&=\tilde\kappa^n\big(\tilde\nu^n([0,x])-\tilde\mu^n([0,x])\big)
=\tilde\kappa^n\tilde\QQ^n(\beta=\theta)-\int_0^x(1+u(z))\,\mathrm{d}z-\mathbf{e}^{\tilde\kappa^n,\tilde\QQ^n,\tilde L^n}([0,x])\\
&=\gamma+V_0+\varepsilon_n-\int_0^x(1+u(z))\,\mathrm{d}z-\mathbf{e}^{\tilde\kappa^n,\tilde\QQ^n,\tilde L^n}([0,x]),\quad\text{a.e.}\quad x\in[0,\tilde\Lambda_\infty^n).
\end{align*}
Now we apply Lemma \ref{lem:UsefulProperties} (2) and Lemma \ref{lem:w=v} to see that
\begin{equation}\label{eq:Example1.2}
\begin{split}
t&=w^{\tilde\QQ^n,\tilde L^n}(\tilde\Lambda_t^n)=w^{\tilde L^n}(\tilde\Lambda_t^n)=\int_0^{\tilde\Lambda_t^n}(\tilde L^n)^+(x)\,\mathrm{d}x=\int_0^{\tilde\Lambda_t^n}\frac{1}{\tilde\kappa^n\tilde q^n(x)-\gamma}\,\mathrm{d}x\\&=\int_0^{\tilde\Lambda_t^n}\frac{1}{V_0+\varepsilon_n-\int_0^x(1+u(z))\,\mathrm{d}z-\mathbf{e}^{\tilde\kappa^n,\tilde\QQ^n,\tilde L^n}([0,x])}\,\mathrm{d}x.   
\end{split}
\end{equation}
If $V_0>0$, then necessarily $w^{-1}(t)<x^*$, or equivalently $\sup_{x\in[0,w^{-1}(t)]}\int_0^x(1+u(z))\,\mathrm{d}z<V_0$. We claim that $\liminf_{n\to\infty}\tilde\Lambda_t^n\ge w^{-1}(t)$. Suppose, on the contrary, there exist a sufficiently small $\epsilon>0$ and a subsequence $(\tilde\Lambda_t^{n_k})_{k\ge1}$ such that $\tilde\Lambda_t^{n_k}\leq w^{-1}(t)-\epsilon$, $k\ge1$. Making use of the fact that $0\leq\mathbf{e}^{\tilde\kappa^n,\tilde\QQ^n,\tilde L^n}([0,x])\leq\mathbf{E}_t^{\tilde\kappa^n,\tilde\QQ^n,\tilde L^n}\to0$ for $x\in[0,\tilde\Lambda_t^n]$, we can let $k\to\infty$ in the equation \eqref{eq:Example1.2} with $n=n_k$ to see that
\begin{align*}
t&=\limsup_{k\to\infty}\int_0^{\tilde\Lambda_t^{n_k}}\frac{1}{V_0+\varepsilon_{n_k}-\int_0^x(1+u(z))\,\mathrm{d}z-\mathbf{e}^{\tilde\kappa^{n_k},\tilde\QQ^{n_k},\tilde L^{n_k}}([0,x])}\,\mathrm{d}x\\
&\leq\int_0^{w^{-1}(t)-\epsilon}\frac{1}{V_0-\int_0^x(1+u(z))\,\mathrm{d}z}\,\mathrm{d}x=w\big(w^{-1}(t)-\epsilon\big)<t,    
\end{align*}
a contradiction. Hence, $\liminf_{n\to\infty}\tilde\Lambda_t^n\ge w^{-1}(t)$ for any $t>0$ and therefore $\liminf_{n\to\infty}\tilde\Lambda_\infty^n\ge x^*$ by taking $t\uparrow\infty$. Now, we can go back to the above expression of $\tilde q^n(x)$ and get the convergence 
\begin{align*}
\tilde\kappa^n\tilde q^n(x)=
\gamma+V_0+\varepsilon_n-\int_0^x(1+u(z))\,\mathrm{d}z-\mathbf{e}^{\tilde\kappa^n,\tilde\QQ^n,\tilde L^n}([0,x])
\to \gamma+V_0-\int_0^x(1+u(z))\,\mathrm{d}z,
\end{align*}
as $n\to\infty$, for a.e. $x\in[0,x^*)$ with the limit being locally uniform in $x\in[0,x^*)$ as $x\mapsto\mathbf{e}^{\tilde\kappa^n,\tilde\QQ^n,\tilde L^n}([0,x])$ is non-decreasing for each $n$ and is such that $0\leq\mathbf{e}^{\tilde\kappa^n,\tilde\QQ^n,\tilde L^n}([0,x])\leq\mathbf{E}_t^{\tilde\kappa^n,\tilde\QQ^n,\tilde L^n}\to0$ for $x\in[0,\tilde\Lambda_t^n]$. Moreover, the right-hand side of the above display stays strictly above $\gamma$ on each compact subinterval of $[0,x^*)$. Using these observations, Lemma \ref{lem:UsefulProperties} (2), Lemma \ref{lem:w=v} and the Bounded Convergence Theorem, we obtain:
\begin{align*}
w^{\tilde\QQ^n,\tilde L^n}(x)=\int_0^x(\tilde L^n)^+(z)\,\mathrm{d}z=\int_0^x\frac{1}{\tilde\kappa^n\tilde q^n(z)-\gamma}\,\mathrm{d}z\to\int_0^x\frac{1}{V_0-\int_0^z(u(r)+1)\,\mathrm{d}r}\,\mathrm{d}z=w(x),
\end{align*}
as $n\to\infty$, for any $x\in[0,x^*)$. As $x\mapsto w^{\tilde\QQ^n,\tilde L^n}(x)$ must be non-decreasing, it shall hold that $\lim_{n\to\infty}w^{\tilde\QQ^n,\tilde L^n}(x)=\infty$ for any $x\ge x^*$. 

If $V_0=0$, then equation \eqref{eq:Example1.2} implies that 
\begin{align*}
t\ge\int_0^{\tilde\Lambda_t^n}\frac{1}{\varepsilon_n-\int_0^x(1+u(z))\,\mathrm{d}z}\,\mathrm{d}x.
\end{align*}
As $u$ is assumed to be bounded, the above forces $\lim_{n\to\infty}\tilde\Lambda_t^n=0$, and thus $\lim_{n\to\infty}w^{\tilde\QQ^n,\tilde L^n}(x)=\infty$, $x>0$.

Therefore, in both cases, 
\begin{align*}
\tilde w\wedge T=\lim_{n\to\infty}w^{\tilde\QQ^n,\tilde L^n}\wedge T=w\wedge T    
\end{align*}
holds a.e. and in $L_{\mathrm{loc}}^1$ for any $T>0$, which then implies $\tilde w=w$, hence concluding the proof of the uniqueness of minimal solutions.
\end{proof}

\begin{remark}\label{rem:min.lim.equil.1}
(a) If $V_0=0$, the value of $x^*$, defined above, coincides precisely with the physical jump size in \cite{DIRT2} (after a change of variables) and \cite{dns}.

\noindent(b) It is not unusual that a minimal solution $w$ does not correspond to the arrival time function of any single equilibrium. Indeed, for any $T$-capped equilibrium $(\kappa,\QQ,L)\in\mathcal{E}_T$, its arrival time function $w^{\QQ,L}$ takes its values in $[0,T]\cup\{\infty\}$, whereas the minimal solution $w$ may have its image being $[0,\infty)$ (as in this example with $u\equiv-1$) or $[0,\infty]$ (as in this example with $u\equiv0$). Thus, only the capped function $w\wedge T$ can be expected to coincide with the arrival time function of an equilibrium $(\kappa,\QQ,L)\in\mathcal{E}_T$, and one can recover $w$ as $\lim_{T_n\rightarrow\infty}w\wedge T_n$. This is one of the reasons why it is necessary to consider limits of equilibria (as opposed to the equilibria themselves) in the definition of a minimal solution.    
\end{remark}

\subsection{Example: One Dimension, Two Interfaces}\label{ex:1D2}
Let $\Gamma_{s-}:=(-\infty,0]\cup[1,\infty)$ and consider any $V_0(0),V_0(1)\ge0$.\\

Note that due to the special form of $\Gamma_{s-}$, for any $x\in(0,1)$, the value of the optimal control problem \eqref{eq:Eikonal.OC.1} can be obtained by either the path $y_t=(x-t)\,\bone_{[0,x]}(t)$ or the path $y_t=(x+t)\,\bone_{[0,1-x]}(t)+\bone_{(1-x,\infty)}(t)$. Therefore, the value function of the optimal control problem \eqref{eq:Eikonal.OC.1} must be given by
\begin{align*}
w^L(x)=\int_0^xL^+(z)\,\mathrm{d}z\wedge\int_x^1L^+(z)\,\mathrm{d}z   
\end{align*}
for $x\in[0,1]$. Similar to Lemma \ref{lem:Example11}, we list a few properties of admissible equilibria in the following lemma, but omit the proof for brevity. 

\begin{lemma}\label{lem:Example21}
Consider any $T>0$ and any equilibrium $(\kappa,\QQ,L)\in\cE_T$ that satisfies the admissibility condition $\mathbf{e}^{\kappa,\QQ,L}\ge0$. 

\begin{enumerate}
    \item There exists a non-decreasing continuous function $t\mapsto\Lambda_t^0$ with $\Lambda_0^0=0$, and a non-increasing continuous function $t\mapsto\Lambda_t^1$ with $\Lambda_0^1=1$, such that $\Lambda_t^0\leq\Lambda_t^1$ and $D_t^{\QQ,L}=(-\infty,\Lambda_t^0]\cup[\Lambda_t^1,\infty)=\{w^{\QQ,L}\leq t\}$ for any $t\ge0$.
    \item If $\Lambda_T^0<\Lambda_T^1$, then we have: $\int_0^{\Lambda_T^0}L^+(z)\,\mathrm{d}z=\int_{\Lambda_T^1}^1L^+(z)\,\mathrm{d}z$ and, $\QQ$-a.s., $\beta=0$ or
    \begin{align*}
    y_0\leq\Lambda_t^0&\implies y_t=(y_0-t)\,\bone_{[0,y_0]}(t),\\
    y_0\ge\Lambda_t^1&\implies y_t=(y_0+t)\,\bone_{[0,1-y_0]}(t)+\bone_{(1-y_0,\infty)}(t).
    \end{align*}
    If $\Lambda_T^0=\Lambda_T^1=:x^*$, then we have: $\int_0^{x^*}L^+(z)\,\mathrm{d}z=\int_{x^*}^1L^+(z)\,\mathrm{d}z$ and, $\QQ$-a.s., $\beta=0$ or
    \begin{align*}
    y_0<x^*&\implies y_t=(y_0-t)\,\bone_{[0,y_0]}(t),\\
    y_0>x^*&\implies y_t=(y_0+t)\,\bone_{[0,1-y_0]}(t)+\bone_{(1-y_0,\infty)}(t),\\
    y_0=x^*&\implies y_t=(y_0-t)\,\bone_{[0,y_0]}(t)\quad\text{or }\quad y_t=(y_0+t)\,\bone_{[0,1-y_0]}(t)+\bone_{(1-y_0,\infty)}(t).
    \end{align*}
    \item Let $\mu:=\QQ\circ y_0^{-1}$, $\nu:=\QQ\circ y_\beta^{-1}$, and let $q$ be the Lebesgue density of the occupation measure $\overline\QQ$. Then,
    \begin{align*}
    q(x)=
    \begin{cases}
    \nu([0,x])-\mu([0,x]),&\quad\text{a.e. }\,x\in[0,\Lambda_T^0),\\
    \nu([x,1])-\mu([x,1]),&\quad\text{a.e. }\,x\in(\Lambda_T^1,1].
    \end{cases}  
    \end{align*}
    \item The following holds for any $x\in[0,\Lambda_T^0)$ (including $\Lambda_T^0$ if $\Lambda_T^0<\Lambda_T^1$):
\begin{align*}
\kappa\big(\nu([0,x])-\mu([0,x])\big)=\kappa\QQ(y_\theta=0,\beta=\theta)-\int_0^x(1+u(z))\,\mathrm{d}z-\mathbf{e}^{\kappa,\QQ,L}([0,x]).  
\end{align*}
The following holds for any $x\in(\Lambda_T^1,1]$ (including $\Lambda_T^1$ if $\Lambda_T^0<\Lambda_T^1$):
\begin{align*}
\kappa\big(\nu([x,1])-\mu([x,1])\big)=\kappa\QQ(y_\theta=1,\beta=\theta)-\int_x^1(1+u(z))\,\mathrm{d}z-\mathbf{e}^{\kappa,\QQ,L}([x,1]).  
\end{align*}
Finally, the following identity holds if $\Lambda_T^0=\Lambda_T^1$:
\begin{align*}
0=\kappa\QQ(\beta=\theta)-\int_0^1(1+u(z))\,\mathrm{d}z-\mathbf{e}^{\kappa,\QQ,L}([0,1]).  
\end{align*}
\end{enumerate}
\end{lemma}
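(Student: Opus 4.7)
The plan is to follow the template of Lemma~\ref{lem:Example11}, adapting each item to the two-interface setting, and exploiting the fact that in one dimension every closed set of the form $\RR\setminus O$ with $O\subset(0,1)$ open is a disjoint union of two closed half-rays and at most countably many closed intervals in $(0,1)$.

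\textbf{Item (1).} I would first argue that no "interior island" can appear in $D_T^{\QQ,L}$. Since Lemma~\ref{lem:UsefulProperties}(3) says $L_T=-1/\gamma$ outside $\mathrm{supp}\,\overline{\QQ}_T$, the function $v:=w^{L_T}$ is constant on every open connected component of the complement of $D_T^{\QQ,L}$ inside $(0,1)$. If there were a bounded open gap $(a,b)\subset (0,1)\setminus D_T^{\QQ,L}$ with $a,b\in D_T^{\QQ,L}$, then $\{a\}$ and $\{b\}$ would each be $\partial_*\{v>t\}$ for some $t\in[0,T)$, and \eqref{eq.Step2.mainResult} together with \eqref{eq.perimEst.Step2.aux} and Lemma~\ref{le:perim.mon} would force $\QQ_T(y_{\iota(L,t)}\in\{a,b\},\,\beta>\iota(L,t))>0$, giving occupation inside $(a,b)$ and contradicting $(a,b)\subset\RR\setminus\mathrm{supp}\,\overline{\QQ}_T$. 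The same argument rules out an interior island disjoint from both half-rays. Hence $D_T^{\QQ,L}=(-\infty,\Lambda_T^0]\cup[\Lambda_T^1,\infty)$ with $\Lambda_T^0\leq\Lambda_T^1$, and setting $\Lambda_t^0,\Lambda_t^1$ via the analogous formulas yields monotonicity in $t$. Continuity is obtained exactly as in Lemma~\ref{lem:Example11}(1): a jump in $\Lambda^0$ (or $\Lambda^1$) would produce an open interval on which $v$ is constant but whose right (resp.~left) endpoint is in the support of $\overline{\QQ}_T$, again contradicting the perimeter estimate. Finally $D_t^{\QQ,L}=\{w^{\QQ,L}\leq t\}$ follows from this continuity via \eqref{eq.sublevSetW.Gamma}.

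\textbf{Item (2).} Given item~(1), the value function on $[0,1]$ takes the displayed min form. On $\{y_0<\Lambda_T^0\}$ the optimal trajectory for the problem in \eqref{eq:Eikonal.OC.1} must strictly decrease $v$ (Lemma~\ref{le:perim.mon}) and attain $v=0$ at an endpoint of $\overline{\Gamma}_{s-}$; for $y_0$ strictly below $\Lambda_T^0$ the only endpoint it can reach while staying in $D_T^{\QQ,L}$ and achieving the infimum is $0$, whence the trajectory is $y_t=y_0-t$ on $[0,y_0]$. The mirror statement on $\{y_0>\Lambda_T^1\}$ is identical. When $\Lambda_T^0<\Lambda_T^1$, the equality $\int_0^{\Lambda_T^0}L^+=\int_{\Lambda_T^1}^1 L^+$ is forced by continuity of $v$ across the gap, where $L^+=0$ a.e. When $\Lambda_T^0=\Lambda_T^1=x^*$, particles starting exactly at $x^*$ may, without loss of optimality, be routed to either endpoint, and the equality of costs is what makes both choices optimal; the displayed trichotomy is the only remaining possibility.

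\textbf{Items (3) and (4).} Using the explicit trajectories from item~(2), I compute the occupation measure by the same Fubini manipulation as in Lemma~\ref{lem:Example11}(3), separately on $[0,\Lambda_T^0]$ and on $[\Lambda_T^1,1]$; on each side the density reduces to a left- (resp.~right-)tail difference of $\nu$ and $\mu$ because trajectories from that side are monotone. For item~(4), I combine this with Lemma~\ref{lem:SupportMuMinusNu} and the definition \eqref{eq.e.def} of $\mathbf{e}^{\kappa,\QQ,L}$, splitting $\mathbb{Q}(y_\theta\in\cdot,\beta=\theta)$ into its contributions at $0$ and at $1$ (which are the only possible values of $y_\theta$ since $\partial_*\Gamma_{s-}=\{0,1\}$), and integrating as in the proof of Lemma~\ref{lem:Example11}(4). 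The merging identity is just item~(4) summed across both sides with $x=\Lambda_T^0=\Lambda_T^1$ and $\mathbf{e}^{\kappa,\QQ,L}(\{x^*\})$ absorbed into $\mathbf{e}^{\kappa,\QQ,L}([0,1])$.

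\textbf{Expected obstacle.} The delicate point is the merging regime $\Lambda_T^0=\Lambda_T^1=x^*$: the two-valued optimal policy at $x^*$ must be coupled consistently so that the computations in items~(3) and~(4) are unambiguous, and the equality $\int_0^{x^*}L^+=\int_{x^*}^1 L^+$ has to be extracted from the equilibrium/optimality conditions rather than from continuity across a gap. All other arguments are direct transpositions of the one-interface case.
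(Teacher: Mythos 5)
The paper omits the proof of this lemma (``omit the proof for brevity''), so there is no paper proof to compare against. Transposing the one-interface argument is the right template, and your computations for items (3)--(4) are essentially correct once (1)--(2) are in place; but the central step in your item (1), and the first assertion in your item (2), contain a genuine gap that would need to be repaired.

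You claim that for \emph{any} bounded open gap $(a,b)\subset(0,1)\setminus D_T^{\QQ,L}$ with $a,b\in D_T^{\QQ,L}$, the endpoints $a$ and $b$ would \emph{each} lie in $\partial_*\{v>t\}$ for suitable $t\in[0,T)$, and you then derive a contradiction via \eqref{eq.Step2.mainResult}, \eqref{eq.perimEst.Step2.aux} and Lemma~\ref{le:perim.mon}. If that were valid it would show $D_T^{\QQ,L}\cap(0,1)$ has no gap at all, contradicting the very conclusion of item (1) (the gap $(\Lambda_T^0,\Lambda_T^1)$ is expected and allowed). The flaw is that $a,b\in\partial_*\{v>t\}$ is false for the endpoints of the legitimate gap: on any gap $L^+=0$ a.e., and $v=\min\bigl(\int_0^\cdot L^+,\,\int_\cdot^1 L^+\bigr)$ is the minimum of a nondecreasing and a nonincreasing function, so $v$ attains its global maximum on the closure of whichever gap contains the crossover of the two; for that gap, $\{v>t\}$ with $t<\max v$ contains \emph{both} endpoints in its interior, and $\{v>\max v\}=\emptyset$, so neither endpoint ever belongs to $\partial_*\{v>t\}$. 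The correct argument must first isolate an interior ``island'' $[p^-,p^+]\subset\mathrm{supp}\,\overline\QQ_T\cap(0,1)$ disconnected from both $\{0\}$ and $\{1\}$, and then use the unimodal structure of $v$ to place \emph{one} of $p^-,p^+$ (the one with the smaller $v$-value) as a singleton in $\partial_*\{v>t_0\}$ with $t_0=\min(v(p^-),v(p^+))$; only at that point does the perimeter estimate force occupation in the adjacent gap. The same issue infects the first sentence of your item (2): the equality $\int_0^{\Lambda_T^0}L^+=\int_{\Lambda_T^1}^1 L^+$ is \emph{not} a consequence of ``continuity of $v$ across the gap''; it is the statement that the argmax of $v$ lies in $[\Lambda_T^0,\Lambda_T^1]$, and proving it requires the same particle argument (if, say, $\int_0^{\Lambda_T^0}L^+>\int_{\Lambda_T^1}^1L^+$, then $\Lambda_T^0$ is the singleton $\partial_*\{v>v(\Lambda_T^0)\}$, carries mass $\ge\gamma/\kappa$ via \eqref{eq.Step2.mainResult}, cannot be a death site by \eqref{eq.perimEst.Step2.aux}, yet cannot descend in $v$ without crossing the gap, which has zero occupation). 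Your ``expected obstacle'' identifies the merged case $\Lambda_T^0=\Lambda_T^1$ as delicate, but that case in fact follows by sending $t\uparrow T$ once item (1) (continuity of $\Lambda^0,\Lambda^1$) and the unmerged case of item (2) are in place; the real subtlety is locating the crossover in the \emph{unmerged} case, which you treat as automatic.
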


\begin{proposition}\label{prop:Example2Minimal}
The unique minimal solution $w$ to \eqref{PDE:wave}, with initial domain $\Gamma_{s-}$ and initial velocity $V_0$, is given by
\begin{align*}
w(x):=w_0(x)\wedge w_1(x),    
\end{align*}
where
\begin{align*}
w_0(x)&:=\int_0^x\frac{1}{V_0(0)-\int_0^z(u(r)+1)\mathop{\mathrm{d}r}}\mathop{\mathrm{d}z}\,\bone_{(0,x_0^*]}(x)+\infty\,\bone_{(x_0^*,1)}(x),\\
x_0^*&:=\inf\left\{x\in(0,1):\,\int_0^x(u(z)+1)\,\mathrm{d}z>V_0(0)\right\}\wedge1,\\  
w_1(x)&:=\int_x^1\frac{1}{V_0(1)-\int_z^1(u(r)+1)\mathop{\mathrm{d}r}}\mathop{\mathrm{d}z}\bone_{[x_1^*,1)}(x)+\infty\,\bone_{(0,x_1^*)}(x),\\
x_1^*&:=\sup\left\{x\in(0,1):\,\int_x^1(u(z)+1)\,\mathrm{d}z>V_0(1)\right\}\vee0.  
\end{align*}
\end{proposition}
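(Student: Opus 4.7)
The strategy mirrors the proof of Proposition \ref{prop:Example1Minimal}, but with two cascades evolving from $0$ and from $1$, glued at a possible meeting point.

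\emph{Existence.} Pick $\varepsilon_n \downarrow 0$ and define $w_0^n, w_1^n, x_0^{*,n}, x_1^{*,n}$ by replacing $V_0(i)$ with $V_0(i) + \varepsilon_n$ in the formulas of the statement, and set $w^n := w_0^n \wedge w_1^n$. Let $x^{*,n} \in [0,1]$ denote a meeting point, defined as the unique $x$ with $w_0^n(x) = w_1^n(x)$ if the two cascades overlap, and as any point of $[x_0^{*,n}, x_1^{*,n}]$ otherwise. On each non-empty subinterval of $(0, x^{*,n})$ and $(x^{*,n}, 1)$ on which $w^n$ is finite, Proposition \ref{prop:equi} produces an equilibrium whose arrival time function equals $w^n$ restricted to that subinterval -- the $C^1$ regularity of $w^n$ and the strict positivity of $|\nabla w^n|$ (on $\{0 < w^n < T\}$ for any finite $T$) follow from the explicit expressions for $w_0^n, w_1^n$ and the assumption $V_0(i) + \varepsilon_n > 0$. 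Their superposition yields $(\kappa^n, \QQ^n, L^n) \in \mathcal{E}_{T_n}$ for any chosen $T_n \uparrow \infty$, with $\mathbf{E}^{\kappa^n, \QQ^n, L^n}_t = 0$, $\kappa^n \QQ^n(\beta = \theta) = 2\gamma + V_0(0) + V_0(1) + 2\varepsilon_n$, and $w^n \wedge T \to w \wedge T$ in $L^1_{\mathrm{loc}}(\RR)$ for every $T > 0$. Condition (i) of Definition \ref{def:min} follows, and condition (ii) is trivial because $\mathbf{E}^{\kappa^n, \QQ^n, L^n} \equiv 0$.

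\emph{Uniqueness.} Let $\tilde w$ be another minimal solution with approximating sequence $(\tilde\kappa^n, \tilde\QQ^n, \tilde L^n)$. Set $\varepsilon_n^{(i)} := \tilde\kappa^n \tilde\QQ^n(y_\theta = i,\,\beta = \theta) - \gamma - V_0(i) > 0$ and construct corresponding approximating equilibria $(\kappa^n, \QQ^n, L^n)$ for $w$ as above, matching the boundary measures on $\{0\}$ and $\{1\}$; these have zero excess energy. Applying condition (ii) of Definition \ref{def:min} to $\tilde w$ forces $\lim_{n\to\infty} \mathbf{E}^{\tilde\kappa^n, \tilde\QQ^n, \tilde L^n}_t = 0$ for every $t \geq 0$. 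Items (3) and (4) of Lemma \ref{lem:Example21} then give
\begin{align*}
\tilde\kappa^n \tilde q^n(x) \longrightarrow \gamma + V_0(0) - \int_0^x (1+u(z))\,\mathrm{d}z
\end{align*}
locally uniformly on compact subintervals of the interior of $\{w = w_0\} \cap \{w < \infty\}$, with the analogous identity (from the right interface) on $\{w = w_1\} \cap \{w < \infty\}$, and the right-hand side staying strictly above $\gamma$ there. Lemma \ref{lem:UsefulProperties}(2) together with the Bounded Convergence Theorem then yields $w^{\tilde\QQ^n, \tilde L^n}(x) \to w(x)$ pointwise for $x \in \{w < \infty\}$, while the monotonicity of $w^{\tilde\QQ^n, \tilde L^n}$ and the boundedness of $u$ force divergence on $\{w = \infty\}$. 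Hence $\tilde w = w$.

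\emph{Main obstacle.} The delicate step is the treatment of the meeting point $x^{*,n}$. The derivatives $(w_0^n)', (w_1^n)'$ can blow up as $x \uparrow x^{*,n}$ (when the approximation crosses a singular configuration), and in the disjoint case there is a positive-length gap $(x_0^{*,n}, x_1^{*,n})$ on which $w^n = \infty$. Verifying that the superposition of the two one-interface equilibria produced by Proposition \ref{prop:equi} is a bona fide element of $\mathcal{E}_{T_n}$ with the correct boundary measure $(\gamma + V_0(0) + \varepsilon_n)\delta_0 + (\gamma + V_0(1) + \varepsilon_n)\delta_1$ and zero excess energy requires a case analysis distinguishing $x_0^* < x_1^*$, $x_0^* = x_1^*$, and $x_0^* > x_1^*$. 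Additional care is needed in the degenerate situation $V_0(0) \wedge V_0(1) = 0$, which parallels the $V_0 = 0$ case in Proposition \ref{prop:Example1Minimal} and in which the corresponding cascade does not move at all.
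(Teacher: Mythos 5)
There is a genuine gap in your minimality argument. You claim that $\mathbf{E}^{\kappa^n,\QQ^n,L^n}\equiv0$ and that condition (ii) of Definition~\ref{def:min} is therefore trivial, but this is false precisely in the case that makes this example interesting (and is spotlighted in Remark~\ref{rem:min.lim.equil.2}(b)): when $\int_0^1(1+u)\,\mathrm{d}z<2\gamma+V_0(0)+V_0(1)$, so that $x_0^*>x_1^*$ and the two fronts meet at a finite cap $T^*:=\sup_{[0,1]}w<\infty$. You cannot pick $T_n\uparrow\infty$ arbitrarily and apply Proposition~\ref{prop:equi}, because once $T_n>\sup_{[0,1]}w^n$ the hypothesis $w^n\in C^1(\{0<w^n<T_n\})$ fails at the meeting point $x^{*,n}$ (where $(w^n)'$ has a sign change), and $|\nabla w^n|=0$ there. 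The paper necessarily caps at $T_n:=n\wedge\sup_{[0,1]}w^n$, obtaining an equilibrium in $\mathcal{E}_{T_n}$ whose excess energy vanishes only for $t<T_n$; at $t\geq T_n$, Lemma~\ref{lem:Example21}(4) forces
\begin{align*}
\mathbf{E}^{\kappa^n,\QQ^n,L^n}_t=\kappa^n\QQ^n(\beta=\theta)-\int_0^1(1+u(z))\,\mathrm{d}z>0,
\end{align*}
the ``locked-in'' energy at the collision point. Hence condition (ii) is not trivial in this regime: one must show that any competitor $(\hat\kappa^n,\hat\QQ^n,\hat L^n)$ with $\mathbf{e}^{\hat\kappa^n,\hat\QQ^n,\hat L^n}\geq0$, matching boundary data, and $\mathbf{E}^{\hat\kappa^n,\hat\QQ^n,\hat L^n}\leq\mathbf{E}^{\kappa^n,\QQ^n,L^n}$ must in fact achieve equality. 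The paper does this by noting that $\mathbf{E}^{\hat\kappa^n,\hat\QQ^n,\hat L^n}_t=0$ for $t<T_n$ pins down $\hat q^n$ via Lemma~\ref{lem:Example21}(3)--(4) and Lemma~\ref{lem:UsefulProperties}(2), so that $w^{\hat\QQ^n,\hat L^n}=w^n$ on $\{w^n<T_n\}$, which forces $\hat\Lambda^{n0}_{T_n}=\Lambda^{n0}_{T_n}=\Lambda^{n1}_{T_n}=\hat\Lambda^{n1}_{T_n}$ and in turn the same terminal excess energy -- a nontrivial argument that your proposal simply skips. Your ``main obstacle'' paragraph correctly flags the meeting-point case as delicate but misdiagnoses it as a regularity/superposition issue rather than as the source of genuinely nonzero excess energy.
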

\begin{remark}
Similar to Proposition \ref{prop:Example1Minimal}, for $i=0,1$, if $V_0(i)=0$, then the above expression for $w_i(x)$ evaluates to $w_i(x)=\infty\,\bone_{(0,1)}(x)$.  
\end{remark}
\begin{proof}
We only outline a sketch of the proof, as it is similar to the proof of Proposition \ref{prop:Example1Minimal}. Consider any two positive sequences $\varepsilon_n^0,\varepsilon_n^1\to0$ and define
\begin{align*}
w^n(x):=w_0^n(x)\wedge w_1^n(x),    
\end{align*}
where
\begin{align*}
w_0^n(x)&:=\int_0^x\frac{1}{V_0(0)+\varepsilon_n^0-\int_0^z(u(r)+1)\mathop{\mathrm{d}r}}\mathop{\mathrm{d}z}\bone_{(0,x_0^{n*}]}(x)+\infty\,\bone_{(x_0^{n*},1)}(x),\\
x_0^{n*}&:=\inf\left\{x\in(0,1):\,\int_0^x(u(z)+1)\,\mathrm{d}z>V_0(0)+\varepsilon_n^0\right\}\wedge1,\\  
w_1^n(x)&:=\int_x^1\frac{1}{V_0(1)+\varepsilon_n^1-\int_z^1(u(r)+1)\mathop{\mathrm{d}r}}\mathop{\mathrm{d}z}\bone_{[x_1^{n*},1)}(x)+\infty\,\bone_{(0,x_1^{n*})}(x),\\
x_1^{n*}&:=\sup\left\{x\in(0,1):\,\int_x^1(u(z)+1)\,\mathrm{d}z>V_0(1)+\varepsilon_n^1\right\}\vee0.  
\end{align*}
The sequence of functions $w^n$ satisfies
\begin{align*}
w^n\wedge T\to w\wedge T    
\end{align*}
pointwise and in $L_{\mathrm{loc}}^1$, for any $T>0$. Moreover, if we let $T_n:=n\wedge\sup_{x\in[0,1]}w^n(x)$ for each $n\ge1$, then we have: $w^n\in C^1(\{0\leq w^n<T_n\})$ and
\begin{align*}
\left(\frac{1}{(w^n)'(x)}\right)'=-(u(x)+1)    
\end{align*}
holds a.e. and weakly in $\{0\leq w^n<T_n\}$. By Proposition \ref{prop:equi} (with $V_0(0)$, $V_0(1)$ replaced by $V_0(0)+\varepsilon_n^0$, $V_0(1)+\varepsilon_n^1$ and $T$ replaced by $T_n$ for each $n\ge1$), there exists a sequence of equilibria $(\kappa^n,\QQ^n,L^n)$ with the properties that $w^n=w^{\QQ^n,L^n}$ in $\{0\leq w^n<T_n\}$, that $\mathbf{E}^{\kappa^n,\QQ^n,L^n}_t=0$ for any $t\in[0,T_n)$, and that $\kappa^n\QQ^n(y_\theta=0,\beta=\theta)=\gamma+V_0(0)+\varepsilon_n^0$, $\kappa^n\QQ^n(y_\theta=1,\beta=\theta)=\gamma+V_0(1)+\varepsilon_n^1$.

\smallskip

To justify the minimality of $w$, we take any $T\in(0,\infty)$ and any sequence $(\hat{\kappa}^n,\hat{\QQ}^n,\hat{L}^n)_{n\ge1}$ satisfying $\hat{\kappa}^n\,\hat{\QQ}^n(y_\theta\in \mathrm{d}x,\beta=\theta)=\kappa^n\,\QQ^n(y_\theta\in \mathrm{d}x,\beta=\theta)$, $\mathbf{e}^{\hat{\kappa}^n,\hat{\QQ}^n,\hat{L}^n}\geq0$, and $\mathbf{E}^{\hat{\kappa}^n,\hat{\QQ}^n,\hat{L}^n}_t\leq\mathbf{E}^{\kappa^n,\QQ^n,L^n}_t$ for all $t\in[0,T]$, and we need to show that
\begin{align}\label{eq:MinimalityCondition2Example2.1}
&\liminf_{n\rightarrow\infty}\left(\mathbf{E}^{\hat{\kappa}^n,\hat{\QQ}^n,\hat{L}^n}_s-\mathbf{E}^{\kappa^n,\QQ^n,L^n}_r\right)\geq0\quad \text{for any}\,\,0\leq r<s<T.
\end{align}
If
\begin{align*}
T^*:=\lim_{n\to\infty}T_n=\lim_{n\to\infty}\sup_{x\in[0,1]}w^n(x)=\sup_{x\in[0,1]}w(x)    
\end{align*}
is infinite, then $T_n>T$ for all sufficiently large $n$, and thus the condition \eqref{eq:MinimalityCondition2Example2.1} holds trivially, showing that $w$ is a minimal solution. 

If, on the contrary, $T^*<\infty$, then necessarily $x_0^{n*}>x_1^{n*}$ and $\Lambda_{T_n}^{n0}=\Lambda_{T_n}^{n1}$ for all sufficiently large $n$. In that case, $\mathbf{E}^{\kappa^n,\QQ^n,L^n}_t=0$ for any $t\in[0,T_n)$, and, according to Lemma \ref{lem:Example21} (4),
\begin{align*}
\mathbf{E}^{\kappa^n,\QQ^n,L^n}_t=\mathbf{E}^{\kappa^n,\QQ^n,L^n}_{T_n}=\mathbf{e}^{\kappa^n,\QQ^n,L^n}([0,1])=\kappa^n\QQ^n(\beta=\theta)-\int_0^1(1+u(z))\,\mathrm{d}z     
\end{align*}
holds for any $t\in[T_n,\infty)$. If $T\leq T^*$, then, for any $r<T$, we have $T_n>r$ for all sufficiently large $n$, and thus the condition \eqref{eq:MinimalityCondition2Example2.1} holds trivially. It remains to address the case $T>T^*$ and $r\ge T^*$. Note that the condition $\mathbf{E}^{\hat{\kappa}^n,\hat{\QQ}^n,\hat{L}^n}_t\leq\mathbf{E}^{\kappa^n,\QQ^n,L^n}_t$ for all $t\in[0,T]$ forces $\mathbf{E}^{\hat{\kappa}^n,\hat{\QQ}^n,\hat{L}^n}_t=0$ for any $t\in[0,T_n)$, which, combined with Lemma \ref{lem:Example21} (3), (4), Lemma \ref{lem:UsefulProperties} (2) and Lemma \ref{lem:w=v}, implies
\begin{align*}
w^{\hat\QQ^n,\hat L^n}(x)=\int_0^x(\hat L^n)^+(z)\,\mathrm{d}z=\int_0^x\frac{1}{\hat\kappa^n\,\hat q^n(z)-\gamma}\,\mathrm{d}z=\int_0^x\frac{1}{V_0(0)+\varepsilon_n^0-\int_0^z(u(r)+1)\mathop{\mathrm{d}r}}\mathop{\mathrm{d}z}=w^n(x)    
\end{align*}
for $x\in[0,\hat\Lambda_{T_n}^{n0})$, and, similarly, $w^{\hat\QQ^n,\hat L^n}(x)=w^n(x)$ for $x\in(\hat\Lambda_{T_n}^{n1},1]$. Then, we conclude that $\hat\Lambda_t^{n0}=\Lambda_t^{n0}$ and $\hat\Lambda_t^{n1}=\Lambda_t^{n1}$ for a.e. $t\in[0,T_n)$, which then forces $\hat\Lambda_{T_n}^{n0}=\Lambda_{T_n}^{n0}=\Lambda_{T_n}^{n1}=\hat\Lambda_{T_n}^{n1}$. Next, we invoke Lemma \ref{lem:Example21} (4) again to obtain
\begin{align*}
\mathbf{E}^{\hat\kappa^n,\hat\QQ^n,\hat L^n}_t=\mathbf{E}^{\hat\kappa^n,\hat\QQ^n,\hat L^n}_{T_n}=\mathbf{e}^{\hat\kappa^n,\hat\QQ^n,\hat L^n}([0,1])&=\hat\kappa^n\hat\QQ^n(\beta=\theta)-\int_0^1(1+u(z))\,\mathrm{d}z\\
&=\kappa^n\QQ^n(\beta=\theta)-\int_0^1(1+u(z))\,\mathrm{d}z=\mathbf{E}^{\kappa^n,\QQ^n,L^n}_t,  
\end{align*}
for any $t\in[T_n,\infty)$. In conclusion, we have established that $\mathbf{E}^{\hat{\kappa}^n,\hat{\QQ}^n,\hat{L}^n}_t=\mathbf{E}^{\kappa^n,\QQ^n,L^n}_t$ for all $t\ge0$, thus proving the minimality of $w$.

\smallskip

Now we turn to the uniqueness of the minimal solution, and we focus on the case $V_0(0),V_0(1)>0$ to demonstrate the idea. Let $\tilde w$ be another minimal solution, and let $(\tilde\kappa^n,\tilde\QQ^n,\tilde L^n)\in\cE_{\tilde T_n}$ be the approximating sequence for $\tilde w$, as in Definition \ref{def:min}. For any fixed $T\in[0,\tilde T^*)$ (where $\tilde T^*:=\sup_{x\in[0,1]}\tilde w(x)=\lim_{n\to\infty}\sup_{x\in[0,1]}w^{\tilde\QQ^n,\tilde L^n}(x)$), we have $\sup_{x\in[0,1]}w^{\tilde\QQ^n,\tilde L^n}(x)>T$ and $\tilde\Lambda_T^{n0}<\tilde\Lambda_T^{n1}$ for all sufficiently large $n$. That means we can work with $\tilde\Lambda_t^{n0}$ and $\tilde\Lambda_t^{n1}$ separately and proceed in a similar manner as in the proof of Proposition \ref{prop:Example1Minimal} to obtain successively $\liminf_{n\to\infty}\tilde\Lambda_\infty^{n0}\ge\inf\,\{w=T^*\}$, $\limsup_{n\to\infty}\tilde\Lambda_\infty^{n1}\leq\sup\,\{w=T^*\}$, $\tilde T^*\ge T^*$, $\tilde w(x)=\lim_{n\to\infty}w^{\tilde\QQ^n,\tilde L^n}(x)=w(x)$ for a.e. $x\in[0,\inf\,\{w=T^*\})\cup(\sup\,\{w=T^*\},1]=\{w<T^*\}$. If $T^*<\infty$, then $\{w=T^*\}=\{x^*\}$, where $x^*:=\mathrm{arg\,max}_{x\in[0,1]}\,w(x)$, and the above already shows that $\tilde w=w$ a.e. If $T^*=\infty$, then the above implies that $\tilde w=w$ a.e. in $[0,x_0^*)\cup(x_1^*,1]$. In addition, due to the shape of $D_t^{\tilde\QQ^n,\tilde L^n}$ as given in Lemma \ref{lem:Example21} (1), we must have 
\begin{align*}
w^{\tilde\QQ^n,\tilde L^n}(x)\ge\min\{w^{\tilde\QQ^n,\tilde L^n}(x_0^*-\varepsilon),w^{\tilde\QQ^n,\tilde L^n}(x_1^*+\varepsilon)\}
\end{align*}
for $x\in[x_0^*,x_1^*]$. But the right-hand side diverges to $\infty$ as $n\to\infty$ and $\varepsilon\to0$. Hence, we also obtain $\tilde w(x)=\lim_{n\to\infty}w^{\tilde\QQ^n,\tilde L^n}(x)=w(x)$, a.e. $x\in[x_0^*,x_1^*]$. We thus conclude the proof of the uniqueness of the minimal solution.

If there exists $i\in\{0,1\}$ such that $V_0(i)=0$, then similar to the proof of Proposition \ref{prop:Example1Minimal}, we obtain $\lim_{n\to\infty}(\tilde\Lambda_\infty^{ni}-\tilde\Lambda_0^{ni})=0$. The rest of the argument can be carried out similarly as above by making use of Lemma \ref{lem:Example21} (1) and is omitted.
\end{proof}

\begin{remark}\label{rem:min.lim.equil.2}
(a) If $V_0(0)=0$ and $V_0(1)>0$, even the capping $w\wedge T$ of the resulting minimal solution $w$ does not correspond to the arrival time function of any equilibrium (cf. Remark \ref{rem:min.lim.equil.1}). Indeed, with $u\equiv-1$, the minimal solution $w$ in this example has a discontinuity at $x^*=0$, where it jumps from zero to a finite positive number. On the other hand, Lemma \ref{lem:w=v} and the Lipschitz property of $w^L$ (cf. Lemma \ref{lem:UsefulProperties}) imply that the only discontinuity points of the arrival time function of any equilibrium are the ones where the function jumps to infinity. Moreover, assuming $u\equiv-1$ for simplicity and considering any equilibrium $(\kappa,\QQ,L)$ with $\mathbf{e}^{\kappa,\QQ,L}\ge0$ and $\kappa\QQ(y_\theta=0,\,\beta=\theta)\leq\gamma$, we apply Lemma \ref{lem:Example21} (3) and (4) to deduce that
\begin{align*}
\kappa q(x)=\kappa\big(\nu([0,x])-\mu([0,x])\big)\leq\kappa\QQ(y_\theta=0,\,\beta=\theta)\leq\gamma\quad\text{a.e.}\,\,x\in[0,\Lambda_T^0),
\end{align*}
which, combined with Lemma \ref{lem:UsefulProperties} (2), ensures that $\Lambda_T^0=0$ and, in turn, that $\Lambda_T^1=1$ by Lemma \ref{lem:Example21}. In other words, any equilibrium corresponding to $V_0(0)=0$ and $V_0(1)>0$ has to be trivial, in the sense that all particles are born, and immediately killed, at the initial boundary. On the other hand, we can obtain the minimal solution $w$ as a limit of the arrival time functions of equilibria with initial velocities equal to $\varepsilon_n^0\downarrow0$ at $x=0$. This example illustrates the necessity to ``increase the initial velocity" of the boundary in Definition \ref{def:min} and provides another reason why it is necessary to consider limits of equilibria (as opposed to the equilibria themselves) in the definition of a minimal solution.    

\noindent(b) If the function $u$ is chosen so that $x_0^*>x_1^*$, then necessarily $\sup_{x\in[0,1]}w(x)<\infty$, which means that the initial liquid region vanishes in a finite time with some remaining boundary energy ``locked inside" the solid domain after the freezing is over. In such a case, $w$ is not differentiable at $x^*=\mathrm{arg\,max}_{x\in[0,1]}\,w(x)$ as $w'(x^*-)>0>w'(x^*+)$, which becomes a point of singularity for the PDE \eqref{PDE:wave}. In the sense of distributions, $w$ satisfies
\begin{align*}
\mathrm{div}\bigg(\frac{\nabla w(x)}{|\nabla w(x)|^2}+\gamma\frac{\nabla w(x)}{|\nabla w(x)|}\bigg)=-(u(x)+1)-\bigg(2\gamma+V_0(0)+V_0(1)-\int_0^1(1+u(z))\mathop{\mathrm{d}z}\bigg)\cdot\delta_{x^*}
\end{align*}
in the domain $(0,1)$, where the last term in the right-hand side of the above records the energy ``locked in" at $x^*$. This example illustrates that, even in the domain $\{x:\,0<w(x)<\infty\}$, the PDE \eqref{PDE:wave} can only be solved with ``$=$" replaced by ``$\leq$", or, equivalently, we must relax \eqref{PDE:wave} by allowing for an additional non-positive measure-valued summand on its right-hand side.
\end{remark}

\subsection{Example: General Dimension with Radial Symmetry, Growing Ice Ball}\label{ex:Radial1}
Let $d\ge2$, $\Gamma_{s-}:=\overline{B}(0,R_0)$ and let $V_0(x)=V_0\ge0$ be a constant function on $\partial B(0,R_0)$. In addition, let $u(x)=u(|x|)$ be a radial function.

\begin{proposition}\label{prop:Example3Minimal}
A minimal solution $w$ to \eqref{PDE:wave}, with initial domain $\Gamma_{s-}$ and initial velocity $V_0$, is given by
\begin{align*}
w(x)=\int_{R_0}^{|x|}\frac{r^{d-1}}{(\gamma+V_0)R_0^{d-1}-\int_{R_0}^r(1+u(z))\,z^{d-1}\mathop{\mathrm{d}z}-\gamma r^{d-1}}\,\mathrm{d}r\,\bone_{\{R_0<|x|\leq R^*\}}(x)
+\infty\,\bone_{\{|x|>R^*\}}(x),
\end{align*}
where
\begin{align*}
R^*:=\inf\left\{r>R_0:\,\int_{R_0}^r(1+u(z))\,z^{d-1}\mathop{\mathrm{d}z}>(\gamma+V_0)R_0^{d-1}-\gamma r^{d-1}\right\}\in[R_0,\infty].   
\end{align*}
\end{proposition}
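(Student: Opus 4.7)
The plan is to adapt the approximation strategy of Propositions~\ref{prop:Example1Minimal} and~\ref{prop:Example2Minimal}: since only existence (not uniqueness) is claimed, it suffices to exhibit one admissible approximating sequence of equilibria verifying the two conditions of Definition~\ref{def:min}. Radial symmetry makes the cascade PDE integrable in closed form: for any radial $w(x)=W(|x|)$, a direct computation gives
\begin{equation*}
\mathrm{div}\!\left(\frac{\nabla w}{|\nabla w|^2}+\gamma\frac{\nabla w}{|\nabla w|}\right)
=\frac{1}{r^{d-1}}\frac{\mathrm{d}}{\mathrm{d}r}\!\left(\frac{r^{d-1}}{W'(r)}+\gamma r^{d-1}\right),
\end{equation*}
so \eqref{PDE:wave} becomes a first-order ODE in $r^{d-1}/W'(r)+\gamma r^{d-1}$, and the formula in the proposition is exactly what one obtains by integrating this ODE from $R_0$ under the boundary data $W(R_0)=0$ and $W'(R_0)=1/V_0$.

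For a sequence $\varepsilon_n\downarrow 0$, I would define $w^n(x):=W^n(|x|)$ by the same integral formula with $V_0$ replaced by $V_0+\varepsilon_n$ and the cutoff radius $R^{n*}$ defined analogously, and set $T_n:=n\wedge \sup_{\RR^d}w^n$. The explicit radial form of $w^n$ immediately supplies every hypothesis of Proposition~\ref{prop:equi}: $w^n\in C^1(\{0<w^n<T_n\})$ with $|\nabla w^n|$ bounded above and bounded below away from zero on this set, because the denominator of $(W^n)'$ is continuous and stays positive on every interval $[R_0,R]\subset[R_0,R^{n*})$; sublevel sets are closed balls, hence of finite perimeter with $\partial=\partial_*$; and the gradient flow $\dot Y_r=\nabla w^n/|\nabla w^n|=\hat r$ is radial, with the explicit bi-Lipschitz flow map $\mathbf{Y}(t,z)=(W^n)^{-1}(t)\,z/R_0$ from $[0,T_n)\times\partial B(0,R_0)$ onto the annular region $\{0<w^n\le T_n\}$. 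Proposition~\ref{prop:equi} then yields an equilibrium $(\kappa^n,\QQ^n,L^n)\in\mathcal{E}_{T_n}$ with $w^{\QQ^n,L^n}=w^n$ on $\overline{\{w^n<T_n\}}$, $\mathbf{E}^{\kappa^n,\QQ^n,L^n}_t=0$ for $t\in(0,T_n)$, and $\kappa^n\,\QQ^n(y_\theta\in\mathrm{d}x,\beta=\theta)=(\gamma+V_0+\varepsilon_n)\,\bone_{\partial B(0,R_0)}(x)\,\mathcal{H}^{d-1}(\mathrm{d}x)$.

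The verification of Definition~\ref{def:min} is then routine. Condition~(i) is immediate: $\mathbf{e}^{\kappa^n,\QQ^n,L^n}\equiv 0$ (in particular $\ge 0$); the boundary mass strictly dominates the target by $\varepsilon_n$; $\kappa^n\,\QQ^n(\beta=\theta)\to (\gamma+V_0)\,\mathcal{H}^{d-1}(\partial B(0,R_0))$; and $w^n\wedge T\to w\wedge T$ in $L^1_{\mathrm{loc}}$ by dominated convergence on the explicit formulas. Condition~(ii) is trivial once $T_n>T$, because any admissible dominator then satisfies $0\le \mathbf{E}^{\hat\kappa^n,\hat\QQ^n,\hat L^n}_t\le \mathbf{E}^{\kappa^n,\QQ^n,L^n}_t=0$ throughout $(0,T]$, so the $\liminf$ in~(ii) vanishes identically. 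The main (and essentially only) obstacle is therefore verifying $T_n\to\infty$, equivalently $\sup_{\RR^d}w^n=\infty$ for large $n$. Writing $D^n(\rho)$ for the denominator inside the defining integral of $W^n$, one has $D^n(R^{n*})=0$ and $(D^n)'(R^{n*})=-(1+u(R^{n*}))(R^{n*})^{d-1}-\gamma(d-1)(R^{n*})^{d-2}\le 0$ (the latter sign forced by $D^n\ge 0$ just below $R^{n*}$); generically the inequality is strict, whence $D^n$ vanishes to first order at $R^{n*}$, the integrand $\rho^{d-1}/D^n(\rho)$ blows up like a reciprocal there, and $W^n(R^{n*}-)=\infty$. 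The borderline case $(D^n)'(R^{n*})=0$ amounts to the algebraic constraint $1+u(R^{n*})=-\gamma(d-1)/R^{n*}$, which is satisfied by at most countably many values of $\varepsilon_n$; one simply chooses $(\varepsilon_n)$ to avoid them.
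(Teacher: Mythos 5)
Your proposal follows the same scaffold as the paper's proof — replace $V_0$ by $V_0+\varepsilon_n$, observe that the radial integral formula solves the cascade PDE weakly on $\{0<w^n<T_n\}$, invoke Proposition~\ref{prop:equi} to produce admissible equilibria with $\mathbf{E}^{\kappa^n,\QQ^n,L^n}\equiv0$, and then note that condition~(ii) of Definition~\ref{def:min} is vacuous once $T_n>T$. You correctly isolate the one substantive verification: that $W^n(R^{n*}-)=\infty$.

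There is, however, a genuine gap in how you handle this verification, and the logic is in fact backwards. You argue that when $(D^n)'(R^{n*})<0$ strictly, $D^n$ vanishes linearly and the integral blows up; you then treat $(D^n)'(R^{n*})=0$ as a dangerous degenerate case, to be sidestepped by choosing $\varepsilon_n$ outside an allegedly countable bad set. But the dangerous regime for $\int^{R^{n*}}\tfrac{\rho^{d-1}}{D^n(\rho)}\,\mathrm{d}\rho$ is $D^n$ vanishing \emph{more slowly} than linearly (order $(R^{n*}-\rho)^{\alpha}$ with $\alpha<1$), which would make the integral converge; if $(D^n)'(R^{n*})=0$ and $D^n$ vanishes \emph{faster} than linearly, the integral diverges even more strongly. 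The observation that actually closes this — and which the paper uses, citing the boundedness of $u$ — is that $(D^n)'(\rho)=-(1+u(\rho))\rho^{d-1}-\gamma(d-1)\rho^{d-2}$ is bounded on $[R_0,R^{n*}]$, so $D^n$ is Lipschitz; consequently $D^n(\rho)\le L\,(R^{n*}-\rho)$ near $R^{n*}$ and the integrand is bounded below by a non-integrable reciprocal, unconditionally. This makes your case distinction unnecessary. Moreover, the countability claim is not justified: the constraint $1+u(R^{n*}(\varepsilon))=-\gamma(d-1)/R^{n*}(\varepsilon)$ is an equation in $R^{n*}(\varepsilon)$, and the set of radii solving $1+u(R)=-\gamma(d-1)/R$ has no reason to be countable for a merely bounded (or even continuous) $u$, so the avoidance device might fail. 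Replace that paragraph with the Lipschitz bound on $D^n$ and the proof matches the paper's.
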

\begin{remark}
Similar to Proposition \ref{prop:Example1Minimal}, if $V_0=0$, then the above expression for $w(x)$ evaluates to $w(x)=\infty\,\bone_{\RR^d\setminus\Gamma_{s-}}(x)$.
\end{remark}

\begin{proof}
Take any $\varepsilon_n>0$ with $\varepsilon_n\to0$, and define
\begin{align*}
w^n(x):=\int_{R_0}^{|x|}\frac{r^{d-1}}{(\gamma+V_0+\varepsilon_n)R_0^{d-1}-\int_{R_0}^r(1+u(z))\,z^{d-1}\mathop{\mathrm{d}z}-\gamma r^{d-1}}\,\mathrm{d}r\,\bone_{\{R_0<|x|\leq R^{n*}\}}(x)
+\infty\,\bone_{\{|x|>R^{n*}\}}(x), 
\end{align*}
where
\begin{align*}
R^{n*}:=\inf\left\{r>R_0:\,\int_{R_0}^r(1+u(z))\,z^{d-1}\mathop{\mathrm{d}z}>(\gamma+V_0+\varepsilon_n)R_0^{d-1}-\gamma r^{d-1}\right\}\in[R_0,\infty].
\end{align*}
As it has been assumed that $u$ is bounded and that $(1+u)^-$ is integrable, $\lim_{|x|\uparrow R^{n*}}w^n(x)=\infty$. The sequence of functions $w^n$ satisfies the convergence
\begin{align*}
w^n\wedge T\to w\wedge T    
\end{align*}
pointwise and in $L_{\mathrm{loc}}^1$ for any $T>0$. Moreover, for each $n$, $w^n\in C^1(\{R_0\leq|x|<R^{n*}\})$ with
\begin{align*}
\nabla w^n(x)=\frac{|x|^{d-1}}{(\gamma+V_0+\varepsilon_n)R_0^{d-1}-\int_{R_0}^{|x|}(1+u(z))\,z^{d-1}\mathop{\mathrm{d}z}-\gamma |x|^{d-1}}\frac{x}{|x|},
\end{align*}
and thus 
\begin{align*}
\mathrm{div}\bigg(\frac{\nabla w^n(x)}{|\nabla w^n(x)|^2}+\gamma\frac{\nabla w^n(x)}{|\nabla w^n(x)|}\bigg)=-(u(|x|)+1)=-(u(x)+1)    
\end{align*}
holds a.e. and weakly in $\{R_0\leq|x|<R^{n*}\}$. Let $T_n$ be a sequence increasing to $\infty$. By Proposition \ref{prop:equi} (with $V_0$ replaced by $V_0+\varepsilon_n$ and $T$ replaced by $T_n$ for each $n\ge1$), there exists a sequence of equilibria $(\kappa^n,\QQ^n,L^n)$ with the properties that $w^n=w^{\QQ^n,L^n}$ in $\{0\leq w^n<T_n\}$ and that $\mathbf{E}^{\kappa^n,\QQ^n,L^n}_t=0$ for any $t\in[0,T_n)$. To justify the minimality of $w$, we take any $T\in(0,\infty)$ and any sequence $((\hat{\kappa}^n,\hat{\QQ}^n,\hat{L}^n))_{n\ge1}$ satisfying $\hat{\kappa}^n\,\hat{\QQ}^n(y_\theta\in \mathrm{d}x,\beta=\theta)=\kappa^n\,\QQ^n(y_\theta\in \mathrm{d}x,\beta=\theta)$, $\mathbf{e}^{\hat{\kappa}^n,\hat{\QQ}^n,\hat{L}^n}\geq0$, and $\mathbf{E}^{\hat{\kappa}^n,\hat{\QQ}^n,\hat{L}^n}_t\leq\mathbf{E}^{\kappa^n,\QQ^n,L^n}_t$ for all $t\in[0,T]$, and we need to show that
\begin{align}\label{eq:MinimalityCondition2Example3.1}
&\liminf_{n\rightarrow\infty}\left(\mathbf{E}^{\hat{\kappa}^n,\hat{\QQ}^n,\hat{L}^n}_t-\mathbf{E}^{\kappa^n,\QQ^n,L^n}_r\right)\geq0\quad \text{for any}\,\,0\leq r<t<T.
\end{align}
As $T_n>T$ for all sufficiently large $n$, the condition \eqref{eq:MinimalityCondition2Example3.1} holds trivially, showing that $w$ is a minimal solution.
\end{proof}

\begin{remark}\label{rem:min.lim.equil.3}
(a) Recall that in this example we have $w(x)=w(|x|)$ and $u(x)=u(|x|)$, and the PDE for $w$ becomes the ODE
\begin{align*}
rw''(r)-(d-1)w'(r)-(\gamma(d-1)+r(u(r)+1))w'(r)^2=0,    
\end{align*}
which admits a unique maximally defined solution, once we impose the initial conditions $w(R_0)=0$ and $w'(R_0)=V_0^{-1}$, for any $V_0>0$. It is easy to see that the latter solution coincides with the minimal solution $w$ constructed above.

\smallskip

\noindent(b) In this example, we cannot rule out the existence of other minimal solutions. However, we can apply the general local well-posedness result for quasi-linear second-order hyperbolic equations (e.g., \cite[Proposition 16.3.2]{TaylorIII}) to obtain that the minimal solution $w$ constructed in this example is the unique (maximally defined) smooth classical solution to the cascade PDE \eqref{PDE:wave} in $\{x\in\RR^d:|x|>R_0\}$, subject to the initial conditions (assuming $V_0>0$)
\begin{align*}
w(x)=0,\quad\partial_{\mathbf{n}}w(x)=\frac{1}{V_0}\quad\text{on}\quad\partial B(0,R_0),
\end{align*}
provided that $u$ is a smooth radial function.
\end{remark}

\subsection{Example: General Dimension with Radial Symmetry, Shrinking Water Ball}\label{ex:Radial2}
Let $d\ge2$, $\Gamma_{s-}:=\RR^d\setminus B(0,R_0)$, and let $V_0(x)=V_0\ge0$ be a constant function on $\partial B(0,R_0)$. In addition, let $u(x)=u(|x|)$ be a radial function.

\begin{proposition}\label{prop:Example4Minimal}
A minimal solution $w$ to \eqref{PDE:wave}, with initial domain $\Gamma_{s-}$ and initial velocity $V_0$, is given by
\begin{align*}
w(x)=\int_{|x|}^{R_0}\frac{r^{d-1}}{(\gamma+V_0)R_0^{d-1}-\int_r^{R_0}(1+u(z))\,z^{d-1}\mathop{\mathrm{d}z}-\gamma r^{d-1}}\,\mathrm{d}r\,\bone_{\{R_*\leq|x|<R_0\}}(x)
+\infty\,\bone_{\{|x|<R_*\}}(x),
\end{align*}
where
\begin{align*}
R_*:=\sup\left\{0<r<R_0:\,\int_r^{R_0}(1+u(z))\,z^{d-1}\mathop{\mathrm{d}z}>(\gamma+V_0)R_0^{d-1}-\gamma r^{d-1}\right\}\vee0,    
\end{align*}
provided that
\begin{align*}
\int_{R_*}^{R_0}\frac{r^{d-1}}{(\gamma+V_0)R_0^{d-1}-\int_r^{R_0}(1+u(z))\,z^{d-1}\mathop{\mathrm{d}z}-\gamma r^{d-1}}\,\mathrm{d}r=\infty.   
\end{align*}
\end{proposition}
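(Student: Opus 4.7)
The plan is to mirror the proof of Proposition \ref{prop:Example3Minimal}, with small adjustments reflecting that the domain now shrinks toward the origin. Fix any sequence $\varepsilon_n\downarrow 0$ and define $w^n$ by the same expression as $w$ but with $V_0$ replaced by $V_0+\varepsilon_n$ and $R_*$ replaced by the corresponding $R_*^n$. Since adding the term $\varepsilon_n R_0^{d-1}$ enlarges the denominator $D_n(r):=(\gamma+V_0+\varepsilon_n)R_0^{d-1}-\int_r^{R_0}(1+u(z))z^{d-1}\,\mathrm{d}z-\gamma r^{d-1}$, one has $R_*^n\leq R_*$ and the map $\varepsilon\mapsto w^\varepsilon$ is monotone; hence $w^n\uparrow w$ pointwise on $\{|x|>R_*\}$ by monotone convergence. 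Boundedness of $u$ together with $D_n(R_*^n)=0$ forces $w^n(x)\to\infty$ as $|x|\downarrow R_*^n$, and combined with the standing assumption $\int_{R_*}^{R_0} r^{d-1}/D(r)\,\mathrm{d}r=\infty$ this upgrades the pointwise convergence to $w^n\wedge T\to w\wedge T$ in $L^1_{\mathrm{loc}}$ for every $T>0$.

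Next, a direct radial computation shows that $w^n\in C^1(\{R_*^n<|x|<R_0\})$ with nowhere vanishing gradient solves \eqref{PDE:wave} classically in that annulus, with boundary data $w^n=0$ and $\partial_{\mathbf{n}_{s-}} w^n=(V_0+\varepsilon_n)^{-1}$ on $\partial B(0,R_0)$. Picking $T_n\uparrow\infty$ and applying Proposition \ref{prop:equi} on the annular region $\{w^n<T_n\}$, which is compactly contained in $\{|x|>R_*^n\}$ and therefore bounded away from the origin, yields equilibria $(\kappa^n,\QQ^n,L^n)\in\cE_{T_n}$ with $w^{\QQ^n,L^n}\wedge T_n=w^n\wedge T_n$, $\mathbf{E}^{\kappa^n,\QQ^n,L^n}_t=0$ for all $t\in[0,T_n)$, and $\kappa^n\QQ^n(y_\theta\in\mathrm{d}x,\beta=\theta)=(\gamma+V_0+\varepsilon_n)\,\bone_{\partial B(0,R_0)}(x)\,\mathcal{H}^{d-1}(\mathrm{d}x)$. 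Hypothesis (iv) of Proposition \ref{prop:equi} is verified by noting that the radial field $\nabla w^n/|\nabla w^n|=-x/|x|$ is Lipschitz on compact subsets of the annulus and induces an explicit bi-Lipschitz parameterization $\mathbf{Y}(t,z)=(R^n(t)/R_0)\,z$ for $z\in\partial B(0,R_0)$, where $R^n(\cdot)$ solves the inward radial ODE with speed $D_n(r)/r^{d-1}$.

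Finally, the minimality of $w$ follows with essentially no extra work. Property (i) of Definition \ref{def:min} is immediate from the $L^1_{\mathrm{loc}}$ convergence and the explicit boundary-mass identity above. Property (ii) is essentially vacuous: given any $T>0$ and any admissible dominating sequence $(\hat\kappa^n,\hat\QQ^n,\hat L^n)$, we have $T_n>T$ for all sufficiently large $n$, so $\mathbf{E}^{\kappa^n,\QQ^n,L^n}_r=0$ for every $r\in[0,T]$, and combined with $\mathbf{E}^{\hat\kappa^n,\hat\QQ^n,\hat L^n}_t\geq 0$ this yields the required $\liminf$ inequality. I expect the main technical obstacle to lie in a careful verification of the regularity hypotheses of Proposition \ref{prop:equi}, particularly condition (iv) on the continuous parameterization with locally Lipschitz inverse; this is precisely where the assumption $\int_{R_*}^{R_0} r^{d-1}/D(r)\,\mathrm{d}r=\infty$ is used, since it rules out finite-time collapse of the cascade flow onto the inner boundary $R_*$ and thereby ensures that each approximating annular domain stays strictly bounded away from the singular inner edge.
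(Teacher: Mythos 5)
Your proposal is correct and follows the same route the paper takes: the paper's own proof of this proposition is a one-line remark that it is ``similar to the proof of Proposition~\ref{prop:Example3Minimal},'' and your sketch carries out precisely that adaptation, perturbing $V_0\mapsto V_0+\varepsilon_n$, invoking Proposition~\ref{prop:equi} on the capped annular domains, and then noting that property~(ii) of Definition~\ref{def:min} becomes vacuous once $T_n>T$. One small remark on emphasis: the genuine extra subtlety relative to the growing-ball case is that in a shrinking annulus the denominator $D_n(r)$ need not vanish at the inner endpoint, so $w^n$ could in principle remain bounded and the aggregate could collapse to the origin in finite physical time. The displayed hypothesis $\int_{R_*}^{R_0} r^{d-1}/D(r)\,\mathrm{d}r=\infty$ is exactly what rules this out (it is equivalent to excluding the scenario of Remark~\ref{rem:min.lim.equil.4}(b), where energy gets ``locked in'' at the origin and the formula for $w$ would acquire an extra Dirac term). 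It guarantees that $\sup_x w^n(x)\to\infty$ as $n\to\infty$, so that one may choose $T_n\uparrow\infty$ with $T_n<\sup_x w^n(x)$; this both keeps $\{w^n<T_n\}$ compactly contained in an annulus away from the origin (so hypothesis~(iv) of Proposition~\ref{prop:equi} holds) and makes $T_n>T$ for every fixed $T$, which is what trivializes the minimality check. Your phrasing, attributing the hypothesis primarily to the verification of condition~(iv), captures the right phenomenon; just note that it simultaneously makes the minimality argument go through, which is the part that would actually break without it.
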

\begin{remark}
Similar to Proposition \ref{prop:Example1Minimal}, if $V_0=0$, then the above expression for $w(x)$ evaluates to $w(x)=\infty\,\bone_{\RR^d\setminus\Gamma_{s-}}(x)$.
\end{remark}
\begin{proof}
Similar to the proof of Proposition \ref{prop:Example3Minimal}.
\end{proof}

\begin{remark}\label{rem:min.lim.equil.4}
(a) When $V_0=0$, $R_0-R_*$ above coincides exactly with the physical jump size (with a change of variable and phase) in \cite{NaShsurface}.

\smallskip

\noindent(b) If $R_*=0$ and 
\begin{align*}
\int_0^{R_0}(1+u(z))\,z^{d-1}\mathop{\mathrm{d}z}<(\gamma+V_0)R_0^{d-1},
\end{align*}
then necessarily
\begin{align*}
\int_0^{R_0}\frac{r^{d-1}}{(\gamma+V_0)R_0^{d-1}-\int_r^{R_0}(1+u(z))\,z^{d-1}\mathop{\mathrm{d}z}-\gamma r^{d-1}}\,\mathrm{d}r<\infty,    
\end{align*}
which means that the initial water ball vanishes in a finite time with some remaining boundary energy ``locked inside" the solid domain. When this is the case, $w$ is $C^1$ around $x=0$ but $\nabla w(x)=0$, which is a point of singularity for the PDE \eqref{PDE:wave}. In the sense of distributions, $w$ satisfies
\begin{small}
\begin{align*}
\mathrm{div}\bigg(\frac{\nabla w(x)}{|\nabla w(x)|^2}+\gamma\frac{\nabla w(x)}{|\nabla w(x)|}\bigg)=-(u(x)+1)-\bigg((\gamma+V_0)R_0^{d-1}-\int_0^{R_0}(1+u(z))\,z^{d-1}\mathop{\mathrm{d}z}\bigg)\cdot\delta_0\quad\text{in}\quad B(0,R_0),
\end{align*}
\end{small}
the last term on the right-hand side of which records the energy "locked in" at the origin.

\smallskip

\noindent(c) Similar to the argument in Remark \ref{rem:min.lim.equil.3}, if $u$ is a smooth radial function, then it can be shown that the minimal solution $w$ constructed in this example is the unique maximally defined smooth classical solution to the cascade PDE \eqref{PDE:wave} in $\{x\in\RR^d:R_*<|x|<R_0\}$, subject to the initial conditions (assuming $V_0>0$)
\begin{align*}
w(x)=0,\quad\partial_{\mathbf{n}}w(x)=\frac{1}{V_0}\quad\text{on}\quad\partial B(0,R_0).
\end{align*}
If, in addition, the assumption of the above part (b) is satisfied, then there is no classical solution to the PDE \eqref{PDE:wave} that is smooth everywhere in $B(0,R_0)$. This example illustrates that, even in the domain $\{x:\,0<w(x)<\infty\}$, the PDE \eqref{PDE:wave} can only be solved with ``$=$" replaced by ``$\leq$", or, equivalently, we must relax \eqref{PDE:wave} by allowing for an additional non-positive measure-valued summand in its right-hand side.
\end{remark}

\bigskip\bigskip\bigskip

\bibliographystyle{amsalpha}
\bibliography{Main}

\bigskip\bigskip\bigskip

\end{document}